\newcommand{\arxiv}[1]{\href{http://arxiv.org/abs/#1}{\tt arXiv:\nolinkurl{#1}}}
\newcommand{\arXiv}[1]{\href{http://arxiv.org/abs/#1}{\tt arXiv:\nolinkurl{#1}}}
\newtheorem{theorem}{Theorem}[section]
\newtheorem{lemma}[theorem]{Lemma}
\newtheorem{definition}[theorem]{Definition}
\newtheorem{example}[theorem]{Example}
\newtheorem{proposition}[theorem]{Proposition}
\newtheorem{corollary}[theorem]{Corollary}
\theoremstyle{remark}
\newtheorem{remark}[theorem]{Remark}
\numberwithin{equation}{section}
\newcommand{\nc}{\newcommand}
\newcommand{\dct}[1]{{\underset{#1}{\bigcirc}} }
\newcommand{\ic}[1]{{\circ_{#1}}} 
\nc{\hd}{\operatorname{hd}}
\nc{\flags}{\mathcal{F}}
\nc{\om}{\omega}
\nc{\KP}{\operatorname{KP}}
\nc{\Om}{\Omega}
\nc{\be}{\begin{enumerate}}
\nc{\bnum}{\be[{\rm(i)}]}
\def\dual{^\circledast}
\nc{\w}{\omega}
\def\ii{{\bf i}}
\nc{\re}{{re}}
\nc{\uline}{\underline}
\nc{\ula}{{\underline{\la}}}
\nc{\umu}{{\underline{\mu}}}
\nc{\unu}{{\underline{\nu}}}
\nc{\ra}{\rangle}
\def\N{\mathbb{N}}
\def\Q{\mathbb{Q}}\def\partition{\mathcal{P}}
\def\C{\overline{\mathbb{Q}}}
\def\R{\mathbb{R}}
\def\Z{\mathbb{Z}}
\def\A{\mathcal{A}}
\def\W{\Omega}
\def\B{\mathcal{B}}
\def\ka{\kappa}
\def\uj{\bf j}
\def\jj{{\bf j}}
\def\P{\mathcal{P}} 
\def\Proj{\operatorname{Proj}}
\nc{\co}{\nabla}
\def\a{\alpha}
\def\b{\beta}
\def\la{\lambda}
\def\La{\Lambda}
\def\ga{\gamma}
\def\d{\delta}
\def\de{\delta}
\def\th{\theta}
\def\f{\mathbf{f}}
\def\g{\mathfrak{g}}
\def\D{\Delta}
\def\Hom{\operatorname{Hom}}
\def\dimq{\dim_q}
\def\Ind{\operatorname{Ind}}
\def\Coind{\operatorname{CoInd}}
\def\seq{\,\mbox{Seq}\,}
\def\Mat{\,\mbox{Mat}\,}
\def\End{\operatorname{End}}
\def\Seq{\,\mbox{Seq}\,}
\def\Res{\operatorname{Res}}
\def\Ext{\operatorname{Ext}}
\def\coker{\operatorname{coker}}
\def\mods{\mbox{-mod}}
\def\fmod{\mbox{-fmod}}
\def\prmod{\mbox{-pmod}}
\newcommand{\map}[2]{\,{:}\,#1\!\to\!#2}
\def\p{\pi} %
\def\inv{^{-1}}
\numberwithin{equation}{section}
\title[KLR algebras]{Representations of Khovanov-Lauda-Rouquier algebras III: Symmetric Affine Type}
\address{}\email{maths@petermc.net}
\author{Peter J McNamara}
\date{\today}
\begin{document}

\begin{abstract}
We develop the homological theory of KLR algebras of symmetric affine type. For each PBW basis, a family of standard modules is constructed which categorifies the PBW basis.
\end{abstract}
\maketitle
\tableofcontents
\section{Introduction} 

Khovanov-Lauda-Rouquier algebras (henceforth KLR algebras), also known as Quiver Hecke algebras, are a family of $\Z$-graded associative algebras introduced by Khovanov and Lauda \cite{khovanovlauda} and Rouquier \cite{rouquier} for the purposes of categorifying quantum groups. More specifically they categorify the upper-triangular part $\f=U_q(\mathfrak{g})^+$ of the quantised enveloping algebra of a symmetrisable Kac-Moody Lie algebra $\g$ - see \S \ref{klr} for a precise statement. Let $I$ be the set of simple roots of $\g$ and $\N I$ the monoid of formal sums of elements of $I$. For each $\nu\in\N I$ there is an associated KLR algebra $R(\nu)$.

In this paper we will assume that $\g$ is of symmetric affine type. For now however, we will describe the theory developed in \cite{klr1,bkm} where $\g$ is finite dimensional. The results of this paper generalise these results to the symmetric affine case.

One begins with choosing a convex order $\prec$ on the set of positive roots satisfying a convexity property - see Definition \ref{convexdefn}. It is this convex order which determines a PBW basis of $\f$. The representation theory of KLR algebras is built via induction functors from the theory of cuspidal representations. Write $\{\a_1\succ \cdots \succ\a_N\}$ for the set of positive roots, remembering that we are temporarily discussing the finite type case.

To each root $\a$ there is a subcategory of $R(\a)$-modules which are cuspidal defined in Definition \ref{cuspidaldefn}. There is a unique irreducible cuspidal module $L(\a)$. Let $\D(\a)$ be the projective cover of $L(\a)$ in the category of cuspidal $R(\a)$-modules.

Given any sequence $\p=(\p_1,\ldots,\p_N)$ of natural numbers, the proper standard and standard modules are defined respectively by
\begin{align*}
 \overline{\D}(\p)&=L(\a_1)^{\circ \p_1} \circ \cdots \circ L(\a_N)^{\circ \p_N} \\
\D(\p)&= \D(\a_1)^{(\p_1)}\circ \cdots\circ \D(\a_N)^{(\p_N)}
\end{align*}
where $\circ$ denotes the induction of a tensor product and $(\p_i)$ is a divided power construction. Then in \cite{klr1} it is proved that the modules $\overline{\D}(\p)$ categorify the dual PBW basis, have a unique irreducible quotient and that these quotients give a classification of all irreducible modules. In \cite{bkm} it is proved that the modules $\D(\p)$ categorify the PBW basis and their homological properties are studied, justifying the use of the term standard.

Now let us turn our attention to the results of this paper where $\g$ is of symmetric affine type. Again the starting point is the choice of a convex order $\prec$ on the set of positive roots. The theory of PBW bases for affine quantised enveloping algebras dates back to the work of Beck \cite{beck} and is considerably more complicated than the theory in finite type. It is a feature of the literature that the theory of PBW bases is only developed for convex orders of a  particular form. We rectify this problem by presenting a construction of PBW bases in full generality.

For $\a$ a real root, the category of cuspidal $R(\a)$-modules is again equivalent to the category of $k[z]$-modules while the category of semicuspidal $R(n\a)$-modules is again equivalent to modules over a polynomial algebra. Whereas in finite type the proofs of these results currently rest on some case by case computations, here we give a uniform proof, the cornerstone of which is the growth estimates in \S \ref{growth}.

For the imaginary roots, the category of semicuspidal representations is qualitatively very different. The key observation here is that the R-matrices constructed by Kang, Kashiwara and Kim \cite{kkk} enable us to determine an isomorphism
\[
 \End(M^{\circ n})_0\cong \Q[S_n]
\]
where $M$ is either an irreducible cuspidal $R(\d)$-module or an indecomposable projective in the category of cuspidal $R(\d)$-modules (here $\d$ is the minimal imaginary root). We are then able to use the representation theory of the symmetric group to decompose these modules $M^{\circ n}$. This presence of the symmetric group as an endomorphism algebra can also be seen to explain the appearance of Schur functions in the definition of a PBW basis in affine type.

With the semicuspidal modules understood we are able to prove our main theorems which are analogous to those discussed above in finite type. Namely families of proper standard and standard modules are constructed which categorify the dual PBW and PBW bases respectively. See Theorem \ref{pbwcat} and the following paragraph for this result.
Compared with the corresponding theorem in \cite[Proposition 4.11]{kleshchev}, we are able to identify the imaginary constituents of the PBW basis.
The proper standard modules have a unique irreducible quotient which gives a classification of all irreducibles and the standard modules satisfy homological properties befitting their name, leading to a BGG reciprocity theorem.

As a consequence we obtain a new positivity result, Theorem \ref{positif}, which states that when an element of the canonical basis of $\f$ is expanded in a PBW basis, the coefficients that appear are polynomials in $q$ and $q\inv$ with non-negative coefficients (and the transition matrix is unitriangular).


%
%

We thank A. Kleshchev, P. Tingley and B. Webster for useful conversations.

\section{Preliminaries}

The purpose of this section is to collect standard notation about root systems and other objects which we will be making use of in this paper.

Let $(I,\cdot)$ be a Cartan Datum of symmetric affine type.
Following the approach of Lusztig \cite{lusztigbook}, this comprises a finite set $I$ and a symmetric pairing $\cdot\map{I\times I}{\Z}$ such that $i\cdot i=2$ for all $i\in I$, $i\cdot j\leq 0$ if $i\neq j$ and the matrix $(i\cdot j)_{i,j\in I}$ is of corank 1.
Such Cartan data are completely classified and correspond to the extended Dynkin diagrams of type A, D and E. We extend $\cdot\map{I\times I}{\Z}$ to a bilinear pairing $\N I\times \N I\to \Z$.

Let $\Phi^+$ be the set of positive roots in the corresponding root system.
We identify $I$ with the set of simple roots of $\Phi^+$. In this way we are able to meaningfully talk about elements of $\N I$ as being roots.

The set of real roots of $\Phi^+$ is denoted $\Phi^+_{re}$.

For $\nu=\sum_{i\in I}\nu_i\cdot i\in I$, define $|\nu|=\sum_{i\in I }\nu_i$. If $\nu$ happens to be a root, we also call this the height of the root and denote it $\operatorname{ht}(\nu)$.

Let $\Phi_f$ be the underlying finite type root system. A chamber coweight is a fundamental coweight for some choice of positive system on $\Phi_f$. If a positive system is given, let $\Omega$ denote the set of chamber coweights with respect to this system.

Let $p\map{\Phi}{\Phi_{f}}$ denote the projection from the affine root system to the finite root system whose kernel is spanned by the minimal imaginary root $\delta$. For $\a\in\Phi_f$, let $\tilde{\a}$ denote the minimal positive root in $p^{-1}(\a)$.

Let $W=\langle s_i\mid i\in I\rangle$ be the Weyl group of $\Phi$, generated by the simple reflection $s_i$ which is the reflection in the hyperplane perpendicular to $\a_i$.

Let $\D_f$ be the standard set of simple roots in $\Phi_f$. Let $W_f$ be the finite Weyl group.

Let $\P$ denote the set of partitions. A multipartition $\ula=\{\la_\w\}_{\w\in\W}$ is a sequence of partitions indexed by $\W$. We write $\ula\vdash n$ if $\sum_\w |\la_\w|=n$.

The symmetric group on $n$ letters is denoted $S_n$. If $\mu,\nu\in I$, the element $w[\mu,\nu]\in S_{|\mu+\nu|}$ is defined by
\[
w[\mu,\nu](i)=\begin{cases}
i+|\nu| & \text{if $i\leq \mu$} \\
i-|\mu| & \text{otherwise}.
\end{cases}
\]

\section{Convex Orders on Root Systems}

\begin{definition}\label{convexdefn}
A convex order on $\Phi^+$ is a total preorder $\preceq$ on $\Phi^+$ such that
\begin{itemize}
 \item If $\a\preceq \b$ and $\a+\b$ is a root, then $\a\preceq \a+\b\preceq \b$.
\item If $\a\preceq\b$ and $\b\preceq\a$ then $\a$ and $\b$ are imaginary roots.
\end{itemize}
\end{definition}

\begin{theorem}
A convex order $\prec$ on $\Phi^+$ satisfies the following condition:
\begin{itemize}
                                                     \item Suppose $A$ and $B$ are disjoint subsets of $\Phi^+$ such that $\a\prec\b$ for any $\a\in A$ and $\b\in B$. Then the cones formed by the $\R_{\geq 0}$ spans of $A$ and $B$ meet only at the origin.
                                                                             \end{itemize}\end{theorem}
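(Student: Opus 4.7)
The plan is to argue by contradiction via an induction. If the two cones meet outside the origin, then by clearing denominators we obtain an integer relation
\[
\sum_{i=1}^m \alpha_i = \sum_{j=1}^n \beta_j, \qquad m,n\geq 1,
\]
with $\alpha_i \in A$, $\beta_j \in B$ (counted with multiplicity). In fact I would establish the stronger statement by strong induction on $m+n$: \emph{no nontrivial equality $\sum \alpha_i = \sum \beta_j$ of positive roots can hold whenever $\alpha_i\prec\beta_j$ for every pair $(i,j)$}. The base cases $m+n\leq 2$ are immediate; the decisive one is $m=n=1$, which forces $\alpha_1=\beta_1$, contradicting $A\cap B=\emptyset$ together with $\alpha_1\prec\beta_1$.

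For the inductive step, pick $\alpha^\ast$ maximal in $\prec$ among the $\alpha_i$ and $\beta^\ast$ minimal among the $\beta_j$, so that $\alpha^\ast\prec\beta^\ast$. I would then examine $\beta^\ast-\alpha^\ast$ as an element of the root lattice. If $\beta^\ast-\alpha^\ast\in\Phi^+$, then from the root decomposition $\beta^\ast=\alpha^\ast+(\beta^\ast-\alpha^\ast)$ the convexity axiom together with $\alpha^\ast\prec\beta^\ast$ forces $\beta^\ast-\alpha^\ast\succeq\beta^\ast$ (the alternative $\beta^\ast-\alpha^\ast\preceq\alpha^\ast$ would yield $\beta^\ast\preceq\alpha^\ast$). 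Subtracting one copy of $\alpha^\ast$ from each side of the original relation then produces the shorter relation
\[
\sum_{i>1}\alpha_i \;=\; (\beta^\ast-\alpha^\ast)+\sum_{j>1}\beta_j,
\]
whose left summands remain strictly below every right summand in $\prec$, contradicting the induction hypothesis. The mirror case $\alpha^\ast-\beta^\ast\in\Phi^+$ is handled by the analogous argument applied to $\alpha^\ast=\beta^\ast+(\alpha^\ast-\beta^\ast)$.

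The main obstacle is the residual case in which $\beta^\ast-\alpha^\ast$ is a nonzero non-root element of the root lattice. This situation can only arise when $\alpha^\ast$ and $\beta^\ast$ are both real with distinct projections to $\Phi_f$ whose difference is not a finite root. Here one can first try the companion decomposition involving $\alpha^\ast+\beta^\ast$ whenever that sum is itself a positive root, running an entirely parallel reduction. The delicate sub-case is when none of $\pm(\alpha^\ast\pm\beta^\ast)$ is a root (for instance when the finite projections are orthogonal), so no pair-wise reduction is available for the chosen extremal pair. To finish, one must select a different pair $(\alpha_i,\beta_j)$ and show that some usable pair must exist; I would expect this to use the bilinear form on $\Z I$ via the identity $(\gamma,\gamma)=\sum_{i,j}(\alpha_i,\beta_j)$, combined with the projection $p:\Phi\to\Phi_f$ and the semidefiniteness of the form in affine type, to guarantee that at least one pair of summands admits the reduction. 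This combinatorial step is where the real technical work will be concentrated.
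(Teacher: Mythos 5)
Your reduction steps (subtracting a summand when the difference of a suitable pair is a root, with the convexity bookkeeping showing the new relation still separates) are correct and match the spirit of the paper's argument, but the proof is incomplete at exactly the point you flag, and the fix you sketch does not work as stated. Nothing forces your chosen extremal pair $(\alpha^\ast,\beta^\ast)$ to have a root difference or sum, and the rescue you propose via $(\gamma,\gamma)=\sum_{i,j}(\alpha_i,\beta_j)$, where $\gamma$ is the common value of the two sides, gives no information precisely when $\gamma$ lies in the radical of the form, i.e.\ $\gamma\in\Z\delta$ -- a perfectly possible configuration (both cones can well contain imaginary vectors). In that case $(\gamma,\gamma)=0$, and since the individual terms $(\alpha_i,\beta_j)$ need not be nonnegative, positive semidefiniteness alone does not produce a pair with $(\alpha_i,\beta_j)>0$. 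So the ``combinatorial step where the real technical work will be concentrated'' is genuinely missing, and it is the heart of the theorem.

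The paper closes this gap with two extra ingredients you would need some version of. First, minimality is exploited \emph{within one side}: if $\alpha_i+\alpha_{i'}$ were a root for $i\neq i'$, the two summands could be merged (by convexity the merged root is still $\prec$ every element of $B$), contradicting minimality; hence $(\alpha_i,\alpha_{i'})\geq 0$ for all $i\neq i'$. Second, the case where every summand on both sides is imaginary is excluded outright (all imaginary roots are equivalent under a convex preorder, so $\alpha\prec\beta$ cannot hold there), so some summand, say $\alpha_k$, is real. Pairing the whole relation with $\alpha_k$ then gives $\sum_j d_j(\beta_j,\alpha_k)\geq c_k(\alpha_k,\alpha_k)>0$, so some $(\beta_j,\alpha_k)>0$, hence $\beta_j-\alpha_k$ is a root and your subtraction step applies to the pair $(\alpha_k,\beta_j)$ rather than to the extremal pair. (One should also note that a positive pairing of two distinct real roots can equal $2$, in which case the difference is a nonzero multiple of $\delta$; this is still a root, so the reduction survives.) Without an argument of this kind your induction does not get started in the residual case, so as written the proposal has a genuine gap rather than just a technical loose end.
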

\begin{remark}
In \cite{tingleywebster}, this condition replaces our first condition in their definition of a convex order. This theorem shows that their definition and our definition agree.
\end{remark}
\begin{remark}
 The following proof requires being in finite or affine type since it depends on the positive semidefiniteness of the natural bilinear form.
We do not know if a similar statement is possible for more general root systems.
\end{remark}

\begin{proof}
We will write $(\cdot,\cdot)$ for the natural bilinear form on the root lattice.
Let $\{\a_i\}$ be a finite set of roots in $A$ and let $\{b_j\}$ be a finite set of roots in $B$.
For want of a contradiction, suppose that for some positive real numbers $c_i,d_j$ we have
\begin{equation}\label{cadb}\sum_i c_i\a_i= \sum_j d_j \b_j\end{equation}

Let
\[
 W=\{(x_1,x_2,\ldots,y_1,y_2,\ldots)\mid x_i,y_j\in\Q, \sum_i x_i\a_i=\sum_j y_j\b_j\}.
\]
Then $(c_1,c_2,\ldots,d_1,d_2,\ldots)\in W\otimes_\Q\R$, since the root system $\Phi$ is defined over $\Q$.
As $\Q$ is dense in $\R$, $W$ is dense in $W\otimes_\Q\R$. So there is a point in $W$ with all coordinates positive. Hence we can assume that each $c_i$ and $d_j$ are rational numbers without loss of generality. Clearing denominators, we can assume they lie in $\Z$.


Now suppose we have a solution to (\ref{cadb}) where the $c_i$ and $d_j$ are positive integers with $\sum_i c_i + \sum_j d_j$ as small as possible.

For any $i\neq j$, if $\a_i+\a_j$ were a root, we could replace one occurrence of $\a_i$ and $\a_j$ by the single root $\a_i+\a_j$ to get a smaller solution, contradicting our minimality assumption. Therefore $\a_i+\a_j$ is not a root for any $i\neq j$. This implies that $(\a_i,\a_j)\geq 0$ for $i\neq j$.

If all $\a_i$ and $\b_j$ are imaginary, there is a contradiction since there is only one imaginary direction. So there exists at least one real root in the equation we are studying, without loss of  generality say it is $\a_k$.

Applying $(\cdot,\a_k)$ leaves us with the inequality
\[
 \sum_j d_j (\b_j,\a_k)\geq c_k(\a_k,\a_k)>0.
\]
Therefore there exists $j$ such that $(\b_j,\a_k)>0$, which implies that $\b_j-\a_k$ is a positive root. By convexity this root must be greater than $\b_j$. So now we may subtract $\a_k$ from both sides of (\ref{cadb}) to obtain a smaller solution, again contradicting minimality.

Therefore no solution to (\ref{cadb}) can exist, as required.
\end{proof}

The imaginary roots are all multiples of a fundamental imaginary root, which we
will denote $\d$. In any convex order, these imaginary roots must all be equal to each other.

Let $\prec$ be a convex order. The set of positive real roots is divided into two disjoint subsets, namely
\[
\Phi_{\prec \d} = \{ \a\in \Phi^+ \mid \a\prec \d\},
\]and
\[
\Phi_{\succ \d} = \{ \a \in \Phi^+ \mid  \a\succ \d\}.
\]

If we can write $\Phi_{\prec \d} = \{ \a_1 \prec \a_2\prec \cdots \}$ and $\Phi_{\succ \d} = \{ \b_1 \succ \b_2\succ \cdots \}$ for some sequences of roots $\{\a_i\}_{i=1}^\infty$ and $\{\b_j\}_{j=1}^\infty$, then we say that $\prec$ is of \emph{word type}.

\begin{example}
Let $(V,\leq)$ be a totally ordered $\Q$-vector space. Let $h\map{\Q\Phi}{V}$ be an injective linear transformation. For two positive roots $\a$ and $\b$, say that $\a\prec \b$ if $h(\a)/\operatorname{ht}(\a)<h(\b)/\operatorname{ht}(\b)$ and $\a\preceq \b$ if $h(\a)/\operatorname{ht}(\a)\leq h(\b)/\operatorname{ht}(\b)$. 
This defines a convex order on $\Phi$.
\end{example}

In the above example, we can take $V=\R$ with the standard ordering to get the existence of many convex orders of word type.

An example of a convex order not of word type which we will make use of later on is the following:

\begin{example}\label{tworow}
Let $V=\R^2$ where $(x,y)\leq (x',y')$ if $x<x'$ or $x=x'$ and $y\leq y'$. Let $\D_f$ be a simple system in $\Phi_f$ and pick $\a\in\D_f$. Define $h\map{\Q\Phi}{V}$ by $h(\tilde{\a})=(0,1)$, $h(\tilde{\b})=(x_\b,0)$ for $\b\in\D_f\setminus\{\a\}$ where the $x_\b$ are generically chosen positive real numbers, and $h(\d)=0$. We extend by linearity, noting that $\{\d\}\cup \{\tilde{\b}\mid \b\in\D_f\}$ is a basis of $\Q\Phi$.

In this example, we have
\[
\widetilde{-\a}\prec\widetilde{-\a}+\d+\prec\widetilde{-\a}+2\d\cdots\prec\Z_{>0}\d \prec \cdots \prec \widetilde{\a}+2\d\prec\widetilde{\a}+\d\prec\widetilde{\a}
\] and all other positive roots are either greater than $\widetilde{\a}$ or less than $\widetilde{-\a}$.
\end{example}

Recall that $p$ is the projection from the affine root system to the finite root system.

\begin{lemma}
There exists $w\in W_f$ such that $p(\Phi_{\prec \d})=w \Phi^+_f$ and $p(\Phi_{\succ \d}) = w \Phi^-_f$.
\end{lemma}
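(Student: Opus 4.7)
The plan is to prove that $C := p(\Phi_{\prec \d})$ is a positive system in $\Phi_f$; since $W_f$ acts simply transitively on positive systems, this produces the unique $w\in W_f$ with $C=w\Phi^+_f$, and automatically $p(\Phi_{\succ \d})=-C=w\Phi^-_f$. The bulk of the work is an alignment lemma controlling how the various affine lifts of a given finite root sit relative to $\d$ in the convex order.

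For $\ga\in\Phi^+_f$, the positive real affine roots projecting to $\ga$ are $\ga+n\d$ ($n\ge 0$), and those projecting to $-\ga$ are $m\d-\ga$ ($m\ge 1$); any such pair sums to $(n+m)\d$, which is imaginary, hence a genuine root equivalent to $\d$. Applying convexity to such pairs, I would first show that all lifts $\ga+n\d$ lie on the same side of $\d$ as $\tilde{\ga}=\ga$ itself, and that all lifts $m\d-\ga$ lie on the opposite side. This partitions $\Phi^+_f=A\sqcup B$ with $A=\{\ga\prec\d\}$ and $B=\{\ga\succ\d\}$, yielding $p(\Phi_{\prec\d})=A\cup(-B)=C$ and $p(\Phi_{\succ\d})=B\cup(-A)=-C$, so $\Phi_f=C\sqcup(-C)$ is automatic.

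The remaining task is to verify that $C$ is closed under those sums that happen to be roots. Sums of two elements of $A$ (respectively $B$) stay in $A$ (respectively $B$) by a direct application of convexity in $\Phi^+$, since the sum of two positive real roots both $\prec\d$ (resp.\ both $\succ\d$) lies between them in the order. The delicate mixed case $\ga-\ga'\in\Phi_f$ with $\ga\in A$, $\ga'\in B$ I would handle via the pair of affine roots $(\ga,\d-\ga')$: both are $\prec\d$ (the second by the alignment lemma applied to $\ga'\in B$), their sum $\d+(\ga-\ga')$ is the minimal positive affine lift of $\ga-\ga'$, and convexity places this sum $\prec\d$. Running the alignment lemma in reverse then puts $\ga-\ga'$ in $A$ when it is positive and in $-B$ when it is negative, so $\ga-\ga'\in C$ in either case.

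Once $C$ is known to be a closed subset of $\Phi_f$ with $\Phi_f=C\sqcup(-C)$, the standard characterization of positive systems in finite crystallographic root systems supplies the desired $w$. The principal obstacle is the alignment lemma---pinning down how each finite root's lifts distribute around $\d$---after which everything reduces to carefully chosen applications of convexity to real- and imaginary-sum pairs.
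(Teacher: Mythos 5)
Your proposal is correct and is essentially the paper's own argument: the two key mechanisms are identical, namely that a lift of $\gamma$ and a lift of $-\gamma$ on the same side of $\delta$ would force an imaginary root strictly to one side of $\delta$ (contradicting convexity, since all imaginary roots are equivalent in the preorder), and that closure of $p(\Phi_{\prec\delta})$ follows by adding lifts and applying convexity, after which the standard characterization of positive systems in $\Phi_f$ gives $w$. The only difference is presentational: the paper proves closure directly for arbitrary $\alpha,\beta\in p(\Phi_{\prec\delta})$ in one stroke, so your alignment lemma, the partition $A\sqcup B$, and the separate mixed case are a slightly more elaborate packaging of the same idea rather than a different route.
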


\begin{proof}
First suppose that $\a\in p(\Phi_{\prec \d})$ and $-\a\in p(\Phi_{\prec \d})$. Then there are integers $m$ and $n$ such that the affine roots $-\a+m\d$ and $\a+n\d$ are both less than $\d$ in the convex order $\prec$. By convexity, their sum $(m+n)\d$ is also less that $\d$, a contradiction.
Since a similar argument holds for $p(\Phi_{\succ \d})$, we see that for each finite root $\a$, exactly one of $\a$ and $-\a$ lies in $p(\Phi_{\prec \d})$.

Now suppose that $\a,\b\in p(\Phi_{\prec \d})$ and $\a+\b$ is a root. Then for some integers $m$ and $n$, the affine roots $\a+m\d$ and $\b+n\d$ are both less than $\d$. By convexity, their sum $(\a+\b)+(m+n)\d$, which is also an affine root, is also less than $\d$.
Therefore $\a+\b\in p(\Phi_{\prec \d})$.

We have shown that $p(\Phi_{\prec \d})$ is a positive system in the finite root system $\Phi_f$. This suffices to prove the lemma.
\end{proof}

Define a finite initial segment to be a finite set of roots $\a_1\prec \a_2\prec\cdots\prec \a_N$ such that for all positive roots $\b$, either $\b\succ \a_i$ for all $i=1,\ldots,N$ or $\b=\a_i$ for some $i$.

For any $w\in W$ define $\Phi(w)=\{\a\in\Phi^+\mid w\inv\a\in\Phi^-\}$.

\begin{lemma}\label{finiteinitial}
Let $\a_1\prec \a_2 \prec\cdots \prec \a_N$ be a finite initial segment. Then there exists $w\in W $ such that $\{ \a_1,\ldots \a_N \} =\Phi(w)$. 
Furthermore there exists a reduced expression $w=s_{i_1}\cdots s_{i_N}$ such that $\a_k=s_{i_1}\cdots s_{i_{k-1}} \a_{i_k}$ for $k=1,\ldots N$.
\end{lemma}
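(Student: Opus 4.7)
My plan is to induct on $N$. The base case $N = 0$ is trivial, with $w = e$ and the conditions vacuous. For the inductive step, I first observe that $S := \{\alpha_1, \ldots, \alpha_N\}$ contains no imaginary root: all imaginary roots are equivalent under the convex preorder, so at most one could fit into the strictly ordered list, but then any other imaginary root $m\delta \notin S$ would violate the initial segment condition (it is equivalent, hence not strictly greater than, the imaginary root in $S$). In particular $\alpha_1$ is real, and moreover $\alpha_1$ is the minimum of $\prec$ on all of $\Phi^+$, since any positive root is either in $S$ (hence $\succeq \alpha_1$) or strictly above every $\alpha_i$. I then claim $\alpha_1$ is a simple root: if $\alpha_1 = \beta + \gamma$ with $\beta, \gamma \in \Phi^+$ and $\beta \preceq \gamma$, convexity gives $\beta \preceq \alpha_1 \preceq \gamma$. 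If $\beta \prec \alpha_1$, the initial segment property forces $\beta = \alpha_j$ for some $j$, contradicting $\alpha_1 = \min S$. Otherwise $\beta$ and $\alpha_1$ are equivalent in $\prec$, hence both imaginary by the second axiom of a convex order, contradicting that $\alpha_1$ is real. Write $\alpha_1 = \alpha_{i_1}$.

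Next I transport the problem by the simple reflection $s_{i_1}$. Define a new total preorder $\prec'$ on $\Phi^+$ by declaring $\alpha_{i_1}$ to be its unique maximum and, for $\beta, \gamma \in \Phi^+ \setminus \{\alpha_{i_1}\}$, $\beta \prec' \gamma$ iff $s_{i_1}\beta \prec s_{i_1}\gamma$ (using that $s_{i_1}$ is a bijection of $\Phi^+ \setminus \{\alpha_{i_1}\}$). Convexity of $\prec'$ in the case $\beta, \gamma, \beta+\gamma \in \Phi^+ \setminus \{\alpha_{i_1}\}$ follows directly from convexity of $\prec$ (and $\beta + \gamma \neq \alpha_{i_1}$ since $\alpha_{i_1}$ is simple). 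The remaining case is $\gamma = \alpha_{i_1}$, which reduces to showing $s_{i_1}\beta \preceq s_{i_1}\beta - \alpha_{i_1}$; this follows by applying convexity of $\prec$ to the decomposition $s_{i_1}\beta = \alpha_{i_1} + (s_{i_1}\beta - \alpha_{i_1})$ of positive roots, using that $\alpha_{i_1}$ is the minimum of $\prec$. Then $T := \{s_{i_1}\alpha_2, \ldots, s_{i_1}\alpha_N\}$ is a set of $N-1$ distinct positive roots ordered as $s_{i_1}\alpha_2 \prec' \cdots \prec' s_{i_1}\alpha_N$, and any positive root outside $T$ is either $\alpha_{i_1}$ itself (maximal in $\prec'$) or satisfies $s_{i_1}\beta \notin S$, hence $s_{i_1}\beta \succ \alpha_N$ and therefore $\beta \succ' s_{i_1}\alpha_N$. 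Thus $T$ is a finite initial segment of $\prec'$.

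By induction there exist $w' \in W$ and a reduced expression $w' = s_{i_2} \cdots s_{i_N}$ with $\Phi(w') = T$ and $s_{i_1}\alpha_{k+1} = s_{i_2} \cdots s_{i_k}\alpha_{i_{k+1}}$ for $k = 1, \ldots, N-1$. Setting $w = s_{i_1} w'$: since $\alpha_{i_1} \notin T = \Phi(w')$, the product is reduced of length $N$ and $\Phi(w) = \{\alpha_{i_1}\} \sqcup s_{i_1}\Phi(w') = S$. Multiplying each inductive relation through by $s_{i_1}$ yields $\alpha_k = s_{i_1}\cdots s_{i_{k-1}}\alpha_{i_k}$ for all $k$, completing the induction. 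The main technical obstacle is verifying convexity of the auxiliary order $\prec'$, specifically in the case involving $\alpha_{i_1}$, where the minimality of $\alpha_{i_1}$ under $\prec$ is exactly what makes the argument work; everything else is bookkeeping.
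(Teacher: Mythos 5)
Your proof is correct, but it follows a genuinely different route from the paper's. The paper keeps the convex order $\prec$ fixed and inducts by appending the \emph{last} root: having found $v=s_{i_1}\cdots s_{i_{N-1}}$ with $\Phi(v)=\{\alpha_1,\ldots,\alpha_{N-1}\}$, it shows by a case analysis (convexity combined with the inductive description of $\Phi(v)$, ruling out $v\beta,v\gamma\succ\alpha_N$ and the negative-root cases) that $v^{-1}\alpha_N$ is simple, and then sets $w=vs_{i_N}$. You instead peel off the \emph{first} root: after observing that a finite initial segment contains no imaginary root, you show $\alpha_1=\alpha_{i_1}$ is the global minimum of $\prec$ and hence simple, and then transport the problem through $s_{i_1}$ by introducing the twisted order $\prec'$, verifying its convexity (the only nontrivial case, sums involving $\alpha_{i_1}$, is exactly where minimality of $\alpha_{i_1}$ enters), checking that $s_{i_1}$ of the truncated segment is an initial segment for $\prec'$, and applying induction to the new order; the conclusion then follows from the standard facts $\ell(s_{i_1}w')=\ell(w')+1$ and $\Phi(s_{i_1}w')=\{\alpha_{i_1}\}\cup s_{i_1}\Phi(w')$. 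Your induction is thus over pairs (convex order, segment), which is legitimate because the lemma quantifies over all convex orders. What the paper's argument buys is that no auxiliary convex order needs to be constructed or verified; what yours buys is that all convexity work is isolated in one clean, reusable statement (reflecting a convex order in its minimal simple root yields a convex order), after which the Weyl-group bookkeeping is routine. Two points you leave implicit and should record: the second axiom of a convex order for $\prec'$ (immediate, since $s_{i_1}$ fixes the imaginary roots), and the degenerate case $\beta=\alpha_1$ in your simplicity argument (impossible, as it would force $\gamma=0$); neither affects correctness.
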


\begin{proof}
The proof proceeds by induction on $N$. For the base case where $N=1$, any root $\a$ which is not simple is the sum of two roots $\a=\b+\ga$. By convexity of $\prec$, either $\b\prec\a\prec\ga$ or $\ga\prec\a\prec\b$. Either way, $\a\neq\a_1$ so $\a_1$ is simple, $\a_1=\a_i$ for some $i\in I$ and we take $w=s_i$.

Now assume that the result is known for initial segments with fewer than $N$ roots.
Let $v=s_{i_1}\ldots s_{i_{N-1}}$. Consider $v^{-1}\a_N$. By inductive hypothesis, it is a positive root. Suppose for want of a contradiction that $v^{-1}\a_N$ is not simple.
Then we can find positive roots $\b$ and $\ga$ such that $v^{-1}\a_N=\b+\ga$.

We can't have $v\b=\a_N$ as this would force $\ga=0$. If $v\b=\a_j$ for some $j<N$ then $\b=v^{-1}\a_j$ which by inductive hypothesis is in $\Phi^-$, a contradiction. Therefore either $v\b$ is a positive root satisfying $v\b\succ \a_N$ or $v\b\in \Phi^-$. A similar statement holds for $v\ga$.

To have both $v\b$ and $v\ga$ greater than $\a_N$ contradicts the convexity of $\prec$. Therefore, without loss of generality, we may assume $v\b\in\Phi^-$. Then $-v\b$ is a positive root with $v^{-1}(-v\b)=-\b$ which is a negative root, so by inductive hypothesis, $-v\b=\a_j$ for some $j<N$. Now consider the equation $\a_N + (-v\b)=v\ga$. The convexity of $\prec$ implies that $v\ga=\a_{j'}$ for some $j'<N$. This option is shown to be impossible in the previous paragraph, creating a contradiction. Therefore $v^{-1}\a_N$ must be a simple root.

Define $i_N\in I$ by $\a_{i_N}=v^{-1}\a_N$ and let $w=s_{i_1}\cdots s_{i_N}$. It remains to show that $\{ \a_1,\ldots \a_N \} = \{\a\in \Phi^+ \mid w^{-1}\a\in \Phi^-\}$.

If $\b$ is a positive root that is not equal to $\a_j$ for some $j\leq N$, then by inductive hypothesis $v^{-1}\b\in \Phi^+$. Then $w^{-1}\b=s_{i_N}(v^{-1}\b)\in\Phi^-$ if and only if $v^{-1}\b=\a_{i_N}$ which isn't the case since this is equivalent to $\b=\a_N$.

If $\b=\a_j$ for some $j<N$ then $v^{-1}\b\in\Phi^-$. So $w^{-1}\b=s_{i_N}(v^{-1}\b)\in\Phi^-$ unless $v^{-1}\b=-\a_{i_N}$. This isn't the case since it is equivalent to $\b=-\a_{i_N}$.

The above two paragraphs show that for a positive root $\b$, if $\b\in\{\a_1,\ldots ,\a_{N-1}\}$ then $w^{-1}\b\in\Phi^+$ while if $\b\notin \{\a_1,\ldots,\a_N\}$, then $w^{-1}\b\in\Phi^+$. Since $w^{-1}\a_N=-\a_{i_N}\in\Phi^-$, this completes the proof.
\end{proof}

\begin{lemma}\cite{ito}
The restriction of a convex order to $\Phi_{\prec \d}$ is of the form
\[
 \a_{11}\prec\a_{12}\prec\cdots \prec \a_{21}\prec\a_{22}\prec\cdots \,\,\ \cdots\prec \a_{n1}\prec\a_{n2}\prec\cdots
\]
for some positive integer $n$.
\end{lemma}

For a convex order $\prec$, define
\[
I(\prec) = \{\a\in\Phi^+\mid \{\b\in \Phi^+\mid \b\prec \a\} \mbox{ is finite}\}
\]
 \begin{lemma}\label{3.10}
Let $\prec$ be a convex order. Let $\b$ be the smallest root that is not in any initial segment of $\Phi^+$. Assume that $\b$ is real. Let $S$ be a finite set of roots containing $\b$. Then there exists a convex order $\prec'$ such that $I(\prec')=I(\prec)\cup \{\b\}$ and the restrictions of $\prec $ and $\prec'$ to $S$ are the same.
\end{lemma}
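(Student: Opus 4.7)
The plan is to construct $\prec'$ by choosing a finite biclosed subset $F\subseteq\Phi^+$ containing $\{\beta\}\cup(S\cap I(\prec))$, using $F$ as an initial segment of $\prec'$, and inheriting the remainder of the order from $\prec$. First, I would apply the preceding lemma (Ito) to see that the restriction of $\prec$ to $\Phi_{\prec\delta}$ has order type $\omega\cdot n$ for some $n$; since $\prec$ is not of word type, after possibly passing to the dual order on $\Phi_{\succ\delta}$ we may assume $n\geq 2$, so that $\beta$ is the minimum of the second $\omega$-block of $\Phi_{\prec\delta}$, a real root.

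To construct $F$, begin with the set $F_0=(S\cap I(\prec))\cup\{\beta\}$ together with a sufficiently long initial segment of row 1 of $\prec$, and iteratively close under the operation $(\alpha,\gamma)\mapsto\alpha+\gamma$ whenever $\alpha,\gamma\in F$ and $\alpha+\gamma\in\Phi^+$. This procedure terminates because sums of two real roots in row 1 of $\prec$ cannot be imaginary---otherwise they would be $\succeq\delta\succ\beta$, contradicting convexity, which forces the sum to be $\preceq$ the larger summand---and for each real root $\alpha$ in affine type only finitely many $n\geq 0$ satisfy $\alpha+n\beta\in\Phi^+$, since $p(\alpha+n\beta)$ takes values in the finite root system $\Phi_f$. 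Enlarging the initial-segment part of $F$ (still finitely many times) so that it contains the $\prec$-minimum summand of every decomposition of every element of $F$ then guarantees that $\Phi^+\setminus F$ is also closed under addition, i.e.\ that $F$ is biclosed. By Lemma~\ref{finiteinitial} (or its standard extension to finite biclosed sets), $F=\Phi(w)$ for some $w\in W$, with the $\prec$-restricted order on $F$ arising from a reduced expression of $w$.

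Now I would define $\prec'$ by declaring $F$ to be an initial segment, ordered internally by the restriction of $\prec$, and placing $\Phi^+\setminus F$ after $F$ with the order inherited from $\prec$. The three required properties are then verified by case analysis on whether summands and their sums lie in $F$ or in $\Phi^+\setminus F$: convexity of $\prec'$ follows from biclosedness of $F$ combined with convexity of $\prec$ on each block; the equality $I(\prec')=I(\prec)\cup\{\beta\}$ holds because elements of $F$ acquire only finitely many $\prec'$-predecessors, elements of $I(\prec)\setminus F$ acquire only finitely many new ones, and elements outside $I(\prec)\cup\{\beta\}$ retain all of row 1 of $\prec$ as predecessors; and agreement with $\prec$ on $S$ follows because by construction $S\cap F=S\cap(I(\prec)\cup\{\beta\})$ and both blocks of $\prec'$ internally match $\prec$.

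The main obstacle is the construction of the biclosed set $F$: closure under addition is automatic from iteration, but co-closure (equivalently, that $F=\Phi(w)$ for some $w\in W$) must be arranged while keeping $F$ finite and containing the prescribed elements. The crucial mechanism is that convexity of $\prec$ forces the $\prec$-minimum summand in any decomposition $\mu=\alpha+\gamma$ of a $\mu\in F$ to have $\prec$-position no larger than that of $\mu$; so stuffing $F$ with a long enough initial segment of row 1 of $\prec$ automatically ensures that one of $\alpha,\gamma$ lies in $F$ and hence that $\Phi^+\setminus F$ is closed.
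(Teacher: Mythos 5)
Your construction of $\prec'$ is not convex, and the gap is not repairable by enlarging $F$. The problem is the mixed case in your final ``case analysis'': take $\a\in F$ and $\gamma\notin F$ with $\a+\gamma\in\Phi^+$ but $\gamma\prec\a$ in the original order (this happens as soon as $\a=\b$ and $\gamma$ is a root of the first row lying beyond the finite chunk you put into $F$). Since $\b\prec'\gamma$, convexity of $\prec'$ would force $\b\preceq'\b+\gamma\preceq'\gamma$; but convexity of $\prec$ gives $\gamma\preceq\b+\gamma\preceq\b$, so if $\b+\gamma\in F$ your internal order (the restriction of $\prec$) puts $\b+\gamma$ strictly before $\b$, and if $\b+\gamma\notin F$ your tail order puts $\gamma$ strictly before $\b+\gamma$ --- a contradiction either way. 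Moreover there are in general infinitely many such $\gamma$, so no finite biclosed $F$ avoids this: e.g.\ in type $A_2^{(1)}$ with finite simple roots $a,b$ and the order of Example \ref{tworow} (distinguished simple root $a$), row 1 consists of the roots $k\d-b$ and $k\d-a-b$, row 2 of the roots $k\d-a$, and $\b=\d-a$; for every large $k$ the root $\gamma=k\d-b$ lies outside $F$ and $\b+\gamma=(k+1)\d-a-b$ is a positive root, so the pair $(\b,\gamma)$ violates convexity of your $\prec'$ as above. The moral is that one cannot keep the $\prec$-order on the complement of a finite set: moving $\b$ to the front forces a reordering of the \emph{entire} infinite set $L=\{\gamma\mid\gamma\preceq\b\}=I(\prec)\cup\{\b\}$.

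This is exactly what the paper does instead: by the Cellini--Papi structure theorem, $L=\bigcup_{n\ge 1}\Phi(vt^n)$ with $t$ a translation; one chooses a finite initial segment $\Phi(w)\supseteq S\cap I(\prec)$ ordered compatibly with $\prec$, picks $n$ with $\Phi(w)\cup\{\b\}\subseteq\Phi(vt^n)$, extends the reduced expression of $w$ to one of $v'=vt^n$ and then further through the chain $\Phi(vt^m)$, and uses the resulting word order on all of $L$, keeping $\prec$ only on $\Phi^+\setminus L$. The agreement with $\prec$ on $S$ then comes from $S\cap L\subseteq\Phi(w)\cup\{\b\}$, not from preserving $\prec$ on any cofinite part of $L$. (Two smaller issues with your write-up: the identification of an arbitrary finite biclosed set with some $\Phi(w)$ is a nontrivial external fact, not Lemma \ref{finiteinitial}, and your reduction ``pass to the dual order so that $\b$ is real'' needs more care when the part of $\prec$ below $\d$ is already of word type; but these are secondary to the convexity failure.)
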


\begin{proof}
Let $L$ be the set of roots in $\Phi^+$ less than or equal to $\b$ under $\prec$. Then by \cite[Theorem 3.12]{cellinipapi}, there exists $v,t\in W$ with $t$ a translation
and $L=\cup_{n=1}^\infty \Phi(vt^n)$.

Let $w$ be such that $S\subset \{\a_1\prec\cdots\prec\a_N\}=\Phi_w$. There exists an integer $n$ such that $\Phi(w)\cup \{\beta\}\subset \Phi(vt^n)$. Let $v'=vt^n$. Since $\Phi(v')\supset \Phi(w)$, for any reduced expression of $w$, there exists a reduced expression of $v'$ beginning with that of $w$.

We choose the reduced decomposition of $w$ to be compatible with $\prec$. Then extend the reduced decomposition as per the above to get a new ordering $\prec'$ on $L$. This has the desired properties.
\end{proof}

\begin{theorem}\label{3.11}
 Let $S$ be a finite subset of $\Phi^+$ and let $\prec$ be a convex order on $\Phi^+$. Then there exists a convex order $\prec'$ of word type such that the restrictions of $\prec$ and $\prec'$ to $S$ are equivalent.
\end{theorem}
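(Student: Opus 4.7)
The plan is to construct $\prec'$ by iterating Lemma \ref{3.10} together with its dual (obtained by reversing $\prec$; the reverse order is again convex by the symmetry in Definition \ref{convexdefn}), ultimately producing a convex order whose initial segment covers $\Phi_{\prec'\delta}$ and whose coinitial segment covers $\Phi_{\succ'\delta}$, i.e.\ a convex order of word type.

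Starting from $\prec_0 = \prec$, at each successor step I would produce $\prec_{k+1}$ from $\prec_k$ by applying Lemma \ref{3.10} with the finite set $(S \cap I(\prec_k)) \cup \{\beta_k\}$, where $\beta_k$ is the smallest root not lying in $I(\prec_k)$. This yields $I(\prec_{k+1}) = I(\prec_k) \cup \{\beta_k\}$ while preserving the order on $(S\cap I(\prec_k))\cup\{\beta_k\}$. Since the construction in the proof of Lemma \ref{3.10} only alters the order on the new initial segment $I(\prec_k) \cup \{\beta_k\}$ itself, the order on $S \setminus I(\prec_k)$ is automatically unchanged, so inductively $\prec_{k+1}|_S = \prec_k|_S = \prec|_S$. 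I would alternate this with the analogous procedure on the $\succ\delta$ side obtained by reversing the order. At limit ordinals I would take $\prec_\lambda$ to be the common refinement of the $\prec_\mu$ for $\mu<\lambda$; because every pair of roots is eventually absorbed into either the growing initial or coinitial segment (where its pairwise order is henceforth fixed), this refinement is well-defined. By the Ito lemma cited above, $\Phi_{\prec\delta}$ has order type $\omega\cdot n$ for some $n$ and the reverse of $\Phi_{\succ\delta}$ has a similar form, so after a countable ordinal of iterations every real root has been absorbed, the imaginary roots form a single equivalence class in the middle, and the resulting $\prec'$ is of word type.

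The main obstacle is the passage to the transfinite limit and the verification that convexity survives it. Convexity is a condition on triples of roots, and each such triple stabilizes at some stage of the iteration, so the condition passes to the limit. A secondary subtlety is that the proof of Lemma \ref{3.10} implicitly requires the finite set passed to it to lie in a finite initial segment of the current order --- this is exactly why one feeds it only $(S\cap I(\prec_k))\cup\{\beta_k\}$, while relying on the construction in its proof to leave the order on roots outside the new initial segment undisturbed, which in turn guarantees that the order on the remainder of $S$ is preserved automatically without ever being supplied to the lemma.
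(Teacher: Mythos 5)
Your overall strategy---iterating Lemma \ref{3.10} below $\delta$, treating the roots above $\delta$ by reversing the order, and passing to a limit---is the same as the paper's, but your implementation has a genuine gap at the limit stage. Lemma \ref{3.10} only guarantees that the new order agrees with the old one on the finite set you hand it; its proof builds an entirely new ordering of the whole initial segment (a new infinite reduced word), constrained only to begin with a reduced expression whose inversion set contains the fed set. Consequently, absorbing a root into the initial segment does \emph{not} fix its position relative to the other absorbed roots once and for all: since you feed only $(S\cap I(\prec_k))\cup\{\beta_k\}$, the relative order of two previously absorbed roots that do not lie in $S$ is unprotected and may change at every subsequent application of the lemma. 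So your premise that ``every pair of roots is eventually absorbed into the growing initial or coinitial segment, where its pairwise order is henceforth fixed'' fails, the sequence of orders need not stabilize on pairs, and the ``common refinement'' you take at limit ordinals need not be well defined, let alone convex --- your argument that convexity survives the limit also rests on this missing stabilization.

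The paper closes exactly this hole by feeding an increasing, exhaustive family of finite sets: to build $\prec_{i+1}$ it applies Lemma \ref{3.10} to $\{\alpha_1,\ldots,\alpha_{n+i},\beta_1,\ldots,\beta_i\}$ together with the root being absorbed, so every pair of roots in the prospective first row eventually lies in all later fed sets and its relative order is permanently frozen. The limit $\lim_{i\to\infty}\prec_i$ is then well defined, agrees with $\prec$ on $S$, and by Ito's lemma has one fewer row than $\prec$; since the part below $\delta$ has order type $\omega\cdot n$, only finitely many such $\omega$-limits are needed on each side of $\delta$, so no transfinite alternating iteration or limit-ordinal bookkeeping is required. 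Your step-by-step preservation of $\prec|_S$, and your observation that the construction in Lemma \ref{3.10} leaves the order on roots outside the initial segment (and the cross-relations with it) untouched, are fine; the fix is simply to enlarge the sets fed to the lemma so that they exhaust the current initial segment, which is what the paper does.
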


\begin{proof}
Suppose our convex order begins 
\[
\a_1\prec \a_2 \prec \cdots \prec \b_1 \prec \b_2 \prec \cdots
\]
and that $S\cap \{\a_i\mid i\in \Z^+\} \subset \{\a_1,\ldots,\a_n\}$. We now define inductively a sequence of convex orders $\prec_i$ with $I(\prec_i)=\{\a_i\mid i\in\Z^+\}\cup \{\b_1,\ldots,\b_i\}$ as follows:

Set $\prec_0=\prec$. Assume that $\prec_{i}$ is constructed. To construct $\prec_{i+1}$, apply Lemma \ref{3.10} with $S=\{\a_1,\ldots,\a_{n+i},\b_1,\ldots,\b_{i}\}$. We will take the convex order denoted $\prec'$ whose existence is given to us by Lemma \ref{3.10} as $\prec_{i+1}$.

Now let $\prec''=\lim_{i\to \infty} \prec_i$. If $\prec$ is of $n$-row type, then $\prec''$ will be of $(n-1)$-row type and the restrictions of $\prec$ and $\prec''$ to $S$ are the same. After iterating this process we reach a new convex order $\prec'$ whose restriction to $S$ is the same as $\prec$ and is of word type on $\Phi_{\prec\d}$. Repeating this construction on the set of roots greater than $\d$ completes the proof of this theorem.
\end{proof}

\begin{remark}
Using this theorem it will often be possible to assume without loss of generality that the convex order $\prec$ is of word type.
\end{remark}



\section{The Algebra $\f$}\label{eff}

The algebra $\f_{\Q(q)}$ is the $\Q(q)$ algebra as defined in \cite{lusztigbook} generated by elements $\{\th_i\mid i\in I\}$. Lusztig defines it as the quotient of a free algebra by the radical of a bilinear form. By the quantum Gabber-Kac theorem, it can also be defined in terms of the Serre relations.
Morally, $\f_{\Q(q)}$ should be thought of as the positive part of the quantised enveloping algebra $U_q(\g)$.
There is only a slight difference in the coproduct, necessary as the coproduct in $U_q(\g)$ does not map $U_q(\g)^+$ into $U_q(\g)^+\otimes U_q(\g)^+$.

There is a $\Z[q,q\inv]$-form of $\f_{\Q(q)}$, which we denote simply by $\f$. It is the $\Z[q,q\inv]$-subalgebra of $\f_{\Q(q)}$ generated by the divided powers $\th_i^{(n)}=\th_i^n/[n]_q!$, where $[n]_q!=\prod_{i=1}^n(q^i-q^{-i})/(q-q\inv)$ is the $q$-factorial. If $\A$ is any $\Z[q,q\inv]$-algebra, we use the notation $\f_\A$ for $\A\otimes_{\Z[q,q\inv]}\f$.

The algebra $\f$ is graded by $\N I$ where $\th_i$ has degree $i$ for all $i\in I$.
We write $\f=\oplus_{\nu\in\N I} \f_\nu$ for its decomposition into graded components.
Of significant importance for us is the dimension formula
\begin{equation}\label{dimfnu}
\sum_{\nu\in \N I}\dim \f_\nu t^\nu = \prod_{\a\in\Phi^+} (1-t^{\a})^{-\operatorname{mult}(\a)}
\end{equation}

The tensor product $\f\otimes \f$ has an algebra structure given by
\[
 (x_1\otimes y_1)(x_2\otimes y_2)=q^{\b_1\cdot\a_2}x_1 x_2 \otimes y_1 y_2
\] where $y_1$ and $x_2$ are homogeneous of degree $\b_1$ and $\a_2$ respectively.

Given a bilinear form $(\cdot,\cdot)$ on $\f$, we obtain a bilinear form $(\cdot,\cdot)$ on $\f\otimes \f$ by
\[
 (x_1 \otimes x_2,y_1\otimes y_2)=(x_1,x_2)(y_1,y_2).
\]

There is a unique algebra homomorphism $r\map{\f}{\f\otimes \f}$ such that $r(\th_i)=\th_i\otimes 1 + 1 \otimes \th_i$ for all $i\in I$.

The algebra $\f$ has a symmetric bilinear form $\langle\cdot,\cdot\rangle$ satisfying
\begin{eqnarray*}
 \langle\th_i,\th_i\ra&=&(1-q^2)^{-1} \\
\langle x,yz\ra &=& \langle r(x),y\otimes z\ra.
\end{eqnarray*}

The form $\langle \cdot,\cdot\ra$ is nondegenerate. Indeed, in the definition of $\f$ in \cite{lusztigbook}, $\f$ is defined to be the quotient of a free algebra by the radical of this bilinear form. It is known that $\f$ is a free $\Z[q,q\inv]$-module. Let $\f^*$ be the graded dual of $\f$ with respect to $\langle \cdot,\cdot\ra$. By definition, $\f^*=\oplus_{\nu\in\N I} \f_\nu^*$. As twisted bialgebras over $\Q(q)$, $\f_{\Q(q)}$ and $\f^*_{\Q(q)}$ are isomorphic, though there is no such isomorphism between their integral forms.

\section{KLR Algebras}\label{klr}

A good introduction to the basic theory of KLR algebras appears in \cite[\S 4]{kleschevram}.
Although it is not customary, we will first give the geometric construction of KLR algebras, then discuss the standard presentation in terms of generators and relations.
In this paper, we must restrict ourselves to KLR algebras which come from geometry. The primary reason for this restriction is our reliance on the theory of $R$-matrices, which we introduce in $\S\ref{sim}$. The results presented in $\S\ref{ext}$ also require the geometric interpretation.

Define a graph with vertex set $I$ and with $-i\cdot j$ edges between $i$ and $j$ for all $i\neq j$. Let $Q$ be the quiver obtained by placing an orientation on this graph.

For $\nu\in\N I$, define $E_\nu$ and $G_\nu$ by $$E_\nu=\prod_{i\to j}\Hom_{\mathbb{C}} (\mathbb{C}^{\nu_i},\mathbb{C}^{\nu_j}),$$
$$G_\nu=\prod_i GL_{\nu_i}(\mathbb{C}).$$

With the obvious action of $G_\nu$ on $E_\nu$, $E_\nu/G_\nu$ is the moduli stack of representations of $Q$ with dimension vector $\nu$.

Let $\flags_\nu$ be the complex variety whose points consist of a point of $E_\nu$, together with a full flag of subrepresentations of the corresponding representation of $Q$. 
The variety $\flags_\nu$ is a disjoint union of smooth connected varieties. Let $\flags_\nu=\sqcup_\ii \flags_\nu^\ii$ be its decomposition into connected components and $\pi^\ii\map{\flags_\nu^\ii}{E_\nu}$ be the natural $G_\nu$-equivariant morphism. Define
\[
\mathcal{L}=\bigoplus_\ii (\pi^\ii)_! \underline{\Q}_{\flags_\nu^\ii}[\dim \flags_\nu^\ii] \in D^b_{G_\nu}(E_\nu).
\]

For each $\nu\in\N I$ we define the KLR algebra $R(\nu)$ by
\[
 R(\nu)=\bigoplus_{d\in\Z} \Hom_{D^b_{G_\nu}(E_\nu)}(\mathcal{L},\mathcal{L}[d]).
\]

We now introduce the more customary approach via generators and relations. This presentation is due to \cite{vv} and \cite{rouquier}, and more recently over $\Z$ in \cite{maksimau}.
To introduce this presentation, we first need to define, for any $\nu\in\N I$,
\[
 \Seq(\nu)=\{\ii=(\ii_1,\ldots,\ii_{|\nu|})\in I^{|\nu|}\mid\sum_{j=1}^{|\nu|} \ii_j=\nu\}.
\]
This is acted upon by the symmetric group $S_{|\nu|}$ in which the adjacent transposition $(i,i+1)$ is denoted $s_i$.

Define the polynomials $Q_{i,j}(u,v)$ for $i,j\in I$ by
\[
 Q_{i,j}(u,v)=\begin{cases}
\prod_{i\to j}(u-v) \prod_{j\to i}(v-u)
               &\text{if } i\neq j \\
0 &\text{if } i=j 
              \end{cases}
\] where the products are over the sets of edges from $i$ to $j$ and from $j$ to $i$, respectively.

\begin{theorem}
The KLR algebra $R(\nu)$ is the associative $\Q$-algebra generated by elements $e_{\bf i}$, $y_j$, $\tau_k$ with ${\bf i}\in \seq(\nu)$, $1\leq j\leq |\nu|$ and $1\leq k< |\nu|$ subject to the relations
\begin{equation} \label{eq:KLR}
\begin{aligned}
& e_\ii e_{\uj} = \delta_{\ii, \uj} e_\ii, \ \
\sum_{\ii \in \Seq(\nu)}  e_\ii = 1, \\
& y_{k} y_{l} = y_{l} y_{k}, \ \ y_{k} e_\ii = e_\ii y_{k}, \\
& \tau_{l} e_\ii = e_{s_{l}\ii} \tau_{l}, \ \ \tau_{k} \phi_{l} =
\tau_{l} \phi_{k} \ \ \text{if} \ |k-l|>1, \\
& \tau_{k}^2 e_\ii = Q_{\ii_{k}, \ii_{k+1}} (y_{k}, y_{k+1})e_\ii, \\
& (\tau_{k} y_{l} - y_{s_k(l)} \tau_{k}) e_\ii = \begin{cases}
-e_\ii \ \ & \text{if} \ l=k, \ii_{k} = \ii_{k+1}, \\
e_\ii \ \ & \text{if} \ l=k+1, \ii_{k}=\ii_{k+1}, \\
0 \ \ & \text{otherwise},
\end{cases} \\[.5ex]
& (\tau_{k+1} \tau_{k} \tau_{k+1}-\tau_{k} \tau_{k+1} \tau_{k}) e_\ii\\
& =\begin{cases} \dfrac{Q_{\ii_{k}, \ii_{k+1}}(y_{k},
y_{k+1}) - Q_{\ii_{k}, \ii_{k+1}}(y_{k+2}, y_{k+1})}
{y_{k} - y_{k+2}}e_\ii\ \ & \text{if} \
\ii_{k} = \ii_{k+2}, \\
0 \ \ & \text{otherwise}.
\end{cases}
\end{aligned}
\end{equation}
\end{theorem}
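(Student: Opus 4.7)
The plan is to prove this theorem by constructing an explicit isomorphism between the abstract algebra $R'(\nu)$ presented by the generators and relations in the statement and the geometric algebra $R(\nu)=\bigoplus_{d}\Hom_{D^b_{G_\nu}(E_\nu)}(\mathcal{L},\mathcal{L}[d])$. This strategy goes back to Varagnolo--Vasserot and Rouquier, with an integral refinement due to Maksimau, but the overall shape of the argument is clean: define a map on generators, check relations, then use a faithful polynomial representation and a dimension count to get injectivity.

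First I would define a homomorphism $f\colon R'(\nu)\to R(\nu)$ on the generators. Send $e_\ii$ to the projector onto the summand $(\pi^\ii)_!\underline{\Q}_{\flags_\nu^\ii}[\dim \flags_\nu^\ii]$ of $\mathcal{L}$; send $y_j$ to cup product with the $j$th tautological equivariant Chern class coming from the tautological flag over $\flags_\nu^\ii$; and send $\tau_k$ to convolution with the $G_\nu$-equivariant fundamental class of the closed correspondence inside $\flags_\nu^\ii\times_{E_\nu}\flags_\nu^{s_k\ii}$ parametrising pairs of full flags that agree off the $k$th step. One then checks the KLR relations. Orthogonality of idempotents, polynomial commutativity, and the far-commutation $\tau_k\tau_l=\tau_l\tau_k$ for $|k-l|>1$ are formal from the geometry. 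The substantive identities are the formula for $\tau_k^2$, the twisted Leibniz rule relating $\tau_k$ and $y_l$, and the corrected braid relation; each reduces to a rank one or rank two situation ($\nu=i+j$ or $\nu=2i+j$) and ultimately to an Euler class computation on the corresponding two-step or three-step flag variety, where the polynomial $Q_{\ii_k,\ii_{k+1}}$ appears naturally from the quiver arrows between $\ii_k$ and $\ii_{k+1}$ via localisation at the $T$-fixed points.

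Next I would show $f$ is surjective. Since each $\flags_\nu^\ii$ is an iterated tower of projective bundles, the convolution algebra $\End_{D^b_{G_\nu}(E_\nu)}(\mathcal{L})$ is generated by the tautological classes on each component together with fundamental classes of the elementary one-step correspondences; these are precisely the images of $y_j$, $e_\ii$, and $\tau_k$. For injectivity I would exhibit the polynomial representation of $R'(\nu)$ on $\bigoplus_{\ii\in\Seq(\nu)}\Q[y_1,\ldots,y_{|\nu|}]e_\ii$, where $\tau_k$ acts by a divided-difference type operator dictated by the relations. A direct computation shows this representation is well defined, and from it one extracts a basis theorem: the elements $\tau_{i_1}\cdots \tau_{i_r}y^\alpha e_\ii$ (for a fixed reduced expression $w=s_{i_1}\cdots s_{i_r}$ for each $w\in S_{|\nu|}$ and each multi-index $\alpha$) span $R'(\nu)$ and are linearly independent.

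The main obstacle is then to match the two sides dimensionally. For this I would compute the graded dimension of $R(\nu)$ using that each $\pi^\ii$ is a semismall (in fact small) resolution so that $\mathcal{L}$ is semisimple by the decomposition theorem; Reineke's formula for the cohomology of the quiver flag varieties then produces the same generating function as the basis theorem for $R'(\nu)$. The delicate step here, and the one that makes this theorem nontrivial rather than cosmetic, is verifying the corrected braid relation geometrically: this is where the orientation of the quiver manifests as the asymmetry between $y_k$ and $y_{k+2}$ in the difference quotient, and it is the only place where the choice of $Q_{i,j}$ genuinely enters. Once that identity is established and the dimensions agree, the surjection $f$ is forced to be an isomorphism and the theorem follows.
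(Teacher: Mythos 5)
The paper itself does not prove this theorem: the presentation is imported from Varagnolo--Vasserot and Rouquier (and from Maksimau over $\Z$), so there is no internal argument to compare against. Your outline is essentially the strategy of those sources --- define a map $f$ from the presented algebra to the convolution algebra $\bigoplus_d\Hom_{D^b_{G_\nu}(E_\nu)}(\mathcal{L},\mathcal{L}[d])$, verify the relations by reduction to rank at most two, and then establish bijectivity using the spanning set $\tau_w y^\alpha e_\ii$ and the polynomial representation --- so the overall shape is right and is the intended proof.

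However, two of the specific justifications you give are not correct as stated. First, the maps $\pi^\ii$ are in general neither small nor semismall: already for $\nu=2i$ the map $\flags^{(i,i)}_\nu\to E_\nu$ is $\mathbb{P}^1\to\mathrm{pt}$, and the fact that the multiplicity spaces $L_b$ in the decomposition of $\mathcal{L}$ are graded vector spaces spread over several degrees reflects precisely this failure. Semisimplicity of $\mathcal{L}$ only needs the decomposition theorem for proper maps, but more to the point semisimplicity is not what the dimension count uses. The graded dimension of the geometric algebra comes from the identification of $\bigoplus_d\Hom(\mathcal{L},\mathcal{L}[d])$ with the equivariant Borel--Moore homology of the Steinberg-type variety $\flags_\nu\times_{E_\nu}\flags_\nu$, stratified by the relative position $w\in S_{|\nu|}$ of the two flags; each stratum is an affine bundle over an iterated projective bundle, so its homology is free over $H^*_{G_\nu}(\mathrm{pt})$ with basis matching $\tau_w y^\alpha e_\ii$. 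This filtration is also what your surjectivity step actually needs: the projective-bundle tower structure of each $\flags^\ii_\nu$ only controls the diagonal blocks $\Hom((\pi^\ii)_!\underline{\Q},(\pi^\ii)_!\underline{\Q}[d])$ generated by the $e_\ii$ and $y_j$, and says nothing by itself about generation of the off-diagonal Homs by one-step correspondences; the appeal to ``Reineke's formula'' does not substitute for this computation. A cleaner way to close the argument, avoiding the dimension comparison entirely, is to prove linear independence of $\tau_w y^\alpha e_\ii$ via faithfulness of the polynomial representation (as in Khovanov--Lauda) and to check by equivariant localisation that the geometric images of the generators act on $H^*_{G_\nu}(\flags_\nu)$ by the same divided-difference formulas, so that the faithful representation factors through $f$ and injectivity is immediate; surjectivity then still requires the Steinberg filtration. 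Since the paper works over $\Q$, the integral refinements of Maksimau are not needed.
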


\begin{remark}
Although the polynomials $Q_{i,j}(u,v)$ are not exactly as they appear in \cite{khovanovlauda}, the reader should not be concerned when we quote results from \cite{khovanovlauda} as all of the arguments go through without change.
The discussion in \cite{kl2} shows that changing the ordering of the quiver $Q$ does not change the isomorphism type of $R(\nu)$.
\end{remark}



The KLR algebras $R(\nu)$ are $\Z$-graded, where $e_{\ii}$ is of degree zero, $y_j e_\ii$ is of degree $\ii_j \cdot \ii_j$ and $\phi_k e_\ii$ is of degree $-\ii_k \cdot \ii_{k+1}$.

They satisfy the property that $R(\nu)_d=0$ for $d$ sufficiently negative (depending on $\nu$) and $R(\nu)_d$ is finite dimensional for all $d$. Relevant implications of these properties are that there are a finite number of isomorphism classes of simple modules and that projective covers exist.

All representations of KLR algebras that we consider will be finitely generated $\Z$-graded representations. If needed, we write $M=\oplus_d M_d$ for the decomposition of a module $M$ into graded pieces. A submodule of a finitely generated $R(\nu)$-module is finitely generated by \cite[Corollary 2.11]{khovanovlauda}.

For a module $M$, we denote its grading shift by $i$ by $q^iM$, this is the module with $(q^i M)_d = M_{d- i}$.

Given two modules $M$ and $N$, we consider $\Hom(M,N)$, and more generally $\Ext^i(M,N)$ as graded vector spaces. All Ext groups which appear in the paper will be taken in the category of $R(\nu)$-modules.

Let $\tau$ be the antiautomorphism of $R(\nu)$ which is the identity on all generators $e_\ii,y_i,\phi_j$. For any $R(\nu)$-module $M$, there is a dual module $M\dual = \Hom_\Q (M,\Q)$, where the $R(\nu)$ action is given by $(x\la)(m)=\la(\tau(x)m)$ for all $x\in R(\nu)$, $\la\in M\dual$ and $m\in M$.

For every irreducible $R(\nu)$-module $L$, there is a unique choice of grading shift such that $L\dual\cong L$. \cite{khovanovlauda}

Let $\la,\mu\in\N I$. 
Then there is a natural inclusion $\iota_{\la,\mu}:R(\la)\otimes R(\mu)\to R(\la+\mu)$, defined by $\iota_{\la,\mu}(e_\ii\otimes e_\jj)=e_{\ii\jj}$, $\iota_{\la,\mu}(y_i\otimes 1)=y_i$, $\iota_{\la,\mu}(1\otimes y_i)=y_{i+|\la|}$, $\iota_{\la,\mu}(\phi_i\otimes 1)=\phi_i$, $\iota_{\la,\mu}(1\otimes \phi_i)=\phi_{i+|\la|}$.

Define the induction functor $\Ind_{\la,\mu}:R(\la)\otimes R(\mu)\mods \to R(\la+\mu)\mods$ by
\[
 \Ind_{\la,\mu} (M) = R(\la+\mu)\bigotimes_{R(\la)\otimes R(\mu)} M.
\]
Define the restriction functor $\Res_{\la,\mu}:R(\la+\mu)\mods \to R(\la)\otimes R(\mu)\mods$ by
\[
 \Res_{\la,\mu}(M) = \iota_{\la,\mu}(1_{R(\la)\otimes R(\mu)})M.
\]
 The induction and restriction functors are both exact.

For a $R(\la)$-module $A$ and a $R(\mu)$-module $B$, we write $A\circ B$ for $\Ind_{\la,\mu}(A\otimes B)$. Under duality, the behaviour is
\begin{equation}\label{dualofaproduct}
(A\circ B)\dual \cong q^{\la\cdot\mu} B\dual \circ A\dual.
\end{equation}

Khovanov and Lauda \cite{khovanovlauda} prove the existence of a dual pair of isomorphisms
\begin{equation}\label{pmod}
 \bigoplus_{\nu\in \N I} G_0(R(\nu)\prmod) \cong \f
\end{equation}
and
\begin{equation}\label{fmod}
 \bigoplus_{\nu\in \N I} K_0(R(\nu)\fmod) \cong \f^*.
\end{equation}

The category $R(\nu)\prmod$ is the category of finitely generated projective $R(\nu)$-modules and $G_0$ means to take the split Grothendieck group. The category $R(\nu)\fmod$ is the category of finite dimensional $R(\nu)$-modules and $K_0$ means to take the Grothendieck group. We denote the class of a module $M$, identified with its image under the above isomorphisms, by $[M]$.
The action of $q\in \A$ is by grading shift.

The functors of induction and restriction decategorify to a product and coproduct. The isomorphisms above are then isomorphisms of twisted bialgebras.

If $M$ is a general finitely generated $R(\nu)$-module, then it has a well-defined composition series, where each composition factor appears with a multiplicity that is an element of $\Z((q))$. Thus we can consider $[M]$ to be an element of $\f^*_{\Z((q))}$.

Of great importance will be the following Mackey theorem. The general case stated below has the same proof as the special case presented in \cite{khovanovlauda}.

\begin{theorem}\cite[Proposition 2.18]{khovanovlauda}\label{mackey}
Let $\la_1,\ldots,\la_k,\mu_1\ldots,\mu_l\in\N I$ be such that $\sum_i \la_i=\sum_j \mu_j$ and let $M$ be a $R(\la_1)\otimes \cdots \otimes R(\la_k)$-module. Then the module
$ \Res_{\mu_1,\ldots,\mu_l}\circ\Ind_{\la_1,\ldots,\la_k}(M)
$ has a filtration indexed by tuples $\nu_{ij}$ satisfying $\la_i=\sum_j \nu_{ij}$ and $\mu_j=\sum_i\nu_{ij}$.
The subquotients of this filtration are isomorphic, up to a grading shift, to the composition $\Ind_\nu^\mu\circ \tau\circ \Res_\nu^\la(M)$. Here
$\Res_{\nu}^\la\map{\otimes_i R(\la_i)\mods}{\otimes_i(\otimes_jR(\nu_{ij}))\mods}$ is the tensor product of the $\Res_{\nu_{i\bullet}}$, $\tau\map{\otimes_i(\otimes_jR(\nu_{ij}))\mods}{\otimes_j(\otimes_iR(\nu_{ij}))\mods}$ is given by permuting the tensor factors and $\Ind_\nu^\mu\map{\otimes_j(\otimes_iR(\nu_{ij}))\mods}{\otimes_j R(\mu_j)\mods}$ is the tensor product of the $\Ind_{\nu_{\bullet i}}$.
\end{theorem}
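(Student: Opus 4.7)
The plan is to reduce to the binary case $k=l=2$ already proved in \cite{khovanovlauda} and then iterate, keeping careful track of the combinatorics of double cosets of the appropriate Young subgroups. Because induction and restriction are both transitive, a straightforward induction on $k+l$ will let me peel off one factor at a time; the bulk of the work is done once I can handle the two-sided binary case.

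First I would recall the standard basis of $R(\nu)$ as a free right module over the parabolic subalgebra $R(\la_1)\otimes\cdots\otimes R(\la_k)$. Fixing a section of minimal-length left coset representatives $D$ of the Young subgroup $S_{\la}=S_{|\la_1|}\times\cdots\times S_{|\la_k|}$ in $S_{|\nu|}$, and choosing reduced expressions for each $w\in D$, the elements $\tau_w$ form such a basis. Consequently $\Ind_{\la_\bullet}(M)$ has the explicit description $\bigoplus_{w\in D}\tau_w\otimes M$ as a vector space. Applying $\Res_{\mu_\bullet}$ amounts to multiplying by the sum of idempotents $e_\ii$ with $\ii$ compatible with the $\mu$-shape, so that the relevant coset representatives are exactly those in the minimal-length double coset representatives $_\mu D_\la\subset S_{|\nu|}$ for $(S_\mu, S_\la)$. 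It is classical that $_\mu D_\la$ is in bijection with the set of $l\times k$ matrices $(\nu_{ij})\in(\N I)^{l\times k}$ satisfying the row sums $\mu_j=\sum_i \nu_{ij}$ and column sums $\la_i=\sum_j \nu_{ij}$.

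Next I would build the filtration. Order the matrices $(\nu_{ij})$ compatibly with the length function on $_\mu D_\la$, and for each matrix let $F_{\leq\nu}$ be the span of $\tau_w\otimes M$ for those double coset representatives $w$ corresponding to matrices at most $\nu$ in this order, intersected with the image of the idempotent for $\Res_{\mu_\bullet}$. The main technical point, mirroring the proof in \cite{khovanovlauda}, is that $F_{\leq\nu}$ is actually a submodule under the left action of $R(\mu_1)\otimes\cdots\otimes R(\mu_l)$. This reduces to checking the action of generators $y_i$ and $\tau_k$: for the $y_i$ one uses the commutation relation with $\tau_w$, which only produces terms with shorter $w$; for $\tau_k$ one uses the braid relation together with the quadratic relation $\tau_k^2=Q$, where the crucial observation is that all correction terms involve shorter elements of $_\mu D_\la$ and thus lie in strictly smaller pieces of the filtration.

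Finally I would identify each subquotient $F_{\leq\nu}/F_{<\nu}$ with $\Ind_\nu^\mu\circ\tau\circ\Res_\nu^\la(M)$, up to the grading shift prescribed by the length of the double coset representative and the degree of $\tau_w$. Here $\Res_\nu^\la$ cuts $M$ along the columns of the matrix, $\tau$ permutes the $\nu_{ij}$ tensor factors so they can be grouped by rows, and $\Ind_\nu^\mu$ reassembles the result into a module over $R(\mu_1)\otimes\cdots\otimes R(\mu_l)$. The isomorphism is the obvious one sending $\tau_w\otimes m$ to the element $1\otimes\tau(\Res_\nu^\la m)$ in the column-to-row reorganization, with the grading shift accounting for $\deg(\tau_w)$.

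The main obstacle is verifying that the candidate pieces $F_{\leq\nu}$ are genuine submodules, since the KLR relations are inhomogeneous: multiplication by $\tau_k$ can produce correction terms coming from both $\tau_k^2=Q_{\ii_k,\ii_{k+1}}(y_k,y_{k+1})$ and the nontrivial braid relation. Showing that these terms always decrease the double coset class is the heart of the argument; once this is established the identification of subquotients is essentially formal, and the passage from $k=l=2$ to general $k,l$ is a routine iteration using transitivity of induction and restriction and associativity of the double coset bookkeeping.
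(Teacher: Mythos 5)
Your argument is essentially the paper's: the paper gives no independent proof, citing \cite[Proposition 2.18]{khovanovlauda} and noting the same double-coset filtration argument works in general, and your sketch reproduces exactly that argument (the $\tau_w$-basis over the parabolic, double cosets of Young subgroups indexed by matrices with prescribed row and column sums, a length-compatible filtration preserved by the left action because all correction terms drop the double-coset class, and identification of subquotients with the shifted $\Ind_\nu^\mu\circ\tau\circ\Res_\nu^\la(M)$). So the proposal is correct and takes essentially the same route.
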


We refer to the filtration appearing in the above theorem as the \emph{Mackey filtration}. It will be very common for us to make arguments using vanishing properties of modules under restriction to greatly restrict the number of these subquotients which can be nonzero.

For each $w\in S_{|\nu|}$, make a choice of a reduced decomposition $w=s_1\ldots s_n$ as a product of simple reflections. Define $\tau_w=\tau_{1}\cdots \tau_{n}$. In general $\tau_w$ depends on the choice of reduced decomposition though this is not the case for permutations of the form $w[\b,\ga]$.

\begin{theorem}\cite[Theorem 2.5]{khovanovlauda}\cite[Theorem 3.7]{rouquier}\label{basisofrnu}	
 The set of elements of the form $y_1^{a_1}\cdots y_{|\nu|}^{a_{|\nu|}}\tau_w e_\ii$ with $a_1,\ldots,a_{|\nu|}\in\N$, $w\in S_{|\nu|}$ and $\ii\in\Seq(\nu)$ is a basis of $R(\nu)$.
\end{theorem}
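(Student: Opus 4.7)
The plan is to establish this basis theorem in the standard two-step PBW fashion: first show that the proposed monomials span $R(\nu)$ via a straightening argument using the defining relations (\ref{eq:KLR}), and then exhibit a faithful action on a polynomial module in which their images are visibly linearly independent.

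For spanning, I would argue by induction on the length of a word in the generators $e_\ii, y_l, \tau_m$, using an ordered pair (number of $\tau$'s, number of $y$'s) as the complexity measure. Combining $e_\ii e_\jj = \delta_{\ii,\jj} e_\ii$ with $y_k e_\ii = e_\ii y_k$ and $\tau_l e_\ii = e_{s_l \ii} \tau_l$ lets us push all idempotents to the right of any word and assume the word has a single trailing $e_\ii$. The relation $\tau_k y_l - y_{s_k(l)} \tau_k \in \{0, \pm e_\ii\}$ allows $y$-generators to be commuted past $\tau$-generators at the cost of terms with strictly fewer generators, so every word reduces to a sum of terms with all $y$'s to the left of all $\tau$'s. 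Then $\tau_k^2 e_\ii = Q_{\ii_k,\ii_{k+1}}(y_k,y_{k+1}) e_\ii$ eliminates any consecutive pair $\tau_k \tau_k$ in exchange for two fewer $\tau$'s, while the commutation $\tau_k \tau_l = \tau_l \tau_k$ for $|k-l|>1$ together with the cubic braid-like relation play the role of Matsumoto's theorem modulo words with strictly fewer $\tau$'s; this lets us replace any reduced expression for a given $w \in S_{|\nu|}$ by any preferred one and hence produces a well-defined $\tau_w$ up to lower-order error. Iterating, every element of $R(\nu)$ is a $\Q$-linear combination of the proposed monomials $y_1^{a_1} \cdots y_{|\nu|}^{a_{|\nu|}} \tau_w e_\ii$.

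For linear independence, I would construct the polynomial representation $\operatorname{Pol}_\nu := \bigoplus_{\ii \in \Seq(\nu)} \Q[x_1, \ldots, x_{|\nu|}] v_\ii$, on which $e_\jj$ acts as projection onto the $v_\jj$-summand, $y_k$ as multiplication by $x_k$, and $\tau_k$ on $f v_\ii$ by an explicit modified divided-difference operator: when $\ii_k = \ii_{k+1}$, by $\frac{s_k f - f}{x_k - x_{k+1}} v_{s_k \ii}$; and when $\ii_k \neq \ii_{k+1}$, by $Q_{\ii_k, \ii_{k+1}}(x_k, x_{k+1})(s_k f) v_{s_k \ii}$ (up to sign conventions matching the $Q$'s in (\ref{eq:KLR})). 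One then checks that each KLR relation is satisfied by these operators; this verification is precisely the technical content appearing in \cite{khovanovlauda} and \cite{rouquier}. Granted the representation, the operator $y_1^{a_1} \cdots y_{|\nu|}^{a_{|\nu|}} \tau_w e_\ii$ applied to $v_\ii$ lies in the $v_{w\ii}$-summand, and its leading term in a suitable monomial order on $\N^{|\nu|}$ is the monomial $x_1^{a_1} \cdots x_{|\nu|}^{a_{|\nu|}}$ multiplied by a nonzero polynomial determined solely by $w$ and $\ii$. Since these leading terms are pairwise distinct for distinct triples $(a, w, \ii)$, the corresponding operators, and hence the original monomials, are linearly independent.

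The main obstacle is verifying that the polynomial representation is well-defined. The quadratic and cubic braid-like relations translate into polynomial identities involving the $Q$'s which, after case analysis on the possible coincidences among $\ii_k, \ii_{k+1}, \ii_{k+2}$, reduce to elementary divided-difference manipulations but are notationally very heavy. Once this is in hand, both the leading-term comparison and the straightening step are routine, so the theorem follows.
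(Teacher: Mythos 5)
The paper does not actually prove this statement: it is quoted from \cite[Theorem 2.5]{khovanovlauda} and \cite[Theorem 3.7]{rouquier}, and your two-step plan (straightening to obtain a spanning set, then a faithful polynomial representation for linear independence) is exactly the strategy of those references. Your spanning half is essentially right: pushing idempotents to one side, commuting the $y$'s past the $\tau$'s at the cost of shorter words, killing repeated $\tau_k\tau_k$ with the quadratic relation, and invoking the braid-type relation together with Matsumoto's theorem modulo lower-order terms is the standard argument and works.

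The independence half, however, has two genuine gaps as written. First, your explicit action of $\tau_k$ in the case $\ii_k\neq\ii_{k+1}$, namely $f v_\ii\mapsto Q_{\ii_k,\ii_{k+1}}(x_k,x_{k+1})(s_kf)v_{s_k\ii}$, does not satisfy the quadratic relation: composing it with itself yields the factor $Q_{\ii_{k+1},\ii_k}(x_k,x_{k+1})\,Q_{\ii_k,\ii_{k+1}}(x_{k+1},x_k)$, i.e.\ the square of a $Q$-type polynomial, not $Q_{\ii_k,\ii_{k+1}}(x_k,x_{k+1})$ itself. This is not a sign-convention issue; the polynomial factor must be inserted in only one of the two directions (say according to the orientation of the quiver or a chosen total order on $I$), so that the two-step composite produces exactly $Q$. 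Second, the independence argument ``apply $y_1^{a_1}\cdots y_{|\nu|}^{a_{|\nu|}}\tau_we_\ii$ to $v_\ii$ and compare leading monomials'' fails: when $\ii_k=\ii_{k+1}$ the operator $\tau_k$ is a divided difference and annihilates constants, so $\tau_we_\ii$ applied to $v_\ii$ is frequently zero, and there is no ``nonzero polynomial determined solely by $w$ and $\ii$''; moreover distinct $w$ with $w\ii=w'\ii$ land in the same summand, so leading monomials need not separate them. The standard repair, and what the cited proofs do in substance, is to extend scalars to rational functions: on $\bigoplus_\ii \Q(x_1,\ldots,x_{|\nu|})v_\ii$ each $\tau_we_\ii$ expands as $\sum_{u\leq w}p_u\,u$ with rational-function coefficients and $p_w\neq 0$, and since the twisted permutation operators $u$ are linearly independent over the rational function field, triangularity in the Bruhat order gives independence of the $\tau_we_\ii$ over the polynomial ring, hence independence of the full set of monomials over $\Q$. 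With those two corrections your outline becomes the proof of Khovanov--Lauda and Rouquier.
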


\begin{theorem}\cite[Corollary 2.11]{khovanovlauda}
The KLR algebra is Noetherian.
\end{theorem}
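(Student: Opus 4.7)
The plan is to use Theorem \ref{basisofrnu} to realise $R(\nu)$ as a finitely generated module over a commutative Noetherian subring, and then to deduce the Noetherian property on ideals from a standard module-theoretic argument.

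Concretely, I would first let $P$ be the subalgebra of $R(\nu)$ generated by $y_1,\ldots,y_{|\nu|}$ together with all of the idempotents $e_\ii$ for $\ii\in\Seq(\nu)$. From the KLR relations \eqref{eq:KLR} these generators pairwise commute, and the orthogonality and completeness of the $e_\ii$'s identify $P$ with the finite product $\bigoplus_{\ii\in\Seq(\nu)}\Q[y_1,\ldots,y_{|\nu|}]\,e_\ii$. Each factor is a polynomial ring in finitely many variables over $\Q$, so by the Hilbert basis theorem each factor is Noetherian and hence so is the finite product $P$.

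The next step is to reinterpret Theorem \ref{basisofrnu}: the basis $\{y_1^{a_1}\cdots y_{|\nu|}^{a_{|\nu|}}\tau_w e_\ii\}$ of $R(\nu)$ is precisely the statement that $R(\nu)$ is a \emph{free} left $P$-module with the \emph{finite} basis $\{\tau_w e_\ii : w\in S_{|\nu|},\,\ii\in\Seq(\nu)\}$. In particular, $R(\nu)$ is finitely generated as a left $P$-module. Since $P$ is left Noetherian, every $P$-submodule of $R(\nu)$ is finitely generated over $P$. Any left ideal $J$ of $R(\nu)$ is in particular a $P$-submodule, hence finitely generated over $P$, and \emph{a fortiori} finitely generated as a left ideal of $R(\nu)$. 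This yields left Noetherianity. Right Noetherianity follows symmetrically, either by running the same argument with the $y$'s on the right (the basis can be rewritten as $e_\ii \tau_w y^a$ using the relations) or by transporting the left statement through the anti-automorphism $\tau$ fixing the generators.

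There is no real obstacle here: the PBW-style basis of $R(\nu)$, which is the substantive input, has already been supplied by Theorem \ref{basisofrnu}; the remainder is a standard application of the Hilbert basis theorem. The only mild point to watch is that $P$ is not itself a polynomial ring but a finite direct product of polynomial rings, which is harmless since a finite direct product of Noetherian rings is Noetherian.
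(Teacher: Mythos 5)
Your argument is correct, and it is worth noting that the paper itself offers no proof here: the statement is simply quoted from Khovanov--Lauda (their Corollary 2.11). Their argument runs through the center: one shows that $R(\nu)$ is finitely generated as a module over a central subalgebra of symmetric polynomials, which is Noetherian, and concludes as you do. Your variant replaces the center by the larger commutative but non-central subalgebra $P=\bigoplus_{\ii}\Q[y_1,\ldots,y_{|\nu|}]e_\ii$; this is perfectly adequate, since a left ideal of $R(\nu)$ is still a left $P$-submodule, and Theorem \ref{basisofrnu} shows $R(\nu)$ is generated as a left $P$-module by the finitely many elements $\tau_w e_\ii$ (note $(y^a e_{\jj})(\tau_w e_\ii)=y^a\tau_w e_\ii$ when $\jj=w(\ii)$ and $0$ otherwise). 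The one small imprecision is your claim that $R(\nu)$ is \emph{free} over $P$ with basis $\{\tau_w e_\ii\}$: each $\tau_w e_\ii$ is annihilated by the idempotents $e_\jj$ with $\jj\neq w(\ii)$, so $R(\nu)$ is a finite direct sum of modules of the form $Pe_{\jj}$, i.e.\ finitely generated projective rather than free on that set; finite generation is all the argument needs, so nothing breaks. Your handling of right Noetherianity via the grading-preserving anti-automorphism fixing the generators (or the opposite-sided basis) is also fine. In short, the substantive input is exactly the PBW-type basis, and your deduction from it is the standard one, differing from the cited proof only in the choice of commutative Noetherian subalgebra.
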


We work over the ground field $\Q$. It is proved in \cite{khovanovlauda} that any irreducible module is absolutely irreducible, so there is no change to the theory in passing to a field extension. 
This also means that any irreducible module for a tensor product of KLR algebras is a tensor product of irreducibles, a fact we use without comment.

\section{Adjunctions}
In addition to the induction and restriction functor defined in the previous section, there is also a coinduction functor $\Coind_{\la,\mu}R(\la)\otimes R(\mu)\mods \to R(\la+\mu)\mods $, defined by
\[
 \Coind_{\la,\mu}(M) = \Hom_{R(\la)\otimes R(\mu)} (R(\la+\mu),M) 
\]
where the $R(\la+\mu)$ module structure on $\Coind_{\la,\mu}(M)$ is given by $(rf)(t)=f(tr)$ for $f\in\Coind_{\la,\mu}(M)$ and $r,t\in R(\la+\mu)$.

The following adjunctions are standard:

\begin{proposition}
The functor $\Ind_{\la,\mu}$ is left adjoint to $\Res_{\la,\mu}$, while the functor $\Coind_{\la,\mu}$ is right adjoint to $\Res_{\la,\mu}$.
\end{proposition}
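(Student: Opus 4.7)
The plan is to recognize both statements as the standard tensor-hom and hom-hom adjunctions applied to the ring inclusion $\iota_{\la,\mu}\colon A \hookrightarrow B$, where $A = R(\la)\otimes R(\mu)$ and $B = R(\la+\mu)$. Under this identification $\Res_{\la,\mu}$ is literally restriction of scalars along $\iota_{\la,\mu}$, $\Ind_{\la,\mu}(M) = B\otimes_A M$, and $\Coind_{\la,\mu}(M) = \Hom_A(B,M)$. So the content is just the categorical yoga of modules over a ring extension, with minor bookkeeping to confirm everything is compatible with the $\Z$-grading.

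For the first adjunction, $(\Ind_{\la,\mu},\Res_{\la,\mu})$, I would invoke the tensor-hom adjunction directly: for a left $A$-module $M$ and a left $B$-module $N$,
\[
\Hom_B(B\otimes_A M,\,N) \;\cong\; \Hom_A(M,\,\Hom_B(B,N)) \;\cong\; \Hom_A(M,\,\Res_{\la,\mu} N),
\]
with the first isomorphism sending $f\mapsto (m\mapsto f(1\otimes m))$ and inverse $g\mapsto (b\otimes m\mapsto b\cdot g(m))$. Both maps are manifestly degree preserving since $\iota_{\la,\mu}$ is a graded algebra map and the generators listed in its definition are homogeneous, so the isomorphism passes to the $\Z$-graded $\Hom$ spaces used throughout the paper.

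For the second adjunction, $(\Res_{\la,\mu},\Coind_{\la,\mu})$, I would write down explicit mutually inverse bijections between $\Hom_A(\Res_{\la,\mu} N,\,M)$ and $\Hom_B(N,\,\Coind_{\la,\mu} M)$. Given $\psi\colon \Res_{\la,\mu} N \to M$, define $\tilde\psi\colon N\to \Coind_{\la,\mu} M$ by $\tilde\psi(n)(b) = \psi(bn)$; conversely, given $\phi\colon N\to \Coind_{\la,\mu} M$ define $\hat\phi(n) = \phi(n)(1)$. A short check shows $\tilde\psi(n)$ is $A$-linear (because $\psi$ is) and that $\tilde\psi$ is $B$-linear with respect to the $B$-action $(bf)(t)=f(tb)$ on $\Coind_{\la,\mu} M$, while $\widehat{\tilde\psi} = \psi$ and $\widetilde{\hat\phi} = \phi$ by direct substitution.

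There is no real obstacle here: the proof is entirely formal, and the only thing to keep an eye on is that every map in sight preserves the $\Z$-grading, which is immediate since $\iota_{\la,\mu}$ is a homogeneous embedding of graded algebras. Naturality in $M$ and $N$ is visible from the explicit formulas above, so both adjunctions are established.
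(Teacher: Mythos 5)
Your proof is correct and is exactly the standard argument the paper is implicitly invoking: the paper offers no proof beyond calling these adjunctions ``standard,'' and your tensor-hom and hom-hom adjunctions along the graded inclusion $\iota_{\la,\mu}$ are precisely what is meant. The grading and naturality checks you note are the only bookkeeping required, so nothing is missing.
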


As a $R(\la)\otimes R(\mu)$ module, $R(\la+\mu)$ is free of finite rank. This implies that the induction, restriction and coinduction functors all send projective modules to projective modules. As a consequence, there are natural isomorphisms of higher Ext groups
\begin{equation}\label{extind}
 \Ext^i(A\circ B, C)\cong \Ext^i(A\otimes B,\Res_{\la,\mu} C)
\end{equation}
for all $A\in R(\la)\mods$, $B\in R(\mu)\mods$ and $C\in R(\la+\mu)\mods$.

Let $\sigma_\nu\map{R(\nu)}{R(\nu)}$ be the involutive isomorphism of $R(\nu)$ with $\sigma_\nu(e_\ii)=e_{w_0\ii}$, $\sigma_\nu(y_i)=y_{|\nu|+1-i}$ and $\sigma_\nu(\tau_j e_\ii)=(1-2\delta_{\ii_j,\ii_{j+1}})\tau_{|\nu|-j}e_{w_0\ii}$. This induces an autoequivalence $\sigma_\nu^*$ of $R(\nu)\mods$.

\begin{theorem}\cite[Theorem 2.2]{laudavazirani} 
There is a natural equivalence of functors
\[
 \sigma_{\la+\mu}^* \circ \Ind_{\la,\mu} \cong q^{(\la\cdot\mu)} \Coind_{\la,\mu} \circ (\sigma_\la^*\otimes\sigma_\mu^*).
\]
\end{theorem}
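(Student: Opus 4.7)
The plan is to reduce the functorial equivalence to an isomorphism of $(R(\la+\mu), R(\la)\otimes R(\mu))$-bimodules and then invoke standard Hom-tensor adjunction. Unwinding the definitions, for $N$ an $R(\la)\otimes R(\mu)$-module, the left-hand side equals $R(\la+\mu)\otimes_{R(\la)\otimes R(\mu)} N$ with its left $R(\la+\mu)$-action precomposed with $\sigma_{\la+\mu}$, while the right-hand side equals $\Hom_{R(\la)\otimes R(\mu)}(R(\la+\mu), N)$ in which the action of $R(\la)\otimes R(\mu)$ on $R(\la+\mu)$ used to form the Hom is precomposed with $\sigma_\la\otimes\sigma_\mu$. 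Both sides are natural in $N$, so it suffices to produce a grading-preserving bimodule isomorphism
\[
{}_{\sigma_{\la+\mu}}R(\la+\mu) \isomto q^{\la\cdot\mu} \Hom_{R(\la)\otimes R(\mu)}\bigl(R(\la+\mu),\, R(\la)\otimes R(\mu)\bigr)_{\sigma_\la\otimes\sigma_\mu},
\]
where the subscripts record which action has been twisted.

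The key input is Theorem \ref{basisofrnu}. Let $D\subset S_{|\la+\mu|}$ be the set of minimal length right coset representatives for $S_{|\la|}\times S_{|\mu|}$; choosing reduced expressions for $w\in D$ defines elements $\tau_w$, and $\{\tau_w\}_{w\in D}$ is a right $R(\la)\otimes R(\mu)$-basis of $R(\la+\mu)$ of finite rank. The unique longest element of $D$ is $w[\la,\mu]$, and a direct computation shows $\deg(\tau_{w[\la,\mu]}e_{\ii\jj}) = -\la\cdot\mu$ for every $\ii\in\Seq(\la)$, $\jj\in\Seq(\mu)$, which accounts for the grading shift $q^{\la\cdot\mu}$. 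I would then define the right $R(\la)\otimes R(\mu)$-linear projection $\phi\map{R(\la+\mu)}{R(\la)\otimes R(\mu)}$ by extracting the coefficient of $\tau_{w[\la,\mu]}$ in this basis, and set $\Phi(r)(s) := \phi(sr)$.

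Verifying that $\Phi$ is a bimodule isomorphism splits into three checks. Bijectivity follows from a rank count combined with nondegeneracy of the pairing $(r,s)\mapsto \phi(sr)$, which reduces to analysing the leading term of $\tau_{w[\la,\mu]w^{-1}}\tau_w$ in the length filtration on $D$. Right $R(\la)\otimes R(\mu)$-linearity, twisted by $\sigma_\la\otimes\sigma_\mu$, amounts to the identity $\phi\bigl(s\cdot \iota_{\la,\mu}(x)\bigr) = \phi(s)\cdot (\sigma_\la\otimes\sigma_\mu)(x)$, which itself reduces via the KLR relations \eqref{eq:KLR} to commuting $\iota_{\la,\mu}(x)$ past $\tau_{w[\la,\mu]}$ modulo shorter basis elements. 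The twist by $\sigma_\la\otimes\sigma_\mu$ appears because the shuffle permutation $w[\la,\mu]$ interchanges the two blocks of indices, while the reversal within each block needed to identify $\sigma_{\la+\mu}\circ \iota_{\la,\mu}$ with $\iota_{\la,\mu}\circ(\sigma_\la\otimes\sigma_\mu)$ (after conjugating by $w[\la,\mu]$) is exactly $\sigma_\la\otimes\sigma_\mu$. Left-linearity with respect to the $\sigma_{\la+\mu}$-twist is then an analogous computation, or a consequence of duality.

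The main obstacle is tracking signs and grading shifts during the commutation of generators $\iota_{\la,\mu}(\tau_j\otimes 1)$ and $\iota_{\la,\mu}(1\otimes \tau_j)$ past $\tau_{w[\la,\mu]}$. The braid relations in \eqref{eq:KLR} produce correction terms strictly shorter in length than $w[\la,\mu]$ and hence killed by $\phi$, but the sign factor $(1-2\delta_{\ii_j,\ii_{j+1}})$ in the definition of $\sigma_\nu(\tau_j e_\ii)$ must be reconciled with the signs arising from iterating the quadratic relation $\tau_k^2 e_\ii = Q_{\ii_k,\ii_{k+1}}(y_k, y_{k+1})e_\ii$. This bookkeeping is the delicate part of the argument, which is why \cite{laudavazirani} handle it via an explicit computation rather than an abstract manipulation.
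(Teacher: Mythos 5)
Your route is genuinely different from the paper's: the paper does not reprove the isomorphism at all, it cites \cite[Theorem 2.2]{laudavazirani} for finite-dimensional modules, observes that the same proof works verbatim for graded modules with finite-dimensional graded pieces, and handles arbitrary finitely generated modules by writing them as direct limits, using that $R(\la+\mu)$ is finite over $R(\la)\otimes R(\mu)$. Your plan — establish a twisted Frobenius-extension bimodule isomorphism ${}_{\sigma_{\la+\mu}}R(\la+\mu)\cong q^{\la\cdot\mu}\Hom_{R(\la)\otimes R(\mu)}(R(\la+\mu),R(\la)\otimes R(\mu))_{\sigma_\la\otimes\sigma_\mu}$ and then apply Hom-tensor adjunction using finite projectivity of the extension — is a legitimate self-contained alternative, and it has the advantage of needing no finiteness hypotheses and hence no limiting step. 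The reduction to the bimodule statement, the identification of $\tau_{w[\la,\mu]}$ as the top basis element, and the degree count $\deg(\tau_{w[\la,\mu]}e_{\ii\jj})=-\la\cdot\mu$ explaining the shift are all correct.

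However, the middle of your argument has a concrete flaw. If $\phi$ is defined as the coefficient of $\tau_{w[\la,\mu]}$ with respect to the right $R(\la)\otimes R(\mu)$-basis, then $\phi(s\,\iota_{\la,\mu}(x))=\phi(s)x$ holds untwisted by construction, so your claimed identity $\phi(s\,\iota_{\la,\mu}(x))=\phi(s)\,(\sigma_\la\otimes\sigma_\mu)(x)$ is false (and vacuous as a "check"). The genuinely nontrivial equivariance is the other one: for $\Phi(r)$ to land in $\Hom_{R(\la)\otimes R(\mu)}(R(\la+\mu),-)$ at all, the assignment $s\mapsto\phi(sr)$ must be linear for the left action of $R(\la)\otimes R(\mu)$, and this fails for your $\phi$: commuting $\iota_{\la,\mu}(x\otimes y)$ past $\tau_{w[\la,\mu]}$ produces, modulo lower terms in the length filtration, $\iota_{\mu,\la}(y\otimes x)$ — the block-swapped element — not $\iota_{\la,\mu}((\sigma_\la\otimes\sigma_\mu)(x\otimes y))$. (Already for $\la=\mu=i$ one has $y_1\tau_1=\tau_1y_2-1$ on $e_{ii}$, so the coefficient map sends $y_1\tau_1$ to $y_2$, not $y_1$.) Thus the map as you define it proves, after correct bookkeeping, the flipped statement $\Ind_{\la,\mu}(A\otimes B)\cong q^{\la\cdot\mu}\Coind_{\mu,\la}(B\otimes A)$ rather than the $\sigma$-twisted form in the theorem. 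To obtain the stated theorem you must actually build the automorphisms into $\Phi$, using the identity $\sigma_{\la+\mu}(\iota_{\la,\mu}(x\otimes y))=\iota_{\mu,\la}(\sigma_\mu(y)\otimes\sigma_\la(x))$ which you allude to but never use in the construction; once that is done, the sign factor $(1-2\delta_{\ii_j,\ii_{j+1}})$ enters only through this identity, and there is nothing to "reconcile" with the quadratic relation. With that repair (and a check that the lower-order correction terms are killed by $\phi$, plus the leading-term nondegeneracy argument you sketch), your approach does go through and is arguably more robust than the paper's citation-plus-limits argument.
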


\begin{proof}
 The statement of this theorem in \cite{laudavazirani} includes a hypothesis that the modules in question are all finite dimensional. 
 Exactly the same proof works for graded modules all of whose pieces are finite dimensional, which covers all the modules we will ever come across. The general case follows by writing a module as the direct limit of its finitely generated submodules (noting that $R(\la+\mu)$ is finite over $R(\la)\otimes R(\mu)$).
\end{proof}

\begin{remark}
 Most importantly, applied to a module of the form $A\otimes B$ yields an isomorphism
\begin{equation*}
 \Ind_{\la,\mu}(A\otimes B) \cong q^{(\la\cdot\mu)} \Coind_{\mu,\la} (B\otimes A).
\end{equation*}
In particular, there is an isomorphism
\begin{equation}\label{oppositeadjunction}
 \Ext^i(A,B\circ C)\cong q^{-(\la\cdot\mu)}\Ext^i( \Res_{\la,\mu} A, C\otimes B)
\end{equation}
for any $R(\la)$-module $C$, $R(\mu)$-module $B$ and $R(\la+\mu)$-module $A$.
\end{remark}

There are parabolic analogues of all the functors and results discussed in this section.

\section{The Ext Bilinear Form}\label{ext}

By the decomposition theorem \cite{bbd}, we have
\[
 \mathcal{L}\cong\bigoplus_{b\in\B_\nu} L_b\otimes \P_b
\]
where each $L_b$ is a nonzero finite dimensional graded vector space and $\P_b$ is an 
irreducible $G_\nu$-equivariant perverse sheaf on $E_\nu$. The indexing set $\B_\nu$ can be taken to be the set of elements of weight $\nu$ in the crystal $\mathcal{\B}(\infty)$, though for our purposes it is not necessary to know this fact.

The maximal semisimple quotient of $R(\nu)$ is $\oplus_{b\in \B_\nu} \End(L_b)$ and hence the simple representations of $R(\nu)$ are the multiplicity spaces $L_b$. 
The projective cover of $L_b$ is the module $\oplus_{d\in \Z}\Hom_{D^b_{G_\nu}(E_\nu)}(\mathcal{L},\P_b[d])$.
In this way we get a bijection between simple perverse summands of
$\pi_!\underline{\Q}$ and irreducible representations of $R(\nu)$. By Lusztig's geometric construction of canonical bases, the class of a simple representation under the isomorphism (\ref{fmod}) lies in the dual canonical basis while the class of its projective cover under (\ref{pmod}) lies in the canonical basis.

As has been noted by Kato \cite{kato}, each algebra $R(\nu)$ is graded Morita equivalent to the algebra
\[
A(\nu)=\bigoplus_{d\in\Z}\Hom_{D^b_{G_\nu}(E_\nu)}\left(\bigoplus_{b\in\B_\nu} \P_b,\bigoplus_{b\in\B_\nu} \P_b[d]\right).
\]The algebra $A(\nu)$ is a $\N$-graded algebra with $A(\nu)_0$ semisimple. Under this Morita equivalence the self-dual irreducible module $L_b$ gets sent to a one-dimensional representation of $A(\nu)$ concentrated in degree zero.

\begin{lemma}\label{extwelldefined}
Let $M$ be a finitely generated representation of $R(\nu)$ and let $N$ be a finite dimensional representation of $R(\nu)$. Fix an integer $d$.
Then there exists $i_0$ such that $\Ext^i(M,N)_d=0$ for all $i>i_0$.
\end{lemma}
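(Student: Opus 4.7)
The plan is to exploit the Morita-equivalent algebra $A(\nu)$ introduced just before the statement, which has the crucial property of being $\mathbb{N}$-graded with $A(\nu)_0$ semisimple. Since graded Morita equivalence preserves finite generation, finite dimensionality, and the internal grading on $\Ext$ groups, it suffices to prove the analogous assertion over $A := A(\nu)$. This reduction is essential because $R(\nu)$ itself is only bounded below in grading, whereas the strict positivity of the grading on $A$ supplies the tool we need.

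Over $A$, set $J = \bigoplus_{k>0} A_k$; because $A_0$ is semisimple, $J$ coincides with the graded Jacobson radical. Any finitely generated graded $A$-module $M$ is bounded below, say $M_k = 0$ for $k < m_0$. I would then construct a minimal graded projective resolution $\cdots \to P_1 \to P_0 \to M \to 0$ by iterated projective covers of $\Omega^i M / J\cdot \Omega^i M$; these covers exist by Noetherianity together with the fact that each syzygy is again finitely generated and bounded below. The minimality property $\Omega^i M \subseteq J \cdot P_{i-1}$ then forces the generators of $P_i$ to live in internal degrees $\geq m_0 + i$.

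To finish, let $n_{\max}$ be the top degree of the finite dimensional module $N$. Any graded homomorphism $P_i \to N$ of internal degree $d$ is determined by its values on generators of $P_i$, which sit in degree $\geq m_0 + i$ and must be sent into $N$ at a degree shifted by $d$. Once $m_0 + i + d$ exceeds $n_{\max}$ this forces the map to vanish, so $\Hom(P_i, N)_d = 0$, and therefore $\Ext^i(M, N)_d = 0$, for $i > n_{\max} - d - m_0$. The main obstacle is precisely the construction of a minimal resolution whose syzygies climb into higher internal degrees; this is exactly what the Morita equivalence to the positively graded $A(\nu)$ delivers, and is why one cannot argue directly over $R(\nu)$.
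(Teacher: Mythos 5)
Your proposal is correct and follows essentially the same route as the paper: pass to the Morita-equivalent algebra $A(\nu)$, which is non-negatively graded with $A(\nu)_0$ semisimple, take a minimal projective resolution whose $i$-th term is concentrated in internal degrees at least $m_0+i$, and conclude that $\Hom(P^i,N)_d$, hence $\Ext^i(M,N)_d$, vanishes for large $i$ by degree considerations. The extra detail you give on constructing the minimal resolution via projective covers of $\Omega^i M/J\Omega^i M$ is exactly what the paper leaves implicit.
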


\begin{proof}
 Replace $R(\nu)$ with the Morita equivalent algebra $A(\nu)$ and assume that $M$ and $N$ are $A(\nu)$-modules. Let $\cdots \to P^1\to P^0 \to M\to 0$ be a minimal projective resolution of $M$. As $M$ is finitely generated, there exists $d_0$ such that $M_j=0$ for $j<d_0$. Since $A(\nu)$ is nonnegatively graded with $A(\nu)_0$ semisimple, $P^i_j=0$ for $j<d_0+i$. The vector space $\Ext^i(M,N)$ is a subquotient of $\Hom(P^i,N)$ and for sufficiently large $i$, $\Hom(P^i,N)_d=0$ by degree considerations.
\end{proof}

By the above lemma, if $M$ is a finitely generated $R(\nu)$-module and $N$ is a finite dimensional $R(\nu)$-module, then the infinite sum
\[
  ( M,N ) = \sum_{i=0}^\infty (-1)^i \dimq \Ext^i(M,N).
\]
is a well-defined element of $\Z((q))$. We thus get a pairing on Grothendieck groups
\[
 ( \cdot,\cdot )\map{\f^*_{\Z((q))}\times\f^*}{\Z((q))}.
\]

\begin{lemma}\label{extprops}
 The pairing $(\cdot,\cdot)$ satisfies the following properties
\begin{align*}
 ( f(q)x,g(q) y) &= f(q)g(q\inv)( x,y ) \\
( \th_i,\th_i^* ) &= 1 \\
( xy,z) &=( x\otimes y,r(z)) \\
( x,yz) &= q^{\b\cdot\ga}( r(x),z\otimes y)
\end{align*}
for all $x,y,z\in\f$, $f(q)\in\Z((q))$ ,$\g(q)\in\Z[q,q\inv]$, 
where $y$ and $z$ are homogeneous of degree $\b$ and $\ga$.
\end{lemma}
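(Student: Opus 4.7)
The plan is to verify each of the four identities by unpacking the definition
\[
(M, N) = \sum_{i\geq 0} (-1)^i \dim_q \Ext^i(M, N)
\]
and invoking the adjunctions already established. As a preliminary observation, each side of the pairing is additive on short exact sequences because the long exact sequence of $\Ext$ makes alternating $q$-Euler characteristics additive; hence $(\cdot,\cdot)$ descends to a well-defined pairing on the stated Grothendieck groups, with convergence in $\Z((q))$ guaranteed by Lemma \ref{extwelldefined}.

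Property (1) is a formal consequence of how graded $\Ext$ transforms under shifts: $\Ext^i(q^a M, q^b N)$ is canonically isomorphic to $\Ext^i(M, N)$ up to a fixed shift depending on $a,b$, so $\dim_q$ picks up a factor $q^a q^{-b}$. The asymmetry encoded by $g(q^{-1})$ reflects that shifts of the first and second argument affect $\dim_q$ in opposite directions. The identity then extends $\Z$-linearly to arbitrary $f$ and $g$.

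Property (2) follows by direct computation. The algebra $R(\alpha_i)$ is a polynomial ring $\Q[y_1]$ and is itself the indecomposable projective whose class in $\f$ is $\th_i$. Its unique simple quotient $L(\alpha_i) = R(\alpha_i)/(y_1)$ is one-dimensional in degree $0$, with class $\th_i^*$ in $\f^*$. Since $R(\alpha_i)$ is projective, higher $\Ext$ vanishes and $\Hom(R(\alpha_i), L(\alpha_i)) = L(\alpha_i)$, giving $\dim_q = 1$.

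Properties (3) and (4) follow from the two adjunctions of the previous section. Property (3) is immediate from (\ref{extind}), $\Ext^i(A \circ B, C) \cong \Ext^i(A \otimes B, \Res_{\lambda,\mu} C)$: taking $\dim_q$ and alternating sums, and recalling that induction decategorifies to the product while restriction decategorifies to $r$, one obtains $(xy, z) = (x\otimes y, r(z))$. Property (4) uses instead the coinduction adjunction (\ref{oppositeadjunction}), $\Ext^i(A, B\circ C) \cong q^{-(\gamma\cdot\beta)}\Ext^i(\Res_{\gamma,\beta} A, C\otimes B)$, in which $C$ has degree $\gamma$ and $B$ has degree $\beta$; the internal $q$-shift in this adjunction translates under $\dim_q$ into the factor $q^{\beta\gamma}$ in the target identity. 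The main obstacle is the bookkeeping of these $q$-factors in property (4): one must verify that the convention by which $\dim_q$ interprets a shift of graded vector spaces correctly flips the sign of the exponent coming out of (\ref{oppositeadjunction}), yielding $q^{\beta\gamma}$ rather than $q^{-\beta\gamma}$. Once this is settled, each identity reduces to an application of one of the results already in hand.
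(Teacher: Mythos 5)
Your proposal is correct and follows essentially the same route as the paper, whose proof simply notes that the first identity is obvious, the second is a computation in $R(i)\cong k[z]$, and the third and fourth follow from (\ref{extind}) and (\ref{oppositeadjunction}) respectively; you merely supply the grading-shift bookkeeping that the paper leaves implicit.
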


\begin{proof}
 The first formula is obvious. The second is a simple computation in $R(i)\cong k[z]$. The third follows from (\ref{extind}) and the fourth follows from (\ref{oppositeadjunction}).
\end{proof}

Let $\langle x,y\ra=( x,\bar{y})$. The pairing $\langle \cdot,\cdot\ra$ can be extended by $\Z((q))$-linearity to give a bilinear pairing on $\f^*_{\Z((q))}$.

\begin{lemma}\label{extprops2}
 The pairing $\langle \cdot,\cdot\ra$ satisfies the following properties
\begin{align*}
 \langle f(q)x,g(q)y\ra &=f(q)g(q)\langle x,y\ra \\
\langle\th_i,\th_i\ra &=(1-q^2)\inv \\
\langle xy,z\ra &=\langle x\otimes y,r(z)\ra \\
\langle x,yz\ra &=\langle r(x),y\otimes z\ra
\end{align*}
\end{lemma}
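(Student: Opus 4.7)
The plan is to derive each of the four identities from the analogous property of the sesquilinear form $(\cdot,\cdot)$ stated in Lemma \ref{extprops}, by unpacking the definition $\langle x,y\rangle = (x,\bar y)$ and tracking the bar involution.

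Property (1) is immediate: by definition $\langle f(q)x,g(q)y\rangle = (f(q)x,g(q^{-1})\bar y)$, and part (1) of Lemma \ref{extprops} evaluates this to $f(q)g(q)(x,\bar y) = f(q)g(q)\langle x,y\rangle$.

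For property (3), the key observation is that the coproduct $r$ commutes with the bar involution: both $r$ and bar are $\mathbb{Z}((q))$-algebra maps (with matching twists) that agree on the bar-invariant element $r(\theta_i^*)=\theta_i^*\otimes 1+1\otimes\theta_i^*$. Part (3) of Lemma \ref{extprops} then gives
\[
\langle xy,z\rangle = (xy,\bar z) = (x\otimes y,r(\bar z)) = (x\otimes y,\overline{r(z)}) = \langle x\otimes y, r(z)\rangle,
\]
where the last equality uses the componentwise action of bar on $\f^*\otimes\f^*$.

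Property (4) requires the additional fact that bar is a twisted anti-homomorphism with respect to multiplication: from equation (\ref{dualofaproduct}), $\overline{yz} = q^{\beta\cdot\gamma}\bar z\,\bar y$ for $y,z$ homogeneous of degrees $\beta,\gamma$. Applying part (4) of Lemma \ref{extprops} to $(x,\bar z\,\bar y)$ introduces a factor $q^{\gamma\cdot\beta}$ that cancels the $q^{-\beta\cdot\gamma}$ from the bar twist, so the result simplifies to $\langle r(x),y\otimes z\rangle$.

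Only property (2) requires an explicit computation. In the rank-one case $R(i)\cong k[y_1]$ with $y_1$ in degree $2$, the self-dual simple $L(i)$ admits the minimal projective resolution $0\to q^2R(i)\xrightarrow{y_1}R(i)\to L(i)\to 0$, from which $(L(i),L(i))=1-q^2$. Since $R(i)$ filters as iterated extensions of the $q^{2n}L(i)$, we have $[R(i)]=(1-q^2)^{-1}[L(i)]$ in $\f^*_{\Z((q))}$. Combining the bar-invariance of $[L(i)]$, sesquilinearity, and the identity $(\theta_i,\theta_i^*)=1$ from Lemma \ref{extprops} then yields $\langle\theta_i,\theta_i\rangle=(1-q^2)^{-1}$.

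The main obstacle is keeping track of $q$-twists, particularly in property (4) where a bar twist must exactly cancel an adjunction twist, and in property (2) where one must correctly identify how bar acts on the extended class $[R(i)]\in\f^*_{\Z((q))}$.
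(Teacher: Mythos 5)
Your proposal is correct and follows essentially the same route as the paper: each identity is deduced from Lemma \ref{extprops} by unwinding $\langle x,y\rangle=(x,\bar y)$, using that $r$ commutes with the bar involution for the third identity and the twisted anti-multiplicativity $\overline{yz}=q^{\b\cdot\ga}\bar z\,\bar y$ coming from (\ref{dualofaproduct}) for the fourth, with a rank-one computation in $R(i)\cong k[z]$ feeding the second. One caution: your parenthetical justification that $r$ commutes with bar (``both are algebra maps agreeing on generators'') is not right as stated, since on $\f^*$ the bar involution is $q\mapsto q^{-1}$-semilinear and, by (\ref{dualofaproduct}), a twisted \emph{anti}-automorphism rather than an algebra map; the clean reason, implicit in the paper, is that module duality commutes with the restriction functors (the antiautomorphism $\tau$ fixes the idempotent defining $\Res_{\la,\mu}$), which gives $r(\bar z)=\overline{r(z)}$ on classes.
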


\begin{proof}
 These follow from the analogous formulae in Lemma \ref{extprops}. To derive the third we need to know that $r$ commutes with the bar involution while to derive the fourth we need to know that $\overline{yz}=q^{\b\cdot\ga}\bar{z}\bar{y}$ for homogeneous elements $y$ and $z$ of degree $\b$ and $\ga$.
\end{proof}

\begin{corollary}
 The pairing $\langle \cdot,\cdot\ra $ defined using the Ext-pairing is equal to the usual pairing on $\f$ as in the end of $\S\ref{eff}$.
\end{corollary}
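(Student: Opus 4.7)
The plan is to invoke a uniqueness principle. Both the standard pairing $\langle\cdot,\cdot\rangle$ on $\f$ recalled in Section~4 and the Ext-pairing $\langle\cdot,\cdot\rangle$ just constructed satisfy the following three conditions: (a) $\langle\th_i,\th_i\rangle = (1-q^2)^{-1}$; (b) $\langle xy,z\rangle = \langle x\otimes y, r(z)\rangle$; and (c) $\langle x,yz\rangle = \langle r(x), y\otimes z\rangle$. For the standard pairing these are exactly the defining properties recalled at the end of Section~4, and for the Ext-pairing they are the content of Lemma \ref{extprops2}. Hence it suffices to show that any $\Z[q,q^{-1}]$-bilinear form on $\f$ satisfying (a) and (c) (say) is uniquely determined.

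First I would establish this uniqueness by induction on the total height $|\nu|$ of the graded component. The base $|\nu|=0$ is the standard normalization; for $|\nu|=1$ the graded piece $\f_i$ is the rank-one module generated by $\th_i$ and property (a) fixes $\langle \th_i,\th_i\rangle$. For $|\nu|\geq 2$, since $\f$ is generated as an algebra by the $\{\th_i\}$, any $y\in\f_\nu$ can be written as $y=\sum_j \th_{i_j} w_j$ with $w_j\in\f_{\nu-i_j}$. For any $x\in\f_\nu$ property (c) gives
\[
\langle x,y\rangle=\sum_j \langle r(x),\th_{i_j}\otimes w_j\rangle,
\]
and the right-hand side is a sum of evaluations of the form on pairs in $\f_\mu\times\f_\mu$ for $|\mu|<|\nu|$ (namely $|\mu|=1$ and $|\mu|=|\nu|-1$), hence is determined by the inductive hypothesis. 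This shows that properties (a) and (c) together with bilinearity determine the form entirely, so the two forms coincide.

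The one subtle point to take care of is that the value of the Ext-pairing lies a priori only in $\Z(\!(q)\!)$ rather than $\Z[q,q^{-1}]$, so one must justify restricting the comparison to $\f$ (as opposed to $\f^*_{\Z(\!(q)\!)}$). However, since both forms satisfy the same recursion (c) starting from integral values at the generators, the induction produces outputs in $\Z[q,q^{-1}]$ at every step, and the comparison is unambiguous. In effect, this is the only real check required; the rest of the argument is a straightforward, essentially formal, induction carried out using Lemma \ref{extprops2}.
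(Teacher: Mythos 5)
Your proposal is correct and follows essentially the same route as the paper: the paper simply observes that there is a unique pairing satisfying the properties of Lemma \ref{extprops2} and that these are exactly the defining properties of the pairing in Lusztig's book. Your height induction merely spells out the uniqueness step that the paper declares immediate (using, implicitly, that distinct graded components are orthogonal), so no further changes are needed.
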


\begin{proof}
 It is immediate that there is a unique pairing satisfying the properties of Lemma \ref{extprops2} and these properties define the pairing in \cite{lusztigbook}.
\end{proof}

\begin{lemma}\label{geometry}
Let $M$ be a finite dimensional $R(\nu)$-module with
\[
[M]=\sum_{i=m}^n \sum_L a_{i,L} q^i [L]
\]
where the second sum is over all self-dual simple modules $L$.
If $a_{n,L}\neq 0$ then $q^n L$ is a submodule of $M$ while if $a_{m,L}\neq 0$ then $q^m L$ is a quotient of $M$.
\end{lemma}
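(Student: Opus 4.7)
The plan is to exploit the Morita equivalence between $R(\nu)$ and the nonnegatively graded algebra $A(\nu)$ introduced earlier in this section, whose degree-zero part is semisimple and under which each self-dual irreducible $L_b$ corresponds to a one-dimensional $A(\nu)$-module $S_b$ concentrated in degree zero. After this reduction, both assertions become transparent consequences of the positive grading on $A(\nu)$.

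For the quotient statement I would work with $A(\nu)$-modules. Let $\bar{M} = M/A(\nu)_{>0}M$. Since $A(\nu)_{>0}$ acts as zero on $\bar{M}$, the quotient is a graded $A(\nu)_0$-module, and semisimplicity of $A(\nu)_0$ forces its graded decomposition $\bar{M} = \bigoplus_i \bar{M}_i$ to be a direct sum of $A(\nu)$-modules; in particular every $\bar{M}_i$ is itself a quotient of $M$. Because $A(\nu)_{>0}M$ lives in degrees strictly greater than $m$, we have $\bar{M}_m = M_m$, and the class $[M]$ forces the $A(\nu)_0$-module structure on $M_m$ to be $\bigoplus_b (q^m S_b)^{a_{m, L_b}}$. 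Hence each $q^m L_b$ with $a_{m,L_b}\neq 0$ is a quotient of $M$.

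For the submodule statement I would dualize. Self-duality of the relevant simples, together with $(q^i L)\dual \cong q^{-i} L$, gives $[M\dual] = \sum_{i,L} a_{i,L} q^{-i}[L]$, so the minimal shift appearing in $[M\dual]$ is $-n$ with multiplicities $a_{n,L}$. Applying the quotient statement just established to $M\dual$ produces, for every $L$ with $a_{n,L}\neq 0$, a surjection $M\dual \twoheadrightarrow q^{-n}L$; dualizing once more yields the desired inclusion $q^n L \hookrightarrow M$.

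The main thing to verify carefully is the grading bookkeeping across the Morita equivalence: one needs that the equivalence is compatible with both the grading shift $q$ and the duality $\dual$, so that a self-dual simple of $R(\nu)$ genuinely corresponds to a copy of $\Q$ sitting in degree zero on the $A(\nu)$ side. Once this is recorded, the rest of the argument is purely formal, using nothing beyond $A(\nu)$ being $\N$-graded with semisimple degree-zero part.
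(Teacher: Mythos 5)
Your argument is correct, and it reaches the conclusion by a somewhat different mechanism than the paper, although both hinge on the same structural input: the graded Morita equivalence with $A(\nu)$, which is nonnegatively graded with $A(\nu)_0$ semisimple and under which the self-dual simples become one-dimensional modules concentrated in degree zero. The paper argues by contradiction: if the conclusion failed there would be a nonzero $\Ext^1(L_1,L_2)_d$ with $d\leq 0$ between self-dual simples, whereas a minimal projective resolution over $A(\nu)$ shows such $\Ext^1$ groups are concentrated in strictly positive degrees. You instead construct the desired quotient explicitly: after transporting $M$ to $A(\nu)$, the shifts $i$ in $[M]$ become genuine internal degrees (this is exactly what passing to $A(\nu)$ buys, and you use it correctly), so $M$ lives in degrees $m$ through $n$, $A(\nu)_{>0}M$ lives in degrees $>m$, and the bottom graded piece of $M/A(\nu)_{>0}M$, which is a semisimple quotient, is forced by the class $[M]$ and the semisimplicity of $A(\nu)_0$ to be $\bigoplus_L (q^m L)^{\oplus a_{m,L}}$; the submodule half then follows by applying this to $M\dual$ on the $R(\nu)$ side, using $(q^iL)\dual\cong q^{-i}L$ and $M\dual{}\dual\cong M$ for finite dimensional $M$, so no compatibility of $\dual$ with the Morita equivalence is needed beyond the stated fact that self-dual simples go to degree-zero simples. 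Your version is more explicit and elementary (no Ext computation), at the cost of a little grading bookkeeping; the paper's version is shorter and simultaneously records the positivity of $\Ext^1$ between simples, a fact of independent use.
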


\begin{proof}
 If this lemma is false, then there exist self-dual irreducible representations $L_1$ and $L_2$ of $R(\nu)$, and an integer $d\leq 0$ such that $\Ext^1(L_1,L_2)_d\neq 0$. Now replace $R(\nu)$ by the Morita equivalent $A(\nu)$. We compute $\Ext^1(L_1,L_2)$ by computing a minimal projective resolution of $L_1$. Since $A(\nu)$ is non-negatively graded with $A(\nu)_0$ semisimple, we see from this computation that $\Ext^1(L_1,L_2)$ is concentrated in degrees greater than zero.
\end{proof}

\section{Proper Standard Modules}\label{properstandard}

\begin{definition}
 Let $\a$ be a positive root and $n$ be an integer. A representation $L$ of $R(n\a)$ is called semicuspidal if
$\Res_{\la,\mu}L\neq 0$ implies that $\la$ is a sum of roots less than or equal to $\a$ and $\mu$ is a sum of roots greater than or equal to $\a$.
\end{definition}

\begin{lemma}\label{10.2}
Let $\a$ be a positive root, $m_1,\ldots,m_n\in\Z^+$ and $L_i$ be a semicuspidal representation of $R(m_i\a)$ for each $i=1,2,\ldots,n$. Then the module $L_1\circ\cdots\circ L_n$ is semicuspidal.
\end{lemma}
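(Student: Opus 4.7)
The plan is to apply the Mackey filtration of Theorem \ref{mackey} directly. Set $\nu = \sum_i m_i \a$, and let $\la, \mu \in \N I$ with $\la + \mu = \nu$. We want to show that if $\Res_{\la,\mu}(L_1 \circ \cdots \circ L_n) \neq 0$, then $\la$ is a sum of roots $\preceq \a$ and $\mu$ is a sum of roots $\succeq \a$.

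Applying the Mackey theorem with $\la_i = m_i \a$ (for $i = 1,\dots,n$) and $\mu_1 = \la$, $\mu_2 = \mu$, we obtain a filtration of $\Res_{\la,\mu} \Ind_{m_1\a,\dots,m_n\a}(L_1 \otimes \cdots \otimes L_n)$ indexed by tuples $(\nu_{ij})_{1 \leq i \leq n,\, j \in \{1,2\}}$ satisfying
\[
\nu_{i1} + \nu_{i2} = m_i \a, \qquad \la = \sum_i \nu_{i1}, \qquad \mu = \sum_i \nu_{i2},
\]
whose subquotients are obtained (up to a grading shift and a permutation of tensor factors) from $\Res_{\nu_{11},\nu_{12}} L_1 \otimes \cdots \otimes \Res_{\nu_{n1},\nu_{n2}} L_n$ by an induction functor.

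If $\Res_{\la,\mu}(L_1 \circ \cdots \circ L_n)$ is nonzero, then at least one subquotient of this filtration is nonzero, which forces $\Res_{\nu_{i1},\nu_{i2}} L_i \neq 0$ for every $i$. Invoking the semicuspidality of each $L_i$, we conclude that $\nu_{i1}$ is a sum of roots $\preceq \a$ and $\nu_{i2}$ is a sum of roots $\succeq \a$. Summing over $i$, the decompositions $\la = \sum_i \nu_{i1}$ and $\mu = \sum_i \nu_{i2}$ exhibit $\la$ and $\mu$ as sums of roots $\preceq \a$ and $\succeq \a$ respectively, as required.

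There is essentially no obstacle beyond carefully setting up the Mackey bookkeeping; the argument is a direct transfer of the semicuspidality hypothesis through the Mackey filtration, using only that induction of a nonzero module is nonzero and that the semicuspidal constraint on the two-piece restriction of each $L_i$ is preserved under addition.
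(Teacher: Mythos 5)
Your argument is correct and is precisely the argument the paper has in mind: its proof of this lemma simply states that the result is immediate from the Mackey theorem and the definition of semicuspidality, and your write-up just makes that bookkeeping explicit. Nothing further is needed.
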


\begin{proof}
This immediate from Theorem \ref{mackey} and the definition of semicuspidality.
\end{proof}

\begin{definition}\label{cuspidaldefn}
 Let $\a$ be a positive root. A representation $L$ of $R(\a)$ is called cuspidal if whenever $\Res_{\la,\mu}L\neq 0$ and $\la,\mu\neq 0$, we have that $\la$ is a sum of roots less than $\a$ and $\mu$ is a sum of roots greater than $\a$.
\end{definition}

\begin{remark}It is clear that if $\a$ is an indivisible root then any semicuspidal representation of $R(\a)$ is cuspidal. If $\a=n\d$ for $n\geq 2$ we will see in Theorem \ref{19.7} that there are no cuspidal representations of $R(\a)$.
\end{remark}

\begin{definition}
A sequence of modules $L_1,\ldots,L_n$ is called \emph{admissible} if each $L_i$ is an irreducible semicuspidal representation of $R(m_i\a_i)$ with $m_i\in\Z^+$ and the positive roots $\a_i$ satisfy $\a_1\succ\a_2\succ\cdots\succ\a_n$.
\end{definition}

\begin{lemma}\label{restrictinduct}
Let $\a_1\succ\a_2\succ\cdots \succ \a_k$ and $\b_1\succ\b_2\succ\cdots\succ \b_l$ be positive roots and $m_1,\ldots,m_k$, $n_1,\ldots,n_l$ be positive integers. Let $L_1,\ldots,L_k$ be semicuspidal representations of $R(m_1\a_1),\cdots,R(m_k\a_k)$ respectively. Then
\[
\Res_{n_1\b_1,\ldots,n_l\b_l} L_1\circ \cdots \circ L_k = \begin{cases}
 0 &\mbox{unless } \underline{\b}\leq\underline{\a} \\
L_1\otimes \cdots\otimes L_k &\mbox{if } \underline{\b}=\underline{\a},
\end{cases}
\] where we are considering bilexicographical ordering on the multisets $\underline{\a}$ and $\underline{\b}$.
\end{lemma}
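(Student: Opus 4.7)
The plan is to apply Mackey (Theorem~\ref{mackey}) to $\Res_{n_1\b_1,\ldots,n_l\b_l}(L_1 \circ \cdots \circ L_k)$, obtaining a filtration with subquotients indexed by matrices $(\nu_{ij}) \in (\N I)^{k \times l}$ satisfying the row-sum constraint $\sum_j \nu_{ij} = m_i\a_i$ and the column-sum constraint $\sum_i \nu_{ij} = n_j\b_j$. For such a subquotient to be nonzero, each $\Res_{\nu_{i1},\ldots,\nu_{il}} L_i$ must be nonzero. Factoring this iterated restriction through the $2$-step restriction $\Res_{\nu_{i1}+\cdots+\nu_{ij},\, \nu_{i,j+1}+\cdots+\nu_{il}}$ and invoking the semicuspidality of $L_i$, I obtain for every $1 \leq j < l$ that $\nu_{i1}+\cdots+\nu_{ij}$ is a sum of roots $\preceq \a_i$ and $\nu_{i,j+1}+\cdots+\nu_{il}$ is a sum of roots $\succeq \a_i$.

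The engine of the proof is the convexity theorem at the start of \S 3. Column $1$ reads $\sum_i \nu_{i1} = n_1\b_1$; since each $\nu_{i1}$ is a sum of roots $\preceq \a_i \preceq \a_1$, convexity applied to $A=\{\gamma \preceq \a_1\}$, $B=\{\gamma \succ \a_1\}$ gives $\b_1 \preceq \a_1$. Dually, the analysis of column $l$ forces $\b_l \succeq \a_k$. These two inequalities give the left-lex and right-lex halves of the bilex bound $\underline{\b} \leq \underline{\a}$ at the top level.

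Next I proceed by induction on $k+l$, peeling off corners. When $\b_1 = \a_1$, decompose $\nu_{11} = c\a_1 + \nu_{11}'$ with $\nu_{11}'$ a sum of roots $\prec \a_1$; rewriting $n_1\a_1 = c\a_1 + \nu_{11}' + \sum_{i\ge 2}\nu_{i1}$ and applying convexity to $\{\a_1\}$ vs.\ $\{\gamma \prec \a_1\}$ forces $c=n_1$, $\nu_{11}'=0$, and $\nu_{i1}=0$ for $i \geq 2$. The row-$1$ sum then gives $n_1 \leq m_1$. In the equality case $\underline{\b}=\underline{\a}$ we have $n_1 = m_1$, which empties the rest of row $1$ as well; the matrix decouples into a $1\times 1$ block $\nu_{11} = m_1\a_1$ and a $(k{-}1)\times(k{-}1)$ block to which induction applies. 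Thus the matrix is diagonal with $\nu_{ii}=m_i\a_i$, the $\tau$-permutation is trivial, the grading shift vanishes, and the unique surviving subquotient is $L_1 \otimes \cdots \otimes L_k$. A symmetric right-peeling argument using column $l$ handles the bilex upper bound from the other end when $\underline{\b} \neq \underline{\a}$: either both corners strictly detach (giving strict bilex inequality), or one side forces a reduction to a smaller matrix on which induction applies, or the staircase structure collapses to show the subquotient vanishes.

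The main obstacle is the mixed case where $\b_1 = \a_1$ but $n_1 < m_1$ (so $\a_1$ appears with higher multiplicity in $\underline{\a}$ than in $\underline{\b}$), since here one must track the $(m_1-n_1)\a_1$ of mass distributed across $\nu_{12},\ldots,\nu_{1l}$ while simultaneously controlling column $l$ via the right-peeling. The cleanest way to organise it is a simultaneous left/right induction on the total multiplicities: after stripping off common maximal and minimal roots, one reaches a residual sub-matrix whose top-left and bottom-right satisfy the strict inequalities that either directly witness $\underline{\b} \lneq \underline{\a}$ in bilex, or else contradict convexity and show the subquotient is zero.
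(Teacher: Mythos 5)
Your proposal is correct and is essentially the paper's own argument: the Mackey filtration, the semicuspidality constraints on the partial row sums, the cone-separation (convexity) theorem, and an induction peeling off the top-left corner, with the equality case forcing the matrix $(\nu_{ij})$ to be diagonal so that the unique surviving subquotient is $L_1\otimes\cdots\otimes L_k$ with no shift. The only divergence is that the paper carries out just this left peeling (stating the lexicographic bound and the equality case) whereas you also sketch the symmetric right peeling; that extra step is sound, and the ``mixed case'' you flag as the main obstacle is actually a non-issue, since once left peeling halts at $\beta_t=\alpha_t$ with $n_t<m_t$ the top half of the bilexicographic inequality is already established (agreement above $\alpha_t$ and strictly smaller multiplicity at $\alpha_t$), while the bottom half follows from the right peeling applied independently to the same nonzero layer, so no simultaneous left/right induction or tracking of the leftover $(m_1-n_1)\alpha_1$ is needed.
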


\begin{proof}
Consider a nonzero layer of the Mackey filtration for $\Res_{n_1\b_1,\ldots,n_l\b_l} L_1\circ\cdots\circ L_n $. It is indexed by a set of elements $\nu_{ij}\in \N I$ such that $m_i\a_i=\sum_j \nu_{ij}$ and $n_j\b_j = \sum_{i}\nu_{ij}$. For the piece of the filtration to be nonzero, it must be that $\Res_{\nu_{i,1},\ldots,\nu_{i,n}}L_i \neq 0$ for each $i$.

Suppose that $t$ is an index such that $m_i\a_i=n_i\b_i$ for $i<t$. We will prove that in order for us to have a nonzero piece of the filtration, it must be that either $\b_t\prec \a_t$ or $m_t\a_t=n_t\b_t=\nu_{t,t}$.

By induction on $t$, we may assume that $\nu_{ii}=m_i\a_i=n_i\b_i$ for $i<t$. Therefore $\nu_{i,j}=0$ for all $i$ and $j$ with $i\geq t$ and $j<t$.

Suppose $i\geq t$. Since the module $L_i$ is cuspidal, this implies that $\nu_{i,t}$ is a sum of roots less than or equal to $\a_i$, which are all less than or equal to $\a_t$.

Now $n_t\b_t=\sum_{i\geq t}\nu_{i,t}$ is written as a sum of positive roots all less than or equal to $\a_t$. Therefore, by convexity of the ordering, either $\b_t\prec \a_t$ or $n_t\b_t=m_t\a_t$. In this latter case, equality in our inequalities must hold everywhere, hence $\nu_{t,t}=n_t\b_t$ as required.

This is enough to conclude that $\underline{\a}\geq \underline{\b}$ under lexicographical ordering.
Similarly we get $\underline{\a}\geq \underline{\b}$ under reverse lexicographical ordering, so we have $\underline{\a}\geq \underline{\b}$ under the bilexicographical order.
\end{proof}

\begin{lemma}\label{irreduciblehead}
 Let $\a_1\succ \a_2\succ\cdots\succ \a_n$ be roots, $m_1,\ldots,m_n$ be positive integers and $L_1,\ldots,L_n$ be irreducible semicuspidal representations of $R(m_1\a_1),\ldots,R(m_n\a_n)$ respectively.
Then \begin{enumerate}
\item the module $L_1\circ \cdots \circ L_n$ has a unique irreducible quotient $L$, and
\item $\Res_{m_1\a_1,\ldots,m_n\a_n} L_1\circ\cdots\circ L_n = \Res_{m_1\a_1,\ldots,m_n\a_n}L=L_1\otimes\cdots\otimes L_n$.
\end{enumerate}
\end{lemma}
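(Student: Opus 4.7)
The plan is to run the standard Frobenius reciprocity argument: show the restriction of $M := L_1 \circ \cdots \circ L_n$ to the composition $(m_1\a_1,\ldots,m_n\a_n)$ is an irreducible tensor product, and then deduce that $M$ has a unique irreducible quotient whose restriction matches that of $M$.

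First, by Lemma \ref{restrictinduct} we already know $\Res_{m_1\a_1,\ldots,m_n\a_n} M = L_1 \otimes \cdots \otimes L_n$, and (as noted after Theorem \ref{basisofrnu}) a tensor product of irreducibles over the tensor product KLR algebra is irreducible because irreducibles over $\Q$ are absolutely irreducible. Combined with the parabolic version of the adjunction \eqref{extind}, this gives for any $R(\sum m_i\a_i)$-module $N$
\[
\Hom(M,N) \cong \Hom\bigl(L_1\otimes\cdots\otimes L_n,\, \Res_{m_1\a_1,\ldots,m_n\a_n} N\bigr).
\]

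Next, I would check that every irreducible quotient $L$ of $M$ has $\Res_{m_1\a_1,\ldots,m_n\a_n} L \cong L_1\otimes\cdots\otimes L_n$. A surjection $M \twoheadrightarrow L$ induces, by exactness of restriction, a surjection $L_1\otimes\cdots\otimes L_n = \Res M \twoheadrightarrow \Res L$; since the source is irreducible, $\Res L$ is either $0$ or all of $L_1\otimes\cdots\otimes L_n$. The zero case is excluded by the adjunction displayed above, which would then force $\Hom(M,L)=0$.

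For uniqueness, suppose $L$ and $L'$ are two non-isomorphic irreducible quotients of $M$. Then $L\oplus L'$ is also a quotient of $M$ (the image of $(f,g)\map{M}{L\oplus L'}$ surjects onto both non-isomorphic simples, hence equals the whole), and restriction produces $(L_1\otimes\cdots\otimes L_n)^{\oplus 2}$ as a quotient of the irreducible $\Res M$, a contradiction. Part (ii) then follows at once: the restriction of $M$ is given by Lemma \ref{restrictinduct}, and the restriction of $L$ was identified in the previous step.

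The main obstacle, as is typical for such arguments, is to keep everything compatible with the graded and possibly infinite-dimensional setting, since the semicuspidal representations attached to the imaginary roots $n\d$ need not be finite dimensional. This is not a serious difficulty here: the freeness of $R(\la+\mu)$ over $R(\la)\otimes R(\mu)$ makes restriction exact without finiteness hypotheses and validates \eqref{extind}, so each step above goes through verbatim.
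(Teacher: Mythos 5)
Your proof is correct and follows essentially the same route as the paper's: Frobenius reciprocity for $\Ind\dashv\Res$ combined with Lemma \ref{restrictinduct} and exactness of restriction. (One small point: to conclude the cosocle itself is simple you should also exclude a quotient of the form $L^{\oplus 2}$, not just two non-isomorphic simple quotients, but this follows instantly from the same restriction count, since $(L_1\otimes\cdots\otimes L_n)^{\oplus 2}$ cannot be a quotient of the simple module $\Res_{m_1\a_1,\ldots,m_n\a_n}(L_1\circ\cdots\circ L_n)$.)
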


\begin{proof}
Suppose that $Q$ is a nonzero quotient of $L_1\circ\cdots\circ L_n$. Then by adjunction there is a nonzero map from $L_1\otimes\cdots\otimes L_n$ to $\Res Q$. As $L_1\otimes \cdots\otimes L_n$ is irreducible, this map is injective.

The restriction functor is exact and by Lemma \ref{restrictinduct}, $\Res (L_1\circ\cdots\circ L_n)$ is simple. Therefore the head of $L_1\circ \cdots \circ L_n$ must be simple.
\end{proof}

If $L_1,\ldots,L_n$ is a sequence of representations, we define $A(L_1,\ldots,L_n)=\operatorname{cosoc}(L_1\circ\cdots L_n)$.
The below two theorems appear also in \cite{kleshchev} and \cite{tingleywebster}. We provide proofs of both after the statement of Theorem \ref{numberofimaginarysemicuspidals}.

\begin{theorem}\label{fdmain}
Every irreducible module for $R(\nu)$ is of the form $A(L_1,\ldots,L_n)$ for exactly one set of irreducible semicuspidal representations $L_1,\ldots,L_n$ of $R(m_1\a_1),\ldots,R(m_n\a_n)$ respectively, where $\a_1\succ\cdots\succ\a_n$ are positive roots.
\end{theorem}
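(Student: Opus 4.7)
The plan is to handle uniqueness and existence separately, with uniqueness following directly from the restriction results earlier in this section and existence established by a greedy induction on $|\nu|$ guided by the convex order.

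For uniqueness, suppose $L = A(L_1,\ldots,L_n) = A(L_1',\ldots,L_m')$ with both tuples admissible. Lemma \ref{irreduciblehead}(ii) gives $\Res_{m_1\a_1,\ldots,m_n\a_n}L = L_1\otimes\cdots\otimes L_n \ne 0$ and the analogous equality for the primed data. Since $L$ is a quotient of $L_1'\circ\cdots\circ L_m'$, Lemma \ref{restrictinduct} forces $(m_1\a_1,\ldots,m_n\a_n)$ to be bounded above by $(m_1'\a_1',\ldots,m_m'\a_m')$ in the bilex order on admissible tuples; by symmetry the reverse inequality also holds, so the two root tuples coincide, and then the restriction identities force $L_i = L_i'$ for every $i$.

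For existence I induct on $|\nu|$, the case $\nu=0$ being vacuous. If $\nu = m\a$ for some positive root $\a$ and $L$ is semicuspidal, take $n=1$ and $L_1 = L$. Otherwise, the set
\[
S(L) = \{(\a,m) : \a \in \Phi^+,\ m \in \Z^+,\ m\a < \nu,\ \Res_{m\a,\nu-m\a}L \ne 0\}
\]
is finite and nonempty: any $\ii \in \Seq(\nu)$ with $e_\ii L \ne 0$ yields $(\ii_1, 1) \in S(L)$, and the bound $m\a \le \nu$ componentwise limits the choices. Choose $(\a_1, m_1) \in S(L)$ lexicographically maximal, prioritizing $\a_1$ in the convex order and then $m_1$. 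Let $L_1 \otimes M$ be an irreducible constituent of $\Res_{m_1\a_1, \nu - m_1\a_1}L$; by adjunction there is a surjection $L_1 \circ M \twoheadrightarrow L$. Apply the inductive hypothesis to $M$ to obtain $M = A(L_2,\ldots,L_n)$ with admissible data. Granted (a) that $L_1$ is semicuspidal and (b) that $\a_2 \prec \a_1$, Lemma \ref{irreduciblehead} applied to the surjection identifies $L = A(L_1, L_2, \ldots, L_n)$.

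Both (a) and (b) are proved by exploiting maximality. For (a), apply the inductive hypothesis to $L_1$ (valid since $m_1\a_1 < \nu$ gives $|m_1\a_1| < |\nu|$) to obtain a decomposition $L_1 = A(L_1^{(1)}, \ldots, L_1^{(s)})$ with leading root $\b_1$. If $\b_1 \succ \a_1$, combining Lemma \ref{irreduciblehead}(ii) with transitivity of restriction produces a nonzero $\Res_{m^{(1)}\b_1, \nu - m^{(1)}\b_1}L$, contradicting maximality of $\a_1$. The cases $\b_1 \preceq \a_1$ with $s \ge 2$ are ruled out by the convex-cone theorem (Theorem 3.1), since the identity $m_1\a_1 = \sum_j m^{(j)}\b_j$ would place $m_1\a_1$ in both the ray through $\a_1$ and a cone spanned by strictly smaller roots. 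For (b), once $L_1$ is semicuspidal, $\a_2 \succ \a_1$ (resp.\ $\a_2 = \a_1$) would via Lemma \ref{irreduciblehead}(ii) applied to $M$ and Theorem \ref{mackey} applied to $L_1 \circ M$ exhibit a restriction of $L$ with leading root strictly greater than $\a_1$ (resp.\ with the same $\a_1$ but first multiplicity $m_1 + m_2$), in either case contradicting the maximality of $(\a_1, m_1)$. The main obstacle is step (b): the quotient map $L_1 \circ M \twoheadrightarrow L$ can in principle kill Mackey layers, so one must ensure the relevant layer descends to a nonzero contribution on $\Res L$; this is handled by combining Lemma \ref{irreduciblehead}(ii), which pins down $\Res_{m_1\a_1, \nu - m_1\a_1}L = L_1 \otimes M$, with the vanishing statements of Lemma \ref{restrictinduct} on the dominant-weight tuples of $L_1 \circ L_2 \circ \cdots \circ L_n$.
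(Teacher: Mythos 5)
Your uniqueness half and your step (a) are fine, and step (a) is in substance the paper's own argument that the first tensor factor of a maximal nonzero restriction is semicuspidal. But note the overall route is different from the paper's: there Theorem \ref{fdmain} is proved by a simultaneous induction with Theorem \ref{numberofimaginarysemicuspidals}, and for $\nu$ not a multiple of a root the existence statement is a pure counting argument (the number of admissible sequences equals $\dim\f_\nu$, which equals the number of simples, and the modules $A(L_1,\ldots,L_n)$ are distinct), so the delicate ``greedy'' analysis is only ever run for $\nu=k\a$, where the relation $\nu=m\b+k\ga$ and convexity pin down the configuration. Your proposal replaces the counting by a greedy induction valid for all $\nu$, which would be a genuinely different (and counting-free) proof --- if step (b) were correct.

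Step (b) is the genuine gap. First, the tools you invoke do not apply: Lemma \ref{irreduciblehead}(ii) and Lemma \ref{restrictinduct} are statements about \emph{admissible} sequences of semicuspidal modules, and the pair $(L_1,M)$ (or the sequence $(L_1,L_2,\ldots,L_n)$ in the problematic case $\a_2\succeq\a_1$) is not admissible, so you are not entitled to $\Res_{m_1\a_1,\nu-m_1\a_1}L=L_1\otimes M$, only to an inclusion. Second, and more seriously, the asserted implication ``$\a_2\succ\a_1$ produces a nonzero restriction of $L$ with leading root strictly greater than $\a_1$'' is false as an unconditional statement: restriction slots do not permute, so from $\Res_{m_1\a_1,m_2\a_2,\ldots}L\neq 0$ you may only merge slots and conclude $\Res_{m_1\a_1+m_2\a_2,\ldots}L\neq 0$, which yields no element of your set $S(L)$ when $m_1\a_1+m_2\a_2$ is not a multiple of a root or exhausts $\nu$. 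A concrete instance: let $i,j$ be adjacent simple roots with $i\prec i+j\prec j$ and let $L$ be the cuspidal simple for $\nu=i+j$; then $\Res_{i,j}L=L(i)\otimes L(j)\neq 0$, $S(L)=\{(i,1)\}$, the second factor has leading root $j\succ i$, and yet $\Res_{j,i}L=0$, so no restriction of $L$ has leading root $\succ i$ --- even though here the restriction is exactly $L_1\otimes M$, so your fallback does not save the claim. Of course this $L$ is semicuspidal, so it sits in your excluded branch; but that is precisely the point: any correct proof of (b) must use the branch hypothesis, and your sketch never does. The repair is the paper's mechanism: merge the first two slots, apply the inductive hypothesis to an irreducible submodule of the merged restriction to extract a new leading pair in $S(L)$, use maximality to bound its root by $\a_1$, and then invoke the cone theorem of Section 3 to rule out $\a_2\succ\a_1$; the degenerate case $m_1\a_1+m_2\a_2=\nu$ must be treated separately, and there the correct conclusion is that $L$ itself is semicuspidal (via Lemma \ref{10.2} and convexity), contradicting the branch hypothesis rather than maximality. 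As written, step (b) does not go through.
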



\begin{theorem}\label{numberofimaginarysemicuspidals}
If $\a$ is a positive real root and $n$ is a positive integer, there is one simple semicuspidal module for $R(n\a)$. For the imaginary roots, let $f(n)$ be the number of simple semicuspidal representations of $R(n\d)$ (and set $f(0)=1$). Then
\[
\sum_{n=0}^{\infty} f(n) t^n = \prod_{i=1}^\infty (1-t^i)^{1-|I|}.
\]
\end{theorem}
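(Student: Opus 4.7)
The plan is to run a generating-series count, combining the classification in Theorem \ref{fdmain} with the Khovanov-Lauda categorification \eqref{fmod} and the dimension formula \eqref{dimfnu}, and then isolate the imaginary contribution by dividing out the real-root factors (which are already understood from earlier in the paper).

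First, let $S(\alpha,n)$ denote the number of isomorphism classes, up to grading shift, of simple semicuspidal $R(n\alpha)$-modules, with the convention $S(\alpha,0)=1$. Since all positive imaginary roots are equivalent under the convex preorder $\preceq$, an admissible sequence in the sense of Theorem \ref{fdmain} contains at most one slot whose root is imaginary (necessarily some $k\delta$), together with finitely many strictly decreasing real-root slots. Hence Theorem \ref{fdmain} gives the identity
\[
\sum_{\nu\in\N I} \#\{\text{simples of }R(\nu)\}\, t^\nu
= \prod_{\alpha\in \Phi^+_{re}} \Bigl(\sum_{n\geq 0} S(\alpha,n) t^{n\alpha}\Bigr)\cdot \sum_{k\geq 0} f(k) t^{k\delta}.
\]

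Next, by \eqref{fmod} the Grothendieck group $K_0(R(\nu)\fmod)$ is free of rank $\dim \f_\nu$ over $\Z[q,q\inv]$, so the number of simples (up to shift) for $R(\nu)$ equals $\dim \f_\nu$. Using the dimension formula \eqref{dimfnu} together with the fact that in symmetric affine type one has $\operatorname{mult}(\alpha)=1$ for each real root and $\operatorname{mult}(n\delta)=|I|-1$ for each positive multiple of $\delta$, the right-hand side of the above identity must equal
\[
\prod_{\alpha\in \Phi^+_{re}}(1-t^\alpha)^{-1}\cdot \prod_{n\geq 1}(1-t^{n\delta})^{-(|I|-1)}.
\]

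For the real-root factors, I would invoke the earlier structural result (established via the growth estimates of \S\ref{growth}) that the category of semicuspidal $R(n\alpha)$-modules, for $\alpha$ a positive real root, is equivalent to modules over a polynomial algebra. A nonnegatively graded polynomial algebra has a unique graded simple module up to shift, so $S(\alpha,n)=1$ for all $n\geq 0$, establishing the first assertion of the theorem. This also yields $\sum_n S(\alpha,n) t^{n\alpha}=(1-t^\alpha)^{-1}$, so the real-root factors match and cancel on both sides of the identity.

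After cancellation we are left with
\[
\sum_{k\geq 0} f(k) t^{k\delta} = \prod_{n\geq 1}(1-t^{n\delta})^{-(|I|-1)} = \prod_{n\geq 1}(1-t^{n\delta})^{1-|I|},
\]
and the substitution $t^\delta\mapsto t$ (legitimate since both sides depend only on $t^\delta$) gives the claimed generating function. The main obstacle is really behind us: everything reduces to the real-root semicuspidal classification, whose substantive proof has been carried out earlier via the growth estimates, after which the imaginary statement is a purely formal consequence of Theorem \ref{fdmain} and the Kac dimension formula.
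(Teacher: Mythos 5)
Your generating-function bookkeeping is the same arithmetic that sits inside the paper's argument, but as a standalone proof it is circular on both of the inputs you treat as available. First, Theorem \ref{fdmain} is not an independent prior result: in the paper it is proved by a \emph{simultaneous} induction with the present theorem, and its exhaustion step (``the number of admissible sequences equals $\dim\f_\nu$'') invokes precisely the semicuspidal counts of Theorem \ref{numberofimaginarysemicuspidals} at smaller weights. Quoting \ref{fdmain} wholesale therefore assumes (inductively) the very statement you are proving; the only legitimate way to use it is to run the joint induction on $\nu$, which your write-up never sets up. Second, your claim that the semicuspidal category of $R(n\a)$ for $\a$ real is ``already understood from earlier in the paper'' via the growth estimates is backwards: \S\ref{growth} is Section 15 and the polynomial-algebra/Morita statement is Corollary \ref{morita} in Section 17, both well after this theorem and logically dependent on it --- for instance through the unique cuspidal simple $L(\a)$ and Theorem \ref{realdualpbw}, and through Lemma \ref{oppositerestrict}, whose proof uses the Levendorskii--Soibelman machinery built on Theorem \ref{main}. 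Without some such input, the single identity you derive cannot separate the real factors $\sum_n S(\a,n)t^{n\a}$ from the imaginary factor $\sum_k f(k)t^{k\d}$, so the ``cancellation'' step has no independent justification.

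What the proposal omits is exactly the substantive content of the paper's proof: at a weight of the form $\nu=k\a$ one must show directly, by the restriction/adjunction/maximality-of-$\b$ argument that occupies most of the paper's proof, that every simple module not of the form $A(L_1,\ldots,L_n)$ with $n\geq 2$ is semicuspidal; combined with the inductive count of admissible sequences (which is where your product formula and the Kac multiplicities $\operatorname{mult}(n\d)=|I|-1$ enter), this yields Theorem \ref{fdmain} and the stated generating function in one stroke. So the arithmetic in your note is fine, but the ``main obstacle'' is not behind you: it is the simultaneous induction together with the semicuspidality argument, and your proof supplies neither.
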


\begin{proof}
Here we prove Theorems \ref{fdmain}
and \ref{numberofimaginarysemicuspidals} by a simultaneous induction on $\nu$.

First let us consider the case where $\nu$ is not of the form $n\a$ for some root $\a$. The number of irreducible representations of $R(\nu)$ is equal to $\dim{\f_\nu}$, which is the coefficient of $t^\nu$ in the power series (\ref{dimfnu}).

By inductive hypothesis applied to Theorem \ref{numberofimaginarysemicuspidals}, the number of admissible sequences of semicuspidal modules $(L_1,\ldots,L_n)$ is equal to $\dim\f_\nu$. By Lemma \ref{irreduciblehead}, each of the modules $A(L_1,\ldots,L_n)$ are irreducible, and by applying various restriction functors, we see via Lemma \ref{restrictinduct} that these modules are all distinct. Therefore we have identified all of the irreducible $R(\nu)$-modules in this case, proving Theorem \ref{fdmain}.

Now we turn our attention to the case where $\nu=k\a$ for some root $\a$. By the same arguments as in the previous case, the modules of the form $A(L_1,\ldots,L_n)$ where $n\geq 2$ yield all the irreducible modules for $R(k\a)$ except one, unless $\nu=n\d$, when the construction yields all irreducible modules except $f(n)$. It suffices to prove that if $L$ is an irreducible representation of $R(\nu)$ with $L$ not of the form $A(L_1,\ldots,L_n)$ with $n\geq 2$, then $L$ is semicuspidal.

Suppose that $\la$ and $\mu$ are such that $\Res_{\la\mu}L\neq 0$. We need to prove that $\la$ is a sum of roots less than or equal to $\a$ (the result for $\mu$ is similar) and we may suppose that neither of $\la$ and $\mu$ is zero. Let $L_\la\otimes L_\mu$ be an irreducible submodule of $\Res_{\la\mu}L$. By inductive hypothesis $L_\la=A(L_1,\ldots,L_k)$ for some admissible sequence of semicuspidal representations. Suppose that $L_1$ is a $R(m\b)$ module where $\b$ is a root. Then $\Res_{m\b,\nu-m\b}L\neq 0$.
If $\b\preceq\a$, then $\la$ is a sum of roots less than or equal to $\b$ and hence a sum of roots less than or equal to $\a$.

Therefore without loss of generality we may assume that $\la=m\b$ and that $L_\la$ is semicuspidal. For want of a contradiction, assume $\b\succ \a$. We may further assume without loss of generality that $\b$ is the maximal root for which $\Res_{m\b,\nu-m\b}L\neq 0$ for some positive integer $m$. We may further assume that $m$ is as large as possible.

By inductive hypothesis, write $L_\mu=A(M_1,\ldots,M_n)$ where $M_1$ is a $R(k\ga)$-module for some root $\ga$ and positive integer $k$.

Therefore $\Res_{\la+k\ga,\mu-k\ga}L\neq 0$. If $k\ga\neq \mu$, then by maximality of $\b$, $\la+k\ga$ is a sum of roots less than or equal to $\b$. By maximality of $m$, $\ga \prec \b$. By adjunction this implies that $L$ is a quotient of $L_\la\circ M_1\circ\cdots \circ M_n$. As $(L_\la,M_1,\ldots,M_n)$ is an admissible sequence of semicuspidal modules, this is a contradiction.

Therefore $L_\mu$ is semicuspidal, with $\mu=k\ga$. By convexity $\ga\prec\a\prec\b$.
By adjunction there is a nonzero map from $L_\la\circ L_\mu$ to $L$. As $L$ is irreducible, this exhibits $L$ as $A(L_\la,L_\mu)$, a contradiction. This completes the proof.
\end{proof}

\section{Real Cuspidals}

For $i\in I$, there is an automorphism $T_i$ of the entire quantum group $U_q(\g)$ satisfying
\[
T_i \th_j = \sum_{k=0}^{-i\cdot j} (-q)^k \th_i^{(k)} \th_j \th_i^{(-i\cdot j-k)}
\]
for all $i\neq j$.
In the notation of \cite{lusztigbook}, $T_i$ is the automorphism $T'_{i,+}$.

Now we will define the PBW root vectors for the real roots. Let $\a$ be a positive real root and suppose that $\a\prec\delta$. Let $S_\a=\{\b\in \Phi^+\mid
\a-\b\in \N I \}$. Then $S_\a$ is a finite set of roots. By Theorem \ref{3.11}, we can find a word convex order $\prec'$ whose restriction to $S_\a$ agrees with the restriction of $\prec$ to $S_\a$.

By Lemma \ref{finiteinitial} there exists $w\in W$ such that $\Phi(w)=\{\b\in\Phi^+\mid \b\preceq'\a\}$ and a reduced expression $w=s_{i_1}\ldots s_{i_N}$ such that $\a=s_{i_1}s_{i_2}\cdots s_{i_{N-1}} \a_{i_N}$. We define the root vector $E_\a\in\f$ by
\[
 E_\a=T_{i_1}T_{i_2}\cdots T_{i_{N-1}} \th_{i_N}
\]

If $\a$ happens to be greater than $\delta$, then in a similar vein we get a reduced expression but now define $E_\a\in\f$ by
\[
 E_\a=T_{i_1}^{-1}T_{i_2}^{-1}\cdots T_{i_{N-1}}^{-1} \th_{i_N}
\]

In all cases, we then define the dual root vector $E_\a^*=(1-q_\a^2)E_\a\in\f^*$.

A proof that the elements $E_\a$ and $E_\a^*$ are well defined based on \cite[Proposition 40.2.1]{lusztigbook} is possible. Alternatively, this result will follow from Theorem \ref{realdualpbw}.

For $\a\in\Phi^+_{\re}$, let $L(\a)$ be the unique self-dual cuspidal irreducible representation of $R(\a)$. The existence of a cuspidal irreducible module is Theorem \ref{numberofimaginarysemicuspidals} above while the fact that it can be chosen to be self-dual is in \cite[\S 3.2]{khovanovlauda}.

\begin{theorem}\label{realdualpbw}
Let $\a$ be a positive real root. Then $[L(\a)]=E_\a^*$.
\end{theorem}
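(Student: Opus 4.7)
The plan is induction on the length $N$ of the reduced expression $w = s_{i_1}\cdots s_{i_N}$ provided by Lemma \ref{finiteinitial}. By Theorem \ref{3.11}, I may replace $\prec$ by a word-type convex order agreeing with $\prec$ on the finite set $S_\a$; this leaves both $L(\a)$ and $E_\a$ unchanged, since each depends only on the restriction of $\prec$ to $S_\a$. The base case $N = 1$ is direct: here $\a = \a_i$ is simple, $R(\a_i) \cong \Q[y_1]$, and its unique simple (up to grading shift) is the one-dimensional trivial module, self-dual in degree zero. Lemma \ref{extprops2} gives $\langle \th_i,\th_i\rangle = (1-q^2)^{-1}$, so $\th_i^* = (1-q^2)\th_i = E_{\a_i}^*$, which is exactly $[L(\a_i)]$.

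For the inductive step, set $\b = s_{i_1}\a$. Under a suitable convex order $\prec'$ obtained from $\prec$ by the reflection at $s_{i_1}$, the positive root $\b$ admits the reduced expression $s_{i_2}\cdots s_{i_N}$ of length $N-1$, so the inductive hypothesis applied in $\prec'$ yields $[L_{\prec'}(\b)] = E_\b^*$ with $E_\b = T_{i_2}\cdots T_{i_{N-1}}\th_{i_N}$. What remains is to transport this equality along $T_{i_1}$ at the module level. My plan is to realize $L_\prec(\a)$ as the cosocle of an induced module of the form $L(\a_{i_1})^{\circ k}\circ M$, for an appropriate module $M$ lifting the image of $L_{\prec'}(\b)$ under the reflection and an appropriate positive integer $k$. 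The Mackey theorem (Theorem \ref{mackey}) controls the restriction of this induced module tightly enough that Lemma \ref{geometry} isolates the irreducible cosocle, and at the Grothendieck level the induction-and-cosocle construction should implement Lusztig's braid operator $T_{i_1}$, giving $[L(\a)] = T_{i_1}E_\b^* = E_\a^*$.

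The chief obstacle is verifying that the module-level reflection really categorifies $T_{i_1}$ with the correct normalization: self-duality of $L(\a)$ pins down its grading shift uniquely, and one needs to check that no spurious $q$-power enters so the identity $[L(\a)] = (1-q^2)E_\a$ holds on the nose. The Ext-bilinear form developed in Section 8 is the natural tool for this grading bookkeeping, and combined with cuspidality and the Mackey filtration it should rule out any correction factor. Finally, the case $\a \succ \d$ is handled by the parallel argument with $T_{i_1}^{-1}$ in place of $T_{i_1}$, or alternatively reduced to the case $\a \prec \d$ via the involution $\sigma^*$ of Section 6.
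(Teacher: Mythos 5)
There is a genuine gap at the heart of your inductive step. Everything rests on the claim that $L_\prec(\a)$ can be realised as the cosocle of an induced module $L(\a_{i_1})^{\circ k}\circ M$, with $M$ ``lifting the image of $L_{\prec'}(\b)$ under the reflection,'' and that this construction ``implements Lusztig's braid operator $T_{i_1}$'' on Grothendieck groups. Neither assertion is available with the tools in this paper, and neither is a routine verification. First, the module $M$ is not defined: there is no functor here (nor in the cited references for this paper) that transports $R(\b)$-modules for the order $\prec'$ to $R(\a)$-related data for $\prec$ realising the reflection $s_{i_1}$; that would require a categorified Saito/reflection functor of the type Kato constructs in finite type, which is precisely the heavy machinery this theorem is meant to avoid. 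Second, even granting such an $M$, passing to the cosocle is not exact and $[\operatorname{cosoc}(A\circ B)]$ is not a function of $[A]$ and $[B]$, so ``at the Grothendieck level the induction-and-cosocle construction should implement $T_{i_1}$'' is exactly the statement that needs proof, not a consequence of Theorem \ref{mackey} plus Lemma \ref{geometry}. Your own closing paragraph concedes that the normalisation and absence of spurious $q$-powers still need checking, but the problem is more basic: without a reflection functor the inductive step has no well-defined module to induct from.

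For contrast, the paper's proof never reflects modules at all. It stays inside $\f^*_{\Q(q)}$ and inducts along the reduced expression using Lusztig's decomposition (Chapter 38 of \cite{lusztigbook}), writing $x_k=\th_{i_k}^*y+T_{i_k}(z)$; cuspidality of $L(\a)$ gives $\Res_{\a_k,\a-\a_k}L(\a)=0$, hence orthogonality of $[L(\a)]$ to the $\th_{i_k}^*$-component, and positivity of the form (specialising $q$ to a small real number) forces $y=0$. At the end $x_N$ is a multiple of $\th_{i_N}$, and the scalar is pinned down by unitriangularity with respect to the dual canonical basis, bar-invariance, and the sign argument of \cite[Lemma 2.35]{kleshchev}. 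If you want to salvage your approach you would need to first construct and normalise reflection functors compatible with arbitrary convex orders in affine type; the paper's route is the way to avoid that.
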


\begin{proof}
Let $i_1,\ldots,i_N$ be as in the construction of $E_\a$ above. For $1\leq k\leq N$ let $\a_k=s_{i_1}\cdots s_{i_{k-1}} \a_{i_k}$. Then $\a_1\prec\cdots\prec\a_N=\a$.

First we will prove by induction on $n$ for $1\leq n\leq N$ that there exists $x_n\in \f^*_{\Q(q)}$ such that $[L(\a)]=T_{i_1}\cdots T_{i_{n-1}}(x_{n})$.

For the case $n=1$, let $x_1=[L(\a)]$.
Now assume that the result is known for $n=k$ and consider the case $n=k+1$.

By \cite[Ch 38]{lusztigbook} we can write $x_k=\th_{i_k}^* y + T_{i_k}(z)$ where $y,z\in \f^*_{\Q(q)}$. Then
\[
[L(\a)]=(T_{i_1}\cdots T_{i_{k-1}}(\th_{i_k}^*)) (T_{i_1}\cdots T_{i_{k-1}}(y)) + T_{i_1}\cdots T_{i_k}(z).
\]
Since $L(\a)$ is cuspidal and $\a_k\prec \a$, $\Res_{\a_k,\a_N-\a_k}L(\a)=0$. Therefore $[L(\a)]$ is orthogonal to the product $(T_{i_1}\cdots T_{i_{k-1}}(\th_{i_k}^*)) (T_{i_1}\cdots T_{i_{k-1}}(y))$. Since this product is orthogonal to $T_{i_1}\cdots T_{i_k}(z)$, it must be that $(y,y)=0$.

If $\Q(q)$ is embedded into $\R$ by sending $q$ to a sufficiently small real number, then the form $(\cdot,\cdot)$ on $\f_\nu$ is positive definite. Therefore $y=0$. We let $x_{k+1}=z$.


We have now proved the desired preliminary result by the principle of mathematical induction. Applying this when $k=N$, we see that $[L(\a)]=T_{i_1}\cdots T_{i_{n-1}}(x_{N})$ for some $x_N\in (\f^*_{\Q(q)})_{\a_{i_N}}$. Therefore $x_N$ is a scalar multiple of $\th_{i_N}$.

Since $[L(\a)]$ is an element of a $\Z[q,q\inv]$-basis of $\f^*$, the scalar must be a unit in $\Z[q,q\inv]$, thus of the form $\pm q^i$ for some $i\in \Z$. 
Since $[L(\a)]$ is
invariant under the bar involution, $i=0$. The argument of \cite[Lemma 2.35]{kleshchev} shows that the sign is the positive one.
\end{proof}

\begin{proposition}
Let $\a$ be a real root and $n$ be a positive integer. The module $L(\a)^{\circ n}$ is the unique simple semicuspidal representation of $R(n\d)$.
\end{proposition}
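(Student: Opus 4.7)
The plan is to break the claim into semicuspidality and simplicity. Semicuspidality is essentially free: since $L(\a)$ is cuspidal (hence semicuspidal), Lemma~\ref{10.2} gives at once that $L(\a)^{\circ n}$ is semicuspidal. Combined with Theorem~\ref{numberofimaginarysemicuspidals}, which guarantees exactly one simple semicuspidal module of $R(n\a)$ (the statement of the proposition surely means $R(n\a)$, not $R(n\d)$), it will suffice to show that $L(\a)^{\circ n}$ is itself irreducible.

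To attack irreducibility I would first isolate its unique simple head. By Lemma~\ref{irreduciblehead} applied to the admissible sequence $(L(\a),\ldots,L(\a))$, the module $L(\a)^{\circ n}$ has a unique irreducible quotient $L:=A(L(\a),\ldots,L(\a))$, and $\Res_{\a,\ldots,\a}L=L(\a)^{\otimes n}$. Since quotients of semicuspidal modules are semicuspidal, $L$ is the unique simple semicuspidal of $R(n\a)$. By self-duality of $L(\a)$ and formula~(\ref{dualofaproduct}), $(L(\a)^{\circ n})^\circledast\cong q^c L(\a)^{\circ n}$ for an explicit $c$, so the socle of $L(\a)^{\circ n}$ is also $L$ (with a shift). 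The remaining content is to verify that head and socle account for the whole module.

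For the equality $L(\a)^{\circ n}=L$ I would pass to the Grothendieck group and use the decategorification $[L(\a)]=E_\a^*$ from Theorem~\ref{realdualpbw} together with the bilinear form computations from Lemmas~\ref{extprops} and~\ref{extprops2}. Specifically, one computes $\langle[L(\a)^{\circ n}],[L(\a)^{\circ n}]\rangle$ using the Hopf-type identities $\langle xy,z\rangle=\langle x\otimes y,r(z)\rangle$ and iteratively reduces to $\langle[L(\a)],[L(\a)]\rangle=\langle\theta_i,\theta_i\rangle$-type factors, via the fact (from Lemma~\ref{restrictinduct}, applied with all $\a_i=\b_i=\a$) that the only layer of the Mackey filtration for $\Res_{\a,\ldots,\a}L(\a)^{\circ n}$ that pairs nontrivially with $L(\a)^{\otimes n}$ is the diagonal one. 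One expects $\langle[L(\a)^{\circ n}],[L(\a)^{\circ n}]\rangle$ to be a specific element of $\Z[[q,q^{-1}]]$ matching exactly $\langle[L],[L]\rangle$ (normalised so that $L$ is self-dual), which forces $[L(\a)^{\circ n}]=[L]$ in $\f^*$ and hence that $L(\a)^{\circ n}$ is irreducible.

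The main obstacle is the last step: ruling out that $L(\a)^{\circ n}$ has extra composition factors (necessarily extra shifted copies of $L$, by semicuspidality and uniqueness). The cleanest way I see around this is to combine the self-duality of $L(\a)^{\circ n}$ with the fact that its head appears with multiplicity one in $\Res_{\a,\ldots,\a}L(\a)^{\circ n}$ (by Lemma~\ref{restrictinduct}, which gives $L(\a)^{\otimes n}$ with multiplicity one), so that $L$ appears with multiplicity one in $L(\a)^{\circ n}$; but then self-duality and the fact that head equals socle equals $L$ force $L(\a)^{\circ n}=L$.
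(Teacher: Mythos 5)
You are right that the statement should read $R(n\a)$ and that Lemma \ref{10.2} disposes of semicuspidality, but the heart of your argument, irreducibility, has a genuine gap. Both Lemma \ref{irreduciblehead} and Lemma \ref{restrictinduct} require the roots $\a_1\succ\cdots\succ\a_n$ to be \emph{strictly} decreasing (this is what admissible means), so neither applies to the constant sequence $(L(\a),\ldots,L(\a))$, and their multiplicity-one conclusions genuinely fail there: since $L(\a)$ is cuspidal, the Mackey filtration of $\Res_{\a,\ldots,\a}L(\a)^{\circ n}$ has $n!$ nonzero layers, each a grading shift of $L(\a)^{\otimes n}$ (already for $n=2$ there are two). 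Consequently your key claims --- that $L(\a)^{\otimes n}$ occurs with multiplicity one in the restriction, that only the ``diagonal'' Mackey layer pairs nontrivially with $L(\a)^{\otimes n}$, and hence that the simple head occurs with multiplicity one in $L(\a)^{\circ n}$ --- are unjustified and in fact false as stated. The inner-product step is also circular: you cannot match $\langle[L(\a)^{\circ n}],[L(\a)^{\circ n}]\rangle$ against $\langle[L],[L]\rangle$, because the latter is not known until $[L]$ itself is known, which is exactly what is being determined.

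The paper's argument sidesteps all of this with a short computation in the Grothendieck group: semicuspidality gives $[L(\a)^{\circ n}]=f(q)[L]$ with $f(q)\in\N[q,q\inv]$ and $L$ the unique simple semicuspidal module of $R(n\a)$; by Theorem \ref{realdualpbw} and the fact that the $T_i$ are algebra automorphisms, $[L(\a)^{\circ n}]=T_{i_1}\cdots T_{i_{N-1}}\bigl((\th_{i_N}^*)^n\bigr)$, which is an indivisible element of $\f^*$ (in weight $n\,i_N$ the lattice $\f^*$ has rank one and $(\th_{i_N}^*)^n$ generates it, pairing with $\th_{i_N}^{(n)}$ to a power of $q$). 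Hence $f(q)$ is a unit, so $L(\a)^{\circ n}$ has a single composition factor and is irreducible. If you want to rescue your head/socle strategy you would need an independent bound on the multiplicity of $L$ in $L(\a)^{\circ n}$; the restriction to $(\a,\ldots,\a)$ cannot supply it, precisely because that restriction already carries $n!$ copies of $L(\a)^{\otimes n}$.
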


\begin{proof}
By Lemma \ref{10.2}, $L(\a)^{\circ n}$ is semicuspidal. Therefore $[L(\a)^{\circ n}]=f(q)[L]$ where $L$ is the unique semicuspidal representation of $R(n\a)$ and $f(q)\in\N[q,q\inv]$. By Theorem \ref{realdualpbw}, $[L(\a)^{\circ n}]=T_{i_1}T_{i_2}\cdots T_{i_{N-1}} (\th_{i_N}^*)^n$ which is indivisible in $\f^*$, hence $L(\a)^{\circ n}$ is irreducible.
\end{proof}

\begin{remark}
This gives the existence of many modules called \emph{real} in the nomenclature of \cite{kkko}.
\end{remark}

\section{Root Partitions}\label{rootpartitions}

Let $S$ be an indexing set for the set of self-dual irreducible semicuspidal representations of $R(n\d)$, for all $n$. It will not be until Theorem \ref{19.7} that we exhibit a bijection between $S$ and $\partition^\W$. We write $L(s)$ for the representation indexed by $s\in S$.

We now introduce the notion of a root partition, which allows us to index irreducibles by a finite collection of real roots (with multiplicities), together with an irreducible semicuspidal imaginary module.
We first define a root partition $\pi$ to be an admissible sequence of self-dual irreducible semicuspidal representations. 

To each root partition $\pi$ we define a function $f_\p\map{\Phi^+_{nd}}{\N}$ where if $f_\p(\a)$ is nonzero then there is a representation of $R(f_\p(\a)\a)$ in $\p$. Given two root partitions $\pi$ and $\sigma$ we say that $\pi<\sigma$ if there exist indivisible roots $\a$ and $\a'$ such that $f_\p(\a)<f_\sigma(\a)$, $f_\p(\a')<f_\sigma(\a')$ and $f_\p(\b)=f_\sigma(\b)$ for all roots $\b$ satisfying either $\b\prec\a$ or $\b\succ\a'$. If $f_\pi=f_\sigma$ we say $\p\sim\sigma$.

Since there is exactly one irreducible semicuspidal representation of $R(n\a)$ for each $n$ and each real root $\a$, we can write the datum of a root partition in a more combinatorial manner. 
Concretely we write a root partition in the form $\pi=(\b_1^{m_1},\ldots,\b_k^{m_k},s,\ga_l^{n_l},\ldots,\ga_1^{n_1})$. Here $k$ and $l$ are natural numbers, $s\in S$, $\b_1,\ldots,\b_k,\ga_1,\ldots,\ga_l$ are the set of real roots on which $f_\p$ is nonzero, $f_\pi(\b_i)=m_i$, $f_\pi(\ga_i)=n_i$ and
\[
\b_1\succ\cdots\succ\b_k \succ\d\succ \ga_l\succ\cdots\succ\ga_1
\] 
When we do have a bijection between $S$ and $\partition^\W$ then we will have a purely combinatorial description of a root partition.

Let $\pi=(\b_1^{m_1},\ldots,\b_k^{m_k},s,\ga_l^{n_l},\ldots,\ga_1^{n_1})$ be a root partition. Let $s_\la=\sum_{i=1}^k{\binom{m_i}{2}}+\sum_{j=1}^l\binom{n_j}{2}$. Define the proper standard module $\overline{\Delta}(\pi)$ to be
\[
\overline{\D}(\pi)=q^{s_\la}L(\b_1)^{\circ m_1}\circ \cdots\circ L(\b_k)^{\circ m_k} \circ L(s)\circ L(\ga_l)^{\circ n_l} \circ \cdots \circ (\ga_1)^{\circ n_1}.
\]

Let $L(\pi)$ be the head of $\overline{\D}(\pi)$. This is an irreducible module by Lemma \ref{irreduciblehead}.

\begin{theorem}\label{main}\cite{kleshchev}
The proper standard modules have the following property.
 \begin{enumerate}
\item Up to isomorphism and grading shift, the set $\{L(\pi)\}$ as $\pi$ runs over all root partitions of $\nu$ is a complete and irredundant set of irreducible $R(\nu)$-modules.
\item The module $L(\pi)$ is self dual, i.e. $L(\pi)^\circledast \cong L(\pi)$.
\item If the multiplicity $[\overline{\D}(\pi):L(\sigma)]$ is nonzero, then $\sigma\leq \pi$. Furthermore $[\overline{\D}(\pi):L(\pi)]=1$.
 \end{enumerate}
\end{theorem}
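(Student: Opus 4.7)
The plan is to deduce the three properties from the structural results already established, closely following the finite-type argument of \cite{klr1} and the adaptation in \cite{kleshchev}. The key inputs are Theorem \ref{fdmain} (classification of irreducibles via admissible sequences), Lemma \ref{irreduciblehead} (irreducible head and restriction of an admissible product), and Lemma \ref{restrictinduct} (vanishing and unitriangular restriction to a matching parabolic). For (1), observe that a root partition $\p=(\b_1^{m_1},\ldots,\b_k^{m_k},s,\ga_l^{n_l},\ldots,\ga_1^{n_1})$ encodes precisely an admissible sequence of irreducible semicuspidals: for real $\b$, the previous section identifies $L(\b)^{\circ m}$ as the unique simple semicuspidal of $R(m\b)$, while the sole imaginary block is a single $L(s)$ indexed by $s\in S$. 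Discarding the grading shift we have $L(\p)=\operatorname{cosoc}(\overline{\D}(\p))=A(L(\b_1)^{\circ m_1},\ldots,L(s),\ldots,L(\ga_1)^{\circ n_1})$, and Theorem \ref{fdmain} then furnishes the bijection between root partitions of $\nu$ and irreducible $R(\nu)$-modules up to grading shift.

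For (2), every factor in the product defining $\overline{\D}(\p)$ is self-dual by construction. Iterating (\ref{dualofaproduct}) computes $\overline{\D}(\p)^\circledast$ as the reversed product of the same factors times an accumulated grading shift; using $\b\cdot\b=2$ for each real root in symmetric affine type, one checks that the shift contributed by the real-root blocks is exactly $q^{2s_\p}$. Consequently $q^{-s_\p}\overline{\D}(\p)^\circledast$ has no net shift, and its socle is $L(\p)^\circledast$. Both $L(\p)$ and $L(\p)^\circledast$ restrict to the same self-dual tensor product of semicuspidals under the matching parabolic by Lemma \ref{restrictinduct}, so they must coincide.

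For (3), apply the restriction $\Res_\p$ to the matching parabolic to a composition series of $\overline{\D}(\p)$. Lemma \ref{restrictinduct} gives $\Res_\p\overline{\D}(\p)=L(\b_1)^{\circ m_1}\otimes\cdots\otimes L(s)\otimes\cdots\otimes L(\ga_1)^{\circ n_1}$, which is irreducible, and Lemma \ref{irreduciblehead}(2) identifies this same tensor product as $\Res_\p L(\p)$. Exactness of $\Res_\p$ forces exactly one composition factor of $\overline{\D}(\p)$ to contribute, yielding $[\overline{\D}(\p):L(\p)]=1$. For the triangularity, suppose $[\overline{\D}(\p):L(\sigma)]\neq 0$ for some $\sigma$; apply instead $\Res_\sigma$ to $\overline{\D}(\p)$. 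By exactness $\Res_\sigma L(\sigma)$ is a subquotient of $\Res_\sigma\overline{\D}(\p)$, and Lemma \ref{irreduciblehead}(2) shows $\Res_\sigma L(\sigma)\neq 0$. Hence $\Res_\sigma\overline{\D}(\p)\neq 0$, and the vanishing clause of Lemma \ref{restrictinduct} forces the real-root multiset of $\sigma$ to be $\leq$ that of $\p$ in bilexicographic order, which upon unraveling yields $\sigma\leq\p$ in the root partition order.

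The main obstacle will be the bookkeeping around the imaginary block. The combinatorial classification of simple semicuspidals of $R(n\d)$ via $S$ is not yet tied to $\partition^\W$ at this point in the paper (that comes later in Theorem \ref{19.7}), so the order $\leq$ on root partitions constrains only real-root data, and one must verify that the comparison step in (3) is insensitive to which $L(s)$ occupies the imaginary slot. A secondary technical point is confirming that the specific shift $q^{s_\p}$ in $\overline{\D}(\p)$ aligns with the Khovanov--Lauda self-dual normalization of $L(\p)$, which reduces to careful tracking in (\ref{dualofaproduct}).
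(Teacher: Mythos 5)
Your route is essentially the paper's: part (1) is exactly Theorem \ref{fdmain}, part (3) is the restriction argument the paper summarises as ``follows from Lemma \ref{restrictinduct}'' (and your closing worry is harmless, since the order on root partitions only compares the functions $f_\p$, so the comparison is indeed insensitive to which $L(s)$ occupies the imaginary slot), and part (2) is settled, as in the paper, at the level of $\Res_\p$ using Lemma \ref{irreduciblehead}(2) and the fact that restriction commutes with duality.

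One correction to your part (2): the intermediate claim that iterating (\ref{dualofaproduct}) exhibits $\overline{\D}(\p)\dual$ as the reversed product with accumulated shift exactly $q^{2s_\p}$, so that $q^{-s_\p}\overline{\D}(\p)\dual$ ``has no net shift,'' is false in general. Formula (\ref{dualofaproduct}) also produces the cross-block factors $q^{d_i\cdot d_j}$ for distinct blocks of degrees $d_i,d_j$; already for $\overline{\D}(\p)=L(\b)\circ L(\ga)$ with $\b\succ\ga$ and $s_\p=0$ one gets $(L(\b)\circ L(\ga))\dual\cong q^{\b\cdot\ga}L(\ga)\circ L(\b)$, with a nonzero shift. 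Fortunately this detour is unnecessary: the step that does the work is the restriction, where duality of a tensor product introduces no shift, $q^{\binom{m}{2}}L(\b)^{\circ m}$ is self-dual by the very computation you indicate (within a single block the shift is $m(m-1)$), and $L(s)$ is self-dual by definition; hence $\Res_\p L(\p)$ is self-dual. To pass from ``$L(\p)$ and $L(\p)\dual$ have the same restriction'' to $L(\p)\dual\cong L(\p)$ you also need to know in advance that $L(\p)\dual$ is a grading shift of $L(\p)$ itself: the paper quotes \cite[\S 3.2]{khovanovlauda} (every simple is self-dual up to a shift), after which the self-dual nonzero restriction forces the shift to be zero; alternatively you can use your part (1) plus a second application of Lemma \ref{restrictinduct} to pin down the root partition and the shift, but that step should be made explicit rather than absorbed into ``so they must coincide.'' With that repair your argument agrees with the paper's.
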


\begin{proof}
Part (1) is Theorem \ref{fdmain}. For part (2) note that since $L(\p)$ is irreducible, by \cite[\S 3.2]{khovanovlauda} $L(\pi)\dual\cong q^i L(\pi)$ for some $i$. By Lemma \ref{irreduciblehead}(2) and the fact that restriction commutes with duality, $i=0$. Part (3) follows from Lemma \ref{restrictinduct}.
\end{proof}

\section{Levendorskii-Soibelman Formula}\label{lsformula}

By Theorem \ref{main} the classes $[\overline{\D}(\pi)]$ of the proper standard modules is a basis of $\f^*$. We call this the categorical dual PBW basis. Let $\{E_\pi\}$ be the basis of $\f$ dual to this with respect to $\langle\cdot,\cdot\ra$. We shall call this basis the categorical PBW basis. Later we will identify the categorical PBW basis both with a basis coming from a family of standard modules, as well as an algebraically defined basis which generalises the approach of \cite{beck}.


The results in this section are an affine type analogue of the Levendorskii-Soibelman formula \cite[Proposition 5.5.2]{ls}. We refer to both Theorems \ref{ls} and \ref{lsf} as a Levendorskii-Soibelman formula.

\begin{theorem}\label{ls}
Let $\theta,\psi\in\Phi^+_{\re} \cup S$ with $\theta \succ\psi$. Expand $[L(\theta)][L(\psi)]-q^{(\th\cdot\psi)}[L(\psi)][L(\th)]$ in the standard basis
\[
 [L(\theta)][L(\psi)]-q^{(\th\cdot\psi)}[L(\psi)][L(\th)]=\sum_\pi c_\pi [\overline{\D}(\pi)].
\]
If $c_\pi\neq 0$ for some root partition $\pi$ then $\pi< (\theta,\psi)$ where $<$ is the partial order on root partitions from $\S\ref{rootpartitions}$.
\end{theorem}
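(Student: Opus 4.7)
The plan is to use the duality relation \eqref{dualofaproduct} to reduce the identity to a statement about composition factors of the single module $M:=L(\theta)\circ L(\psi)$. Since $L(\theta)$ and $L(\psi)$ are self-dual irreducibles, \eqref{dualofaproduct} gives $M^\circledast\cong q^{\theta\cdot\psi}L(\psi)\circ L(\theta)$, so, passing to classes in $\f^*$ and using that duality descends to the bar involution,
\[
Y := [L(\theta)][L(\psi)] - q^{\theta\cdot\psi}[L(\psi)][L(\theta)] \;=\; [M]-[M^\circledast] \;=\; [M]-\overline{[M]}.
\]

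Because $\theta\succ\psi$, the module $M$ coincides with the proper standard $\overline{\D}(\theta,\psi)$, so Lemma \ref{irreduciblehead} and Theorem \ref{main} tell us that $M$ has a unique self-dual irreducible head $L(\theta,\psi)$ appearing with multiplicity one, and that every other composition factor is of the form $L(\sigma)$ with $\sigma<(\theta,\psi)$ in the bilex order. Writing $[M]=[L(\theta,\psi)]+\sum_{\sigma<(\theta,\psi)}a_\sigma(q)[L(\sigma)]$ and using the self-duality of each $L(\sigma)$ (Theorem \ref{main}(2)), the class of $M^\circledast$ is obtained by replacing each $a_\sigma(q)$ with $\overline{a_\sigma(q)}$. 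The bar-invariant $L(\theta,\psi)$-contribution cancels in the subtraction, leaving
\[
Y \;=\; \sum_{\sigma<(\theta,\psi)}\bigl(a_\sigma(q)-\overline{a_\sigma(q)}\bigr)[L(\sigma)].
\]

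To finish I would invert the unitriangular change of basis between $\{[\overline{\D}(\pi)]\}$ and $\{[L(\pi)]\}$ provided by Theorem \ref{main}(3): this expresses $[L(\sigma)]=[\overline{\D}(\sigma)]+\sum_{\pi<\sigma}u_{\sigma,\pi}[\overline{\D}(\pi)]$ for suitable coefficients. Substituting into the expression above for $Y$, every nonzero coefficient $c_\pi$ of $[\overline{\D}(\pi)]$ arises from some $\sigma$ with $\pi\leq\sigma<(\theta,\psi)$, and transitivity of the bilex order forces $\pi<(\theta,\psi)$ as required. The main content of the argument is the duality trick of the first paragraph, which converts the skew-symmetric combination $Y$ into a bar-antisymmetric sum of strictly smaller irreducibles; the harder structural inputs (existence of the head, its self-duality, and unitriangularity) are all borne by the already-established Theorem \ref{main}.
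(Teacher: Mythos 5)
Your proposal is correct and follows essentially the paper's own argument: both identify the skew combination with $[M]-\overline{[M]}$ (you via the module-level duality (\ref{dualofaproduct}), the paper by applying the bar involution algebraically), cancel the bar-invariant top factor $[L(\theta,\psi)]$, and then use the unitriangularity of Theorem \ref{main}(3) to pass from the span of the smaller simples to the span of the corresponding proper standard modules.
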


\begin{proof}
 By Theorem \ref{main}, 
\[
 [L(\th)][L(\psi)]-[L(\th,\psi)]\in\sum_{\pi<(\th,\psi)}\Z[q,q\inv] [L(\pi)].
\]
Applying the bar involution on $\f^*$ yields
\[
 q^{(\th\cdot\psi)}[L(\psi)][L(\th)]-[L(\th,\psi)]\in \sum_{\pi<(\th,\psi)}\Z[q,q\inv] [L(\pi)].
\]
Theorem \ref{main} also shows that
\[
 \sum_{\pi<(\th,\psi)}\Z[q,q\inv] [L(\pi)]=\sum_{\pi<(\th,\psi)}\Z[q,q\inv] [\bar\D(\pi)]
\]
so upon subtraction we obtain the desired result.
\end{proof}

\begin{lemma}\label{orthog}
Let $\sigma$ and $\pi$ be two root partitions. Then $\langle [\overline{\D}(\sigma)],[\overline{\D}(\pi)]\rangle = 0$ unless $\sigma\sim\pi$.
\end{lemma}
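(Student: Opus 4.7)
The plan is to compute the pairing via the Hopf-pairing identities of Lemma \ref{extprops2} and then invoke the restriction vanishing of Lemma \ref{restrictinduct}. Write each proper standard module as an induction of semicuspidal factors: up to a grading shift, $\overline{\D}(\sigma)\cong L_1\circ\cdots\circ L_k$ with $L_i$ the self-dual irreducible semicuspidal of weight $m_i^\sigma\a_i^\sigma$ and $\a_1^\sigma\succ\cdots\succ\a_k^\sigma$, and similarly $\overline{\D}(\pi)\cong M_1\circ\cdots\circ M_l$. Passing to the Grothendieck group, $[\overline{\D}(\sigma)]=q^{c_\sigma}[L_1]\cdots[L_k]$ for some integer $c_\sigma$.

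Iterating $\langle xy,z\rangle=\langle x\otimes y,r(z)\rangle$ from Lemma \ref{extprops2} on the first argument yields
\[
\langle [\overline{\D}(\sigma)],[\overline{\D}(\pi)]\rangle \;=\; q^{c_\sigma}\,\bigl\langle [L_1]\otimes\cdots\otimes [L_k],\; r^{(k-1)}[\overline{\D}(\pi)]\bigr\rangle.
\]
Because the tensor pairing vanishes outside matching multi-degrees, only the component of $r^{(k-1)}[\overline{\D}(\pi)]$ of multi-weight $(m_1^\sigma\a_1^\sigma,\ldots,m_k^\sigma\a_k^\sigma)$ contributes, and under the correspondence between the coproduct and the restriction functor that component is the class of $\Res_{m_1^\sigma\a_1^\sigma,\ldots,m_k^\sigma\a_k^\sigma}\overline{\D}(\pi)$. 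By Lemma \ref{restrictinduct} this class is zero unless the sequence $\underline{\sigma}$ is bilexicographically less than or equal to $\underline{\pi}$.

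By the symmetric manipulation, iterating $\langle x,yz\rangle=\langle r(x),y\otimes z\rangle$ on the second argument expresses the pairing in terms of $\Res_{n_1^\pi\b_1^\pi,\ldots,n_l^\pi\b_l^\pi}\overline{\D}(\sigma)$, which by a second application of Lemma \ref{restrictinduct} forces $\underline{\pi}\leq\underline{\sigma}$ in bilex order. Since bilex is a total order, the two inequalities together imply $\underline{\sigma}=\underline{\pi}$ as sequences of (multiplicity, root) pairs, which is exactly $f_\sigma=f_\pi$, i.e.\ $\sigma\sim\pi$. The only nuisance in this plan is tracking the $q$-twists produced by repeated application of the Hopf-pairing identities, but since we only care about vanishing, the scalar factors are harmless; the real content is entirely in the restriction formula of Lemma \ref{restrictinduct}.
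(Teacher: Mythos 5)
Your proposal is correct and follows essentially the same route as the paper: pair off the induction product against the restriction of the other proper standard module via the Hopf-pairing/adjunction identities, apply Lemma \ref{restrictinduct} in both directions to get the two bilexicographic inequalities, and conclude $f_\sigma=f_\pi$, i.e.\ $\sigma\sim\pi$. The grading shifts are indeed irrelevant since only vanishing is at stake, exactly as in the paper's (more terse) argument.
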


\begin{proof}
We have $\langle [\overline{\D}(\sigma)],[\overline{\D}(\pi)]\rangle = \langle [L(\sigma_1)\otimes \cdots \otimes L(\sigma_l)],[\Res_\sigma\overline{\D}(\pi)]\rangle$ which, by Lemma \ref{restrictinduct}, is zero unless $\sigma\leq \pi$.

Also $\langle [\overline{\D}(\sigma)],[\overline{\D}(\pi)]\rangle=\langle [\Res_\pi(\overline{\D}(\sigma)],[L(\pi_1)\circ \cdots \circ L(\pi_k)]\rangle$ which again by Lemma \ref{restrictinduct} is zero unless $\pi\leq \sigma$.
\end{proof}

\begin{lemma}\label{thsimpsi}
If $\th\in\Phi^+_\re\cup S$ then
\[
E_\th \in \operatorname{span}_{\psi\sim \th} [L(\psi)].
\]
\end{lemma}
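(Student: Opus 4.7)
The plan is to combine the orthogonality of Lemma \ref{orthog} with the direct identification $\overline{\D}(\psi) = L(\psi)$ available for single-semicuspidal root partitions.

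First I would observe that for $\theta \in \Phi^+_{\re} \cup S$ the root partition $\theta$ involves a single semicuspidal factor, and the same is true for every $\psi$ with $f_\psi = f_\theta$. Indeed, when $\theta$ is real, uniqueness of the irreducible cuspidal for $R(\theta)$ (Theorem \ref{numberofimaginarysemicuspidals}) forces $\psi = \theta$; when $\theta \in S$ indexes a semicuspidal of $R(n\d)$, the only competing root partitions are the other elements of $S$ indexing semicuspidals of the same $R(n\d)$, because a root partition contains at most one imaginary semicuspidal label. In either situation the definition of the proper standard has no induction or divided-power correction, so $\overline{\D}(\psi) = L(\psi)$ and hence $[\overline{\D}(\psi)] = [L(\psi)]$.

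Next, for each function $\lambda\colon \Phi^+_{nd} \to \N$ with a root partition of matching $f$, set
\[
V_\lambda = \operatorname{span}_{\Q(q)}\{[\overline{\D}(\pi)] \mid f_\pi = \lambda\}.
\]
By Lemma \ref{orthog} the decomposition $\f^*_{\Q(q)} = \bigoplus_\lambda V_\lambda$ is orthogonal with respect to $\langle \cdot, \cdot \rangle$, so nondegeneracy of the global pairing implies nondegeneracy on each summand. The categorical PBW basis $\{E_\pi\}$ is defined as the basis dual to $\{[\overline{\D}(\pi)]\}$, hence $\langle E_\theta, [\overline{\D}(\pi)]\rangle = 0$ whenever $f_\pi \neq f_\theta$. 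Consequently $E_\theta$ annihilates $V_\mu$ for all $\mu \neq f_\theta$, which via the orthogonal decomposition forces $E_\theta \in V_{f_\theta}$. Combined with the first paragraph this reads
\[
E_\theta \in V_{f_\theta} = \operatorname{span}_{\Q(q)}\{[L(\psi)] \mid \psi \sim \theta\},
\]
as claimed.

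There is no genuine obstacle here; the only point requiring care is the implicit identification of $\f_{\Q(q)}$ with $\f^*_{\Q(q)}$ (so that the dual basis $E_\pi \in \f$ and the irreducible classes $[L(\psi)] \in \f^*$ can be compared). This identification is exactly the twisted-bialgebra isomorphism over $\Q(q)$ already invoked in the paper, implemented by the nondegenerate form, so the argument above is essentially a formal consequence of Lemma \ref{orthog} once the triviality $\overline{\D}(\psi) = L(\psi)$ for single-semicuspidal $\psi$ is noted.
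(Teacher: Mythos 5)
Your proposal is correct and follows essentially the same route as the paper: use the defining duality $\langle E_\theta,[\overline{\D}(\pi)]\rangle=\delta_{\pi\theta}$ together with the orthogonality of Lemma \ref{orthog} to place $E_\theta$ in the span of the $[\overline{\D}(\psi)]$ with $\psi\sim\theta$, and then identify $\overline{\D}(\psi)=L(\psi)$ for these single-semicuspidal root partitions. You merely spell out the block decomposition and the $\f\cong\f^*$ identification over $\Q(q)$ more explicitly than the paper does.
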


\begin{proof}
 By definition of $E_\th$, we have $\langle E_\th,[\overline{\D}(\pi)]\rangle = 0$ unless $\pi=\th$. By Lemma \ref{orthog} and the fact that the classes $[\overline{\D}(\sigma)]$ are a basis of $\f^*$, this forces
\[
E_\th \in \operatorname{span}_{\psi\sim \th} [\overline{\D}(\psi)]= \operatorname{span}_{\psi\sim \th} [L(\psi)].
\]\end{proof}

\begin{corollary}\label{improd}
If $\sigma,\pi\in S$ then $E_\sigma E_\pi$ is a linear combination of $E_\tau$ for $\tau\in S$.
\end{corollary}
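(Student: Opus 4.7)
The plan is to use Lemma \ref{thsimpsi} to reduce to a statement about the induction product of two irreducible imaginary semicuspidals, and then exploit Lemma \ref{10.2} together with the exactness of $\Res$ to finish.

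First I would apply Lemma \ref{thsimpsi} to write $E_\sigma=\sum_{\sigma'}a_{\sigma'}[L(\sigma')]$ and $E_\pi=\sum_{\pi'}b_{\pi'}[L(\pi')]$ for suitable scalars. Since $\sigma\in S$ indexes a semicuspidal of some $R(n\delta)$, its associated root partition has $f$-function supported only at $\delta$; consequently the equivalence class $\{\sigma'\sim\sigma\}$ consists exactly of those $\sigma'\in S$ indexing semicuspidals of the same $R(n\delta)$, and similarly $\{\pi'\sim\pi\}$ consists of elements of $S$ of weight $m\delta$. This reduces the claim to showing that each product $[L(\sigma')][L(\pi')]$ lies in the span of $\{E_\tau:\tau\in S\}$.

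Next I would use that multiplication in $\f^*$ decategorifies induction (see \S\ref{klr}), so $[L(\sigma')][L(\pi')]=[L(\sigma')\circ L(\pi')]$. By Lemma \ref{10.2} this module is semicuspidal of weight $(n+m)\delta$; since $\Res$ is exact, semicuspidality passes to subquotients, hence to every composition factor. By definition of $S$, the simple semicuspidal modules of $R((n+m)\delta)$ are exactly the $L(u)$ with $u\in S$ of weight $(n+m)\delta$, so $E_\sigma E_\pi$ lies in the $\Q(q)$-span of these classes.

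To finish I would carry out a dimension count. Applying Lemma \ref{thsimpsi} to each $\tau\in S$ of weight $(n+m)\delta$ shows $\{E_\tau\}$ for such $\tau$ is contained in the same $\Q(q)$-span; both sets have the common cardinality $f(n+m)$ given by Theorem \ref{numberofimaginarysemicuspidals}, and $\{E_\tau\}_\tau$ being a basis of $\f$ makes this subset linearly independent, so the two spans coincide. Uniqueness of expansion in the PBW basis $\{E_\tau\}$ then forces all coefficients of $E_\sigma E_\pi$ outside $S$ to vanish. The main subtlety is keeping the identification of $\f$ with $\f^*$ (via the bilinear form, as twisted bialgebras over $\Q(q)$) consistent with multiplication, together with the observation that $[\bar\Delta(\psi)]=[L(\psi)]$ for any $\psi\in S$, since the proper standard attached to a single imaginary semicuspidal carries no grading shift and has only one factor.
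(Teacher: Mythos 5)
Your proposal is correct and follows essentially the same route as the paper, whose proof simply cites Lemmas \ref{10.2} and \ref{thsimpsi}: use Lemma \ref{thsimpsi} to expand $E_\sigma,E_\pi$ in classes of imaginary semicuspidal simples, use Lemma \ref{10.2} (plus exactness of restriction) to see the product stays in the span of such classes, and compare that span with the span of the $E_\tau$, $\tau\in S$. You have merely made explicit the span-equality/dimension count and the observation $[\overline{\D}(\psi)]=[L(\psi)]$ for $\psi\in S$ that the paper leaves implicit.
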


\begin{proof}
This follows from Lemmas \ref{10.2} and \ref{thsimpsi}.
\end{proof}

\begin{lemma}
Let $\pi=(\pi_1,\ldots,\pi_k)$ be a root partition. Then $E_\pi=E_{\pi_1}\cdots E_{\pi_k}$.
\end{lemma}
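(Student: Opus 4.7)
The plan is to verify the defining duality $\langle E_{\pi_1}\cdots E_{\pi_k},[\overline{\D}(\sigma)]\rangle=\delta_{\pi\sigma}$ for every root partition $\sigma$ of the same weight, from which $E_\pi=E_{\pi_1}\cdots E_{\pi_k}$ follows by uniqueness of the dual basis. My first step is to show that $E_{\pi_1}\cdots E_{\pi_k}$ lies in $\operatorname{span}_{\rho\sim\pi}[\overline{\D}(\rho)]$. Applying the argument of Lemma~\ref{thsimpsi} to each length-one root partition $(\pi_i)$ gives $E_{\pi_i}\in\operatorname{span}_{\tau\sim(\pi_i)}[\overline{\D}(\tau)]$: this span is one-dimensional for a real component $\pi_i=\b^m$ and is spanned by $\{[\overline{\D}((s'))]:s'\in S_{|s|}\}$ for an imaginary component $\pi_i=s$. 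Concatenating length-one root partitions multiplies $\overline{\D}$-classes, and the shifts $\binom{m_i}{2}$, $\binom{n_j}{2}$ that make up $s_\la$ add correctly, so any such product is itself a single class $[\overline{\D}(\rho)]$ for a root partition $\rho\sim\pi$.

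Next, Lemma~\ref{orthog} gives $\langle[\overline{\D}(\rho)],[\overline{\D}(\sigma)]\rangle=0$ unless $\rho\sim\sigma$, which combined with the first step already forces the pairing to vanish whenever $\sigma\not\sim\pi$. In the remaining case $\sigma\sim\pi$ I would iterate the coproduct identity of Lemma~\ref{extprops2} to rewrite the pairing as $\langle E_{\pi_1}\otimes\cdots\otimes E_{\pi_k},[\Res_{|\pi_1|,\ldots,|\pi_k|}\overline{\D}(\sigma)]\rangle$. Because $\sigma\sim\pi$, the restriction shape is a strictly decreasing sequence of positive multiples of indivisible roots whose multiset agrees with that of the $\sigma$-factorisation; Lemma~\ref{restrictinduct} then applies in its equality case and yields $q^{s_\la}[L(\sigma_1)]\otimes\cdots\otimes[L(\sigma_k)]$. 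The local pairings evaluate to $\langle E_{(\b^m)},[L(\b)^{\circ m}]\rangle=q^{-\binom{m}{2}}$ (from $[\overline{\D}((\b^m))]=q^{\binom{m}{2}}[L(\b)^{\circ m}]$ and the defining duality) and $\langle E_s,[L(s')]\rangle=\delta_{ss'}$. Multiplying these and cancelling the $q^{s_\la}$ shift against $\prod_i q^{-\binom{m_i}{2}}\prod_j q^{-\binom{n_j}{2}}$ leaves $\delta_{ss'}$, which is $\delta_{\sigma\pi}$ inside the $\sim$-class.

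The main obstacle I anticipate is the careful bookkeeping between the grading shift $q^{s_\la}$ built into $\overline{\D}(\sigma)$ and the normalizations $q^{-\binom{m}{2}}$ that appear because $E_{(\b^m)}$ is dual to $[\overline{\D}((\b^m))]$ rather than to $[L(\b)^{\circ m}]$. Once this cancellation is verified factor by factor, combining the span argument, Lemma~\ref{orthog}, and the equality case of Lemma~\ref{restrictinduct} delivers the pairing $\delta_{\pi\sigma}$ uniformly across all $\sigma$, and the lemma follows from uniqueness of the dual basis.
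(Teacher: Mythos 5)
Your proposal is correct and follows essentially the same route as the paper: place $E_{\pi_1}\cdots E_{\pi_k}$ in $\operatorname{span}_{\rho\sim\pi}[\overline{\D}(\rho)]$ via (the argument of) Lemma \ref{thsimpsi}, kill the pairings with $[\overline{\D}(\sigma)]$ for $\sigma\not\sim\pi$ by Lemma \ref{orthog}, and evaluate the remaining pairings by restricting along the common shape and using the equality case of Lemma \ref{restrictinduct}, then invoke uniqueness of the dual basis. Your explicit $q^{\binom{m}{2}}$ bookkeeping is just an unpacked version of the paper's factorisation $\prod_i\langle E_{\pi_i},[L(\eta_i)]\rangle$ in terms of the self-dual semicuspidal components, so the two arguments coincide.
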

\begin{proof}
 By Lemma \ref{thsimpsi}, the element $E_{\pi_1}\cdots E_{\p_k}$ is a linear combination of elements of the form $[\overline{\D}(\sigma)]$ where $\sigma\sim \pi$. Therefore by Lemma \ref{orthog}, $E_{\pi_1}\cdots E_{\p_k}$ is orthogonal to all elements of the from $[\overline{\D}(\eta)]$ where $\eta\not\sim\pi$. For $\eta\sim\pi$, we compute
\[
 \langle E_{\p_1}\cdots E_{\p_k} , [\overline{\D}(\eta)]\rangle = \prod_{i=1}^k \langle E_{\p_i},[L(\eta_i)]\rangle
\]
which is zero unless $\eta=\pi$ in which case it is equal to one. We have shown that the product $E_{\p_1}\cdots E_{\p_k}$ has all the properties which define $E_\pi$, hence is equal to $E_\p$.
\end{proof}

\begin{theorem}\label{lsf}
Let $\theta,\psi\in\Phi^+_{\re} \cup S$ with $\theta \succ\psi$. Then
\[
E_\th E_\psi - q^{(\th\cdot\psi)} E_\psi E_\th \in \sum_{\pi<(\th,\psi)} \Z[q,q\inv]E_\p.
\]
\end{theorem}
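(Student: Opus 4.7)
The plan is to expand $E_\psi E_\th$ in the categorical PBW basis and read off the Levendorskii--Soibelman identity. Since $\th\succ\psi$, the pair $(\th,\psi)$ is admissible, and the preceding lemma gives $E_\th E_\psi = E_{(\th,\psi)}$. Proving the theorem is therefore equivalent to showing
\[
E_\psi E_\th \;=\; q^{-(\th\cdot\psi)}E_{(\th,\psi)} \;+\; \sum_{\pi<(\th,\psi)}d_\pi E_\pi
\]
for some $d_\pi\in\Z[q,q\inv]$; the theorem then follows by multiplying through by $q^{(\th\cdot\psi)}$ and subtracting from $E_{(\th,\psi)} = E_\th E_\psi$.

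Since $\{E_\sigma\}$ and $\{[\overline{\D}(\sigma)]\}$ are dual bases, $d_\sigma = \langle E_\psi E_\th,\,[\overline{\D}(\sigma)]\rangle$. Applying the product-coproduct adjunction from Lemma \ref{extprops2} and extracting the bidegree $(|\psi|,|\th|)$ summand of $r[\overline{\D}(\sigma)]$ yields
\[
d_\sigma \;=\; \langle E_\psi\otimes E_\th,\ [\Res_{|\psi|,|\th|}\overline{\D}(\sigma)]\rangle.
\]
Since $E_\psi$ is dual to $[\overline{\D}(\psi)] = [L(\psi)]$ and $E_\th$ to $[L(\th)]$, this pairing tracks the (graded) multiplicity of the composition factor $L(\psi)\otimes L(\th)$ in $\Res_{|\psi|,|\th|}\overline{\D}(\sigma)$.

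The core of the argument is a Mackey analysis of this restriction. Writing $\sigma=(\sigma_1,\ldots,\sigma_k)$ and applying Theorem \ref{mackey}, the subquotients of $\Res_{|\psi|,|\th|}(L(\sigma_1)\circ\cdots\circ L(\sigma_k))$ are parametrised by matrices $(\nu_{ij})_{1\le i\le k,\, j\in\{1,2\}}$ with prescribed row and column sums; semicuspidality of each $L(\sigma_i)$ forces $\nu_{i1}$ to be a sum of roots $\preceq$ the underlying root of $\sigma_i$ and $\nu_{i2}$ a sum $\succeq$ it. For $\sigma = (\th,\psi)$ the only subquotient that can carry $L(\psi)\otimes L(\th)$ is the ``cross-swap'' with $\nu_{11}=\nu_{22}=0$, $\nu_{12}=|\th|$, $\nu_{21}=|\psi|$; unwinding the Mackey grading shift (or equivalently invoking (\ref{dualofaproduct})) gives $d_{(\th,\psi)} = q^{-(\th\cdot\psi)}$. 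For $\sigma > (\th,\psi)$ in the bilexicographic order, one checks that every Mackey subquotient has first tensor factor whose content is incompatible with carrying a composition factor $L(\psi)$, so $d_\sigma = 0$.

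The main obstacle is the vanishing of $d_\sigma$ for $\sigma > (\th,\psi)$. Ruling out every Mackey subquotient depends on the interplay between the bilexicographic order on root partitions, convexity of $\prec$, and the cuspidality bounds on each $L(\sigma_i)$. The cases where $\th$ or $\psi$ lies in $S$ are the most delicate, since only semicuspidality (rather than strict cuspidality) is available at an imaginary level, so one must eliminate additional Mackey layers coming from imaginary-level restrictions by a finer analysis of the semicuspidal representations at $n\d$.
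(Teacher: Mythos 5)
There is a genuine gap here, on two levels. First, the mechanism you propose for evaluating $d_\sigma$ rests on an incorrect duality claim: $E_\psi$ and $E_\th$ are dual to the proper standard classes $[\overline{\D}(\pi)]$, not to the simple classes $[L(\pi)]$. By the unitriangularity in Theorem \ref{main}(3), $[L(\pi')]$ expands as $[\overline{\D}(\pi')]$ plus $\Z[q,q\inv]$-multiples of $[\overline{\D}(\rho)]$ with $\rho<\pi'$, so $\langle E_\psi,[L(\pi')]\rangle$ is typically nonzero for every $\pi'>\psi$. Consequently $d_\sigma=\langle E_\psi\otimes E_\th,[\Res\,\overline{\D}(\sigma)]\rangle$ is \emph{not} the graded multiplicity of the composition factor $L(\psi)\otimes L(\th)$; it is the coefficient of $[\overline{\D}(\psi)]\otimes[\overline{\D}(\th)]$ in the proper-standard expansion of the restriction, and every factor $L(\pi')\otimes L(\pi'')$ with $\pi'\geq\psi$, $\pi''\geq\th$ contributes. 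In particular even your evaluation $d_{(\th,\psi)}=q^{-(\th\cdot\psi)}$ is not established by locating the cross-swap layer of Theorem \ref{mackey}: you would also have to show that all the other Mackey layers of $\Res(L(\th)\circ L(\psi))$ pair to zero against $E_\psi\otimes E_\th$. Second, the entire content of the theorem --- vanishing of $d_\sigma$ for every $\sigma$ that is neither $(\th,\psi)$ nor $<(\th,\psi)$, which includes $\sigma\sim(\th,\psi)$ with a different imaginary label in $S$ and also $\sigma$ incomparable to $(\th,\psi)$ (the order on root partitions is only partial) --- is exactly what you leave as ``one checks'' and acknowledge as ``the main obstacle''. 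Note that Lemma \ref{restrictinduct} only controls restrictions along a \emph{decreasing} sequence of roots, whereas your restriction is to $(|\psi|,|\th|)$ with $\psi\prec\th$, so no existing lemma collapses that Mackey filtration for you; and in the imaginary case a weight/content analysis can never separate different $s'\in S$ at level $n\d$, so that part of the vanishing cannot come from ``incompatible content'' at all.

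You have also missed that the paper's own proof is far softer and avoids Mackey filtrations entirely. Theorem \ref{ls} gives the analogous statement for $[L(\th)][L(\psi)]-q^{(\th\cdot\psi)}[L(\psi)][L(\th)]$ using only Theorem \ref{main}(3) together with bar-invariance of the self-dual simple classes (the bar involution interchanges the two products up to the twist $q^{(\th\cdot\psi)}$, so the leading term cancels exactly); then Lemma \ref{thsimpsi} (itself a consequence of the orthogonality Lemma \ref{orthog}) says $E_\th$ lies in the span of $[L(\th')]$ with $\th'\sim\th$, and since the order ideal $\{\pi<(\th,\psi)\}$ is closed under $\sim$, the statement transfers from the $[L]$'s to the $E$'s, with $\Z[q,q\inv]$-coefficients because the $E_\pi$ form a $\Z[q,q\inv]$-basis. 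If you wish to salvage your direct approach, you would essentially have to reprove both of these inputs by hand; the bar-involution argument is what produces the exact cancellation of the leading coefficient that you are trying to extract from the Mackey filtration.
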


\begin{proof}
This is immediate from Theorem \ref{ls} and Lemma \ref{thsimpsi}.
\end{proof}

This yields an algorithm for expanding any monomial in the $E_\th$ in the PBW basis. Namely given a monomial $E_{\ka_1}E_{\ka_2}\cdots E_{\ka_k}$, repeatedly apply the following types of moves:
\begin{itemize}
\item
If $\ka_l \prec \ka_{l+1}$, replace $E_{\ka_l}E_{\ka_{l+1}}$ with $q^{\ka_l\cdot\ka_{l+1}}E_{\ka_{l+1}}E_{\ka_{l}}$ plus the correction term from Theorem \ref{lsf}.
\item If $\ka_l$ and $\ka_{l+1}$ are both imaginary replace the product $E_{\ka_l}E_{\ka_{l+1}}$ with a sum of terms $E_\th$, which is possible by Corollary \ref{improd}.
\end{itemize}

\section{Minimal Pairs}

Let $\a$ be a positive root. Define $S(\a)$ to be the quotient of $R(\a)$ by the two-sided ideal generated by the set of $e_\ii$ such that $e_\ii L=0$ for all semicuspidal modules $L$.

\begin{lemma}
 There is an equivalence of categories between the category of $S(\a)$ modules and the full subcategory of semicuspidal $R(\a)$-modules.
\end{lemma}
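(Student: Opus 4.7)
The plan is to invoke the standard fact that for a two-sided ideal $J$ in an algebra $R$, the category of $R/J$-modules is canonically equivalent, via inflation, to the full subcategory of $R$-modules on which $J$ acts as zero. So the proof reduces to identifying the category of semicuspidal $R(\a)$-modules with the full subcategory of $R(\a)$-modules annihilated by the defining ideal $J$ of $S(\a)$.

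First I would reformulate semicuspidality as a support condition on weight spaces. For any $R(\a)$-module $M$ one has $M = \bigoplus_{\ii \in \Seq(\a)} e_\ii M$, and the restriction decomposes as $\Res_{\la,\mu} M = \bigoplus_{\ii \in \Seq(\la),\, \jj \in \Seq(\mu)} e_{\ii \jj} M$. Hence $\Res_{\la,\mu} M \neq 0$ if and only if $e_\ka M \neq 0$ for some $\ka \in \Seq(\a)$ admitting a splitting $\ka = \ii\jj$ with $\ii \in \Seq(\la)$ and $\jj \in \Seq(\mu)$. Calling a sequence $\ka \in \Seq(\a)$ \emph{bad} if it admits a splitting $\ka = \ii\jj$ where $\sum \ii$ is not a sum of roots $\preceq \a$ or $\sum \jj$ is not a sum of roots $\succeq \a$, we see that $M$ is semicuspidal if and only if $e_\ka M = 0$ for every bad $\ka$.

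Next I would observe that every bad $\ka$ lies in the generating set $\{e_\ii : e_\ii L = 0 \text{ for all semicuspidal } L\}$ of $J$: by the preceding paragraph, $e_\ka L = 0$ on any semicuspidal $L$. Conversely, any $\ii$ in that generating set manifestly annihilates every semicuspidal module. Consequently, an $R(\a)$-module $M$ is semicuspidal if and only if $e_\ii M = 0$ for every generator $e_\ii$ of $J$.

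Finally, I would promote the annihilation of generators to annihilation by the whole ideal. If $e_\ii M = 0$ for a generator and $r, s \in R(\a)$, $m \in M$, then $r e_\ii s m = r\,(e_\ii(sm)) = 0$, since $sm \in M$ and $e_\ii$ annihilates $M$ (here idempotency of $e_\ii$ matters only via the fact that $e_\ii M = 0$ is a statement about the image of $M$, which contains $sm$). So the two-sided ideal $J$ acts as zero on $M$, proving $J M = 0$; the reverse implication is trivial. Combined with the first paragraph, this yields the desired equivalence. The only subtle point, and thus the mildly delicate step, is recognising that because the ideal is generated by \emph{idempotents}, killing the generators on $M$ forces the whole ideal to act as zero — there is no genuine obstacle.
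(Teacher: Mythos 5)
Your proof is correct, but it takes a genuinely different route from the paper's. The paper handles the nontrivial direction (every $S(\a)$-module is semicuspidal) through composition factors: it uses the classification of simples by root partitions (Theorem \ref{fdmain}) together with the nonvanishing of $\Res_{\la_1,\ldots,\la_l}L(\la)$ from Lemma \ref{irreduciblehead} to show that a non-semicuspidal simple subquotient of $M$ would be detected by an idempotent whose image in $S(\a)$ is zero. You never mention simple modules: you rewrite semicuspidality of an arbitrary module as the vanishing of $e_\ka M$ for all ``bad'' words $\ka$, note that each bad $e_\ka$ annihilates every semicuspidal module $L$ (because $e_\ka L\subseteq\Res_{\la,\mu}L=0$ for the bad splitting $(\la,\mu)$) and hence lies in the generating set of the defining ideal, and then conclude via the standard inflation equivalence for a quotient by a two-sided ideal. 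Your argument is more elementary and self-contained: it needs only the weight-space decomposition and the definition of $\Res$, makes no appeal to composition series (so it applies verbatim to arbitrary modules without any finiteness bookkeeping), and it shows in passing that the ideal generated by the bad idempotents cuts out the same module category. What the paper's argument buys is the explicit link to its cuspidal-systems machinery, recording that the simples of the subcategory are exactly the semicuspidal simples, which is the form in which the statement gets reused. One small quibble: your closing remark that idempotency of the generators is what allows passing from ``the generators kill $M$'' to ``the ideal kills $M$'' is off the mark --- for any generating set, if each generator $x$ annihilates all of $M$ then so does every element $\sum_i r_i x_i s_i$ of the two-sided ideal, since $x_i(s_i m)\in x_i M=0$; idempotency plays no role in that step.
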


\begin{proof}
It is clear from the definition that any semicuspidal $R(\a)$-module is a $S(\a)$-module.

Conversely suppose that $M$ is a $S(\a)$-module. Suppose that $\la=(\la_1,\ldots,\la_l)$ is a root partition such that $q^n L(\la)$ appears as a subquotient of $M$. Then $e_\ii M\neq 0$ for some $\ii$ which is the concatenation of $\ii_1,\ldots,\ii_l$ in $\Seq(\la_1),\ldots,\Seq(\la_l)$ respectively. If $\la\neq \a$, then $e_\ii L=0$ for all semicuspidal $R(\a)$-modules $L$. Therefore $e_\ii$ has zero image in $S(\a)$, contradicting $e_\ii M\neq 0$. Hence all composition factors of $M$ are semicuspidal, so $M$ is semicuspidal.
\end{proof}

\begin{definition}
Let $\a$ be a positive root. A minimal pair for $\a$ is an ordered pair of roots $(\b,\ga)$ satisfying $\a=\b+\ga$, $\ga\prec\b$ and there is no pair of roots $(\b',\ga')$ satisfying $\a=\b'+\ga'$ and $\ga\prec\ga'\prec\b'\prec\b$.
\end{definition}

\begin{lemma}\label{oppositerestrict}
Let $\a$ be a positive root and let $(\b,\ga)$ be a minimal pair for $\a$. Let $L$ be a cuspidal representation of $R(\a)$. Then $\Res_{\ga,\b}L$ is a $S(\ga)\otimes S(\b)$-module.
\end{lemma}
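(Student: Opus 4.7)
I would argue by contradiction, assuming $\Res_{\ga,\b}L$ has an irreducible composition factor $M\otimes N$ with, without loss of generality, $M$ not semicuspidal as an $R(\ga)$-module (the symmetric case with $N$ not semicuspidal follows by a parallel argument using the coinduction adjunction (\ref{oppositeadjunction})). By Theorem \ref{fdmain}, write $M=A(K_1,\ldots,K_r)$ with $K_i$ irreducible semicuspidal of $R(p_i\a_i)$ and $\a_1\succ\cdots\succ\a_r$ summing to $\ga$. Non-semicuspidality of $M$ forces either $\a_1\succ\ga$ or $\a_r\prec\ga$ strictly. Using Lemma \ref{irreduciblehead}(2) together with adjunction, the extremal tensor factors $K_1\otimes A(K_2,\ldots,K_r)$ (respectively $A(K_1,\ldots,K_{r-1})\otimes K_r$) embed into $\Res_{p_1\a_1,\ga-p_1\a_1}M$ (respectively $\Res_{\ga-p_r\a_r,p_r\a_r}M$); by exactness of restriction inside $\Res_{\ga,\b}L$ this yields that the three-step restriction $\Res_{p_1\a_1,\nu,\b}L$ (or $\Res_{\nu',p_r\a_r,\b}L$) is non-zero, where $\nu=\ga-p_1\a_1$ and $\nu'=\ga-p_r\a_r$.

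The main tools are then the cuspidality of $L$, applied to two-step coarsenings of these three-step restrictions, and the cone theorem from Section 3. In the case $\a_r\prec\ga$, cuspidality applied to $\Res_{\nu',p_r\a_r+\b}L$ yields $p_r\a_r+\b=\sum_k\delta_k$ with $\delta_k\succ\a$, and the cone theorem with $A=\{\a_r,\b\}$ and $B=\{\delta_k\}$ (after cancelling any $\delta_k$ equal to $\b$, which would force $p_r\a_r=0$) directly contradicts this identity: the two disjoint root-sets satisfy $A\prec B$, so their positive cones meet only at the origin, yet the equation supplies a non-zero common element.

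The case $\a_1\succ\ga$ is the main obstacle. Cuspidality applied to $\Res_{p_1\a_1,\nu+\b}L$ gives both $p_1\a_1=\sum_j\xi_j$ with $\xi_j\prec\a$ and $\nu+\b=\sum_k\delta_k$ with $\delta_k\succ\a$. Applying the cone theorem to $A=\{\xi_j\}$ and $B=\{\a_1\}$ first forces $\a_1\prec\a$ (since otherwise $p_1\a_1$ would lie in two disjoint cones), so that $\ga\prec\a_1\prec\a\prec\b$ and $\a_1$ lies strictly between $\ga$ and $\b$ in the convex order. The remaining identity is $\a=p_1\a_1+\sum_k\delta_k$ with $\a_1\prec\a$ and $\delta_k\succ\a$. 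The plan is to convert this multi-term decomposition of $\a$ into a two-term root decomposition $\a=\b'+\ga'$ satisfying $\ga\prec\ga'\prec\b'\prec\b$, which would violate the minimality of the pair $(\b,\ga)$. I would do this by an iterative convexity argument: repeatedly combining nearby roots in the decomposition (using that if $\xi+\eta$ is a root with $\xi\prec\eta$ then $\xi\prec\xi+\eta\prec\eta$) to reduce the number of terms, while keeping track that the resulting two-term split lies strictly inside the interval $(\ga,\b)$. Executing this combinatorial reduction, and in particular checking that the produced root $\b'$ stays strictly below $\b$, is where the difficulty of the proof lies.
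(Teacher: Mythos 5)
Your opening moves are sound: a non-semicuspidal factor of $\Res_{\ga,\b}L$ does force a three-step restriction of $L$ to be nonzero, and cuspidality of $L$ then yields the identities you record, including the deduction $\ga\prec\a_1\prec\a$ in your main case. But the argument has a genuine gap exactly where you admit the difficulty lies, and I do not think it can be closed by convexity alone. In the main case your data is $\a=p_1\a_1+\sum_k\delta_k$ with $\a_1\prec\a$ and $\delta_k\succ\a$; to contradict minimality of $(\b,\ga)$ you must produce roots $\b',\ga'$ with $\a=\b'+\ga'$ and $\ga\prec\ga'\prec\b'\prec\b$, yet nothing in your constraints bounds the $\delta_k$ above by $\b$ --- cuspidality only gives $\delta_k\succ\a$, and they may well be $\succ\b$, in which case any two-term decomposition you reach by merging terms (e.g.\ $\a=\a_1+\delta_1$ with $\delta_1\succ\b$) violates nothing. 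The same unestablished upper bound is silently used in your ``easy'' case $\a_r\prec\ga$: the cone theorem with $A=\{\a_r,\b\}$ and $B=\{\delta_k\}$ requires $\b\prec\delta_k$ for every $k$, whereas you only know $\delta_k\succ\a$; you dispose of the possibility $\delta_k=\b$, but not of $\a\prec\delta_k\prec\b$. So both branches rest on control over the upper ends of these decompositions that you have not obtained.

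That missing control is exactly what the paper's proof supplies by working in $\f^*$ rather than with root sums: it expands $[\Res_{\ga,\b}L]$ in the dual PBW basis, identifies each coefficient as $\langle E_\pi E_\sigma,[L]\rangle$, and straightens the monomial $E_\pi E_\sigma$ with the Levendorskii--Soibelman formula (Theorems \ref{ls} and \ref{lsf}), whose key feature is that every correction term is a root partition supported \emph{strictly between} the two roots being exchanged. That ``strictly between'' statement is an algebraic fact about $\f$, not a consequence of convexity of the order, and it is precisely what lets the minimality of $(\b,\ga)$ force the only surviving terms to be cuspidal tensor cuspidal. To salvage your route you would need an analogous two-sided bound on which restrictions of $L$ can be nonzero, bounded above by $\b$ as well as below by $\ga$ --- which is essentially the content of Lemma \ref{oppositerestrict} itself. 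I would recommend reworking the argument along the paper's lines, or else isolating and proving the combinatorial claim you need in the main case, which at present I do not believe is true in the generality you use it.
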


\begin{proof}
Expand $[\Res_{\ga,\b}L]$ in the categorical dual PBW basis
\[
[\Res_{\ga,\b}L]= \sum_{\pi,\sigma} c_{\pi\sigma}E_\pi^* E_\sigma^*.
\]
Then
\[
c_{\pi\sigma}=\langle E_\pi\otimes E_\sigma,[\Res_{\ga,\b}L]\rangle =\langle E_\pi E_\sigma , [L]\rangle.
\]

In the previous section we showed how the Levendorskii-Soibelman formula gave an algorithm for expanding the product $E_\pi E_\sigma$ into the PBW basis. Each term $E_{\ka_1}\cdots E_{\ka_n}$ which appears at some point in this expansion has $\ka_1\succeq\pi_1\succeq \b$ and $\ka_n\preceq \sigma_l\preceq \ga$.

The only PBW basis elements which fail to be orthogonal to $[L]$ are those of the form $E_\a$ if $\a$ is real and $E_s$ with $s\in S$ if $\a$ is imaginary. For such a term to appear, it must arise as a result of applying Theorem \ref{lsf} to a term $E_{\ka_1}E_{\ka_2}$ with $\ka_1+\ka_2=\a$.

We have already showed that $\ka_1\succeq \b$ and $\ka_2\preceq \ga$. To apply the Levendorskii-Soibelman formula we need $\ka_1\prec \ka_2$ and we also know $\ka_1+\ka_2=\a$. Since $(\b,\ga)$ is a minimal pair, this forces $\ka_1=\b$ and $\ka_2=\ga$. Therefore the coefficient $c_{\pi\sigma}$ can only be nonzero if $\pi=\ka_1$ and $\sigma=\ka_2$. Hence $[\Res_{\ga\b}L]$ is a linear combination of elements of the form $[L_\ga]\otimes [L_\b]$ where $L_\ga$ and $L_\b$ are cuspidal representations of $R(\ga)$ and $R(\b)$. This implies that $\Res_{\ga\b}L$ is a $S(\ga)\otimes S(\b)$-module, as required.
\end{proof}

A chamber coweight $\w$ is said to be \emph{adapted} to the convex order $\prec$ if it is a fundamental coweight for the positive system $p(\Phi_{\succ\d})$ in $\Phi_f$. Let $\w$ be such a chamber coweight. Then there exists a root $\a\in p(\Phi_{\succ\d})$ such that $\langle\w,\a\rangle=1$ and $\langle\w,\b\rangle=0$ for all $\b\in p(\Phi_{\succ\d})\setminus\{\a\}$. Let $\w_+=\tilde{\a}$ and $\w_-=\widetilde{-\a}$.
We will always assume that all chamber coweights are adapted to the given convex order.

\begin{lemma}
Let $\a$ be a positive real root which is not simple that does not have a real minimal pair. Then there exists a chamber coweight $\om$ adapted to $\prec$ such that $\a=\om_++n\d$ or $\a=\om_-+n\d$ for some $n\in\N$.
\end{lemma}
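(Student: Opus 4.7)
The plan is to reduce the lemma to showing that $p(\a)$ is a simple root of the positive system $p(\Phi_{\succ\d}) = w\Phi_f^-$ (when $\a\succ\d$), or of $p(\Phi_{\prec\d}) = w\Phi_f^+$ (when $\a\prec\d$), where $w\in W_f$ is the element provided by the preceding lemma. Granted this, writing $\a = \tilde\mu + m\d$ with $\mu := p(\a)$ and identifying a simple root of $w\Phi_f^-$ as some $-w\a_i$ displays $\a$ as $\widetilde{-w\a_i}+m\d = \w_++m\d$ for the chamber coweight $\w$ adapted to $\prec$ dual to $-w\a_i$; the case $\a\prec\d$ is symmetric and yields $\a = \w_-+n\d$. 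I will focus on $\a\succ\d$, so $\mu\in w\Phi_f^-$.

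The heart of the argument is the following auxiliary claim: if $\a$ admits a real decomposition $\a = \a_1+\a_2$ with both $\a_1, \a_2 \in \Phi_{\succ\d}$, then $\a$ has a real minimal pair. The set of such ``one-sided'' real decompositions is finite, since the decompositions $\mu = \mu_1+\mu_2$ with $\mu_i \in w\Phi_f^-$ are finite in number and the admissible $\d$-shifts are bounded by $m$. I would select a pair $(\a_1,\a_2)$ with $\a_2\prec\a_1$ that is minimal in the tightening order among one-sided pairs, then verify that no intervening pair $(\b',\ga')$ with $\a_2\prec\ga'\prec\b'\prec\a_1$ exists at all. Any such $\ga'$ must satisfy $\ga'\succ\a_2\succ\d$ and $\b'\succ\d$, so an imaginary entry would contradict its equivalence to $\d$ in the convex preorder; the only remaining possibility is that $(\b',\ga')$ is itself a one-sided real pair, contradicting minimality.

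Given the auxiliary claim, suppose for contradiction that $\mu$ is not simple in $w\Phi_f^-$, so one may write $\mu = \mu_1+\mu_2$ with $\mu_i\in w\Phi_f^-$. A short case analysis of the signs of $\mu_i$ in $\Phi_f$ shows $\tilde\mu_1+\tilde\mu_2 = \tilde\mu+k\d$ for some $k\in\{0,1\}$. If $m\geq k$, the pair $(\tilde\mu_1+n_1\d,\tilde\mu_2+n_2\d)$ with $n_1+n_2 = m-k$ is a one-sided real decomposition of $\a$, and the auxiliary claim produces a real minimal pair, contradicting the hypothesis. Otherwise $m<k$ forces $m=0$; but then $\a-\d$ fails to be a positive affine root, so no imaginary pairs are available to intervene between real pairs and ``no real minimal pair'' collapses to ``no real decomposition''. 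Classifying the possible finite-root decompositions $\mu = \delta_1+\delta_2$ then forces $\mu$ to be either a finite simple root (if $\mu\in\Phi_f^+$) or $-\theta$ where $\theta$ is the highest root (if $\mu\in\Phi_f^-$, using that every non-highest $\beta\in\Phi_f^+$ admits some $\rho\in\Phi_f^+$ with $\beta+\rho\in\Phi_f^+$), making $\a$ equal to the simple affine root $\a_i$ or $\a_0$ and contradicting non-simplicity. Therefore $\mu$ is simple in $w\Phi_f^-$ and $\a = \w_++m\d$.

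The main obstacle is the rigorous verification of the auxiliary claim, particularly the argument that no imaginary pair can lie strictly between two real roots both in $\Phi_{\succ\d}$; this rests essentially on the fact that all positive imaginary roots are equivalent to $\d$ in the convex preorder. The remaining work is routine bookkeeping with lifts of finite-root decompositions to $\Phi^+_{\re}$.
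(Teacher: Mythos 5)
Your proposal is correct and follows essentially the same route as the paper: reduce to showing $p(\a)$ is simple for the positive system $p(\Phi_{\succ\d})$ (resp.\ $p(\Phi_{\prec\d})$), lift a putative decomposition of $p(\a)$ to a sum of two real roots on the same side of $\d$, and note that such a one-sided decomposition forces a real minimal pair. The only differences are minor: you spell out the step the paper asserts (one-sided real decomposition $\Rightarrow$ real minimal pair via finiteness and the fact that imaginary roots are equivalent to $\d$), and you handle the leftover case $m<k$ by showing $\a$ would be an affine simple root, whereas the paper instead observes that the hypothesis already forces $\a-\d\in\Phi^+$, so that case never occurs.
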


\begin{proof}
Since every root which is not simple has a minimal pair, if $\a$ has no real minimal pair it must be that $\a-\d$ is also a root.

Without loss of generality suppose $\a\succ \d$. The $p(\a)$ is a positive root in $\Phi_f$. We have to prove that $p(\a)$ is simple. Suppose for want of a contradiction that $p(\a)=\b+\ga$ for two positive roots $\b$ and $\ga$. Then $\a=\tilde{\b}+\tilde{\ga}+n\d$ for some $n$. If $n\geq 0$ then we can use this expression to write $\a$ as a sum of two roots both greater than $\d$, which proves that $\a$ has a real minimal pair.

Therefore the only case left to consider is if $\tilde{\b}+\tilde{\ga}-2\d$ is a positive root. When writing $\tilde{\b}+\tilde{\ga}$ in the form $n\d+x$ with $x\in\Phi_f$, $n\geq 2$ with equality if and only if $\b$ and $\ga$ are negative under the usual positive system on $\Phi_f$. Therefore it is impossible for $\tilde{\b}+\tilde{\ga}-2\d$ to be a root.
\end{proof}

\section{Independence of Convex Order}

In this section, we prove some results detailing how some modules which a priori depend on the entire convex order $\prec$, only depend on the positive system $p(\Phi_{\prec \d})$.

\begin{theorem}\label{independence}
Let $\w$ be a chamber coweight.
The algebras $S(\omega_+)$ and $S(\om_-)$ only depend on the set $p(\Phi_{\prec \d})$. 
\end{theorem}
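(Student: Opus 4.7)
My plan is to observe that $S(\omega_+) = R(\omega_+)/J$, where $J$ is the two-sided ideal generated by those idempotents $e_\ii$ annihilating every semicuspidal $R(\omega_+)$-module. Since $R(\omega_+)$ itself is independent of the convex order, it suffices to show that the category of semicuspidal $R(\omega_+)$-modules depends only on $\Psi^- := p(\Phi_{\prec\delta})$; the analogous statement for $S(\omega_-)$ will then follow by applying the same argument to the reverse convex order. As $\omega_+$ is real, semicuspidality coincides with cuspidality, so I will fix two convex orders $\prec$ and $\prec'$ with $p(\Phi_{\prec\delta}) = p(\Phi_{\prec'\delta}) = \Psi^-$ and prove that an $R(\omega_+)$-module $L$ is cuspidal with respect to $\prec$ iff it is cuspidal with respect to $\prec'$.

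First, applying Theorem \ref{3.11} to the finite set of positive roots appearing in any decomposition $\omega_+ = \lambda + \mu$ with $\lambda,\mu\neq 0$, I reduce to the case of word-type orders. Cuspidality then amounts to the conjunction, over all such decompositions with $\Res_{\lambda,\mu}L \neq 0$, of the two conditions (A) $\lambda$ is a sum of positive roots $\prec \omega_+$ and (B) $\mu$ is a sum of positive roots $\succ \omega_+$; so it suffices to show that the set of $(\lambda,\mu)$ satisfying (A) and (B) depends only on $\Psi^-$.

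The key combinatorial input is that $\omega_+ = \tilde\alpha$ is minimal among positive affine roots projecting to the simple root $\alpha$ of $\Psi^+ = -\Psi^-$ singled out by $\omega$. This minimality rules out decompositions with $\lambda$ or $\mu$ purely imaginary, since $\omega_+ - n\delta \notin \N I$ for $n \geq 1$; hence $p(\lambda),p(\mu) \neq 0$ and, by simplicity of $\alpha$ in $\Psi^+$, exactly one of these projections lies in the cone $\Z_{\geq 0}\Psi^+\setminus\{0\}$ and the other in $\Z_{\leq 0}\Psi^+$. The technical heart of the proof will be the structural claim that for any convex order with $p(\Phi_{\prec\delta}) = \Psi^-$, the positive affine roots strictly greater than $\omega_+$ are exactly those projecting to $\Psi^+ \setminus \{\alpha\}$. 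The easy direction follows from convexity: $\omega_+ + m\delta \prec \omega_+$ for $m \geq 1$ by the standard decreasing-chain argument, $n\delta \prec \omega_+$ since $\delta \prec \omega_+$, and roots in $\Phi_{\prec\delta}$ are below $\delta$. For the reverse direction I will invoke Ito's $n$-row description of $\Phi_{\succ\delta}$ together with the fact that $\omega$ singles out $\alpha$ among the simples of $\Psi^+$, in order to argue that $\alpha$'s row sits at the bottom of the $\omega^*\cdot n$ decomposition, forcing every other row to lie entirely above $\omega_+$.

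With this structural identification in place, conditions (A) and (B) translate into explicitly $\Psi^-$-dependent requirements: (A) becomes $p(\lambda) \in \N\alpha + \N\Psi^-$ (with any leftover imaginary content absorbed by adding copies of $\delta \prec \omega_+$), while (B) becomes $p(\mu) \in \N(\Psi^+ \setminus \{\alpha\})$. Both conditions visibly depend only on $\Psi^-$ and the chamber coweight $\omega$, so the set of cuspidal decompositions coincides for $\prec$ and $\prec'$, yielding equality of the defining ideals $J$ and hence of $S(\omega_+)$. The main obstacle will be rigorously establishing the structural characterization of the roots $\succ \omega_+$ via Ito's lemma; once that is secured, the rest of the argument amounts to unpacking definitions.
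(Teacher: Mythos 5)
Your reduction of the theorem to the assertion that the cuspidality condition at $\w_+$ is intrinsic to $\Psi^-=p(\Phi_{\prec\d})$ is legitimate in principle (for a real indivisible root, $S(\w_+)$ is indeed determined by which $e_\ii$ kill the cuspidal modules), but the structural claim you place at the technical heart is false, and with it your translations of (A) and (B). Only one inclusion of your characterisation is valid: every root $\succ\w_+$ does project to $\Psi^+\setminus\{\a\}$. The converse fails, even for roots $\ga$ with $\w_+-\ga\in\N I$, which are exactly the ones relevant to cuspidality at $\w_+$. Concretely, in type $A_3^{(1)}$ take $\Psi^+=s_1s_2\Phi^+_f$, whose simple roots are $\a_2$, $-\a_1-\a_2$ and $\a:=\a_1+\a_2+\a_3$, and let $\w$ be the fundamental coweight of $\Psi^+$ dual to $\a$, so $\w_+=\tilde\a=\a_1+\a_2+\a_3$. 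Using the paper's construction of convex orders from an injective linear map $h\map{\Q\Phi}{\R}$ (order by $h(\cdot)/|\cdot|$), set $h(\d)=4$; then a real root $\b+n\d$ lies in $\Phi_{\succ\d}$ if and only if $h(\b)>\operatorname{ht}(\b)$, independently of $n$. Both choices $(h(\a_1),h(\a_2),h(\a_3))=(0.9,\,1.05,\,5)$ and $(0.5,\,1.4,\,1.2)$ (perturbed slightly to make $h$ injective) therefore give convex orders with $p(\Phi_{\succ\d})=\Psi^+$; but in the first, $h(\a_2)/1=1.05<6.95/3=h(\a)/3$, so $\a_2\prec\w_+$, while in the second, $1.4>3.1/3$, so $\a_2\succ\w_+$. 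Since $p(\a_2)=\a_2\in\Psi^+\setminus\{\a\}$ and $\w_+-\a_2\in\N I$, the set of roots greater than $\w_+$ is genuinely not determined by $\Psi^-$, so no argument via Ito's $n$-row lemma can establish your claim -- the statement itself is wrong. The same example refutes your restatement of (A): $\la=\a_2$ is a sum of roots $\prec\w_+$ in the first order but not in the second, while a short coefficient computation (the $\a_1$- and $\a_3$-coefficients force $\a$ and $-\a$ to occur equally often, leaving a nonpositive $\a_2$-coefficient) shows $\a_2\notin\N\a+\N\Psi^-$. This does not contradict the theorem, because for the decomposition $\w_+=\a_2+(\a_1+\a_3)$ the condition on $\mu=\a_1+\a_3$ fails in both orders; but it shows that any repair must treat the pair $(\la,\mu)$ jointly rather than root by root, and that is exactly what your outline does not supply.

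The paper avoids this combinatorial problem entirely and argues at the level of Grothendieck groups: $S(\w_\pm)$ is determined by the unique simple cuspidal module $L(\w_\pm)$, whose class is $(1-q^2)E_{\w_\pm}$ by Theorem \ref{realdualpbw}; for two word-type orders with the same $\Psi^-$ one chooses $w$ and $u$ with $\Phi(w)\subset\Phi(u)$ and places both root vectors $E^{\prec}_{\w_\pm}$, $E^{\prec'}_{\w_\pm}$ inside Lusztig's subspace $U^+(u)$ of \cite[Proposition 40.2.1]{lusztigbook}, whose $\w_\pm$-weight space is one-dimensional precisely because $p(\w_\pm)$ is simple in the relevant positive system; the scalar is then $1$ because both vectors are classes of simple self-dual modules. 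So the simplicity of $\a$ does enter, but through a weight-space dimension count rather than through a classification of the roots above $\w_+$, which, as the example above shows, is not an order-independent notion.
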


\begin{remark}
 For a balanced convex order, this is \cite[Lemma 5.2]{kleshchev}.
\end{remark}

\begin{proof}
It suffices to prove that the simple modules $L(\w_-)$ and $L(\w_+)$ depend only on $p(\Phi_{\prec \d})$.

We write $E_\a^\prec$ for the root vector defined using the convex order $\prec$. Let $\prec$  and $\prec'$ be two convex orders with $p(\Phi_{\prec \d})=p(\Phi_{\prec' \d})$. Without loss of generality we may assume that $\prec$ and $\prec'$ are of word type. 
Label the roots smaller than $\d$ as $\a_1\prec\a_2\prec\cdots$ and $\a_1'\prec'\a_2'\prec'\cdots$. Let $n$ and $N$ be such that 
 \[
\w_-\in\{\a_1,\ldots,\a_n\}\subset \{\a_1',\ldots,\a_N'\}
\]

Let $w$ be the element of $W$ such that $\Phi(w)=\{\a_1,\ldots,\a_n\}$ and let $u\in W$ be such that $\Phi(u)=\{\a_1',\ldots,\a_N'\}$. Then $\Phi(w)\subset \Phi(u)$. 
Hence if we fix a reduced expression for $w$ (in particular the one used to define $E_{\w_-}^\prec$) then there exists a reduced expression for $u$ beginning with this fixed reduced expression for $w$.

By \cite[Prop 40.2.1]{lusztigbook} there exists a subspace $U^+(u)$ of $\f$ which contains $E_{\w_-}^\prec$ and $E_{\w_-}^{\prec'}$. The dimension of $U^+(u)_{\w_-}$ is equal to the number of ways of writing $\w_-$ as a $\N$-linear combination of roots in $\Phi^+(u)$. Any nontrivial expression contradicts the simplicity of $p(\w_-)$, hence this space is one-dimensional, so $E_{\w_-}^\prec$ and $E_{\w_-}^{\prec'}$ are scalar multiples of one another.

By Theorem \ref{realdualpbw}, $(1-q^2)E_{\w_-}$ is the character of the irreducible module $L(\w_-)$, hence this scalar must be one and the module $L(\w_-)$ is the same for the convex orders $\prec$ and $\prec'$. This completes the proof for $\w_-$ and the proof for $\w_+$ is similar.
\end{proof}

\begin{lemma}\label{strongminimalpair}
 Let $(\b,\ga)$ be a minimal pair for $\d$. Let $L_\b$ and $L_\ga$ be cuspidal $R(\b)$ and $R(\ga)$-modules respectively. Then $\Res_{\ga\b}(L_\ga\circ L_\b) \cong L_\ga\otimes L_\b$ and $\Res_{\ga\b}(L_\b\circ L_\ga) \cong q^{-\b\cdot \ga} L_\ga\otimes L_\b$.
\end{lemma}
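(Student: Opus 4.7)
The plan is to apply the Mackey filtration (Theorem~\ref{mackey}) to each of the two induced-restricted modules and verify that in each case only one layer survives with the correct grading shift.

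For $\Res_{\ga,\b}(L_\ga\circ L_\b)=\Res_{\ga,\b}\circ\Ind_{\ga,\b}(L_\ga\otimes L_\b)$, the Mackey layers are indexed by $\nu\in\N I$ with $0\le\nu\le\ga$ and $0\le\nu\le\b$ componentwise; the $\nu$-layer is obtained (up to a grading shift) from $\Res_{\ga-\nu,\nu}L_\ga\otimes\Res_{\nu,\b-\nu}L_\b$ by swapping the middle two tensor factors and reinducing, so that the $\nu=0$ layer is $L_\ga\otimes L_\b$ with no grading shift (no swap occurs). For $\Res_{\ga,\b}\circ\Ind_{\b,\ga}(L_\b\otimes L_\ga)$, an analogous Mackey analysis exhibits a single ``fully-swapped'' diagonal layer whose grading shift is the degree of the intertwining $\tau_{w[\b,\ga]}$, namely $-\b\cdot\ga$; this layer is $q^{-\b\cdot\ga}L_\ga\otimes L_\b$, matching the claim.

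The heart of the argument is therefore that every other Mackey layer vanishes. For a hypothetical nonzero layer with $\nu\ne 0$, both $\Res_{\ga-\nu,\nu}L_\ga$ and $\Res_{\nu,\b-\nu}L_\b$ must be nonzero, so cuspidality of $L_\ga$ and $L_\b$ forces $\nu$ to be simultaneously a sum of positive roots $\succ\ga$ and a sum of positive roots $\prec\b$, together with $\ga-\nu$ a sum of roots $\prec\ga$ and $\b-\nu$ a sum of roots $\succ\b$. If every summand in the $\succ\ga$-decomposition of $\nu$ lay in $\{\delta\succeq\b\}$, then $\nu\in\operatorname{cone}(\{\delta\succeq\b\})\cap\operatorname{cone}(\{\delta\prec\b\})=\{0\}$ by the convexity theorem at the beginning of Section~3, contradicting $\nu\ne 0$. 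Hence some positive real root $\epsilon\in(\ga,\b)$ appears. Since $\d$ is $W$-invariant in symmetric affine type, $\d-\epsilon$ is again a positive real root, so $\d=\epsilon+(\d-\epsilon)$ is a decomposition of $\d$ into two positive real roots and the minimality of $(\b,\ga)$ forces $\d-\epsilon\preceq\ga$ or $\d-\epsilon\succeq\b$.

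I expect the main obstacle to be converting this extremality into a genuine contradiction with $\nu\ne 0$. The plan is to use the decomposition $\d=(\ga-\nu)+2\nu+(\b-\nu)$, whose outer summands are sums of roots $\prec\ga$ and $\succ\b$ respectively, and to combine the above extremality with iterated applications of the convexity theorem and the minimality of $(\b,\ga)$ to show that any remaining $(\ga,\b)$-contributions can be reorganised into pairs of real roots summing to $\d$ and lying strictly inside $(\ga,\b)$, contradicting minimality unless those contributions are empty. This delicate case analysis, rather than the Mackey step itself, is the technical crux.
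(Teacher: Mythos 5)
Your Mackey set-up is the same as the paper's: both proofs reduce the statement to showing that in the Mackey filtration of $\Res_{\ga,\b}(L_\ga\circ L_\b)$ (and of $\Res_{\ga,\b}(L_\b\circ L_\ga)$) only the expected layer survives, and the identification of that layer and of the shift $q^{-\b\cdot\ga}$ is correct. But the crux, the vanishing of all other layers, is exactly what you leave as a ``plan'', and the plan as sketched does not go through. First, your cuspidality constraints are only available when both pieces of each restriction are nonzero, so they say nothing in the layers $\nu=\ga$ and $\nu=\b$; these diagonal layers are precisely the hard cases, and the paper has to work to exclude them (for $\nu=\ga$ it computes $(\ga,\b-\ga)=-4$, deduces that some summand $\nu_j\succ\b$ of $\b-\ga$ has $(\ga,\nu_j)<0$, so $\ga+\nu_j$ is a root strictly between $\ga$ and $\b$ of height strictly less than $|\b|$, and derives a contradiction). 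Second, your key claim that for a real root $\epsilon$ with $\ga\prec\epsilon\prec\b$ the element $\d-\epsilon$ is again a \emph{positive} real root is false: e.g.\ $\epsilon=\ga+\d$ gives $\d-\epsilon=\b-\d$, a negative root, so the appeal to minimality of $(\b,\ga)$ (which only constrains pairs of positive roots summing to $\d$) collapses at that point.

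What you are missing is the paper's reduction that makes such height/interval arguments possible at all. Since the cuspidal modules $L_\ga$ and $L_\b$ depend only on $p(\Phi_{\prec\d})$ (Theorem \ref{independence}), one may replace the given convex order by the order of Example \ref{tworow}; for that order the only roots strictly between $\ga$ and $\b$ are $\ga+n\d$, $n\d$ and $\b+n\d$ with $n\geq 1$, all of height exceeding $|\b|$. This explicit description of the interval is what turns the convexity constraints into genuine contradictions: it first pins the middle term down to $\nu\in\{0,\ga,\b\}$, and then kills $\nu=\ga$ and $\nu=\b$ via the root $\ga+\nu_j$ produced above. Working with an arbitrary convex order, as you propose, the interval between $\ga$ and $\b$ is not under control, and your intended induction on ``reorganising $(\ga,\b)$-contributions into pairs summing to $\d$'' has no visible way to terminate; so as it stands the proposal has a genuine gap at the main step.
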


\begin{proof}
By Lemma \ref{independence}, without loss of generality, assume our convex order $\prec$ is as in Example \ref{tworow}. Thus the only roots between $\ga$ and $\b$ are of the form $\ga+n\d$, $\b+n\d$ or $n\d$.

Consider a nonzero quotient in the Mackey filtration of $\Res_{\ga,\b}(L_\ga\circ L_\b)$. Then we have $\la,\mu,\nu\in\N I$ such that $\la+\mu=\ga$, $\mu+\nu=\b$, $\la$ is a sum of roots less than or equal to $\ga$, $\nu$ is a sum of roots greater than or equal to $\b$, while $\mu$ is both a sum of roots greater than or equal to $\ga$ and a (possibly different) sum of roots less than or equal to $\b$.

Consider $\ga=\la+\mu$ which has been written as a sum of roots less than or equal to $\b$. No roots between $\ga$ and $\b$ can appear in this sum. By convexity of the convex order, the only options are $\mu=0$, $\mu=\ga$ and $\mu=\b$. We will have to show that the last two options are not possible.

So suppose for want of a contradiction that $\mu=\ga$. Then $\nu=\b-\ga=\sum_i \nu_i$ with each $\nu_i$ larger than $\b$. Note that there is at least two terms in this sum as $\b-\ga$ is not a root.

Since $(\ga,\b-\ga)=-4$, there exists an index $j$ such that $(\ga,\nu_j)<0$. Therefore $\ga+\nu_j$ is a root. Now consider
\begin{equation}\label{bgn}
 \b = (\ga+\nu_j) + \sum_{i\neq j} \nu_i.
\end{equation}
By convexity this implies $\ga+\nu_j\prec\b$ and as $\nu_j\succ\b\succ\ga$ it must be that $\ga+\nu_j\succ\ga$. 
The equation (\ref{bgn}) implies $|\ga+\nu_j|<|\b|$. But on the other hand we've classified all roots $\a$ between $\b$ and $\ga$ and none of them satisfy $|\a|<\b$, a contradiction.
The case $\mu=\b$ is handled similarly.

Therefore there is only one term in the Mackey filtration, which is the one where $\mu=0$, whence we obtain the lemma.
\end{proof}


\section{Simple Imaginary Modules}\label{sim}

We start by following \cite{kkk} and defining the $R$-matrices for KLR algebras. First we need to introduce some useful elements of $R(\nu)$.

For $1\leq a< n=|\nu|$ we define elements $\varphi_a\in R(\nu)$ by
\[
  \varphi_a e_\ii =
\begin{cases}
  (\tau_ay_a-y_{a}\tau_a)e_\ii  & \text{if $\ii_a=\ii_{a+1}$,} \\
\tau_ae_\ii & \text{otherwise.}
\end{cases}
\]

These elements satisfy the following properties
\begin{lemma}\cite[Lemma 1.3.1]{kkk}\label{klem}
 \hfill
\begin{enumerate}
\item $\varphi_a^2e_\ii=(
Q_{\nu_a,\nu_{a+1}}(x_a,x_{a+1})+\delta_{\nu_a,\nu_{a+1}}) e_\ii.$
\item
$\{\varphi_k\}_{1\le k<n}$ satisfies the braid relations.
\item
 For $w\in S_n$, let $w=s_{a_1}\cdots s_{a_\ell}$ be a reduced expression of $w$ and
set $\varphi_w=\varphi_{a_1}\cdots\varphi_{a_\ell}$.
Then $\varphi_w$ does not depend on the choice of reduced expressions of $w$.

\item For $w\in S_n$ and $1\le k\le n$, we have $\varphi_w x_k=x_{w(k)}\varphi_w$.
\item
For $w\in S_n$ and $1\le k<n$,
if $w(k+1)=w(k)+1$, then $\varphi_w\tau_k=\tau_{w(k)}\varphi_w$.\label{ga5}
\item
$\varphi_{w^{-1}}\varphi_we_\ii=\prod\limits_{ \substack{a<b,\\ w(a)>w(b)} }
(Q_{\ii_a,\ii_b}(x_a,x_b)+\delta_{\ii_a,\ii_b})e_\ii$. \label{ga6}
\end{enumerate}
\end{lemma}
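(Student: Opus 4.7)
The conceptual point of the definition is that $\varphi_a$ is engineered to absorb the ``defect'' $\pm e_\ii$ appearing in the KLR relation $(\tau_a y_l - y_{s_a(l)}\tau_a)e_\ii = \pm e_\ii$ in the case $\ii_a = \ii_{a+1}$. All six claims follow from direct manipulation with the generators, and my plan is to verify them in the order (1), (4), (2)--(3), (5), (6).

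For (1), I split on whether $\ii_a = \ii_{a+1}$. If not, $\varphi_a e_\ii = \tau_a e_\ii$ and the quadratic KLR relation $\tau_a^2 e_\ii = Q_{\ii_a,\ii_{a+1}}(y_a,y_{a+1})e_\ii$ matches the claim since $\delta_{\ii_a,\ii_{a+1}}=0$. If $\ii_a = \ii_{a+1}$, I rewrite $\varphi_a e_\ii = ((y_{a+1}-y_a)\tau_a - 1)e_\ii$ using $\tau_a y_a e_\ii = (y_{a+1}\tau_a-1)e_\ii$. Setting $u = y_{a+1}-y_a$ this gives $\tau_a u e_\ii = (-u\tau_a + 2)e_\ii$, while $\tau_a^2 e_\ii = 0$ since $Q_{i,i} = 0$; squaring collapses to $\varphi_a^2 e_\ii = e_\ii$, matching the formula. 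For (4), an induction on $\ell(w)$ reduces the claim to $\varphi_a y_k = y_{s_a(k)}\varphi_a$: for $k \notin \{a,a+1\}$ this is commutativity; for $k \in \{a,a+1\}$ with $\ii_a \neq \ii_{a+1}$ it is the defect-free KLR commutation; and when $\ii_a = \ii_{a+1}$ the extra $\tau_a y_a - y_a \tau_a$ built into the definition of $\varphi_a$ exactly cancels the defect.

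For (2)--(3), I verify the braid relation $\varphi_{a+1}\varphi_a\varphi_{a+1} = \varphi_a\varphi_{a+1}\varphi_a$ by case analysis on $(\ii_a,\ii_{a+1},\ii_{a+2})$. When the $\varphi$'s reduce to $\tau$'s, the cubic KLR relation applies directly; otherwise I expand using $\varphi_a = \tau_a y_a - y_a\tau_a$, push the $y$'s through using (4), and combine with the quadratic and cubic relations. Independence of $\varphi_w$ from the reduced expression follows formally. Statement (5) follows from (3) applied to $w s_k = s_{w(k)} w$, a length-additive factorisation since $w(k+1) = w(k)+1$ forces $\ell(ws_k) = \ell(w)+1$, so $\varphi_w \varphi_{s_k} = \varphi_{ws_k} = \varphi_{s_{w(k)}}\varphi_w$. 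Statement (6) is obtained by writing $w w^{-1}$ as a product of reduced expressions, identifying $\varphi_{w^{-1}}\varphi_w$ with the corresponding long product via (3), and iteratively applying (1) to collapse adjacent pairs $\varphi_a^2$, each contributing the factor $Q_{\ii_a,\ii_b}(y_a,y_b) + \delta_{\ii_a,\ii_b}$ indexed by the inversion it records.

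The main obstacle is the subcase of (2) where $\ii_a = \ii_{a+2}$: here the cubic KLR relation contributes the divided-difference polynomial $(Q_{\ii_a,\ii_{a+1}}(y_a,y_{a+1}) - Q_{\ii_a,\ii_{a+1}}(y_{a+2},y_{a+1}))/(y_a - y_{a+2})$, and one has to track carefully how the $y$-corrections built into the $\varphi$'s interact with this polynomial. Once that subcase is dispatched, every other part reduces to a short mechanical computation.
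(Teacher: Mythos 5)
The paper offers no argument for this lemma at all — it is quoted verbatim from [KKK, Lemma 1.3.1] — so your direct verification from the presentation (\ref{eq:KLR}) is a reasonable thing to attempt, and the parts you actually compute are right: the identity $\tau_a(y_{a+1}-y_a)e_\ii=(-(y_{a+1}-y_a)\tau_a+2)e_\ii$, the collapse $\varphi_a^2e_\ii=e_\ii$ when $\ii_a=\ii_{a+1}$, and the commutation (4) all check out, and your sketch of (6) (collapse the palindromic word $\varphi_{w^{-1}}\varphi_w$ from the middle, using (1) at each step and (4) to transport each factor back to the inversion it records) is the standard induction. However, there are two genuine gaps. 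First, your derivation of (5) proves the wrong identity: from (3) and length-additivity you get $\varphi_w\varphi_{s_k}=\varphi_{s_{w(k)}}\varphi_w$, which agrees with the asserted $\varphi_w\tau_k=\tau_{w(k)}\varphi_w$ only on idempotents $e_\ii$ with $\ii_k\neq\ii_{k+1}$; when $\ii_k=\ii_{k+1}$ one has $\varphi_ke_\ii\neq\tau_ke_\ii$ and the statement about $\tau_k$ does not follow formally. It can be repaired: writing $\varphi_ke_\ii=((y_{k+1}-y_k)\tau_k-1)e_\ii$ and using (4), the relation $\varphi_w\varphi_ke_\ii=\varphi_{w(k)}\varphi_we_\ii$ gives $(y_{w(k)+1}-y_{w(k)})(\varphi_w\tau_k-\tau_{w(k)}\varphi_w)e_\ii=0$, and one then needs the extra input that left multiplication by a nonzero polynomial is injective on $R(\nu)$ — which follows from the basis of Theorem \ref{basisofrnu}, since $R(\nu)$ is free over $\Q[y_1,\ldots,y_{|\nu|}]$ with basis $\{\tau_we_\ii\}$ — a step your argument never supplies.

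Second, the braid relation (2) in the case $\ii_a=\ii_{a+2}$ — the only part of the lemma that is not a short mechanical computation, and the one on which (3), (5) and (6) all rest — is explicitly left "to be dispatched" rather than proven, so as written the proposal asserts rather than establishes the key point. A clean way to close it: when $\ii_a=\ii_{a+1}=\ii_{a+2}$ you are inside the nilHecke algebra $R(3i)$, where $\varphi_a=\tau_ay_a-y_a\tau_a$ acts on the faithful polynomial representation $\Q[y_1,y_2,y_3]$ as the simple transposition $s_a$, so the braid relation is immediate from faithfulness; the remaining mixed cases (exactly two equal, or all distinct) then do reduce to the quadratic and cubic relations by the expansion you describe. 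With those two points filled in, your route is correct and is essentially the proof one would find by unwinding the citation to \cite{kkk}.
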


Let $M$ and $N$ be modules for $R(\la)$ and $R(\mu)$ respectively. Let $(\la,\mu)_n$ be the degree of $\phi_{w[\la,\mu]}$. Define the morphism $R_{M,N}\map{M\circ N}{q^{-(\la,\mu)_n}N\circ M}$ by
\[
R_{M,N}(u\otimes v)=\varphi_{w[\la,\mu]}v\otimes u.
\] 

In \cite{kkk} an algebra homomorphism $\psi_z\map{R(\nu)}{\Q[z]\otimes R(\nu)}$ is constructed where $\psi_z(e_\ii)=e_\ii$, $\psi_z(y_j)=y_j+z$ and $\psi_z(\tau_k)=\tau_k$. If $M$ is an $R(\nu)$-module we define the $R(\nu)$-module $M_z=\psi_z^*(\Q[z]\otimes M)$. The morphism $r_{M,N}\map{M\circ N}{q^{2s-(\la,\mu)_n}N\circ M}$ is now defined by
\[
r_{M,N} = \left( (z-w)^{-s} R_{M_z,N_w} \right) |_{z=w=0}.
\]
where $s$ is the largest possible integer for which this definition is possible. In \cite{kkk} it is shown that $r_{M,N}$ is a nonzero morphism and that these collections of morphisms satisfy the braid relation.

\begin{lemma}\label{risq}
Let $L_1$ and $L_2$ be two irreducible cuspidal representations of $R(\d)$. Then the morphisms $r_{L_1,L_2}$ and $r_{L_2,L_1}$ are inverse to one another.
\end{lemma}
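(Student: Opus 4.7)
The plan is to compute $r_{L_2, L_1} \circ r_{L_1, L_2}$ explicitly by applying Lemma~\ref{klem}(\ref{ga6}) to the permutation $w[\delta, \delta]$, which is its own inverse. That identity yields
\[
\varphi_{w[\delta,\delta]}^2\, e_\ii \;=\; \prod_{a \leq |\delta| < b} \bigl(Q_{\ii_a, \ii_b}(y_a, y_b) + \delta_{\ii_a, \ii_b}\bigr)\, e_\ii,
\]
and, combined with the identification of $R_{M_z, N_w}$ with the action of $\varphi_{w[\lambda, \mu]}$, this shows that the unnormalised composition $R_{L_{2,w}, L_{1,z}} \circ R_{L_{1,z}, L_{2,w}}$ acts on $L_{1,z} \circ L_{2,w}$ as multiplication by the polynomial
\[
P(y, z, w) \;:=\; \prod_{a \leq |\delta| < b} \bigl(Q_{\ii_a, \ii_b}(y_a + z, y_b + w) + \delta_{\ii_a, \ii_b}\bigr).
\]

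Next I would unpack the renormalisation. If $s_1, s_2$ denote the integers entering the definitions of $r_{L_1, L_2}$ and $r_{L_2, L_1}$ respectively, then
\[
r_{L_2, L_1} \circ r_{L_1, L_2} \;=\; (-1)^{s_2}\bigl((z-w)^{-(s_1 + s_2)} P(y, z, w)\bigr)\Big|_{z = w = 0}
\]
as an endomorphism of $L_1 \circ L_2$. Since each $r$-matrix is a nonzero morphism by \cite{kkk}, the polynomial $P$ must be divisible by $(z-w)^{s_1 + s_2}$ and the quotient specialises, at $z = w = 0$, to a well-defined endomorphism of degree zero.

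The hard part is showing that this degree-zero endomorphism is the identity rather than some other operator. My plan for this step is to fix a sequence $\ii \in \Seq(2\delta)$ occurring in $L_1 \otimes L_2$, pick a vector $v \in e_\ii(L_1 \otimes L_2)$ which cyclically generates $L_1 \circ L_2$, and read off the leading $(z-w)$-coefficient of $P \cdot v$ directly. The factors of $P$ split into two types: those indexed by pairs $(a, b)$ with $\ii_a = \ii_b$, which contribute the constant $1$; and those with $\ii_a \neq \ii_b$, which contribute polynomials coming from the quiver edges between $\ii_a$ and $\ii_b$. Cuspidality of $L_1$ and $L_2$ would be used both to control how $y_a, y_b$ act on $v$ and to force the leading coefficient of $(z-w)^{-(s_1+s_2)} P \cdot v$ to equal $1$. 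By the symmetric argument, $r_{L_1, L_2} \circ r_{L_2, L_1}$ is the identity on $L_2 \circ L_1$, so the two $r$-matrices are mutually inverse.
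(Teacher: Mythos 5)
Your overall strategy is the same as the paper's: apply Lemma \ref{klem}(vi) to the involution $w[\delta,\delta]$, identify the composite of the two unnormalised $R$-matrices with multiplication by $\prod_{a\le|\delta|<b}\bigl(Q_{\ii_a,\ii_b}(y_a,y_b)+\delta_{\ii_a,\ii_b}\bigr)$ on the deformed induced module, renormalise, and evaluate on a vector generating $L_1\circ L_2$. But at the two points where the paper does actual work your write-up has gaps. The first is your claim that the renormalised composite is ``a well-defined endomorphism of degree zero.'' Nonvanishing of each $r$-matrix only gives divisibility of the composite by $(z-w)^{s_1+s_2}$; it does not exclude divisibility by a strictly higher power (so that the specialisation at $z=w=0$ could be zero), nor does it make the degree zero: the composite is homogeneous of degree $2(s_1+s_2)-2(\delta,\delta)_n$ (up to sign convention), so what you need is precisely $s_1=s_2=(\delta,\delta)_n/2$. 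The paper proves this using cuspidality: by adjunction $\Hom(L_1\circ L_2,L_2\circ L_1)\cong\Hom(L_1\otimes L_2,\Res_{\delta,\delta}(L_2\circ L_1))$, and since $L_1,L_2$ are cuspidal the Mackey filtration of the restriction has only the two layers $L_2\otimes L_1$ and $L_1\otimes L_2$, with no grading shift because $\delta\cdot\delta=0$; hence the Hom space is concentrated in degree zero and the nonzero morphism $r_{L_1,L_2}$ must have degree zero. Nothing in your argument replaces this step, and it is in fact the only place where cuspidality is used.

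The second gap is that your ``hard part'' remains a plan without the device that makes the evaluation possible. The paper chooses $v_j\in L_j$ with $y_iv_j=0$ for all $i$ (such vectors exist because the $y_i$ raise degree on the finite-dimensional modules $L_j$; this has nothing to do with cuspidality). On $1\otimes(v_1\otimes v_2)$ every factor then acts by the exact scalar $Q_{\ii_a,\ii_b}(z,w)$, so the whole product is an exact power of $(z-w)$, up to sign, times the vector, with no lower-order corrections; together with $s_1+s_2=(\delta,\delta)_n$ the renormalised limit is $\pm 1$, the sign being absorbed by the normalisation bookkeeping you already flagged, and cyclic generation finishes the proof. With a general cyclic vector $v$, as in your sketch, each factor is $\pm(z-w)$ plus a nilpotent operator, the product is not a polynomial in $(z-w)$ alone, and ``reading off the leading $(z-w)$-coefficient'' is not a well-defined step; your appeal to cuspidality to ``control how $y_a,y_b$ act'' and to ``force the leading coefficient to equal $1$'' names the desired conclusion rather than supplying a mechanism for it.
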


\begin{proof}
By adjunction
\[
\Hom (L_1\circ L_2,L_2\circ L_1) \cong \Hom(L_1\otimes L_2,\Res_{\d,\d} L_2\circ L_1).
\]
As $L_1$ and $L_2$ are cuspidal, the Mackey filtration of $\Res_{\d,\d} (L_2\circ L_1)$ has two nonzero pieces, namely $L_2\otimes L_1$ and $L_1\otimes L_2$. In particular this implies that $\Hom (L_1\circ L_2,L_2\circ L_1)$ is concentrated in degree zero. Since $r_{L_1,L_2}\neq 0$, the integer $s$ in the construction of $r_{L_1,L_2}$ must be equal to $(\delta,\delta)_n/2$.

For $j=1,2$,
pick a nonzero vector $v_j\in L_j$ such that $y_i v_j=0$ for all $i$. 
The morphism $r_{L_2,L_1}r_{L_1,L_2}$ maps $v_1\otimes v_2$ to $\left((z'-z)^{-2s}\varphi_{w[\d,\d]}^2 v_1\otimes v_2\right)|_{z=z'=0}$ where the computation is taking place in $(L_1)_{z}\circ (L_2)_{z'}$ (by abuse of notation, we write $v$ for $1\otimes v\in L_z$). We can compute this using Lemma \ref{klem}(vi). Since $y_i v_j=0$ in $L_j$, we have $y_i v_j=zv_j$ in $(L_j)z$. Then the product on the right hand side of \ref{klem}(vi) acts by the scalar $(z'-z)^{(\delta,\delta)_n}$ on the vector $v_1\otimes v_1\in (L_1)_{z}\circ (L_2)_{z'}$. We've already computed $(\delta,\delta)_n=2s$ and hence $r_{L_2,L_1}r_{L_1,L_2} v_1\otimes v_2=v_1\otimes v_2$.

Since $L_1$ and $L_2$ are irreducible, $v_1\otimes v_2$ generates $L_1\circ L_2$. Therefore $r_{L_2,L_1}r_{L_1,L_2}$ is the identity.
\end{proof}

From the evident maps from $\End(L\circ L)$ to $\End(L^{\circ n})$, 
the morphisms $r_{L,L}$ define $n-1$ elements, denoted $r_1,r_2,\ldots , r_{n-1} \in \End(L^{\circ n})$.
The following result was first noticed in a special case in 
 \cite[Theorem 4.13]{kmr}, and is fundamental to the paper
\cite{imaginaryschurweyl}.

\begin{theorem}\label{14.3}
Let $L$ be an irreducible cuspidal representation of $R(\d)$.
 There is an isomorphism $\End(L^{\circ{n}})\cong \Q[S_n]$ sending $r_i$ to the transposition $(i,i+1)$.
\end{theorem}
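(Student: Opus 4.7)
The plan is to define an algebra homomorphism $\Phi\colon \Q[S_n] \to \End(L^{\circ n})$ sending $s_i \mapsto r_i$, bound $\dim \End(L^{\circ n}) \leq n!$ via Mackey, and then conclude by exhibiting $n!$ linearly independent endomorphisms $r_w$, $w \in S_n$.

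The Coxeter relations are almost immediate: $r_i^2 = 1$ is Lemma \ref{risq} applied with $L_1 = L_2 = L$; the braid relation $r_i r_{i+1} r_i = r_{i+1} r_i r_{i+1}$ is the braid relation for the R-matrices cited from \cite{kkk} in the preamble to Lemma \ref{risq}; and $r_i r_j = r_j r_i$ for $|i-j| \geq 2$ is formal from naturality, since the two R-matrices act on disjoint adjacent pairs of factors. This defines $\Phi$.

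For the upper bound, adjunction gives $\End(L^{\circ n}) \cong \Hom(L^{\otimes n}, \Res_{\delta, \ldots, \delta} L^{\circ n})$. Theorem \ref{mackey} presents the restriction with subquotients indexed by $n \times n$ matrices $(\nu_{ij})$ in $\N I$ whose row and column sums are all equal to $\delta$. Cuspidality of $L$ applied along each row, combined with Frobenius reciprocity along each column (so that the induced $j$-th factor must contain $L$ as a composition factor), forces monotonicity of the $\nu_{ij}$ under $\prec$; together with the sum constraints only permutation matrices produce nonzero layers. Each such layer is isomorphic to $L^{\otimes n}$ up to grading shift, and since $L$ is absolutely irreducible we have $\End(L^{\otimes n}) = \Q$, yielding $\dim \End(L^{\circ n}) \leq n!$.

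To complete the argument it remains to show the $r_w$ are linearly independent. Under the adjunction each $r_w$ corresponds to a map $\phi_w\colon L^{\otimes n} \to \Res L^{\circ n}$. Inducting on $\ell(w)$ and using the inductive construction of $r_{L,L}$ from $\varphi_{w[\delta,\delta]}$ via Lemma \ref{klem}(\ref{ga6}) --- as in the proof of Lemma \ref{risq} --- one shows that $\phi_w$ projects to a unit scalar multiple of the canonical identification on the Mackey subquotient indexed by $w$, while vanishing on layers placed strictly above. Hence the $\phi_w$ are linearly independent in the associated graded, so $\Phi$ is injective and therefore an isomorphism by the dimension bound. The main obstacle is this final induction: carefully tracking which Mackey subquotient contains the image of $r_w$, with the non-degeneracy of the calculation ultimately guaranteed by Lemma \ref{risq}.
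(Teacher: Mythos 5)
Your proposal is correct and follows essentially the same route as the paper: the adjunction plus cuspidality gives the $n!$ Mackey-layer bound, the Coxeter relations (via Lemma \ref{risq} and the braid relation from \cite{kkk}) define the map from $\Q[S_n]$, and linear independence of the $r_w$ is obtained by the same induction on $\ell(w)$ using the explicit leading term of $\varphi_{w[\d,\d]}$ from the proof of Lemma \ref{risq}. The only cosmetic difference is that you phrase the triangularity against the Mackey layers while the paper phrases it against the $\tau_{w'}$ spanning elements, which amounts to the same argument.
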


\begin{proof}
 By adjunction $\End(L^{\circ n})=\Hom(L^{\otimes n},\Res_{\de,\ldots,\de}L^{\circ n})$. Since $L$ is cuspidal, the Mackey filtration of $\Res_{\de,\ldots,\de}L^{\circ n}$ has exactly $n!$ nonzero subquotients, each isomorphic to $L^{\otimes n}$. Therefore $\dim\End(L^{\circ n})\leq n!$. 

By Lemma \ref{risq}, $r_i^2=1$. The identity $r_ir_j=r_jr_i$ for $|j-i|>1$ is trivial and the braid relation $r_i r_{i+1} r_i= r_{i+1} r_i r_{i+1}$ is a general fact about the morphisms $r_{M,N}$ constructed in \cite{kkk}. This allows us to define $r_w$ for each $w\in S_n$.

Recall that in the proof of Lemma \ref{risq}, we showed that $s=(\d,\d)_n/2$, where $s$ is the integer appearing in the definition of $r_{L,L}$.
Therefore by induction on the length of $w$, using \cite[Proposition 1.4.4(iii)]{kkk}, we obtain
\[
r_w v\otimes\cdots \otimes v - \tau_{\iota(w)}v\otimes \cdots \otimes v \in \sum_{\ell(w')<\ell(\iota(w))}\tau_{w'}L\otimes \cdots \otimes L
\]
where $\iota\map{S_n}{S_{n|\delta|}}$ is the obvious embedding.
Therefore the endomorphisms $r_w$ are linearly independent. 

Since the $r_i$ satisfy the Coxeter relations there is a homomorphism from $\Q[S_n]$ to $\End(L^{\circ n})$. We have just shown it is injective. Surjectivity follows from the dimension estimate in the first paragraph of this proof.
\end{proof}

Let $\w$ be a chamber coweight. Let $L(\w)$ be the head of the module $L(\w_-)\circ L(\w_+)$.

\begin{lemma}\label{reslw}
The module $L(\w)$ is an irreducible module with $L(\w)\dual\cong L(\w)$. Furthermore $\Res_{\w_-,\w_+}L(\w)\cong L(\w_-)\otimes L(\w_+)$.
\end{lemma}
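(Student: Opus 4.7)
My strategy is to reduce, via Theorem \ref{independence}, to a convex order in which $(\w_+,\w_-)$ is manifestly a minimal pair for $\delta$, then invoke Lemma \ref{strongminimalpair}. Since $L(\w_\pm)$ depend only on $p(\Phi_{\prec\d})$, I may replace $\prec$ by the convex order of Example \ref{tworow} with the distinguished simple root $\a$ chosen so that $\w_+=\tilde\a$; then automatically $\w_-=\widetilde{\d-\a}$.

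In that order the positive roots lying strictly between $\w_-$ and $\w_+$ are exactly the families $\widetilde{\d-\a}+n\d$, $n\d$, and $\tilde\a+n\d$ for $n\geq 1$. Any decomposition $\d=\b'+\ga'$ into two positive roots must have both summands real with opposite finite-type projections: writing $\b'=\tilde\mu+k\d$, $\ga'=\tilde\nu+l\d$, the vanishing of the finite part forces $\nu=-\mu$ and hence $k=l=0$. Among such decompositions only $\mu=\a$ places both summands in the interval $[\w_-,\w_+]$, so $(\w_+,\w_-)$ is a minimal pair for $\d$.

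Lemma \ref{strongminimalpair} with $\b=\w_+$ and $\ga=\w_-$ now gives
\[
\Res_{\w_-,\w_+}\bigl(L(\w_-)\circ L(\w_+)\bigr) \cong L(\w_-)\otimes L(\w_+),
\]
which is a simple $R(\w_-)\otimes R(\w_+)$-module. If $Q$ is any nonzero quotient of $L(\w_-)\circ L(\w_+)$, then Frobenius reciprocity yields a nonzero map $L(\w_-)\otimes L(\w_+)\to\Res_{\w_-,\w_+}Q$, which by simplicity of the source and exactness of $\Res_{\w_-,\w_+}$ must be an isomorphism. So the cosocle of $L(\w_-)\circ L(\w_+)$ is simple, which is to say $L(\w)$ is well defined and irreducible, and as a byproduct $\Res_{\w_-,\w_+}L(\w)\cong L(\w_-)\otimes L(\w_+)$.

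For self-duality, since $L(\w)$ is simple \cite[\S 3.2]{khovanovlauda} gives $L(\w)\dual\cong q^iL(\w)$ for some $i\in\Z$. Restriction commutes with duality and each $L(\w_\pm)$ is self-dual, so
\[
L(\w_-)\otimes L(\w_+)\cong\bigl(\Res_{\w_-,\w_+}L(\w)\bigr)\dual\cong\Res_{\w_-,\w_+}L(\w)\dual\cong q^iL(\w_-)\otimes L(\w_+),
\]
forcing $i=0$. The substantive content is the minimal-pair verification together with the reduction afforded by Theorem \ref{independence}; once $(\w_+,\w_-)$ is known to be minimal the rest is essentially formal manipulation of Lemma \ref{strongminimalpair}, so I anticipate the main obstacle to be making sure the ad hoc convex order from Example \ref{tworow} really does identify $\w_\pm$ with the expected roots and exhibits the desired minimal pair.
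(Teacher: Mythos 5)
Your proposal is correct and follows essentially the same route as the paper: reduce via Theorem \ref{independence} to the convex order of Example \ref{tworow} so that $(\w_+,\w_-)$ is a minimal pair for $\d$, apply Lemma \ref{strongminimalpair} together with adjunction and exactness of restriction to get simplicity of the head and the restriction isomorphism, and deduce self-duality from the self-duality of $L(\w_\pm)$ and compatibility of restriction with duality. The only difference is that you spell out the verification that $(\w_+,\w_-)$ really is a minimal pair in that order, a point the paper leaves implicit.
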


\begin{remark}
The irreducibility of $L(\w)$ is in \cite{tingleywebster} and can also be derived from \cite[Theorem 3.2]{kkko}. Our preference for giving this proof is that we wish to make use of the extra properties of $L(\w)$ that we establish.
\end{remark}

\begin{proof}
Using Theorem \ref{independence} and the convex order from Example \ref{tworow}, we may assume without loss of generality that $(\w_+,\w_-)$ is a minimal pair for $\d$.

For any quotient $Q$ of $L(\w_-)\circ L(\w_+)$ there is, by adjunction, a nonzero morphism from $L(\w_-)\otimes L(\om_+)$ to $\Res_{\w_-,\w_+}Q$ which is injective as the source is irreducible.
Lemma \ref{strongminimalpair} implies that $\Res_{\w_-,\w_+} L(\w_-)\circ L(\w_+)\cong L(\w_-)\otimes L(\w_+)$. By exactness of the restriction functor, this forces the head of $L(\w_-)\circ L(\w_+)$ to be irreducible and furthermore $\Res_{\w_-,\w_+}L(\w)\cong L(\w_-)\otimes L(\w_+)$. The self-duality of $L(\w)$ follows since every simple module is self-dual up to a grading shift, duality commutes with restriction and the modules $L(\w_\pm)$ are self-dual.
\end{proof}

\section{The Growth of a Quotient}\label{growth}

Let $z$ be the element $y_1+\cdots +y_{|\nu|}\in R(\nu)$. It is straightforward to check that $z$ is central.
The following lemma and proof appeared in an early version of \cite{bkm}.

\begin{lemma}\label{centretrick}
 Let $R'(\nu)$ be the subalgebra of $R(\nu)$ generated by $e_\ii$, $\ii\in\Seq(\nu)$, $\tau_i$ and $y_i-y_{i+1}$, $1\leq i<|\nu|$. Then multiplication induces an algebra isomorphism $\Q[z]\otimes R'(\nu)\to R(\nu)$. 
\end{lemma}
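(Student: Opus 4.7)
The plan is to verify that the natural multiplication map $\mu : \Q[z] \otimes R'(\nu) \to R(\nu)$ is a well-defined algebra homomorphism and then prove that it is bijective by comparing explicit bases.

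First I would check that $z = y_1 + \cdots + y_{|\nu|}$ is central. It commutes with every $y_j$ since the $y$'s commute, and it trivially commutes with each $e_\ii$. For $\tau_k e_\ii$, the commutation relation between $\tau_k$ and $y_l$ gives $\tau_k (y_k + y_{k+1}) e_\ii = (y_k + y_{k+1}) \tau_k e_\ii$ (the $\pm e_\ii$ correction terms cancel in the sum), while $\tau_k$ commutes with $y_l$ for $l \neq k, k+1$; summing gives $\tau_k z = z \tau_k$. Consequently, since $R'(\nu)$ is a subalgebra, the multiplication map $\mu$ is a well-defined homomorphism of algebras.

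For surjectivity, it suffices to show each generator of $R(\nu)$ lies in the image. The generators $e_\ii$ and $\tau_k$ lie in $R'(\nu)$ by definition. For $y_j$, write
\[
y_j \;=\; \frac{z}{|\nu|} \;+\; \frac{1}{|\nu|} \sum_{i=1}^{|\nu|}(y_j - y_i),
\]
and note each $y_j - y_i$ is a telescoping sum of the generators $y_l - y_{l+1}$, hence lies in $R'(\nu)$. Thus $y_j = \mu(z/|\nu| \otimes 1) + \mu(1 \otimes (\text{element of } R'(\nu)))$, and surjectivity follows.

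For injectivity I would use the basis of Theorem \ref{basisofrnu}. Setting $n = |\nu|$ and $u_i = y_i - y_{i+1}$ for $1 \leq i < n$, the elements $\{z, u_1, \ldots, u_{n-1}\}$ are a linear basis for the span of the $y_j$'s, so the commutative polynomial algebra decomposes as $\Q[y_1, \ldots, y_n] = \Q[z] \otimes \Q[u_1, \ldots, u_{n-1}]$ via the standard monomials. Combined with Theorem \ref{basisofrnu}, this gives that the set
\[
\{\, z^c\, u_1^{b_1} \cdots u_{n-1}^{b_{n-1}}\, \tau_w\, e_\ii \;:\; c, b_i \in \N,\ w \in S_n,\ \ii \in \Seq(\nu)\,\}
\]
is a $\Q$-basis of $R(\nu)$. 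On the other hand, the subalgebra $R'(\nu)$ admits the spanning set $\{u_1^{b_1} \cdots u_{n-1}^{b_{n-1}} \tau_w e_\ii\}$ (every monomial in its generators can be rewritten in this normal form using the defining relations of $R(\nu)$, which never require $z$ since applying a relation to a product of $\tau$'s, $e_\ii$'s and $u_l$'s never introduces $y_j$ in a way that escapes the $u$-subalgebra). Counting against the above basis of $R(\nu)$ shows that this spanning set is actually a basis of $R'(\nu)$, and that $\mu$ sends the tensor-product basis $\{z^c \otimes u^{\mathbf{b}} \tau_w e_\ii\}$ bijectively to the basis of $R(\nu)$. This establishes that $\mu$ is an isomorphism.

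The one step requiring genuine care is the last one: verifying that $R'(\nu)$ really has the expected size, i.e.\ that $\{u_1^{b_1} \cdots u_{n-1}^{b_{n-1}} \tau_w e_\ii\}$ spans $R'(\nu)$. The argument is that one can reduce any word in the generators of $R'(\nu)$ to the claimed normal form using the KLR relations, noting that each relation either preserves or shortens complexity when $y$'s are replaced by $u$'s, because differences $y_l - y_{l+1}$ satisfy commutation relations with the $\tau_k$ that remain internal to $R'(\nu)$ (the correction terms $\pm e_\ii$ in the $\tau y - y \tau$ relation are themselves in $R'(\nu)$). This is the only calculational point; once established, linear independence follows from the basis theorem for $R(\nu)$ as above.
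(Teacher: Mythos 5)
Your proof is correct and follows essentially the same route as the paper: express each $y_j$ via $z$ and the differences $y_i-y_{i+1}$ to get surjectivity, observe that the KLR relations let every word in the generators of $R'(\nu)$ be straightened to the normal form $(y_1-y_2)^{b_1}\cdots(y_{n-1}-y_n)^{b_{n-1}}\tau_w e_\ii$, and then conclude via the basis of Theorem \ref{basisofrnu} and a change of variables in the commutative polynomial part. The extra verifications you include (centrality of $z$, that the correction terms in the relations stay inside $R'(\nu)$) are accurate and only make explicit what the paper leaves to inspection.
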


\begin{proof}
 An inspection of the presentation (\ref{eq:KLR}) of $R(\nu)$ shows that the set of elements of the form
\[
(y_1-y_2)^{a_1}(y_2-y_3)^{a_2}\cdots (y_{n-1}-y_n)^{a_{n-1}}\tau_w e_\ii
\]
with $a_1,\ldots,a_{n-1}\in \N$, $w\in S_n$ and $\ii\in\Seq(\nu)$ is a spanning set for $R'(\nu)$. Since Theorem \ref{basisofrnu} provides us with a basis of $R(\nu)$, we can see that the collection of elements above forms a linearly independent set, hence is a basis for $R'(\nu)$. We compute
$$ny_n= z + \sum_{i=1}^{n-1}i(y_i- y_{i+1})$$
and thus $y_n$ is in the image of $\Q[z]\otimes R'(\nu)$. Therefore the multiplication map from $\Q[z]\otimes R'(\nu)$ to $R(\nu)$ is surjective. A dimension count using Lemma \ref{basisofrnu} shows that it must be an isomorphism.
\end{proof}

\begin{lemma}\label{scentretrick}
Let $\a$ be a positive root. There is an injection from $\Q[z]$ into the centre of $S(\a)$.
\end{lemma}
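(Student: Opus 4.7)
The plan is to produce a semicuspidal $R(\a)$-module on which $z$ acts faithfully; the injectivity of $\Q[z] \to Z(S(\a))$ will follow at once. Centrality of the image of $z$ is clear since $z$ is already central in $R(\a)$, and by the equivalence of categories proved just after the definition of $S(\a)$ the kernel of $R(\a)\twoheadrightarrow S(\a)$ consists exactly of those elements annihilating every semicuspidal module. So it is enough to find a single semicuspidal $N'$ on which no nonzero polynomial in $z$ acts as zero.

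The natural approach is to use Lemma \ref{centretrick}. Let $N$ be any nonzero finite-dimensional semicuspidal $R(\a)$-module; such an $N$ exists by Theorem \ref{numberofimaginarysemicuspidals}, since every positive root is either real or a positive multiple of $\d$. Let $M$ denote $N$ regarded as a module over the subalgebra $R'(\a)$, and set
\[
N' \;=\; \Q[z]\otimes M,
\]
viewed as an $R(\a)$-module through the isomorphism $R(\a) \cong \Q[z]\otimes R'(\a)$ of Lemma \ref{centretrick}. Three verifications remain. First, $N'$ is finitely generated over $R(\a)$, as it is generated by the finite-dimensional subspace $1\otimes N$. Second, every restriction idempotent $\sum_{\ii} e_\ii$ lies in $R'(\a)$, so one has $\Res_{\la,\mu} N' \cong \Q[z]\otimes \Res_{\la,\mu} N$, which vanishes whenever the semicuspidality of $N$ forces it to; hence $N'$ is semicuspidal. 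Third, the central element $z\in R(\a)$ corresponds to $z\otimes 1$ under the isomorphism of Lemma \ref{centretrick}, so it acts on $N'$ by multiplication by $z$ on the left tensor factor. Consequently any nonzero $p(z)\in\Q[z]$ acts as a nonzero (in fact injective) operator on $N'$, so $p(z)\notin\ker(R(\a)\to S(\a))$.

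The one conceptual point worth emphasising is that $z$ has strictly positive degree and must therefore act nilpotently on any finite-dimensional graded module; this explains why one cannot succeed with $N$ itself and must pass to the infinite-dimensional deformation $N'$. Given Lemma \ref{centretrick} and the existence of a finite-dimensional semicuspidal, however, the construction is routine and I do not foresee a serious obstacle.
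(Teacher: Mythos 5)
Your argument is correct, and it reaches the conclusion by a slightly different mechanism than the paper. The paper stays at the level of algebras: since the kernel of $R(\a)\twoheadrightarrow S(\a)$ is generated by idempotents $e_\ii$, which lie in the subalgebra $R'(\a)$ of Lemma \ref{centretrick}, the decomposition $R(\a)\cong\Q[z]\otimes R'(\a)$ descends to an isomorphism $S(\a)\cong\Q[z]\otimes S'(\a)$, where $S'(\a)$ is the corresponding quotient of $R'(\a)$; the image of $\Q[z]\otimes\Q$ is then the desired central subalgebra. You instead work at the level of modules, producing the semicuspidal deformation $N'=\Q[z]\otimes N$ of a nonzero semicuspidal $N$ on which $z$ acts freely, so that no nonzero polynomial in $z$ can lie in the kernel ideal. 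The two proofs rest on the same two ingredients -- Lemma \ref{centretrick} together with the fact that the defining idempotents (and restriction idempotents) live in $R'(\a)$ -- and both ultimately need the existence of a nonzero semicuspidal module, which you make explicit via Theorem \ref{numberofimaginarysemicuspidals} and which the paper leaves implicit (its argument tacitly requires $S'(\a)\neq 0$). The paper's route yields the slightly stronger structural statement $S(\a)\cong\Q[z]\otimes S'(\a)$, while yours buys a concrete faithful semicuspidal object (essentially an affinization in the spirit of the modules $M_z$ of \S\ref{sim}) and makes the nondegeneracy of the $z$-action completely transparent; your closing remark about nilpotence on finite-dimensional graded modules correctly explains why the passage to $N'$ is unavoidable. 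One small point: you assert that the kernel consists \emph{exactly} of the elements annihilating every semicuspidal module, but your argument only uses (and only needs) the inclusion of the kernel in that annihilator, which follows directly from the fact that each generator $e_\ii$ kills every semicuspidal module.
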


\begin{proof}
Let $S'(\a)$ be the quotient of $R'(\a)$ by the two sided ideal generated by all $e_\ii$ such that $e_\ii L=0$ for all cuspidal representations $L$ of $R(\a)$.
Lemma \ref{centretrick} implies that $S(\a)\cong \Q[z]\otimes S'(\a)$. The image of $\Q[z]\otimes \Q$ provides us with our desired central subalgebra.
\end{proof}

Let $\a$ be an indivisible root, $L$ a cuspidal representation of $R(\a)$ and let $(\b,\ga)$ be a minimal pair for $\a$. Let $L''\otimes L'$ be an irreducible subquotient of $\Res_{\ga,\b}L$. By Lemma \ref{oppositerestrict}, $L''$ and $L'$ are cuspidal modules for $R(\ga)$ and $R(\b)$. We will call $(L',L'')$ a minimal pair for $L$.
We inductively define a word $\ii_L\in\Seq(\a)$ as the concatenation $\ii_{L''}\ii_{L'}$.




Let $T(L)$ be the subalgebra of $e_{\ii_L} S(\a) e_{\ii_L}$ generated by $y_1 e_{\ii_L},\ldots,y_{|\a|}e_{\ii_L}$.

\begin{lemma}\label{surjbydefn}
Let $L$ be a cuspidal representation and $(L',L'')$ be a minimal pair for $L$. The inclusion $R(\ga)\otimes R(\b)\to R(\a)$ induces a homomorphism from $T(L'')\otimes T(L')$ to $T(L)$.
\end{lemma}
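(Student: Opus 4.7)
My plan is to construct the desired map at the level of the full KLR algebras first, and then argue that it descends to the quotients defining the $T$'s. The substance of the argument is concentrated in the descent step, where Lemma~\ref{oppositerestrict} plays the critical role.

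First I would record that the algebra inclusion $\iota_{\gamma,\beta}\map{R(\gamma)\otimes R(\beta)}{R(\alpha)}$ sends $y_k\otimes 1\mapsto y_k$, $1\otimes y_j\mapsto y_{|\gamma|+j}$, and $e_{\ii_{L''}}\otimes e_{\ii_{L'}}\mapsto e_{\ii_L}$. Writing $\widetilde{T}(M)$ for the analogue of $T(M)$ sitting in $e_{\ii_M} R(\cdot) e_{\ii_M}$ rather than in $e_{\ii_M}S(\cdot)e_{\ii_M}$, it is then automatic that $\iota_{\gamma,\beta}$ restricts to an algebra homomorphism $\widetilde{T}(L'')\otimes \widetilde{T}(L')\to \widetilde{T}(L)$. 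Composing with the quotient $R(\alpha)\to S(\alpha)$ yields an algebra homomorphism $\Phi\map{\widetilde{T}(L'')\otimes\widetilde{T}(L')}{T(L)}$.

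The main obstacle is to show that $\Phi$ factors through $T(L'')\otimes T(L')$. By the equivalence between $S(\cdot)$-modules and semicuspidal $R(\cdot)$-modules, the kernel of $R(\gamma)\to S(\gamma)$ is exactly the set of elements that annihilate every semicuspidal $R(\gamma)$-module, and similarly for $S(\alpha)$. So it suffices to show: if $x\in\widetilde{T}(L'')$ vanishes in $T(L'')$ and $y\in\widetilde{T}(L')$ is arbitrary, then $\iota_{\gamma,\beta}(x\otimes y)$ annihilates every semicuspidal $R(\alpha)$-module $M$, together with the symmetric statement where the roles of $x$ and $y$ are swapped.

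For this final step, note that $\iota_{\gamma,\beta}(x\otimes y)=e_{\ii_L}\iota_{\gamma,\beta}(x\otimes y)e_{\ii_L}$, so its action on $M$ is determined by its action on $e_{\ii_L}M\subseteq \Res_{\gamma,\beta}M$. Since $\alpha$ is indivisible, any semicuspidal $M$ is automatically cuspidal, so Lemma~\ref{oppositerestrict} applies and tells us that $\Res_{\gamma,\beta}M$ is an $S(\gamma)\otimes S(\beta)$-module. Hence the action of $x\otimes y$ on $\Res_{\gamma,\beta}M$ factors through $S(\gamma)\otimes S(\beta)$, and because $x$ has zero image in $S(\gamma)$ this action is zero. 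Therefore $\iota_{\gamma,\beta}(x\otimes y)e_{\ii_L}M=0$, and the symmetric argument covers elements with $y$ in the kernel of $\widetilde{T}(L')\to T(L')$. This produces the desired well-defined algebra homomorphism $T(L'')\otimes T(L')\to T(L)$.
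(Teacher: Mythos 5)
Your proposal is correct and follows essentially the same route as the paper: the whole content is the descent step, which in both arguments comes down to Lemma \ref{oppositerestrict} showing that $\Res_{\ga,\b}M$ is an $S(\ga)\otimes S(\b)$-module for any $S(\a)$-module $M$, so that elements of $\ker(R(\ga)\to S(\ga))$ (or of $\ker(R(\b)\to S(\b))$) act by zero and hence land in $\ker(R(\a)\to S(\a))$. Your version merely makes explicit the intermediate algebras $\widetilde{T}(\cdot)$ and the identification of the kernels with annihilators of semicuspidal modules, which the paper leaves implicit.
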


\begin{proof}
Suppose $x\in \ker(R(\ga)\to S(\ga))$. Consider $x\otimes 1\in R(\ga)\otimes R(\b)\hookrightarrow R(\a)$. On $M\in S(\a)\mods$, $x\otimes 1$ acts in the way it does on
$\Res_{\ga\b}M$, which is a $S(\ga)\otimes S(\b)$-module. Therefore $x\otimes 1$ acts by zero and hence is in the kernel of $R(\a)\to S(\a)$.
\end{proof}

\begin{lemma}\label{zerodim}
Let $L$ be a cuspidal representation of $\a$.
The scheme $\Proj T(L)$ has a unique $\C$-point $[x_1:\cdots :x_{|\a|}]$, namely $x_1=\cdots=x_{|\a|}$.
\end{lemma}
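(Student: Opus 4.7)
I would prove this by induction on the height of $\a$. The base case is when $\a=\a_i$ is simple: then $R(\a_i)=\Q[y_1]=S(\a_i)$, the only word is $\ii_L=(i)$, and $T(L)=\Q[y_1]$, so $\Proj T(L)$ is a single point.

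For the inductive step I pick a minimal pair $(\b,\ga)$ for $\a$ and use Lemma \ref{oppositerestrict} to extract a minimal pair $(L',L'')$ for $L$, with $L'$ and $L''$ cuspidal modules over $R(\b)$ and $R(\ga)$ respectively, both of strictly smaller height. Lemma \ref{surjbydefn} supplies a graded algebra homomorphism $T(L'')\otimes T(L')\to T(L)$, which is surjective because the generators $y_i e_{\ii_{L''}}\otimes 1$ and $1\otimes y_j e_{\ii_{L'}}$ map onto the generators $y_i e_{\ii_L}$ and $y_{|\ga|+j}e_{\ii_L}$ of $T(L)$. A $\C$-point $[x_1:\cdots:x_{|\a|}]$ of $\Proj T(L)$ corresponds to a graded $\C$-algebra map $T(L)\to\C[t]$; composing with this surjection and restricting to each tensor factor yields a graded map from $T(L'')$ or $T(L')$ to $\C[t]$, each of which is either the augmentation or defines a $\C$-point of the corresponding $\Proj$. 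The inductive hypothesis then forces the coordinates within each block to coincide, so any $\C$-point has the form $x_1=\cdots=x_{|\ga|}=a$ and $x_{|\ga|+1}=\cdots=x_{|\a|}=b$ with $(a,b)\neq(0,0)$.

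The principal remaining difficulty, which I expect to be the hardest step, is showing that $a=b$. My plan is to exploit the intertwining element $\varphi_{w[\ga,\b]}\in R(\a)$: by Lemma \ref{klem}(iv) it satisfies $\varphi_{w[\ga,\b]}y_a=y_{w[\ga,\b](a)}\varphi_{w[\ga,\b]}$, linking $y_a$ for $a\leq|\ga|$ with $y_{a+|\b|}$. Composing $\varphi_{w[\b,\ga]}$ with $\varphi_{w[\ga,\b]}$ and invoking Lemma \ref{klem}(vi) produces an explicit element of $T(L)$ written as a product of factors $(y_i-y_j)^{-(\ii_L)_i\cdot(\ii_L)_j}$ over pairs $i\leq|\ga|<j$. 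Combined with the R-matrix machinery developed in Section \ref{sim}, and the cuspidality of $L$ which severely restricts what can survive in the quotient $S(\a)$, the aim is to conclude that $y_1-y_{|\ga|+1}$ is nilpotent in $T(L)$, forcing any $\C$-point to satisfy $a=b$. The possibility $a=0\neq b$ is then automatically excluded: such a nilpotency relation would equally imply $b=0$, contradicting $(a,b)\neq(0,0)$.
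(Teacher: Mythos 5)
Your overall strategy coincides with the paper's: induction on the height of $\a$, a minimal pair $(\b,\ga)$ for $\a$ and $(L',L'')$ for $L$, Lemma \ref{surjbydefn} to force $x_1=\cdots=x_{|\ga|}$ and $x_{|\ga|+1}=\cdots=x_{|\a|}$, and then the intertwiner $\varphi_{w}$ for the block transposition together with Lemma \ref{klem}(vi) to compare the two blocks. But at exactly the step you yourself flag as the hardest one, you only announce an aim: you never prove that the product of factors $(y_i-y_j)$ vanishes in $T(L)$, i.e.\ that $y_1-y_{|\ga|+1}$ is nilpotent there, and the tools you invoke for it (the R-matrix machinery of Section \ref{sim}) are not what does the job --- they play no role in this argument. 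So as written the proposal has a genuine gap at its decisive point.

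The missing observation is short and purely formal. With $w=w[|\ga|,|\b|]$, the intertwining relation $\varphi_w e_{\ii}=e_{w\ii}\varphi_w$ gives $\varphi_w^2e_{\ii_L}=\varphi_w e_{\ii_{L'}\ii_{L''}}\varphi_w$, where the word $\ii_{L'}\ii_{L''}$ has its first $|\b|$ letters of weight $\b$ and its last $|\ga|$ letters of weight $\ga$. For any semicuspidal $R(\a)$-module $M$ one has $e_{\ii_{L'}\ii_{L''}}M\subseteq\Res_{\b,\ga}M=0$, because $\b\succ\a$ is a single root and convexity prevents $\b$ from being a sum of roots $\preceq\a$; hence $e_{\ii_{L'}\ii_{L''}}$ lies in the defining ideal of $S(\a)$, and therefore $\varphi_w^2e_{\ii_L}$ is zero in $S(\a)$ and in particular in $T(L)$. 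On the other hand Lemma \ref{klem}(vi) writes $\varphi_w^2e_{\ii_L}$, up to sign, as the product over pairs $i\leq|\ga|<j$ of $(y_i-y_j)^{-(\ii_L)_i\cdot(\ii_L)_j}$ (pairs with equal letters contributing $1$). Evaluating this identically zero element at your point, where the first block of coordinates equals $a$ and the second equals $b$, yields $(a-b)^N=0$, so $a=b$, and since $(a,b)\neq(0,0)$ the unique point is $[1:\cdots:1]$. Once you supply this identification of an explicit element of the kernel of $R(\a)\to S(\a)$, your argument becomes the paper's proof; without it, the nilpotency claim is unsubstantiated.
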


\begin{proof}
We prove this by induction on the height of $\a$. Choose a minimal pair $(\b,\ga)$ for $\a$ and $(L',L'')$ for $L$.
Suppose that $[x_1:\cdots : x_{|\a|}]$ is a $\C$-point of $\Proj T(L)$. Then by Lemma \ref{surjbydefn} $[x_1:\cdots : x_{|\ga|}]$ and $[x_{|\ga|+1}:\cdots:x_{|\a|}]$ are points in $\Proj T(L'')$ and $\Proj T(L')$ respectively.
By inductive assumption, $x_1=\cdots = x_{|\ga|}$ and $x_{|\ga|+1}=\cdots = x_{|\a|}$.

Let $w=w[|\ga|,|\b|]$ and consider the element $\varphi_w^2 e_{\ii_L}$. By Lemma \ref{klem}(vi) it lives in $T(\ii_L)$ and since $\varphi_w^2 e_{\ii_L}=\varphi_w e_{\ii_{L'}\ii_{L''}}\varphi_w$, it lives in the kernel of the map from $R(\a)$ to $S(\a)$. Therefore $\varphi_w^2 e_{\ii_L}$ is zero in $T(\ii_L)$.
Lemma \ref{klem}(vi) writes $\varphi_w^2 e_{\ii_L}$ as a product of elements of the form $x_i-x_j$ where $i\leq |\ga|$ and $j>|\ga|$. Therefore any $\C$-point of $\Proj T(\ii_L)$ has $x_1=\cdots=x_{|\a|}$ as required.
\end{proof}

\begin{theorem}\label{oofone}
 Let $\a$ be an indivisible root. Then $\dim S(\a)_d$ is bounded as a function of $d$.
\end{theorem}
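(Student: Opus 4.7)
The plan is to exhibit $S(\a)$ as a finitely generated module over the central polynomial subalgebra $\Q[z]\subset Z(S(\a))$ supplied by Lemma~\ref{scentretrick}. Since $z$ is homogeneous of positive degree $2$, the structure theorem for finitely generated graded modules over the graded PID $\Q[z]$ writes any such module as a finite direct sum of cyclic pieces $q^{d_i}\Q[z]$ and $q^{d_i}\Q[z]/(z^{k_i})$, each contributing at most one dimension to any fixed graded piece. Thus finite generation of $S(\a)$ over $\Q[z]$ gives directly a uniform bound on $\dim S(\a)_d$, which is the desired claim.

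I would proceed in three steps. First, I extract from Lemma~\ref{zerodim} that $T(L)$ is finitely generated over $\Q[z]$: the statement $\Proj T(L)=\{[1{:}{\cdots}{:}1]\}$ means that every difference $y_i e_{\ii_L}-y_j e_{\ii_L}$ lies in the nilradical of the commutative Noetherian $\Q$-algebra $T(L)$, so this nilradical is nilpotent, and a filtration by its powers shows that $T(L)$ is finitely generated as a module over the cyclic subalgebra $\Q[y_1 e_{\ii_L}]$, equivalently (modulo nilpotents) over $\Q[z\cdot e_{\ii_L}]$. Second, by the basis theorem (Theorem~\ref{basisofrnu}), for any $\ii,\jj\in\Seq(\a)$ the bimodule $e_\ii R(\a) e_\jj$ is finitely generated as a left $\Q[y_1,\ldots,y_{|\a|}]$-module by the finite collection $\{\tau_w e_\jj:w\in S_{|\a|},\,w\jj=\ii\}$, so its image $e_\ii S(\a) e_\jj$ in the quotient is finitely generated on the left by $T(\ii)$ and on the right by $T(\jj)$. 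Third, since every irreducible semicuspidal module is finite-dimensional and (by Theorem~\ref{numberofimaginarysemicuspidals}) there are finitely many isomorphism classes up to grading shift, the set $\{\ii:e_\ii\neq 0\text{ in }S(\a)\}$ is finite; summing $e_\ii S(\a) e_\jj$ over this finite set expresses $S(\a)$ in terms of finitely many bimodules, reducing the theorem to the assertion that $T(\ii)$ is a finitely generated $\Q[z]$-module for every such $\ii$.

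The main obstacle is precisely this last reduction: Lemma~\ref{zerodim} treats only the specific word $\ii_L$ attached to a cuspidal $L$, but the support of $L$ typically contains many other words. To handle a general $\ii$ in the support, I would pick $w\in S_{|\a|}$ with $w\ii_L=\ii$ and transport the key vanishings in the proof of Lemma~\ref{zerodim} (namely, $\varphi_v e_{\ii_L}=0$ in $S(\a)$ coming from $e_{\ii'}=0$ for the wrong-order concatenation $\ii'$, forced by cuspidality) along $w$, using Lemma~\ref{klem}(v,vi) to track the polynomial error terms produced by the non-commutation of $\tau_w$ with the $y_k$. These transported relations should yield enough nilpotent elements of $T(\ii)$ to force $\Proj T(\ii)$ onto the diagonal; combined with the first two steps, this gives $S(\a)$ as a finitely generated $\Q[z]$-module and hence the required bound on $\dim S(\a)_d$.
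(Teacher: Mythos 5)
Your overall strategy (show $S(\a)$ is finitely generated over the central $\Q[z]$, hence has bounded graded dimensions) is stronger than what the theorem needs, and the proof of the one step you yourself flag as the main obstacle is not actually supplied. Lemma \ref{zerodim} is proved only for the distinguished word $\ii_L$, and its proof depends essentially on the inductive concatenation structure $\ii_L=\ii_{L''}\ii_{L'}$ coming from a minimal pair: the key vanishing is $\varphi_w^2e_{\ii_L}=\varphi_w e_{\ii_{L'}\ii_{L''}}\varphi_w$, which dies in $S(\a)$ precisely because the wrong-order concatenation $\ii_{L'}\ii_{L''}$ is killed by cuspidality ($\Res_{\b,\ga}L=0$). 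For an arbitrary word $\ii$ in the support of the cuspidal modules there is no such distinguished decomposition, and transporting relations along some $w$ with $w\ii_L=\ii$ via Lemma \ref{klem} only produces identities like $\varphi_{w^{-1}}\varphi_w e_\ii=\prod(Q_{\ii_a,\ii_b}(y_a,y_b)+\delta_{\ii_a,\ii_b})e_\ii$; these vanish in $S(\a)$ only if $\varphi_w e_\ii$ factors through a killed idempotent, which there is no reason to expect for general $\ii$ and $w$. So the assertion that the transported relations ``should yield enough nilpotent elements of $T(\ii)$'' is exactly the unproved content, and your argument does not close. (A smaller point: from the Proj statement one does not directly get that the $y_ie_{\ii_L}-y_je_{\ii_L}$ are nilpotent; the correct deduction is that $z$ misses the unique point, so $T(L)/(z)$ has empty Proj and is finite dimensional, whence $T(L)$ is finite over $\Q[ze_{\ii_L}]$ by graded Nakayama. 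That part is repairable.)

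The paper avoids the problem you ran into by never trying to control $e_\ii S(\a)$ for all $\ii$. It takes a composition series of $S(\a)$ as an $S(\a)$-module, writes $[S(\a)]=\sum_L f_L(q)[L]$ with $f_L(q)\in\N((q))$ and $L$ running over the finitely many irreducible cuspidals, and applies the idempotent $e_{\ii_L}$ only for the distinguished words: boundedness of $\dim(e_{\ii_L}S(\a))_d$ (exactly your step one plus finiteness of $e_{\ii_L}S(\a)$ over $T(\ii_L)$ from Theorem \ref{basisofrnu}) together with $e_{\ii_L}L\neq 0$ and nonnegativity of all multiplicities forces the coefficients of $f_L(q)$ to be bounded; since each $L$ is finite dimensional, $\dim S(\a)_d$ is then bounded. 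If you want to salvage your write-up, replace your steps two and three by this Grothendieck-group bookkeeping; as it stands, the proposal has a genuine gap at its central reduction.
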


\begin{proof}
Consider a composition series for $S(\a)$ as a $S(\a)$-module. Every composition factor must be cuspidal, so
\begin{equation}\label{sl}
[S(\a)]= \sum_L f_L(q) [L]
\end{equation}
where $f_L(q)\in \N((q))$ and the sum is over irreducible cuspidal representations $L$. For any $\ii\in Seq(\nu)$, we therefore get the equality
\begin{equation}\label{fl}
\dim (e_\ii S(\a))=\sum_L f_L(q) \dim e_\ii L.
\end{equation}

Pick an irreducible cuspidal representation $L$ and let $\ii_L$ be the corresponding word in $\Seq(\nu)$. By Lemma \ref{zerodim} and the theory of the Hilbert polynomial, $\dim T(\ii_L)_d$ is a bounded function of $d$. From Theorem \ref{basisofrnu} we see that $e_{\ii_L} S(\a)$ is finite over $T(\ii_L)$ and hence $\dim (e_{\ii_L} S(\a))_d$ is a bounded function of $d$. 

We take $\ii=\ii_L$ in (\ref{fl}) and since $e_{\ii_L}L\neq 0$, the Laurent series $f_L(q)=\sum_d f_L^{(d)} q^d$ has $f_L^{(d)}$ a bounded function of $d$. Equation (\ref{sl}) completes the proof.
\end{proof}

\section{An Important Short Exact Sequence}

Let $\a$ be a real root. Define $\D(\a)$ to be the projective cover of $L(\a)$ in the category of $S(\a)$-modules. Let $\om$ be a chamber coweight. Define $\D(\om)$ to be the projective cover of $L(\om)$ in the category of $S(\d)$-modules.

\begin{lemma}\label{mpses}
Let $\a$ be an indivisible root.
Suppose that $(\b,\ga)$ is a minimal pair for $\a$. Let $\D_\b$ and $\D_\ga$ be finitely generated projective $S(\b)$ and $S(\ga)$-modules.
Then there is a short exact sequence
\[
0 \to q^{-\b\cdot \ga}\D_\b\circ \D_\ga\to \D_\ga\circ \D_\b\to C\to 0
\] for some projective $S(\a)$-module $C$.
\end{lemma}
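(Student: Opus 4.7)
The plan is to construct a degree-zero map $\iota : q^{-\b\cdot\ga}\D_\b\circ\D_\ga \to \D_\ga\circ\D_\b$, show it is injective, and identify the cokernel as a projective $S(\a)$-module.

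First, I would compute both $\Res_{\b,\ga}(\D_\b\circ\D_\ga)$ and $\Res_{\b,\ga}(\D_\ga\circ\D_\b)$ using the Mackey filtration (Theorem~\ref{mackey}). Semicuspidality of $\D_\b$ and $\D_\ga$ together with the convexity theorem of Section~3 forces the matrix $(\nu_{ij})$ indexing a nonzero layer into a very restricted shape, and in each case exactly one layer survives. For $\D_\b\circ\D_\ga$ this is the identity layer, giving $\Res_{\b,\ga}(\D_\b\circ\D_\ga)\cong \D_\b\otimes\D_\ga$, whereas for $\D_\ga\circ\D_\b$ only the swap layer survives, giving $\Res_{\b,\ga}(\D_\ga\circ\D_\b)\cong q^{-\b\cdot\ga}\D_\b\otimes\D_\ga$ (the grading shift being the one that appeared in Lemma~\ref{strongminimalpair}). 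The adjunction~(\ref{extind}) then identifies $\Hom(q^{-\b\cdot\ga}\D_\b\circ\D_\ga,\D_\ga\circ\D_\b)$ with $\End_{R(\b)\otimes R(\ga)}(\D_\b\otimes\D_\ga)$, and I define $\iota$ to correspond to the identity endomorphism; by construction $\Res_{\b,\ga}\iota$ is then the identity map of $q^{-\b\cdot\ga}\D_\b\otimes\D_\ga$.

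The main obstacle is the injectivity of $\iota$. Its kernel $K$ is annihilated by the idempotent $e_{\b,\ga}$, and the challenge is to upgrade this to $K=0$. My preferred approach is a composition-factor argument: one shows that every simple subquotient $L(\pi)$ of $\D_\b\circ\D_\ga$ satisfies $\pi\leq (\b,\ga)$ (in the spirit of Lemma~\ref{restrictinduct}) and that every such $L(\pi)$ has $\Res_{\b,\ga}L(\pi)\neq 0$, so a minimal nonzero simple submodule of $K$ would produce an element in $e_{\b,\ga}K$, a contradiction. A more concrete fallback is to use the basis of Theorem~\ref{basisofrnu} to realize $\D_\b\circ\D_\ga$ as $\bigoplus_w \varphi_w\otimes(\D_\b\otimes\D_\ga)$ over minimal double-coset representatives of $S_{|\a|}/(S_{|\b|}\times S_{|\ga|})$, and exploit the nondegeneracy properties of Lemma~\ref{klem}(vi) to verify $\iota$ has no kernel on any $\varphi_w$-summand.

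Next I would verify that $C$ is semicuspidal. Using exactness of $\Res_{\la,\mu}$, it suffices to prove $\Res_{\la,\mu}C = 0$ whenever $\la$ is not a sum of roots $\preceq \a$ (or $\mu$ is not a sum of roots $\succeq \a$). Applying the Mackey filtration to both $\Res_{\la,\mu}(\D_\b\circ\D_\ga)$ and $\Res_{\la,\mu}(\D_\ga\circ\D_\b)$, semicuspidality of $\D_\b,\D_\ga$ together with the minimality of the pair $(\b,\ga)$ (no intermediate 2-root splitting $\a=\b'+\ga'$ with $\ga\prec\ga'\prec\b'\prec\b$) puts the surviving Mackey layers on the two sides in bijection, and the characterization of $\iota$ via its restriction identifies them layer-by-layer. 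Hence the induced map on $\Res_{\la,\mu}$ is an isomorphism outside the semicuspidal range, so $\Res_{\la,\mu}C=0$ there and $C$ is an $S(\a)$-module.

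Finally, for projectivity of $C$ in $S(\a)\mods$ I use that $\Res_{\b,\ga}M=0$ for every $S(\a)$-module $M$: indeed $\ga\prec\a$ by convexity, so $\ga$ cannot be a sum of roots $\succeq\a$. Adjunction~(\ref{extind}) then yields $\Hom_{R(\a)}(q^{-\b\cdot\ga}\D_\b\circ\D_\ga,M)=0$. Applying $\Ext^\bullet_{R(\a)}(-,M)$ to the short exact sequence and using that $\D_\b\circ\D_\ga$ and $\D_\ga\circ\D_\b$ are projective over $R(\a)$ (induction preserves projectivity) gives $\Ext^1_{R(\a)}(C,M)=0$. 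Since the subcategory $S(\a)\mods\subset R(\a)\mods$ is closed under extensions, the natural map $\Ext^1_{S(\a)}(C,M)\hookrightarrow \Ext^1_{R(\a)}(C,M)$ is an injection, so $\Ext^1_{S(\a)}(C,M)=0$ for every $S(\a)$-module $M$, and $C$ is projective in $S(\a)\mods$.
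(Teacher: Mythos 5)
Your construction of the map agrees with the paper's (it is the image of the identity under adjunction, using that only the swap layer of the Mackey filtration of $\Res_{\beta,\gamma}(\Delta_\gamma\circ\Delta_\beta)$ survives), but your injectivity argument has a genuine gap. The composition factors $L(\pi)$ of $\Delta_\beta\circ\Delta_\gamma$ with $\pi<(\beta,\gamma)$ are precisely the cuspidal $R(\alpha)$-modules (this is where minimality of the pair enters), and for a cuspidal $L$ one has $\Res_{\beta,\gamma}L=0$, since $\beta\succ\alpha$ cannot be a nonnegative combination of roots $\prec\alpha$. So it is false that every simple subquotient of $\Delta_\beta\circ\Delta_\gamma$ has nonzero image under $e_{\beta,\gamma}$: a cuspidal kernel would be completely invisible to $\Res_{\beta,\gamma}$, and ruling out exactly this possibility is the hard point of the lemma. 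Your fallback via Theorem \ref{basisofrnu} and Lemma \ref{klem}(vi) does not close the gap either: the map is not diagonal with respect to the shuffle decomposition, and the fact that $\varphi_{w^{-1}}\varphi_w$ acts by a nonzero polynomial does not prevent it from annihilating elements of a module with torsion over the polynomial part. The paper's proof rules out a cuspidal kernel by a growth argument: $\ker\phi$ is finitely generated and cuspidal, hence an $S(\alpha)$-module, so by Theorem \ref{oofone} its graded dimensions are bounded; on the other hand the adjunction (\ref{oppositeadjunction}) produces a nonzero map $\Res_{\gamma,\beta}\ker\phi\to\Delta_\gamma\otimes\Delta_\beta$, and the target is free over $\Q[z_1,z_2]$ (freeness of $\Delta_\beta,\Delta_\gamma$ over the central $\Q[z]$ of Lemma \ref{scentretrick}), so it has no nonzero submodule of bounded growth. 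This growth estimate is the essential missing ingredient in your proposal.

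The projectivity step also fails as written: $\Delta_\beta$ and $\Delta_\gamma$ are projective only in the cuspidal categories $S(\beta)\mods$ and $S(\gamma)\mods$, not as $R(\beta)$- and $R(\gamma)$-modules, so $\Delta_\gamma\circ\Delta_\beta$ is not a projective $R(\alpha)$-module and you cannot conclude $\Ext^1_{R(\alpha)}(\Delta_\gamma\circ\Delta_\beta,M)=0$ from ``induction preserves projectivity''. The paper instead uses adjunction to rewrite this group as $\Ext^1(\Delta_\gamma\otimes\Delta_\beta,\Res_{\gamma,\beta}M)$ and then needs two nontrivial inputs you omit: Lemma \ref{oppositerestrict}, which uses the minimal-pair hypothesis via the Levendorskii--Soibelman algorithm to show $\Res_{\gamma,\beta}M$ is an $S(\gamma)\otimes S(\beta)$-module, and the inductive hypothesis of Theorem \ref{14.1} for the lower-height roots $\beta,\gamma$, which gives the vanishing of $R$-module Ext between a projective $S$-module and an $S$-module. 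Finally, your verification that $C$ is semicuspidal is only sketched; the paper establishes cuspidality of both kernel and cokernel at once by using the explicit formula $\phi(1\otimes(v_\beta\otimes v_\gamma))=\tau_{w[\beta,\gamma]}(1\otimes(v_\gamma\otimes v_\beta))$ to see that on each filtration subquotient $q^{-\beta\cdot\gamma}L_\beta\circ L_\gamma\to L_\gamma\circ L_\beta$ the induced map carries the non-cuspidal head onto the non-cuspidal socle; some argument of this kind is needed before you can match Mackey layers as you propose.
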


\begin{proof}
By adjunction,
\[
\Hom(q^{-\b\cdot \ga}\D_\b\circ \D_\ga, \D_\ga\circ \D_\b)\cong \Hom(q^{-\b\cdot\ga}\D_\b\otimes \D_\ga,\Res_{\b\ga}\Delta_\ga\circ\Delta_\b).
\]
Since the modules $\D_\ga$ and $\D_\b$ are cuspidal, the Mackey filtration of $\Res_{\b\ga}\Delta_\ga\circ\Delta_\b$ has only one nonzero term, yielding an isomorphism
$$\Res_{\b\ga}\Delta_\ga\circ\Delta_\b\cong q^{-\b\cdot \ga}\Delta_\b\otimes \Delta_\ga.$$
Let $\phi\map{q^{-\b\cdot\ga}\D_\b\circ \D_\ga}{\D_\ga\circ \D_\b}$ be the image of the identity map on $q^{-\b\cdot\ga}\D_\b\otimes\D_\ga$ under the isomorphisms discussed above.

This map $\phi$ satisfies
\begin{equation}\label{explicitphi}
\phi(1\otimes (v_\b\otimes v_\ga)) = \tau_{w[\b,\ga]} 1\otimes (v_\ga\otimes v_\b)
\end{equation}
for all $v_\b\in \D_\b$ and $v_\ga\in \D_\ga$.

There are filtrations of $\D_\b$ and $\D_\ga$ where each successive subquotient is an irreducible cuspidal module for $R(\b)$ or $R(\ga)$ respectively. This induces a pair of filtrations on $\D_\b\otimes \D_\ga$ and $\D_\ga\otimes \D_\b$ where the successive subquotients are of the form $L_\b \circ L_\ga$ or $L_\ga\circ L_\b$ for cuspidal irreducible representations $L_\b$ and $L_\ga$ of $R(\b)$ and $R(\ga)$.

From the explicit formula (\ref{explicitphi}), we see that $\phi$ induces a morphism $\bar\phi$ on each subquotient $\bar\phi\map{q^{-\b\cdot \ga}L_\b\circ L_\ga}{L_\ga \circ L_\b}$ satisfying
\[
\bar\phi(1\otimes (v_\b\otimes v_\ga)) = \tau_{w[\b,\ga]} 1\otimes (v_\ga\otimes v_\b).
\]

By Theorem \ref{main}, the module $L_\b\circ L_\ga$ has an irreducible head $A(L_\b,L_\ga)$. Since $(\b,\ga)$ is a minimal pair, all other composition factors are cuspidal. Taking duals, $q^{\b\cdot\ga}L_\ga\circ L_\b$ has $A(L_\b,L_\ga)$ as its socle with all other composition factors cuspidal.

The morphism $\bar\phi$ therefore sends the head of $q^{-\b\cdot \ga}L_\b\circ L_\ga$ onto the socle of $L_\ga\circ L_\b$. Hence $\phi$ induces a bijection between all occurrences of non-cuspidal subquotients as sections of filtrations of $q^{-\b\cdot \ga}\D_\b\circ \D_\ga$ and $\D_\ga\circ \D_\b$. This shows that $\ker\phi$ and $\coker\phi$ are both cuspidal $R(\a)$-modules.

Suppose for want of a contradiction that $\ker\phi$ is nonzero. It is a submodule of 
 the finitely generated module $q^{-\b\cdot\ga}\D_\b\circ \D_\ga$. By \cite[Corollary 2.11]{khovanovlauda}, $R(\a)$ is Noetherian and hence $\ker\phi$ is finitely generated.

As $\ker\phi$ is cuspidal it is a $S(\a)$-module, so by Theorem \ref{oofone} we deduce that $\dim(\ker\phi)_d$ is bounded as a function of $d$.

The adjunction (\ref{oppositeadjunction}) yields a canonical nonzero map from $\Res_{\ga,\b}\ker\phi$ to $\D_\ga\otimes \D_\b$. If $X$ is the image of this map then we have $\dim X_d$ is a bounded function of $d$.

The modules $\D_\b$ and $\D_\ga$ are free over the central subalgebra $\Q[z]$ of $S(\b)$ and $S(\ga)$. Therefore $\D_\b\otimes \D_\ga$ is a free $\Q[z_1,z_2]$-module. Hence there are no nonzero submodules $M$ of $\D_\b\otimes \D_\ga$ for which $\dim M_d$ is a bounded function of $d$. This is a contradiction, implying $\phi$ is injective.

Now let $L$ be a cuspidal $R(\a)$-module. We apply $\Hom(-,L)$ to the short exact sequence
\[
 0\to q\D_\b\circ \D_\ga \xrightarrow{\phi} \D_\ga\circ \D_\b \to \coker\phi\to 0.
\]
and obtain a long exact sequence. As $\Res_{\b,\ga}L=0$, we have
\[
 \Ext^i(\D_\b\circ \D_\ga,L) = \Ext^i(\D_\b\otimes \D_\ga,\Res_{\b,\ga}L)=0.
\]
Therefore our long exact sequence degenerates into a sequence of isomorphisms
\begin{equation}\label{uno}
 \Ext^i(\coker\phi,L)\cong \Ext^i(\D_\ga\circ \D_\b,L)
\end{equation}
and by adjunction we have
\begin{equation}\label{duo}
 \Ext^i(\D_\ga\circ \D_\b,L)\cong \Ext^i(\D_\ga\otimes \D_\b,\Res_{\ga,\b}L).
\end{equation}

Lemma \ref{oppositerestrict} shows that $\Res_{\ga,\b}L$ is a $S(\ga)\otimes S(\b)$-module.
Since $\D_\ga\otimes \D_\b$ is a projective $S(\ga)\otimes S(\b)$-module, we derive that $\Ext^1(\D_\ga\otimes \D_\b,\Res_{\ga,\b}L)=0$. Tracing through the above isomorphisms yields $\Ext^1(\coker\phi,L)=0$ and therefore $\coker\phi$ is a projective $S(\a)$-module.
\end{proof}

\section{Cuspidal Representations of $R(\d)$}

We first explain the intertwined logical structure of this section and the following one. Each statement in Section 18 involves a positive root $\a$. We prove all the results in this section under an assumption that the results in Section 18 are known for all roots $\a$ of height less than the height of $\delta$. The reader will not be worried about the forward references once the logical structure of Section 18 is known.

The results of Section 18 will be proved by a simultaneous induction on the height of the root $\a$. In particular, when Theorem 18.1 is proved for a root $\a$, it will be safe to assume that Theorem 18.2 is known for all roots of smaller height. There are references to the results of this section in Section 18. However they only appear when the root $\a$ under question is of height at least that of $\delta$. Thus there is no circularity and the argument is valid.

Let $\omega$ be a chamber coweight. Recall from \S \ref{sim} that $L(\omega)$ is the head of the module $L(\om_-)\circ L(\om_+)$ and is irreducible. Let $\D(\omega)$ be the projective cover of $L(\omega)$ in the category of $S(\d)$-modules. We caution the reader that while $L(\w)$ will depend only on the chamber coweight $\w$ (as in \cite{tingleywebster}), the module $\D(\w)$ will depend not just on $\w$ but also on the positive system $p(\Phi_{\succ\d})$.

\begin{theorem}\label{omegases} Let $\w$ be a chamber coweight.
There is a short exact sequence
\[
0 \to q^2 \D(\om_+)\circ \D(\om_-)\to \D(\om_-)\circ \D(\om_+)\to \D(\om)\to 0.
\]
\end{theorem}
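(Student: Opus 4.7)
The plan is to apply Lemma~\ref{mpses} with $(\a,\b,\ga)=(\d,\w_+,\w_-)$ and then identify the resulting cokernel with $\D(\w)$. First I would verify that $(\w_+,\w_-)$ is a minimal pair for $\d$. By Theorem~\ref{independence} the modules $\D(\w_\pm)$ depend on $\prec$ only through $p(\Phi_{\prec\d})$, and the same holds for $S(\d)$ and $\D(\w)$ since cuspidality for $R(\d)$-modules is controlled by $p(\Phi_{\prec\d})$. Without loss of generality I may therefore replace $\prec$ with the convex order of Example~\ref{tworow} attached to the simple root $\a\in\D_f$ for which $\w_+=\tilde\a$. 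In that order the positive roots strictly between $\w_-$ and $\w_+$ are $\widetilde{\d-\a}+n\d$, $n\d$ and $\tilde\a+n\d$ with $n\ge 1$; direct inspection shows $\d=\b'+\ga'$ admits no decomposition with $\w_-\prec\ga'\prec\b'\prec\w_+$. Since $(\d,\d)=0$ and $(\w_\pm,\w_\pm)=2$ force $\w_+\cdot\w_-=-2$, the shift $q^{-\w_+\cdot\w_-}=q^2$ from Lemma~\ref{mpses} matches the statement, and that lemma supplies a short exact sequence
\[
0 \to q^2\D(\w_+)\circ\D(\w_-) \to \D(\w_-)\circ\D(\w_+) \to C \to 0
\]
with $C$ projective in $S(\d)\mods$.

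To identify $C$ with $\D(\w)$ I would compute $\Hom(C,L')$ for each simple semicuspidal (equivalently cuspidal, since $\d$ is indivisible) $R(\d)$-module $L'$. Applying $\Hom(-,L')$ to the exact sequence and using the Ind--Res adjunction~(\ref{extind}),
\[
\Hom(q^2\D(\w_+)\circ\D(\w_-),L') \cong q^{-2}\Hom(\D(\w_+)\otimes\D(\w_-),\Res_{\w_+,\w_-}L') = 0,
\]
since $\w_+\succ\d$ combined with cuspidality of $L'$ forces $\Res_{\w_+,\w_-}L'=0$. Left-exactness then yields
\[
\Hom(C,L') \cong \Hom(\D(\w_-)\otimes\D(\w_+),\Res_{\w_-,\w_+}L').
\]

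The crux is to show this Hom vanishes unless $L'\cong L(\w)$, where it is one-dimensional in degree zero. By Lemma~\ref{oppositerestrict} the finite-dimensional $\Res_{\w_-,\w_+}L'$ carries an $S(\w_-)\otimes S(\w_+)$-structure, and the unique simple in that category is $L(\w_-)\otimes L(\w_+)$, so if $\Res_{\w_-,\w_+}L'$ is nonzero its socle contains $L(\w_-)\otimes L(\w_+)$. The Ind--Res adjunction converts this embedding into a nonzero map $L(\w_-)\circ L(\w_+)\to L'$; as $L'$ is simple the map is surjective, and Lemma~\ref{reslw} identifies the unique simple quotient of $L(\w_-)\circ L(\w_+)$ as $L(\w)$, so $L'\cong L(\w)$. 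Conversely Lemma~\ref{reslw} gives $\Res_{\w_-,\w_+}L(\w)=L(\w_-)\otimes L(\w_+)$, whence the Hom equals $\Hom(\D(\w_-)\otimes\D(\w_+),L(\w_-)\otimes L(\w_+))=\Q$ in degree zero since $\D(\w_\pm)$ is the projective cover of $L(\w_\pm)$. Therefore $C$ is a projective $S(\d)$-module with simple head $L(\w)$ of multiplicity one in degree zero, so $C\cong \D(\w)$. The delicate point is this socle step, which uses finite-dimensionality of simple $R(\d)$-modules to ensure a nonzero $\Res_{\w_-,\w_+}L'$ has nonzero socle.
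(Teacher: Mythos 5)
Your proposal is correct, and its first half (reduction via Theorem~\ref{independence} to the two-row order of Example~\ref{tworow}, checking that $(\w_+,\w_-)$ is a minimal pair for $\d$, and invoking Lemma~\ref{mpses} with the shift $q^{-\w_+\cdot\w_-}=q^2$) coincides with the paper's argument. Where you diverge is in identifying the cokernel $C$ with $\D(\w)$. The paper first proves $C$ is indecomposable by an endomorphism computation: it shows $\End(C)\cong\Hom(\D(\w_-)\circ\D(\w_+),C)$, that this receives a surjection from $\End(\D(\w_-)\circ\D(\w_+))\cong\End(\D(\w_-)\otimes\D(\w_+))$, and then uses Theorem~\ref{premorita} to identify the latter with $\Q[x,y]$, so $\End(C)_0\cong\Q$; it then exhibits $L(\w)$ as a quotient of $C$ and concludes $C$ is the projective cover. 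You instead compute the head of $C$ directly: using left-exactness and cuspidality to get $\Hom(C,L')\cong\Hom(\D(\w_-)\otimes\D(\w_+),\Res_{\w_-,\w_+}L')$ for every cuspidal simple $L'$, then using Lemma~\ref{oppositerestrict}, the uniqueness of the simple $S(\w_-)\otimes S(\w_+)$-module, and the Ind--Res adjunction on a simple submodule of the (finite-dimensional) restriction to force $L'\cong L(\w)$, with the converse multiplicity-one computation coming from Lemma~\ref{reslw} and the projective-cover property of $\D(\w_\pm)$. Both routes establish that $C$ is a nonzero projective $S(\d)$-module with simple head $L(\w)$ concentrated in degree zero, hence is $\D(\w)$. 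Your version has the mild advantage of not needing Theorem~\ref{premorita} (so slightly less of the running induction is invoked), at the cost of needing the socle argument on $\Res_{\w_-,\w_+}L'$; the paper's version yields along the way the explicit degree-zero endomorphism computation that it reuses in the surrounding arguments. No gaps: the only points requiring care -- that $\w_+\succ\d\succ\w_-$ so $\Res_{\w_+,\w_-}$ kills cuspidal modules, that the socle of the nonzero finite-dimensional restriction contains a shift of $L(\w_-)\otimes L(\w_+)$, and that a finitely generated projective $S(\d)$-module with simple head is the projective cover of that head -- are all addressed or immediate in your setting.
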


\begin{proof}
As in the proof of Lemma \ref{reslw}, we may assume without loss of generality that $(\w_+,\w_-)$ is a minimal pair for $\d$.

By Lemma \ref{mpses}, there is
a short exact sequence
\begin{equation}\label{cses}
0 \to q^2 \D(\om_+)\circ \D(\om_-)\to \D(\om_-)\circ \D(\om_+)\to C\to 0
\end{equation}
for some projective $S(\d)$-module $C$.

As $C$ is cuspidal, $\Res_{\w_+\w_-}C=0$. By adjunction, this implies that  $\Ext^i(q^2\D(\w_+)\circ \D(\w_-),C)=0$. From the long exact sequence obtained by applying $\Hom(-,C)$ to (\ref{cses}), we therefore get an isomorphism
\begin{equation}\label{1}
 \End(C) \cong \Hom ( \D(\w_-)\circ\D(\w_+),C).
\end{equation}

By Lemma \ref{strongminimalpair} and adjunction, 
\[
 \Ext^1(\D(\w_-)\circ \D(\w_+),q^2 \D(\w_+)\circ\D(\w_-) )=
\Ext^1(\D(\w_-)\otimes \D(\w_+),q^4 \D(\w_-)\otimes \D(\w_+))
\] 
which is zero since $\D(\w_+)\otimes \D(\w_-)$ is a projective $S(\w_+)\otimes S(\w_-)$-module. From the long exact sequence obtained by applying $\Hom(\D(\w_-)\circ\D(\w_+),-)$ to (\ref{cses}), we therefore have a surjection
from $\End(\D(\w_-)\circ \D(\w_+))$ onto $\Hom(\D(\w_-)\circ\D(\w_+),C)$.

Again we apply Lemma \ref{strongminimalpair} and adjunction to obtain
\[
\End(\D(\w_-)\circ\D(\w_+))\cong \End(\D(\w_-)\otimes \D(\w_+))
\]
By Theorem \ref{premorita} this is isomorphic to $\Q[x,y]$ with $x$ and $y$ in degree 2.
Concentrating our attention to degree zero, we obtain $\End(C)_0\cong \Q$. Therefore $C$ is  indecomposable.

The module $L(\omega)$ is by construction a quotient of $\D(\w_-)\circ \D(\w_+)$. Since it is cuspidal, the same argument that produced the isomorphism (\ref{1}) yields an isomorphism
\[
\Hom(C,L(\w))\cong \Hom(\D(\w_-)\circ \D(\om_+),L(\w)).
\]
Therefore
$L(\om)$ is a quotient of $C$. Since $C$ is an indecomposable projective $S(\d)$-module, it must be that $C$ is the projective projective cover of $L(\om)$.
\end{proof}

\begin{corollary}\label{stdim}
Let $\om$ be a chamber coweight. Then $[\D(\om)]\in\f$ and when specialised to $q=1$ is equal to $h_\w \otimes t$.
\end{corollary}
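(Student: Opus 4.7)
The plan is to pass the short exact sequence of Theorem \ref{omegases} to the Grothendieck group. Since induction categorifies the twisted product on $\f$, this immediately yields
\[
[\D(\om)] = [\D(\om_-)]\,[\D(\om_+)] - q^2\,[\D(\om_+)]\,[\D(\om_-)],
\]
so both assertions of the corollary reduce to understanding the classes $[\D(\om_\pm)]$.

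For the first assertion, since $\om_\pm$ are real roots, the real-cuspidal theory developed earlier in the paper identifies $S(\om_\pm)$ with the graded polynomial ring $k[z]$ (with $z$ in degree $2$), under which the projective cover $\D(\om_\pm)$ corresponds to the regular module $k[z]$. Taking graded characters gives $[\D(\om_\pm)] = (1-q^2)^{-1}[L(\om_\pm)]$, and combining with Theorem \ref{realdualpbw} and the definition $E_\a^* = (1-q^2)E_\a$ produces $[\D(\om_\pm)] = E_{\om_\pm} \in \f$. Substituting into the displayed identity places $[\D(\om)]$ in $\f$ as well.

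For the evaluation at $q=1$ I would use the classical specialisation $\f|_{q=1} \cong U(\mathfrak{n}^+)$ of the affine Kac--Moody algebra, under which $E_{\om_\pm}$ becomes the loop-algebra root vector $e_{\om_\pm}$. With $\om_+ = \tilde{\a}$ and $\om_- = \widetilde{-\a}$ for the finite root $\a \in p(\Phi_{\succ\d})$ singled out by $\w$, these vectors realise as $e_\a\otimes 1$ and $e_{-\a}\otimes t$ in the loop algebra, so the displayed identity specialises at $q=1$ to the commutator $[e_{-\a}, e_\a]\otimes t$, which equals $h_\w\otimes t$ under the standard normalisation tying the chamber coweight $\w$ to the corresponding Cartan element.

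The main obstacle I anticipate is the identification $[\D(\om_\pm)] = E_{\om_\pm}$, which rests on the real-cuspidal structural statement $S(\om_\pm) \cong k[z]$. Once that is in hand, the first assertion is an algebraic consequence of the Grothendieck-group identity above, and the second is a routine commutator calculation in the loop-algebra realisation of the affine Lie algebra.
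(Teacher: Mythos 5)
Your proposal is correct and is essentially the paper's own argument: the paper also passes the short exact sequence of Theorem \ref{omegases} to the Grothendieck group and combines it with $[\D(\om_\pm)]=E_{\om_\pm}$ (Theorem \ref{realpbw}, available for $\om_\pm$ by the height induction, and which your Morita-equivalence computation merely rederives from Corollary \ref{morita} and Theorem \ref{realdualpbw}), then reads off membership in $\f$ and the $q=1$ commutator $h_\w\otimes t$ exactly as you do.
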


\begin{proof}
This is immediate from Theorems \ref{realpbw} and \ref{omegases}.
\end{proof}

As a consequence we also obtain the following theorem, which also appears in \cite{tingleywebster}.

\begin{theorem}\label{imaginaryclassification}
The set of all modules $L(\omega)$, as $\omega$ runs over the chamber coweights adapted to the convex order $\prec$, is a complete list of the cuspidal irreducible representations of $R(\d)$.
\end{theorem}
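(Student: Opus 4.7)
The plan is to produce $|I|-1$ pairwise non-isomorphic cuspidal irreducibles $L(\omega)$ and match this against the count of cuspidal irreducibles of $R(\delta)$ coming from Theorem \ref{numberofimaginarysemicuspidals}. First, each $L(\omega)$ is irreducible by Lemma \ref{reslw}, and since $L(\omega_-)\circ L(\omega_+)$ is semicuspidal by Lemma \ref{10.2}, so is its head $L(\omega)$. For the imaginary root $\delta$, any nontrivial decomposition $\delta=\lambda+\mu$ has neither $\lambda$ nor $\mu$ a positive multiple of $\delta$, so semicuspidality of an $R(\delta)$-module coincides with cuspidality, and hence $L(\omega)$ is a cuspidal irreducible.

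For the count, a chamber coweight adapted to $\prec$ is by definition a fundamental coweight for the finite-type positive system $p(\Phi_{\succ\delta})\subset\Phi_f$, and the number of such coweights is $\operatorname{rank}(\Phi_f)=|I|-1$. By Theorem \ref{numberofimaginarysemicuspidals}, the number of simple semicuspidal representations of $R(\delta)$ equals the coefficient of $t$ in $\prod_{i\geq 1}(1-t^i)^{1-|I|}$, which is also $|I|-1$. So once distinctness is verified, the $L(\omega)$ exhaust the cuspidal irreducibles by counting.

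For distinctness, I would invoke Corollary \ref{stdim}: the classes $[\D(\omega)]\in\f$ specialise at $q=1$ to the elements $h_\omega\otimes t$, one for each adapted chamber coweight, and these elements are linearly independent. Hence the projective covers $\D(\omega)$ are pairwise non-isomorphic, and therefore so are their irreducible heads $L(\omega)$. Combined with the matching count above, this completes the proof.

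The main obstacle is the distinctness step. Without Corollary \ref{stdim} one would be forced to separate the $L(\omega)$'s by a direct Mackey computation, for example by showing $\Res_{\omega_-,\omega_+}L(\omega')=0$ whenever $\omega\neq\omega'$. This would mean carefully controlling which summands of the Mackey filtration of $\Res_{\omega_-,\omega_+}(L(\omega'_-)\circ L(\omega'_+))$ survive, using cuspidality of $L(\omega'_\pm)$ together with the rigidity of the decompositions of $\delta$ near a minimal pair (as in Lemma \ref{strongminimalpair}). Fortunately Corollary \ref{stdim} sidesteps this delicate analysis entirely.
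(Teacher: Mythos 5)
Your overall route is the same as the paper's: irreducibility of $L(\omega)$ from Lemma \ref{reslw}, and distinctness plus completeness by combining Corollary \ref{stdim} (the classes $[\D(\omega)]$ specialise at $q=1$ to the linearly independent elements $h_\omega\otimes t$) with the count $f(1)=|I|-1$ from Theorem \ref{numberofimaginarysemicuspidals}; the paper phrases this as ``the $\D(\omega)$ are a complete set of indecomposable projectives for $S(\delta)$'' rather than an explicit enumeration, but the content is identical.

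There is, however, a genuine flaw in your cuspidality step. Lemma \ref{10.2} does not apply to $L(\omega_-)\circ L(\omega_+)$: that lemma concerns induction products of semicuspidal modules over $R(m_1\alpha),\ldots,R(m_n\alpha)$ for multiples of a \emph{single} root $\alpha$, whereas $\omega_-$ and $\omega_+$ are two distinct real roots (neither a multiple of the other nor of $\delta$). Worse, the intermediate claim is false: $L(\omega_-)\circ L(\omega_+)$ is \emph{not} semicuspidal as an $R(\delta)$-module. Up to grading shift its composition factors coincide with those of $\overline{\D}(\omega_+,\omega_-)=L(\omega_+)\circ L(\omega_-)$, and in particular it contains (in fact as its socle, by the duality argument in the proof of Lemma \ref{mpses}) a shift of the non-cuspidal simple $A(L(\omega_+),L(\omega_-))$, whose restriction $\Res_{\omega_+,\omega_-}$ is nonzero with $\omega_+\succ\delta$; by exactness of restriction the product itself then violates semicuspidality. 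So ``head of a semicuspidal module'' is not available, and cuspidality of $L(\omega)$ — which you also need implicitly, since $\D(\omega)$ is by definition the projective cover of $L(\omega)$ in $S(\delta)$-mod — requires a separate argument. One way to repair it within your framework: since $(\omega_+,\omega_-)$ may be taken to be a minimal pair for $\delta$ (Theorem \ref{independence} and Example \ref{tworow}), Theorem \ref{main}(3) shows every composition factor of $L(\omega_-)\circ L(\omega_+)$ other than the single shifted copy of $A(L(\omega_+),L(\omega_-))$ is cuspidal, so it remains only to rule out that the head is that one copy; this needs an argument (e.g.\ via the grading shift $q^{-\omega_+\cdot\omega_-}=q^{2}$ on the socle together with self-duality considerations, or via the identification of $L(\omega)$ as a quotient of the projective $S(\delta)$-module in Theorem \ref{omegases}), and your proposal as written supplies none.
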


\begin{proof}
Corollary \ref{stdim} shows that the modules $\D(\om)$ are a complete set of indecomposable projective modules for $S(\d)$. In the last paragraph of the proof of Theorem \ref{omegases}, we showed that the module $L(\w)$ is a quotient of $\D(\om)$. We also proved that $L(\om)$ is simple in Lemma \ref{reslw}. Therefore the set of such $L(\w)$ is a complete set of irreducible cuspidal representations of $R(\d)$.
\end{proof}

Let $\{n_\w\}_{\w\in\Om}$ be a sequence of natural numbers. Lemma \ref{risq} shows that the induced product
\[
\dct{w\in \W} L(\om)^{\circ n_\w}
\] is independent of the order of the factors.

Now we know the modules $L(\w)$ are pairwise nonisomorphic, we can use
 the same argument as in Theorem \ref{14.3} to obtain a natural isomorphism
\begin{equation}\label{endsn}
\End \left( \dct{w\in \Om} L(\om)^{n_\w} \right)  \cong \bigotimes_{w\in \Om} \Q[S_{n_\w}].
\end{equation}

If $\{m_\w\}_{\w\in\Om}$ and $\{n_\w\}_{\w\in\Om}$ are two sequences of natural numbers then there is a natural inclusion
\begin{equation}\label{endinclusion}
\End \left( \dct{w\in \Om} L(\om)^{m_\w} \right) \otimes \End \left( \dct{w\in \Om} L(\om)^{n_\w} \right) \hookrightarrow \End \left( \dct{w\in \Om} L(\om)^{m_\w+n_\w} \right)
\end{equation}
which, under the isomorphism (\ref{endsn}) is the tensor product of the natural inclusions
\begin{equation}\label{symmetricinclusion}
\Q[S_{m_\w}]\otimes \Q[S_{n_\w}] \hookrightarrow \Q[S_{m_\w +n_\w}].
\end{equation}

If $\om$ is a chamber coweight and $\la$ is a partition of $n$, we define 
\[
 L_\w(\la) = \Hom_{\Q[S_n]} ( S^\la, L(\w)^{\circ n} )
\]
where $S^\la$ is the Specht module for $S_n$.

Let $\uline{\la}=\{\la_\w\}_{\w\in\W}$ be a multipartition. Then we define 
\[
L(\uline{\la})=\dct{\w\in \W} L_\w(\la_\w)=\Hom_{\otimes\Q[S_{n_\w}]} \left(\bigotimes_{\w\in\W} S^{\la_\w},\dct{\w\in \Om}L(\w)^{\circ n_\w}\right).
\]

Define the multi-Littlewood-Richardson coefficients by $$c^{\unu}_{\ula\umu}=\prod_{\w\in\W} c_{\la_\w\mu_\w}^{\nu_\w}$$ where $c_{\la\mu}^\nu$ is the ordinary Littlewood-Richardson coefficient, which we take to be zero if $|\nu|\neq |\la|+|\mu|$.

\begin{theorem}\label{lrl}
The family of modules $L(\ula)$ enjoy the following properties under induction and restriction:
\[
 L(\ula)\circ L(\umu) = \bigoplus_{\unu} L(\unu)^{ \oplus c^{\unu}_{\ula\umu} }
\]
\[
 \Res_{k\d,(n-k)\d} L(\unu) = \bigoplus_{\ula\vdash k,\umu\vdash n-k}  L(\ula)\otimes L(\umu)^{\oplus c^{\unu}_{\ula\umu} }
\]
\end{theorem}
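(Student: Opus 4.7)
My proof plan is to derive both identities simultaneously by exhibiting two different decompositions of the bimodule $X := \dct{\w}L(\w)^{\circ(m_\w+n_\w)}$, which carries commuting actions of $R((m+n)\d)$ and $\bigotimes_\w\Q[S_{m_\w+n_\w}]$ by \eqref{endsn}. The crucial preliminary step is to verify that under the natural isomorphism $X \cong \dct{\w}L(\w)^{\circ m_\w}\circ \dct{\w}L(\w)^{\circ n_\w}$ of $R((m+n)\d)$-modules, the action of the Young subgroup $\bigotimes\Q[S_{m_\w}]\otimes\bigotimes\Q[S_{n_\w}]$ on the right-hand side (each factor acting on its own side of $\circ$) coincides with the restriction of the ambient $\bigotimes\Q[S_{m_\w+n_\w}]$-action via the embedding \eqref{endinclusion}. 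This compatibility follows from the braid relations for the $r$-matrices underlying \eqref{endsn}, much as in the proof of Theorem \ref{14.3}.

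For the induction formula, I would decompose $X$ as a bimodule in two ways. Starting from the Schur-Weyl decomposition $X = \bigoplus_\unu L(\unu)\otimes \bigotimes_\w S^{\nu_\w}$ built into the definition of $L(\unu)$, one restricts the ambient symmetric group action to the Young subgroup and applies the classical Littlewood-Richardson branching rule $S^\nu\downarrow_{S_m\times S_n} = \bigoplus_{\la,\mu}c^\nu_{\la\mu}S^\la\otimes S^\mu$ coordinatewise to produce
\[
X \cong \bigoplus_{\unu,\ula,\umu} c^\unu_{\ula\umu}\,L(\unu)\otimes \bigotimes_\w S^{\la_\w}\otimes\bigotimes_\w S^{\mu_\w}.
\]
Alternatively, using the $\circ$-factorisation together with the Schur-Weyl decompositions of the two factors and exactness of induction one has
\[
X \cong \bigoplus_{\ula,\umu}\,(L(\ula)\circ L(\umu))\otimes \bigotimes_\w S^{\la_\w}\otimes\bigotimes_\w S^{\mu_\w}.
\]
Matching isotypic components of the Young subgroup action, whose simples $\bigotimes S^{\la_\w}\otimes\bigotimes S^{\mu_\w}$ are pairwise non-isomorphic by Wedderburn, yields the induction formula $L(\ula)\circ L(\umu) = \bigoplus_\unu c^\unu_{\ula\umu}L(\unu)$.

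For the restriction formula, I would combine the induction formula with the two adjunctions to pin down the head and socle of $\Res_{m\d,n\d}L(\unu)$. Frobenius reciprocity \eqref{extind} and the induction formula give $\dim\Hom(L(\ula)\otimes L(\umu),\Res L(\unu)) = c^\unu_{\ula\umu}$, controlling the socle; the opposite adjunction \eqref{oppositeadjunction}, together with $\d\cdot\d = 0$ in symmetric affine type, gives the matching head count. Since $L(\unu)$ is semicuspidal, all composition factors of $\Res L(\unu)$ are necessarily of the form $L(\ula)\otimes L(\umu)$.

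The main obstacle will be upgrading these head and socle multiplicity data to a genuine direct sum decomposition, i.e.\ establishing semisimplicity of $\Res L(\unu)$. I plan to handle this by applying $\Res_{m\d,n\d}$ to both sides of the double decomposition of $X$ established above and matching Young-subgroup isotypic components to obtain the identity $\Res(L(\ula)\circ L(\umu)) \cong \bigoplus_\unu c^\unu_{\ula\umu}\Res L(\unu)$ of actual $R(m\d)\otimes R(n\d)$-modules. Combining this identity with the socle bound $\dim\Hom(L(\ula)\otimes L(\umu),\Res L(\unu)) = c^\unu_{\ula\umu}$ and a character count forced by expanding $\dim X$ two different ways using the full Schur-Weyl structure, the total composition-factor multiplicity of $L(\ula)\otimes L(\umu)$ in $\Res L(\unu)$ is pinned down to equal its socle multiplicity, so that the socle exhausts $\Res L(\unu)$ and semisimplicity follows.
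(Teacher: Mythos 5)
Your treatment of the induction formula is essentially the paper's own argument, just written out in full: the paper's proof consists precisely of the observation that \eqref{endinclusion} matches \eqref{symmetricinclusion} under \eqref{endsn}, followed by the classical branching rules for Specht modules, and your double decomposition of $X=\dct{\w\in\W}L(\w)^{\circ(m_\w+n_\w)}$ as a bimodule with matching of Young-subgroup isotypic components is exactly how that observation is converted into the statement $L(\ula)\circ L(\umu)\cong\bigoplus_{\unu}L(\unu)^{\oplus c^{\unu}_{\ula\umu}}$.

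The restriction half, however, has a genuine gap. Your socle/head bookkeeping treats $L(\ula)\otimes L(\umu)$ as a simple module: you speak of its socle multiplicity in $\Res_{k\d,(n-k)\d}L(\unu)$, of ``all composition factors of $\Res L(\unu)$ are necessarily of the form $L(\ula)\otimes L(\umu)$'', and of pinning its ``total composition-factor multiplicity'' down to the socle multiplicity. At this point of the paper none of that is available: the irreducibility of the $L(\ula)$ and the fact that they exhaust the simple semicuspidal modules of $R(n\d)$ are only proved two sections later (Theorem \ref{19.7}), and Theorem \ref{lrl} is invoked before that (e.g.\ in the MV polytope section and in Theorem \ref{22.6}). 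Without simplicity, $\dim\Hom(L(\ula)\otimes L(\umu),\Res L(\unu))=c^{\unu}_{\ula\umu}$ bounds neither the socle nor composition multiplicities, and the final ``character count'' step is not well defined as stated (graded dimensions of $X$ alone do not determine the classes $[\Res L(\unu)]$). The route the theorem actually intends is the same isotypic-component mechanism you used for induction, applied on the other side of the Young-subgroup inclusion: since the $R$-matrix action of $\bigotimes_\w\Q[S_{n_\w}]$ commutes with the $R(n\d)$-action and $\Hom$ against a Specht module is just cutting by an idempotent, one has $\Res_{k\d,(n-k)\d}L(\unu)\cong\Hom_{\otimes\Q[S_{n_\w}]}\bigl(\otimes_\w S^{\nu_\w},\Res_{k\d,(n-k)\d}\dct{\w\in\W}L(\w)^{\circ n_\w}\bigr)$; identifying the restriction of the big module (its Mackey filtration has layers which are again products of the $L(\w)$, with no shifts since $\d\cdot\d=0$, and the identification is compatible with \eqref{endinclusion}/\eqref{symmetricinclusion}) and using the restriction branching rule $S^{\nu}\!\downarrow_{S_k\times S_{n-k}}=\bigoplus c^{\nu}_{\la\mu}S^{\la}\otimes S^{\mu}$ gives the second displayed isomorphism directly, with no appeal to simplicity or to semisimplicity of $\Res L(\unu)$.
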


\begin{proof}
 This follows from the observation above that the inclusions (\ref{endinclusion}) and (\ref{symmetricinclusion}) are equivalent under the isomorphism (\ref{endsn}), together with the known formulae for the induction and restriction of Specht modules for the inclusions $S_m\times S_n\to S_{m+n}$.
\end{proof}

As a particular case of Theorem \ref{lrl}, we have
\begin{equation}\label{lrr}
\Res_{k\d,(n-k)\d} L_\w (1^n) \cong L_\w(1^k) \otimes L_\w(1^{n-k}).
\end{equation}

\section{Homological Modules}

See the beginning of the previous section for a discussion of the inductive structure of the arguments in this section.

\begin{theorem}\label{14.1}
Let $\a$ be an indivisible positive root. Let $\D$ and $L$ be $S(\a)$-modules with $\D$ projective. Then for all $i>0$,
\[
\Ext^i(\D,L) =0.\]
\end{theorem}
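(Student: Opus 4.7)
The plan is to proceed by induction on $\operatorname{ht}(\a)$, simultaneously with the other results of this section. I would first reduce to the case that $\D$ is indecomposable projective and $L$ is irreducible: additivity of $\Ext$ handles the reduction for $\D$, while for $L$ one can pass to composition factors using Lemma~\ref{extwelldefined}. Thus $\D = \D(\a)$ and $L = L(\a)$ in the real case, or $\D = \D(\w)$ and $L = L(\w')$ for adapted chamber coweights in the imaginary case. The base case $\operatorname{ht}(\a) = 1$ is immediate: $R(\a) = S(\a) = \Q[z]$ and the unique indecomposable projective is $\Q[z]$ itself, against which all higher $\Ext$ vanishes.

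For the inductive step when $\a$ is real with a real minimal pair $(\b,\ga)$, apply Lemma~\ref{mpses} to $\D_\b = \D(\b)$ and $\D_\ga = \D(\ga)$ to obtain the short exact sequence
\[
0 \to q^{-\b\cdot\ga}\D(\b)\circ\D(\ga) \to \D(\ga)\circ\D(\b) \to C \to 0
\]
with $C$ projective over $S(\a)$, and observe that $\D(\a)$ must appear as a direct summand of $C$, since the only indecomposable projective of $S(\a)$-mod is $\D(\a)$ (up to grading shift) and $C$ is nonzero because $\D(\ga)\circ\D(\b)$ has $L(\a)$ in its head. Now apply $\Hom_{R(\a)}(-,L)$ together with the adjunctions~\eqref{extind} and~\eqref{oppositeadjunction}. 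The leftmost term yields $\Ext^i(\D(\b)\otimes\D(\ga),\Res_{\b,\ga}L) = 0$ for every $i$, because cuspidality of $L$ combined with $\b \succ \a$ forces $\Res_{\b,\ga}L = 0$. The middle term yields $\Ext^i(\D(\ga)\otimes\D(\b),\Res_{\ga,\b}L)$; by Lemma~\ref{oppositerestrict} the restriction is an $S(\ga)\otimes S(\b)$-module, so a K\"unneth-type splitting together with the inductive hypothesis applied to each strictly smaller-height factor $\D(\b)$ and $\D(\ga)$ kills all $\Ext^i$ for $i \geq 1$. The long exact sequence therefore collapses to $\Ext^i(C,L) = 0$ for $i \geq 1$, and the same holds for the summand $\D(\a)$.

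The remaining cases go through by an identical mechanism with the appropriate replacement of the short exact sequence. For $\a = \delta$, use the sequence of Theorem~\ref{omegases} in place of Lemma~\ref{mpses}; since the chamber-coweight constituents $\w_\pm$ are real of strictly smaller height, the inductive hypothesis covers them. For $\a$ real without a real minimal pair, so that $\a = \w_\pm + n\delta$ for some $n \geq 1$, one invokes Lemma~\ref{mpses} with a minimal pair of indivisible roots of strictly smaller height, such as $(\w_+ + (n-1)\delta,\delta)$, and the argument proceeds verbatim. The main obstacles I anticipate are two: first, cleanly establishing that $\D(\a)$ (or $\D(\w)$) is a direct summand of the cokernel $C$, which requires combining the classification of indecomposable projectives in $S(\a)$-mod with the surjection of $C$ onto the relevant cuspidal simple; second, the K\"unneth-style identification of $\Ext^i_{R(\b)\otimes R(\ga)}(\D(\b)\otimes\D(\ga), -)$ in terms of $\Ext$ over the individual factors, which relies on the freeness of $R(\b+\ga)$ over $R(\b)\otimes R(\ga)$ and then lets the factorwise inductive hypothesis do the real work.
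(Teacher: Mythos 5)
Your proposal follows the same skeleton as the paper's proof: induct on height simultaneously with the rest of the section, produce the short exact sequence of Lemma \ref{mpses} (or of Theorem \ref{omegases} when $\a=\d$), kill the term $\Ext^{i}(q^{-\b\cdot\ga}\D_\b\circ\D_\ga,L)$ by adjunction and cuspidality of $L$, kill $\Ext^i(\D_\ga\circ\D_\b,L)$ by adjunction, Lemma \ref{oppositerestrict} and the inductive hypothesis, and sandwich $\Ext^i(C,L)$. The genuine gap is the step you flag but then assert without proof: that $C\neq 0$, so that $\D(\a)$ is a summand of it. Your justification, ``$\D(\ga)\circ\D(\b)$ has $L(\a)$ in its head'', is, after applying $\Hom(-,L(\a))$ to the sequence and using $\Res_{\b,\ga}L(\a)=0$, literally equivalent to the assertion $\Res_{\ga,\b}L(\a)\neq 0$ (and, when one constituent of the minimal pair is $\d$, to the stronger assertion that this restriction involves the particular simple $L(\w')$ whose projective cover you chose to induct with). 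Nothing established at this stage of the induction gives that for a general convex order in affine type; it is exactly the crux. The paper supplies it by a different mechanism: by the inductive hypothesis the classes of the inducted projectives are known in $\f$ (Theorem \ref{realpbw} for real constituents, Corollary \ref{stdim} giving $[\D(\w)]$, equal to $h_\w\otimes t$ at $q=1$), and one checks that for a suitable choice $[\D_\b][\D_\ga]\neq q^{\b\cdot\ga}[\D_\ga][\D_\b]$; since $[C]$ is precisely this difference and $C$ is projective over $S(\a)$, which has the unique simple $L(\a)$, it follows that $C$ is a nonzero direct sum of grading shifts of $\D(\a)$.

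The omission is most serious in your case $\a=\w_\pm+n\d$, where the minimal pair you propose contains $\d$: Lemma \ref{mpses} must then be fed some indecomposable projective $S(\d)$-module $\D(\w')$, and the choice matters. For $\w'$ with $\langle \w',p(\a)\rangle=0$ the resulting cokernel can be zero (compare the computation of $g_x(q)$ in Lemma \ref{zerocase2}, which is $0$ or $1$ for $x\neq\w$), so ``the argument proceeds verbatim'' conceals exactly the point where the $q=1$ class computation selects the right $\w'$. Two smaller remarks: the reduction of $L$ to a simple module is unnecessary, since Lemma \ref{oppositerestrict} and the vanishing $\Res_{\b,\ga}L=0$ hold for arbitrary cuspidal $L$; and as stated it leans on Lemma \ref{extwelldefined}, which is a boundedness statement rather than a passage to composition factors --- if you do want that reduction, use instead that $\D$ admits a resolution by finitely generated projectives, so $\Ext^i(\D,-)$ is compatible with the exhaustive filtration of $L$ by submodules with simple quotients.
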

We remind readers that these Ext groups are taken in the category of $R(\a)$-modules which makes this result nontrivial.

\begin{proof}
Let $(\b,\ga)$ be a minimal pair for $\a$. If $\a$ is a real root, then by the inductive hypothesis applied to Theorem \ref{realpbw} and Corollary \ref{stdim}, there exist projective $S(\b)$ and $S(\ga)$-modules, $\D_\b$ and $\D_\ga$ such that $[\D_\b][\D_\ga]\neq q^{\b\cdot \ga}[\D_\ga][\D_\b]$. Therefore in the short exact sequence of Lemma \ref{mpses}, $C$ is a nonzero direct sum of copies of $\D(\a)$.

If $\a$ is imaginary, then without loss of generality assume that $\D$ is indecomposable projective, hence isomorphic to $\D(\om)$ for some $\om$. Then we use the short exact sequence of Theorem \ref{omegases} and so in all cases we have a short exact sequence
\begin{equation}\label{a}
0 \to q^{-\b\cdot \ga}\D_\b\circ \D_\ga\to \D_\ga\circ \D_\b\to C\to 0
\end{equation}
and it suffices to prove that $\Ext^i(C,L)=0$ for all cuspidal $R(\a)$-modules $L$.

By adjunction there is an isomorphism
\[
\Ext^i(\D_\ga\circ \D_\b,L)\cong \Ext^i(\D_\ga\otimes \D_\b,\Res_{\ga\b}L).
\]
Lemma \ref{oppositerestrict} shows that $\Res_{\ga\b}L$ is a $S(\ga)\otimes S(\b)$-module. Thus by inductive hypothesis we know that this Ext group is zero.

On the other hand, the group $\Ext^{i-1}(q^{-\b\cdot \ga}\D_\b\circ \D_\ga,L)$ is zero by adjunction and the cuspidality of $L$.

Now consider the short exact sequence (\ref{a}) and apply $\Hom(-,L)$ to get a long exact sequence of Ext groups.
In the long exact sequence the group $\Ext^i(C,L)$ is sandwiched between two groups which we have shown to be zero, hence must be zero itself.
\end{proof}

\begin{theorem}\label{realpbw}
Let $\a$ be a real root.
Inside $\f^*_{\Z((q))}$ we have $[\D(\a)]=E_\a$.
\end{theorem}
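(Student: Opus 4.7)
The plan is to show that, viewed as elements of $\f^*_{\Z((q))}$, both $[\D(\a)]$ and $E_\a$ are characterised by the same duality relations with respect to the basis $\{[\bar\D(\pi)]\}_\pi$ of proper standard modules; non-degeneracy of $\langle\cdot,\cdot\rangle$ then forces $[\D(\a)]=E_\a$. Concretely, I will check in each case that the pairing with $[\bar\D(\pi)]$ is $\delta_{\pi,(\a)}$.

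First I compute $\langle[\D(\a)],[\bar\D(\pi)]\rangle$ for every root partition $\pi$ of $\a$. For $\pi=(\a)$, $\bar\D((\a))=L(\a)$ is self-dual, so by Theorem \ref{14.1} the pairing collapses to $\dimq\Hom(\D(\a),L(\a))=1$, correctly normalised because $\D(\a)$ is the projective cover of the self-dual simple $L(\a)$ in $S(\a)\mods$. For $\pi\neq(\a)$: since $\a$ is real, no imaginary block can appear, so $\pi=(\pi_1,\dots,\pi_n)$ with $n\geq 2$, real roots $\a_1\succ\dots\succ\a_n$, and each $\pi_i$ of degree $m_i\a_i$. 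Iterating Lemma \ref{extprops2} gives, up to a power of $q$,
\[
\langle[\D(\a)],[\bar\D(\pi)]\rangle=\langle[\Res_{m_1\a_1,\dots,m_n\a_n}\D(\a)],[\pi_1]\otimes\cdots\otimes[\pi_n]\rangle.
\]
Two applications of the convex-order theorem following Definition \ref{convexdefn} close this. Taking $A=\{\a_1,\dots,\a_n\}$, $B=\{\a\}$ rules out $\a_1\prec\a$ (their cones would be disjoint, contradicting $\sum m_i\a_i=\a$), while $\a_1=\a$ contradicts $n\geq 2$ with $m_i\geq 1$; hence $\a_1\succ\a$ strictly. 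Then taking $A=\{\b\in\Phi^+:\b\prec\a\}$ and $B=\{\a_1\}$, the cones meet only at the origin, so $m_1\a_1$ is not a sum of positive roots $\prec\a$. Cuspidality of $\D(\a)$ forces $\Res_{m_1\a_1,\a-m_1\a_1}\D(\a)=0$, and by transitivity the iterated restriction vanishes.

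To identify the answer with $E_\a$, I check that $E_\a$ enjoys the same pairings. By Lemma \ref{thsimpsi}, $E_\a$ lies in the one-dimensional span of $[L(\a)]$ (the only root partition equivalent to $(\a)$ when $\a$ is real), and Theorem \ref{realdualpbw} identifies the scalar as $(1-q^2)^{-1}$. Lemma \ref{orthog} gives $\langle E_\a,[\bar\D(\pi)]\rangle=0$ for $\pi\neq(\a)$, while for $\pi=(\a)$ one computes $\langle E_\a,[L(\a)]\rangle=1$ from $\langle\th_i,\th_i\rangle=(1-q^2)^{-1}$ together with the invariance of the form under the braid operators $T_i$. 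Thus both sides of $[\D(\a)]=E_\a$ satisfy $\langle\,\cdot\,,[\bar\D(\pi)]\rangle=\delta_{\pi,(\a)}$, and the theorem follows.

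The main obstacle is the Hopf-pairing vanishing step for $\pi\neq(\a)$: one has to use both halves of the convex-order theorem in tandem with cuspidality of $\D(\a)$ to control which restrictions of $\D(\a)$ can be non-zero, and it is essential that the convex order prevents $m_1\a_1$ from ever being written as a sum of smaller roots. The remaining identifications are formal consequences of previous results on adjunctions, duality, and the structure of the PBW basis.
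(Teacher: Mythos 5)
Your argument is correct in substance, but it takes a genuinely different (and longer) route than the paper's. The paper avoids your orthogonality computation entirely: since $\D(\a)$ is an $S(\a)$-module and $L(\a)$ is the unique simple $S(\a)$-module, every composition factor of $\D(\a)$ is $L(\a)$, so $[\D(\a)]$ is automatically a $\Z((q))$-multiple of $[L(\a)]=E_\a^*$ (Theorem \ref{realdualpbw}), hence of $E_\a$; the scalar is then pinned down exactly as in your $\pi=(\a)$ case, via $\langle[\D(\a)],[L(\a)]\rangle=1$ (Theorem \ref{14.1} together with the projective-cover property) and $\langle E_\a,E_\a^*\rangle=1$ from the $T_i$-invariance of the form. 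You instead characterise both sides by their pairings against the whole basis $\{[\overline{\D}(\pi)]\}$ and appeal to non-degeneracy, so you must additionally prove $\langle[\D(\a)],[\overline{\D}(\pi)]\rangle=0$ for $\pi\neq(\a)$, which you do correctly by combining the convexity theorem with cuspidality of the composition factors of $\D(\a)$. The paper's route buys brevity; yours buys independence from the fact that $L(\a)$ is the \emph{only} composition factor of $\D(\a)$ — you only use cuspidality.

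Two small slips, neither fatal. First, the claim that ``since $\a$ is real, no imaginary block can appear'' is false: whenever $\a-n\d\in\N I$ for some $n\geq 1$ (e.g.\ $\a=\widetilde{p(\a)}+n\d$), root partitions of $\a$ containing a nonzero multipartition do exist. Fortunately your vanishing argument never uses that the components are real: listing the component weights as $m_1\a_1\succ\cdots$ (one of them possibly equal to $n\d$), the same two applications of the convexity theorem give $\a_1\succ\a$ strictly and then $\Res_{m_1\a_1,\a-m_1\a_1}\D(\a)=0$, so this case should simply be included rather than excluded. Second, Lemma \ref{thsimpsi} is a statement about the categorical PBW element (the dual basis to $\{[\overline{\D}(\pi)]\}$), not about the braid-operator root vector $E_\a$ that Theorem \ref{realpbw} concerns, so it cannot be invoked for $E_\a$ as you do; the citation is unnecessary anyway, since Theorem \ref{realdualpbw} already gives $E_\a=(1-q^2)^{-1}[L(\a)]$, which is all your final step needs before applying Lemma \ref{orthog} and the pairing computation via $T_i$-invariance.
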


\begin{proof}
%
%
By Theorem \ref{14.1}, $\langle[\D(\a)],[L(\a)]\rangle=1$. We know that $\D(\a)$ only has $L(\a)$ appearing as a composition factor, and by Theorem \ref{realdualpbw}, $[L(\a)]=E_\a^*$. Therefore $\D(\a)$ is a scalar multiple of $E_\a$. 
By \cite[Proposition 38.2.1]{lusztigbook}, the automorphisms $T_i$ preserve $(\cdot,\cdot)$, hence $\langle E_\a, E_\a^* \rangle =1$ and the scalar is 1.
\end{proof}

\begin{theorem}\label{premorita}
Let $\a$ be a real root.
The endomorphism algebra of $\D(\a)$
 is isomorphic to $\Q[z]$, where $z$ is in degree two. 
\end{theorem}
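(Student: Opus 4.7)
The plan is to exploit the central decomposition $S(\a)\cong \Q[z]\otimes S'(\a)$ provided by Lemma \ref{scentretrick}, together with the fact (from Theorem \ref{oofone}) that $S'(\a)$ is finite-dimensional, to reduce the computation to a single finite-dimensional question. The element $z=y_1+\cdots+y_{|\a|}$ has degree two, so Lemma \ref{scentretrick} also furnishes a graded algebra homomorphism $\varphi\colon\Q[z]\to\End_{R(\a)}(\D(\a))$ with $\varphi(z)$ in degree two; I want to show this is an isomorphism.

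The first main step is the decomposition $\D(\a)\cong \Q[z]\otimes \D'(\a)$, where $\D'(\a)$ is the projective cover of $L(\a)$ in finite-dimensional graded $S'(\a)$-modules. Since $z$ has positive degree it must act by zero on the graded simple module $L(\a)$, so $L(\a)$ is pulled back from $S'(\a)$. A graded Nakayama argument applied to $\Q[z]$ then shows that $\Q[z]\otimes \D'(\a)$ is a projective $S(\a)$-module with essential surjection onto $L(\a)$, which must therefore be $\D(\a)$.

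In particular $z$ acts injectively on $\D(\a)$, giving the short exact sequence
\[
0\to q^2 \D(\a)\xrightarrow{\ z\ } \D(\a)\to \D'(\a)\to 0.
\]
Passing to classes in $\f^*_{\Z((q))}$ and using $[\D(\a)]=E_\a$ (Theorem \ref{realpbw}) together with $[L(\a)]=E_\a^*=(1-q^2)E_\a$ (Theorem \ref{realdualpbw}), one gets $[\D'(\a)]=(1-q^2)E_\a=[L(\a)]$. Since the only simple composition factors of $\D'(\a)$ are shifts of $L(\a)$ by Theorem \ref{numberofimaginarysemicuspidals}, this equality of classes forces $\D'(\a)\cong L(\a)$. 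Hence
\[
\End_{R(\a)}(\D(\a))\cong \End_{S(\a)}(\Q[z]\otimes L(\a))\cong \Q[z]\otimes \End_{S'(\a)}(L(\a))\cong \Q[z],
\]
the last step by Schur's lemma for the absolutely irreducible module $L(\a)$. The only non-routine step is verifying the projective-cover decomposition $\D(\a)\cong \Q[z]\otimes \D'(\a)$; once that is in place, everything reduces to character bookkeeping.
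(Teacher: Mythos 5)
Your argument is correct, but it takes a different route from the paper's. The paper's proof stays at the level of graded dimensions: since $\D(\a)$ is the projective cover of the unique simple $S(\a)$-module $L(\a)$, the graded dimension of $\End(\D(\a))$ equals the graded multiplicity of $L(\a)$ in $\D(\a)$, which by Theorems \ref{realdualpbw} and \ref{realpbw} (that is, $[L(\a)]=E_\a^*=(1-q^2)E_\a$ and $[\D(\a)]=E_\a$) is $(1-q^2)^{-1}$; the central copy of $\Q[z]$ from Lemma \ref{scentretrick} injects into $\End(\D(\a))$, and the dimension count forces this injection to be onto. You instead prove the stronger structural statement $\D(\a)\cong\Q[z]\otimes L(\a)$: you lift projective covers through the decomposition $S(\a)\cong\Q[z]\otimes S'(\a)$ to get $\D(\a)\cong\Q[z]\otimes\D'(\a)$, and the same character identity $E_\a^*=(1-q^2)E_\a$ collapses $\D'(\a)$ to $L(\a)$, after which the endomorphism algebra is read off by Schur's lemma (using absolute irreducibility of $L(\a)$). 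The essential inputs coincide (Lemma \ref{scentretrick} plus Theorems \ref{realdualpbw} and \ref{realpbw}), but your version buys an explicit description of $\D(\a)$ from which Corollary \ref{morita} is immediate, at the cost of two steps you should spell out: first, Theorem \ref{oofone} literally asserts only that $\dim S(\a)_d$ is bounded, and finite-dimensionality of $S'(\a)$ is deduced from this via $\dim S(\a)_d=\sum_{k\geq 0}\dim S'(\a)_{d-2k}$ together with the fact that $S'(\a)$ is bounded below with finite-dimensional graded pieces; second, the claim that $\Q[z]\otimes\D'(\a)\to L(\a)$ is an essential surjection requires noting that $zS(\a)$ lies in the graded radical of $S(\a)$ (true because $z$ is central of positive degree and all graded simples are finite dimensional, so $z$ kills them). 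Both verifications are routine, so I regard your proof as complete in substance, just organised around the module $\D(\a)$ rather than around a dimension count of its endomorphism ring.
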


\begin{proof}
As $\D(\a)$ is the projective cover of $L(\a)$ which is the unique simple $S(\a)$-module, the dimension of $\End(\D(\a))$ is equal to the multiplicity of $L(\a)$ in $\D(\a)$. Theorems \ref{realdualpbw} and \ref{realpbw} tell us that $[\D(\a)]=E_\a$ and $[L(\a)]=E_\a^*$. Since $E_\a^*=(1-q^2)E_\a$, we have $\dim\End(\D(\a))=(1-q^2)\inv$.

There is an injection from the centre of $S(\a)$ into $\End(\D(\a))$. By Lemma \ref{scentretrick}, there is an injection from $\Q[z]$ into $\End(\D(\a))$. A dimension count shows that this injection must be a bijection, as required.
\end{proof}

\begin{corollary}\label{morita}
Let $\a$ be a positive real root. Then the algebras $S(\a)$ and $\Q[z]$ are graded Morita equivalent.
\end{corollary}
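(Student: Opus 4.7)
The plan is to exhibit $\D(\a)$ as a graded progenerator of the category of $S(\a)$-modules and apply the graded version of the Morita theorem. Once this is done, since Theorem \ref{premorita} identifies $\End_{S(\a)}(\D(\a))$ with $\Q[z]$ (with $z$ in degree $2$), the functor
\[
F = \Hom_{S(\a)}(\D(\a),-) : S(\a)\text{-mod} \longrightarrow \Q[z]\text{-mod}
\]
is an equivalence of graded module categories, which is exactly what we mean by $S(\a)$ and $\Q[z]$ being graded Morita equivalent.

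The first thing to verify is that $\D(\a)$ is a finitely generated projective $S(\a)$-module: projectivity is its defining property (projective cover of $L(\a)$ in the semicuspidal category $S(\a)\text{-mod}$), and finite generation follows because $\D(\a)$ is cyclic over $S(\a)$ (projective cover of a simple).

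Next I need to check that $\D(\a)$ is a generator. Since $\a$ is a positive real root, Theorem \ref{numberofimaginarysemicuspidals} gives that $L(\a)$ is the unique (up to grading shift) simple semicuspidal $R(\a)$-module, hence the unique simple $S(\a)$-module. Thus every nonzero $S(\a)$-module has a simple subquotient isomorphic to a grading shift of $L(\a) = \D(\a)/\operatorname{rad}\D(\a)$, and since $\D(\a)$ is projective with simple head $L(\a)$, every simple (and hence every) $S(\a)$-module is a quotient of a direct sum of grading shifts of $\D(\a)$. This establishes that $\D(\a)$ is a progenerator.

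Finally, to invoke the graded Morita theorem in the form that gives an equivalence of module categories, one needs $\D(\a)$ to be finitely generated projective over $\End_{S(\a)}(\D(\a))^{op}=\Q[z]$ as well; this is the only step that is not entirely formal. The cleanest route is to observe that $[\D(\a)]=E_\a$ and $[L(\a)]=E_\a^*=(1-q^2)E_\a$ (Theorems \ref{realpbw} and \ref{realdualpbw}), so the graded dimension of $\D(\a)$ equals $(1-q^2)^{-1}$ times that of $L(\a)$, which is the graded dimension of $\Q[z]\otimes L(\a)$. Combined with the fact that the central $\Q[z]$ acts freely on $\D(\a)$ (an indecomposable projective over a $\Z_{\geq 0}$-graded algebra with semisimple degree-zero part, with $z$ in strictly positive degree), this shows $\D(\a)\cong \Q[z]\otimes L(\a)$ as right $\Q[z]$-modules, hence is finitely generated and free over $\Q[z]$. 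With $\D(\a)$ a finitely generated projective generator on one side and finitely generated projective on the other, the standard graded Morita theorem yields the equivalence, completing the proof.
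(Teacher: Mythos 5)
Your proposal is correct and follows essentially the same route as the paper, which simply observes that $\D(\a)$ is a projective generator of $S(\a)\mbox{-mod}$ with endomorphism algebra $\Q[z]$ (Theorem \ref{premorita}) and invokes graded Morita theory; your extra verification that $\D(\a)$ is finitely generated free over $\Q[z]$ is in any case automatic for a progenerator, and its freeness is most cleanly seen from Lemma \ref{scentretrick} (i.e.\ $S(\a)\cong\Q[z]\otimes S'(\a)$, so the summand $\D(\a)$ is graded projective, hence free, over $\Q[z]$) rather than from the parenthetical claim that $S(\a)$ is nonnegatively graded, which is not evident for a quotient of $R(\a)$.
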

\begin{proof}
The module $\D(\a)$ is a projective generator for the category of $S(\a)$-modules and its endomorphism algebra is $\Q[z]$.
\end{proof}

\section{Standard Imaginary Modules}

\begin{lemma}\label{negativehoms}
Let $d\leq 0$ be an integer and let $\om$ and $\om'$ be two chamber coweights. Then \[
\dim\Hom(\D(\w),\D(\w'))_d = \begin{cases} \Q & \text{if $d=0$ and $\w=\w'$,} \\
0 & \text{otherwise}.
\end{cases}
\]
\end{lemma}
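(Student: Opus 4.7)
My strategy is to transport $\Delta(\omega)$ and $\Delta(\omega')$ through the graded Morita equivalence between $R(\delta)$ and the $\mathbb{N}$-graded algebra $A(\delta)$ (whose degree-zero part $A(\delta)_0$ is semisimple), and then read off the Hom space directly from the non-negative grading.

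The key preparatory step is to identify the Morita transform $\bar{\Delta}(\omega)$ as a cyclic $A(\delta)$-module generated in degree zero. Since $S(\delta)\text{-mod}$ is a Serre subcategory of $R(\delta)\text{-mod}$ and $\Delta(\omega)$ has simple head $L(\omega)$ in degree zero, the $R(\delta)$-projective cover $P(\omega)$ of $L(\omega)$ surjects onto $\Delta(\omega)$. Under Morita, $P(\omega)$ corresponds to the indecomposable projective $A(\delta) e_\omega$, so $\bar{\Delta}(\omega)$ is a cyclic quotient of $A(\delta) e_\omega$ with a distinguished generator $\bar{e}_\omega$ in degree zero. Because $A(\delta)$ is concentrated in non-negative degrees with $A(\delta)_0$ semisimple and orthogonal idempotents $e_{\omega''}$ for the various chamber coweights, $\bar{\Delta}(\omega)$ lives in degrees $\geq 0$ with one-dimensional degree-zero part $\mathbb{Q} \bar{e}_\omega$.

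With this picture in hand, any degree-$d$ morphism $f : \bar{\Delta}(\omega) \to \bar{\Delta}(\omega')$ is determined by $f(\bar{e}_\omega) \in e_\omega \bar{\Delta}(\omega')_d$. If $d < 0$ then $\bar{\Delta}(\omega')_d = 0$, so $f = 0$. If $d = 0$ then $e_\omega \bar{\Delta}(\omega')_0 = e_\omega \mathbb{Q} \bar{e}_{\omega'} = \delta_{\omega,\omega'} \mathbb{Q}$; when $\omega = \omega'$ every scalar multiple of the identity is a valid morphism (the consistency check $f(k \bar{e}_\omega) = c k \bar{e}_\omega = 0$ for $k$ in the defining relations is automatic), producing the advertised one-dimensional Hom space. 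The main obstacle is the bookkeeping in the preparatory step---identifying $\bar{\Delta}(\omega)$ as a cyclic quotient of $A(\delta) e_\omega$---since $\Delta(\omega)$ is defined by a universal property intrinsic to $S(\delta)\text{-mod}$ rather than to $R(\delta)\text{-mod}$; once this point is handled the conclusion is a one-line degree comparison.
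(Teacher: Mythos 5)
Your proof is correct, but it takes a genuinely different route from the paper's. The paper exploits projectivity of $\D(\w)$ in the category of $S(\d)$-modules to identify $\dim\Hom(\D(\w),\D(\w'))$ with the graded multiplicity of $L(\w)$ in $\D(\w')$, and then computes that multiplicity on the level of classes: by Theorem \ref{14.1} the families $\{[\D(\om)]\}$ and $\{[L(\om)]\}$ are dual bases of the span of the cuspidal classes, and since $\langle [L(\om)],[L(\om')]\rangle\in\delta_{\om\om'}+q\Z[[q]]$ one gets $[\D(\w)]\in[L(\w)]+\sum_x q\Z[[q]]\,[L(x)]$, which is exactly the statement about multiplicities in non-positive degrees. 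You instead transport everything through the graded Morita equivalence with the non-negatively graded algebra $A(\d)$ (the same device the paper uses for Lemmas \ref{extwelldefined} and \ref{geometry}) and read the answer off from the grading: the transform of $\D(\w)$ is a cyclic quotient of $A(\d)e_\w$ generated in degree zero with $\Q\bar e_\w$ as its entire degree-zero part, so evaluation at the generator kills everything in negative degree and leaves at most $e_\w\Q\bar e_{\w'}$ in degree zero, with the identity supplying the one nontrivial endomorphism when $\w=\w'$. Your preparatory step (surjectivity of $P(\w)\to\D(\w)$, hence cyclicity of the transform) is the standard graded Nakayama argument and is fine; note also that the $\w\neq\w'$ case silently uses that the idempotents $e_\w,e_{\w'}$ are distinct and orthogonal, i.e.\ that $L(\w)\not\cong L(\w')$, which is available from Theorem \ref{imaginaryclassification} and Corollary \ref{stdim}. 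What the two approaches buy: yours avoids Theorem \ref{14.1} and the Ext-pairing computation entirely and is a purely module-theoretic degree count; the paper's argument yields along the way the expansion of $[\D(\w)]$ in the classes of the simples, which meshes with the dual-bases bookkeeping used later (e.g.\ Corollary \ref{19.10}).
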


\begin{proof}
Since $\D(\om)$ is a projective $S(\d)$-module, the dimension of $\Hom(\D(\om),\D(\om'))$ is equal to the multiplicity of $L(\w)$ in $\D(\om')$.

We have
\[
\langle [L(\om)],[L(\om')] \rangle \in \delta_{\om\w'} + q \Z[[q]]
\]
and by Lemma \ref{14.1}, the bases $\{\D(\om)\}$ and $\{L(\om)\}$ are dual bases for the subspace of $\f_\d$ spanned by the cuspidal modules. Therefore
\[
 [\D(\w)]\in [L(\w)] + \sum_{x\in \Omega} q\Z[[q]]\cdot [L(x)]
\]
which shows the desired properties of the multiplicities.
\end{proof}

\begin{lemma}\label{impr}
 The module $\dct{\om\in\Om}\D(\w)^{\circ n_\w}$ is a projective object in the category of $S(n\d)$-modules.
\end{lemma}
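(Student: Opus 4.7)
To verify projectivity in $S(n\d)\mods$, it suffices to show that $\Ext^1_{R(n\d)}(\dct{\w\in\Om}\D(\w)^{\circ n_\w},L)=0$ for every $L\in S(n\d)\mods$: the inclusion $S(n\d)\mods\hookrightarrow R(n\d)\mods$ injects $\Ext^1$ groups, and the induced product itself lies in $S(n\d)\mods$ by Lemma \ref{10.2}. Iterating the adjunction \eqref{extind} converts this to
\[\Ext^1_{R(n\d)}\bigl(\dct{\w\in\Om}\D(\w)^{\circ n_\w},L\bigr)\cong \Ext^1_{R(\d)^{\otimes n}}\bigl(\bigotimes_{j=1}^n \D(\w_{i_j}),\Res_{\d,\ldots,\d}L\bigr),\]
where $\{\w_{i_j}\}_{j=1}^n$ lists each $\w\in\Om$ with multiplicity $n_\w$.

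I next verify that $\Res_{\d,\ldots,\d}L$ is an $S(\d)^{\otimes n}$-module, i.e.\ each of its $n$ tensor factors is cuspidal over $R(\d)$. Suppose for some index $i$ and some $\la+\mu=\d$ with $\la,\mu\neq 0$ the refined restriction $\Res_{\d,\ldots,\d,\la,\mu,\d,\ldots,\d}L$ (with $\la,\mu$ in positions $i,i+1$) is nonzero; coarsening gives $\Res_{(i-1)\d+\la,\,\mu+(n-i)\d}L\neq 0$, so semicuspidality of $L$ forces $(i-1)\d+\la$ to be a sum of roots $\preceq\d$ and $\mu+(n-i)\d$ to be a sum of roots $\succeq\d$. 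Projecting via $p$, $p(\la)$ lies in the $\N$-cone generated by $p(\Phi_{\prec\d})$ and $p(\mu)$ in the $\N$-cone generated by $p(\Phi_{\succ\d})$. Writing $\la=\la^*+k\d$ where $\la^*$ is the minimum-height lift of $p(\la)$ built from minimal positive representatives $\tilde{\gamma}$ of roots $\gamma\in p(\Phi_{\prec\d})$, minimality of $\la^*$ among elements of $\N I$ projecting to $p(\la)$ forces $k\geq 0$, while $|\la|<|\d|$ forces $k<1$; hence $k=0$ and $\la=\la^*$ is a sum of real roots $\prec\d$. The symmetric argument handles $\mu$.

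The remaining ingredient is a tensor-product analogue of Theorem \ref{14.1}: for projective $S(\d)$-modules $\D_1,\ldots,\D_n$ and any $S(\d)^{\otimes n}$-module $M$, $\Ext^i_{R(\d)^{\otimes n}}(\bigotimes_j \D_j,M)=0$ for all $i>0$. This follows by induction on $n$ via a Grothendieck-style spectral sequence for the iterated $\Hom$: the calculation reduces to $\Ext^\bullet_{R(\d)}(\D_1,\Hom_{R(\d)^{\otimes(n-1)}}(\bigotimes_{j\geq 2}\D_j,M))$, and the inner $\Hom$ inherits an $S(\d)$-module structure on the remaining factor from that of $M$, so the $n=1$ case (Theorem \ref{14.1}) completes the argument.

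The main obstacle is the second step, verifying that $\Res_{\d,\ldots,\d}L$ factors through $S(\d)^{\otimes n}$. The crucial input is the strict height bound $|\la|,|\mu|<|\d|$, which rules out spurious copies of $\d$ in the minimum-height $p$-lift, combined with the fact established earlier that $p(\Phi_{\prec\d})$ and $p(\Phi_{\succ\d})$ form opposite positive systems in $\Phi_f$.
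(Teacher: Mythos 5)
Your skeleton is the same as the paper's: reduce to $\Ext^1$-vanishing over $R(n\d)$ (your first step, which is fine, and which the paper leaves implicit), apply adjunction, and use that $\Res_{\d,\ldots,\d}L$ is an $S(\d)^{\otimes n}$-module together with projectivity of each $\D(\w)$ over $S(\d)$; your third step (the iterated/K\"unneth-type vanishing) is also fine, if informal. The gap is in the second step, exactly at the point you flag as the main obstacle. Semicuspidality of $L$ hands you one particular expression $(i-1)\d+\la=\sum_j\b_j+m\d$ with $\b_j$ real roots $\prec\d$, hence one particular decomposition $p(\la)=\sum_j p(\b_j)$ into elements of $p(\Phi_{\prec\d})$, and $\la^*=\sum_j\widetilde{p(\b_j)}$. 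The assertion that $\la^*$ is minimal among elements of $\N I$ projecting to $p(\la)$ --- which is what gives you $k\ge 0$ --- is unjustified and false in general. For instance, in affine type $A_2$ with $p(\Phi_{\prec\d})=\{\a_1,\a_1+\a_2,-\a_2\}$, the decomposition $\a_1=(\a_1+\a_2)+(-\a_2)$ has minimal lifts $\a_1+\a_2$ and $\a_0+\a_1$, whose sum is $\d+\a_1$; this projects to $\a_1$ but is not the minimal lift, so for $\la=\a_1$ one would get $k=-1$ and your argument concludes nothing. If instead $\la^*$ is meant to be the minimum over \emph{all} decompositions, then the claim that this minimum is achieved at the minimal $\N I$-lift is precisely the statement being proved, so the argument is circular. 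Either way, the step ``minimality forces $k\ge 0$'' is where the entire content of the assertion that $\Res_{\d,\ldots,\d}L$ is an $S(\d)^{\otimes n}$-module sits, and it is not established. (A smaller omission: you also need that the minimal lift $\tilde\gamma$ of $\gamma\in p(\Phi_{\prec\d})$ is itself $\prec\d$, which requires a short convexity check.)

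A correct route using results already available in the paper: if some tensor factor $L_i$ of a composition factor of $\Res_{\d,\ldots,\d}L$ were not cuspidal, then by Theorem \ref{fdmain} and Lemma \ref{irreduciblehead} one has $\Res_{m\th,\d-m\th}L_i\neq 0$ for some real root $\th\succ\d$ and $m\ge 1$ (the largest root of the admissible sequence for $L_i$; it is $\succ\d$ by the cone-separation theorem of Section 3). Coarsening, $\Res_{(i-1)\d+m\th,\;\d-m\th+(n-i)\d}L\neq 0$, so semicuspidality writes $(i-1)\d+m\th$ as a sum of real roots $\prec\d$ plus a multiple of $\d$; after moving the copies of $\d$ to one side this exhibits a nonzero vector lying both in the cone spanned by roots $\preceq\d$ and in the cone spanned by $\{\th\}$ (or by $\{\th,\d\}$ against roots $\prec\d$), contradicting the cone-separation theorem since every root involved on one side is strictly smaller than every root on the other. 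The symmetric argument with the smallest root handles the $\mu$-side. Substituting this for your lifting argument closes the gap, after which your proof coincides with the paper's.
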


\begin{remark}
We choose an arbitrary ordering of the factors in $\circ_{\om\in\Om}\D(\w)^{\circ n_\w}$. Lemma \ref{deltascommute} below shows that this choice of ordering is immaterial.
\end{remark}

\begin{proof}
 Let $L$ be a semicuspidal $R(n\d)$-module. 
Therefore $\Res_{\d,\ldots,\d}L$ is a $S(\d)\otimes\cdots\otimes S(\d)$-module. By adjunction
\[
\Ext^1 (\dct{\om\in\Om}\D(\w)^{\circ n_\w},L)=\Ext^1(\bigotimes_{w\in \Om} \D(\om)^{\otimes n_\w},\Res_{\d,\ldots,\d}L)
\]
and since each $\D(\om)$ is a projective $S(\d)$-module, this $\Ext^1$ group is trivial, as required.
\end{proof}

\begin{lemma}\label{deltascommute}
Let $\om$ and $\om'$ be two chamber coweights. Then $\D(\om)\circ \D(\om')\cong \D(\om')\circ \D(\om)$.
\end{lemma}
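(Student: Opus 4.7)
The case $\om=\om'$ is trivial, so assume $\om\neq\om'$. The plan is to show that both $\D(\om)\circ\D(\om')$ and $\D(\om')\circ\D(\om)$ are indecomposable projective semicuspidal $S(2\d)$-modules with isomorphic simple heads, and then invoke uniqueness of projective covers in this category.

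Both are projective by Lemma \ref{impr}. For indecomposability, the plan is to compute $\End(\D(\om)\circ\D(\om'))$ via the adjunction $\End(\D(\om)\circ\D(\om')) \cong \Hom_{R(\d)\otimes R(\d)}(\D(\om)\otimes\D(\om'), \Res_{\d,\d}(\D(\om)\circ\D(\om')))$. Cuspidality of $\D(\om)$ and $\D(\om')$, combined with the observation that $\d$ is not a nontrivial nonnegative combination of real roots all on one side of $\d$ (by the convexity theorem of Section 3), forces the Mackey filtration of $\Res_{\d,\d}(\D(\om)\circ\D(\om'))$ to have only two layers, giving a short exact sequence
\[
0 \to \D(\om')\otimes\D(\om) \to \Res_{\d,\d}(\D(\om)\circ\D(\om')) \to \D(\om)\otimes\D(\om') \to 0
\]
with no grading shift since $\d\cdot\d=0$. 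Applying $\Hom(\D(\om)\otimes\D(\om'),-)$ and using Lemma \ref{14.1} together with a K\"unneth-type computation to establish the vanishing of $\Ext^1_{R(\d)\otimes R(\d)}(\D(\om)\otimes\D(\om'),\D(\om')\otimes\D(\om))$, one obtains a short exact sequence of Hom spaces. In degree zero, the contribution of the swap layer vanishes by Lemma \ref{negativehoms} (any decomposition $a+b=0$ forces one of $\Hom(\D(\om),\D(\om'))_a$ or $\Hom(\D(\om'),\D(\om))_b$ into nonpositive degree where it vanishes for $\om\neq\om'$), while the identity layer contributes $\End(\D(\om))_0\otimes\End(\D(\om'))_0 = \Q$. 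Thus $\End(\D(\om)\circ\D(\om'))_0 = \Q$ and the module is indecomposable.

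For the simple heads, the natural surjection $\D(\om)\circ\D(\om')\twoheadrightarrow L(\om)\circ L(\om')$ (induced from $\D(\om)\twoheadrightarrow L(\om)$ and $\D(\om')\twoheadrightarrow L(\om')$) forces the head of $L(\om)\circ L(\om')$, being a nonzero quotient of the simple head of the indecomposable projective $\D(\om)\circ\D(\om')$, to be isomorphic to that head; and similarly on the swapped side. By Lemma \ref{risq} the R-matrix supplies an isomorphism $L(\om)\circ L(\om')\cong L(\om')\circ L(\om)$ (with no grading shift, since $\d\cdot\d=0$), so the simple heads of $\D(\om)\circ\D(\om')$ and $\D(\om')\circ\D(\om)$ agree. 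Both modules are therefore indecomposable projective covers of the same simple in the category of semicuspidal $S(2\d)$-modules, hence isomorphic.

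The principal obstacle is extending Lemma \ref{14.1} via a K\"unneth argument to the tensor-product algebra $R(\d)\otimes R(\d)$ and carefully verifying that the Mackey filtration receives no contribution from `mixed' layers in which the component restrictions have two nonzero parts; the latter reduces to the convexity argument ruling out $\d$ as a sum of real roots strictly less than (respectively greater than) $\d$.
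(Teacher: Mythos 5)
Your proposal is correct and follows essentially the same route as the paper: projectivity via Lemma \ref{impr}, indecomposability from $\End(\D(\om)\circ\D(\om'))_0\cong\Q$ computed by adjunction, the Mackey filtration and Lemma \ref{negativehoms}, and then identification of both sides as the projective cover of $L(\om)\circ L(\om')\cong L(\om')\circ L(\om)$ using Lemma \ref{risq} and uniqueness of projective covers. The extra K\"unneth/$\Ext^1$ step you invoke is unnecessary (left exactness of $\Hom$ already bounds the degree-zero endomorphisms), but it does no harm.
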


\begin{proof}
We assume that $\w\neq \w'$ as otherwise the result is trivial. By Lemma \ref{negativehoms} and a computation using adjunction and the Mackey filtration, we compute $\End(\D(\w)\circ\D(\w'))_0\cong \Q$. Hence $\D(\w)\circ\D(\w')$ is indecomposable.
By Lemma \ref{impr} the module $\D(\om)\circ \D(\om')$ is a projective $S(2\d)$-module which surjects onto $L(\om)\circ L(\om')$, hence is the projective cover of $L(\w)\circ L(\w')$ in the category of $S(2\d)$-modules. By Lemma \ref{risq}, $L(\w)\circ L(\w')\cong L(\w')\circ L(\w)$, hence their projective covers are isomorphic.
\end{proof}

\begin{theorem}
Let $\{m_\w\}_{\w\in\W}$ and $\{n_\w\}_{\w\in\W}$ be two collections of natural numbers with $\sum_\w m_\w=\sum_\w n_\w$ and let $d\leq 0$ be an integer. Then
\[
\Hom ( \dct{\w\in\W} \D(\om)^{\circ m_\w},\dct{\w\in\W}\D(\w)^{\circ n_\w})_d\cong 
\begin{cases}
\bigotimes_{\w\in\W} \Q[S_{n_\w}] &\text{if $m_\w=n_\w$ for all $\w$ and $d=0$} \\
0 & \text{otherwise}
\end{cases}
\]
\end{theorem}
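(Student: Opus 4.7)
The plan is to attack the Hom computation through iterated adjunction followed by the Mackey filtration. Setting $n = \sum_\w m_\w$, the Hom vanishes by $\N I$-grading unless $\sum_\w n_\w = n$, which I will assume. Using Lemma~\ref{deltascommute} I enumerate $M = \D(\w_1)\circ\cdots\circ\D(\w_n)$ and $N = \D(\w_1')\circ\cdots\circ\D(\w_n')$, with each $\w$ appearing with multiplicity $m_\w$ in $M$ and $n_\w$ in $N$. Iterating the $(\Ind,\Res)$-adjunction gives
\[
\Hom(M,N)_d \cong \Hom\bigl(\D(\w_1)\otimes\cdots\otimes\D(\w_n),\,\Res_{\d,\ldots,\d}N\bigr)_d.
\]

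Next I would decompose $\Res_{\d,\ldots,\d}N$ via the Mackey filtration (Theorem~\ref{mackey}), whose layers are indexed by $n\times n$ matrices $\nu=(\nu_{ij})$ of $\N I$-entries whose rows and columns each sum to $\d$. For a permutation matrix $\nu_{ij} = \d\,[\sigma(i)=j]$ with $\sigma \in S_n$, the layer is isomorphic to $\D(\w_{\sigma(1)}')\otimes\cdots\otimes\D(\w_{\sigma(n)}')$ with trivial grading shift, since $\d\cdot\d=0$ makes every permutation of weight-$\d$ blocks grading-neutral. For a non-permutation $\nu$, at least one column carries a nontrivial cuspidal splitting of $\d$, and I claim the total grading shift $q^{s(\nu)}$ satisfies $s(\nu)>0$, following the pattern of the $q^{-\b\cdot\ga} = q^2$ appearing in Lemma~\ref{strongminimalpair} and using the strict negativity of $\b\cdot\ga$ for any genuine cuspidal split $\b+\ga=\d$ (a consequence of $\d$ being isotropic and the positivity of $\b\cdot\b$, $\ga\cdot\ga$ for real roots).

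Because $\bigotimes_i\D(\w_i)$ is projective in the category of $S(\d)^{\otimes n}$-modules (Lemma~\ref{impr} factorwise), Theorem~\ref{14.1} together with a K\"unneth-type argument gives the Ext-vanishing needed to conclude that Hom to the Mackey filtration equals the direct sum of Homs to its layers. Non-permutation layers, being shifted by $q^{s(\nu)}$ with $s(\nu)>0$, then contribute nothing in degree $d\leq 0$. The permutation layers yield
\[
\Hom(M,N)_d \cong \bigoplus_{\sigma\in S_n}\bigotimes_{i=1}^n \Hom\bigl(\D(\w_i), \D(\w_{\sigma(i)}')\bigr)_{d_i}
\]
with $\sum_i d_i = d$. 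By Lemma~\ref{negativehoms} each factor vanishes unless $d_i = 0$ and $\w_i = \w_{\sigma(i)}'$, forcing $d=0$ and forcing the multisets of $\w_i$'s and $\w_i'$'s to agree, i.e.\ $m_\w = n_\w$ for all $\w$. When the multisets agree, the count of valid $\sigma$ is exactly $\prod_\w n_\w! = \dim\bigotimes_\w\Q[S_{n_\w}]$.

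The hard part will be making the Mackey-shift positivity rigorous for arbitrary non-permutation matrices: one must track how the elementary shifts arising from each cuspidal split combine through the re-grouping operator $\tau$ and confirm that $s(\nu)$ is strictly positive away from permutation matrices. The remaining ingredients — the adjunction reduction, the Ext-vanishing of Theorem~\ref{14.1}, and the degree-wise vanishing of Lemma~\ref{negativehoms} — are already in hand, so once the shift analysis is settled the proof clicks into place.
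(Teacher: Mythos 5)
The main gap is in your treatment of the non-permutation Mackey layers. You propose to show they are irrelevant because their grading shift $s(\nu)$ is strictly positive, and you concede this is the unfinished ``hard part.'' But positivity of the total shift is not the mechanism at work, and it is doubtful you could push it through as stated: the shift is a signed sum of products $\nu_{ij}\cdot\nu_{kl}$ between pieces lying in \emph{different} rows, and such cross terms can be negative, so tracking shifts through the regrouping operator does not obviously yield $s(\nu)>0$. What actually happens is stronger and simpler: each $\D(\w)$ is an $S(\d)$-module, so every nonzero restriction $\Res_{\la,\mu}\D(\w)$ with $\la,\mu\neq 0$ has $\la$ a sum of roots $\prec\d$ and $\mu$ a sum of roots $\succ\d$; since every column of the Mackey matrix must sum to $\d$, convexity forces some row to contribute all of $\d$ to the first column, and inductively the matrix must be a permutation matrix. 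Hence the filtration of $\Res_{\d,\ldots,\d}(\dct{\w\in\W}\D(\w)^{\circ n_\w})$ has exactly $(\sum_\w n_\w)!$ nonzero layers, each an unshifted tensor product of the $\D(\w)$'s (this is the same cuspidality-plus-convexity argument as in Lemma~\ref{restrictinduct} and Theorem~\ref{14.3}, and it is what the paper uses). Once the spurious layers are gone, your splitting step is also cleaner: all layers are projective $S(\d)^{\otimes n}$-modules and the restricted module is an $S(\d)^{\otimes n}$-module, so the filtration splits outright; note that if nonzero non-permutation layers did exist they would not visibly be $S(\d)^{\otimes n}$-modules, so your K\"unneth/Ext-vanishing reduction would not apply to them anyway.

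A second, smaller shortfall: your argument ends with a dimension count $\prod_\w n_\w!$ and a sum over permutations, which identifies the Hom space as a graded vector space but not the algebra isomorphism
\[
\End\Bigl(\dct{\w\in\W}\D(\w)^{\circ n_\w}\Bigr)_0\cong \bigotimes_{\w\in\W}\Q[S_{n_\w}],
\]
which is how the theorem is used later (the $\Q[S_{n_\w}]$-action is needed to define $\D(\ula)$). The paper gets this by observing that $\dct{\w\in\W}\D(\w)^{\circ n_\w}$ is projective in the category of $S(n\d)$-modules (Lemma~\ref{impr}) and surjects onto $\dct{\w\in\W}L(\w)^{\circ n_\w}$, so every endomorphism of the latter lifts, the lift being unique in degree zero by your very dimension count; combined with (\ref{endsn}) this pins down the algebra structure. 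You should add this lifting step to complete the proof.
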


\begin{proof}
The Mackey filtration for $\Res_{\d,\ldots,\d}(\ic{\w\in\W}\D(\w)^{\circ n_\w})$ has $(\sum_{\w}n_\w)!$ nonzero subquotients, each a tensor product of projective $S(\d)$-modules where the factor $\D(\om)$ appears $n_\w$ times.

Therefore the filtration splits, and by Lemma \ref{negativehoms} and adjunction, the Hom space under question is zero unless $m_\w=n_\w$ for all $\w$ and $d=0$. Furthermore in this case its dimension is $\prod_w n_\w!$.

Since $\circ_{\w\in\W}\D(\w)^{\circ n_\w}$ is a projective $S(n\d)$-module and $\circ_{\w\in\W}L(\w)^{\circ n_\w}$ is a quotient of $\circ_{\w\in\W}\D(\w)^{\circ n_\w}$, every endomorphism of $\circ_{\w\in\W}L(\w)^{\circ n_\w}$ lifts to an endomorphism of $\circ_{\w\in\W}\D(\w)^{\circ n_\w}$. From the dimension counts in the previous paragraph and (\ref{endsn}), this lift is unique in degree zero and hence we get an algebra isomorphism
\[
\End(\dct{\w\in\W}\D(\w)^{\circ n_\w})_0 \cong \End(\dct{\w\in\W}L(\w)^{\circ n_\w}).
\]
So the result follows from (\ref{endsn}).
\end{proof}

For a multipartition $\uline{\la}=\{\la_\w\}_{\w\in\W}$ where each $\la_w$ is a partition of $n_\w$, we define
\[
\D(\uline{\la}) = \Hom_{\otimes \Q[S_{n_\w}]} (\otimes S^{\la_w},\dct{\w\in\W}\D(\w)^{\circ n_\w})
\]

\begin{theorem}
The modules $\D(\la)$ behave in the following way under induction and restriction.
\[
\D(\ula)\circ \D(\umu)\cong \bigoplus_{\unu}\D(\unu)^{\oplus c^{\unu}_{\ula\umu}}
\]
\[
 \Res_{k\d,(n-k)\d} \D(\unu) = \bigoplus_{\ula\vdash k,\umu\vdash n-k}  \D(\ula)\otimes \D(\umu)^{\oplus c^{\unu}_{\ula\umu} }
\]
\end{theorem}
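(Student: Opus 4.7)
The plan is to mirror the proof of Theorem \ref{lrl}: transfer the symmetric-group decomposition machinery from $\dct{\w\in\Om}L(\w)^{\circ n_\w}$ to $\dct{\w\in\Om}\D(\w)^{\circ n_\w}$ and then invoke the classical Littlewood--Richardson rule for the embedding $\bigotimes_\w(S_{m_\w}\times S_{n_\w})\hookrightarrow\bigotimes_\w S_{m_\w+n_\w}$. The decisive input is the immediately preceding theorem, which identifies $\End\bigl(\dct{\w\in\Om}\D(\w)^{\circ n_\w}\bigr)_0$ with $\bigotimes_\w\Q[S_{n_\w}]$ via the unique lift from $\End\bigl(\dct{\w\in\Om}L(\w)^{\circ n_\w}\bigr)$; this uniqueness forces every morphism between such products to intertwine the two symmetric-group actions in the canonical way.

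For the induction formula, I would first combine iterated applications of Lemma \ref{deltascommute} to produce a canonical isomorphism
\[
\bigl(\dct{\w\in\Om}\D(\w)^{\circ m_\w}\bigr)\circ\bigl(\dct{\w\in\Om}\D(\w)^{\circ n_\w}\bigr)\;\cong\;\dct{\w\in\Om}\D(\w)^{\circ(m_\w+n_\w)}.
\]
The unique-lifting argument then shows that this isomorphism intertwines the natural $\bigotimes_\w(S_{m_\w}\times S_{n_\w})$-action on the left with the restriction (along the standard parabolic inclusion) of the $\bigotimes_\w S_{m_\w+n_\w}$-action on the right. Applying $\Hom_{\bigotimes_\w(S_{m_\w}\times S_{n_\w})}\bigl(\bigotimes_\w S^{\la_\w}\boxtimes\bigotimes_\w S^{\mu_\w},-\bigr)$ to both sides, unpacking the target via the Schur--Weyl decomposition $\bigoplus_{\unu}\bigl(\bigotimes_\w S^{\nu_\w}\bigr)\otimes\D(\unu)$, and inserting the classical rule $\Ind_{S_m\times S_n}^{S_{m+n}}(S^\la\boxtimes S^\mu)\cong \bigoplus_\nu c^\nu_{\la\mu}S^\nu$ componentwise will deliver $\D(\ula)\circ\D(\umu)\cong\bigoplus_{\unu}\D(\unu)^{\oplus c^{\unu}_{\ula\umu}}$.

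For the restriction formula, I would analyze the Mackey filtration of $\Res_{k\d,(n-k)\d}\bigl(\dct{\w\in\Om}\D(\w)^{\circ n_\w}\bigr)$. Its nonzero layers are indexed by splittings $n_\w=p_\w+q_\w$ and are isomorphic (up to grading shift) to $\bigl(\dct{\w}\D(\w)^{\circ p_\w}\bigr)\otimes\bigl(\dct{\w}\D(\w)^{\circ q_\w}\bigr)$, each appearing with the multiplicity $\prod_\w\binom{n_\w}{p_\w}$ that reflects the coset space $\bigotimes_\w S_{n_\w}/\bigotimes_\w(S_{p_\w}\times S_{q_\w})$. Each layer is projective as an object of $S(k\d)\otimes S((n-k)\d)\mods$ by Lemma \ref{impr}, so Theorem \ref{14.1} applied to each tensor factor (combined with the K\"unneth formula for Ext over the tensor-product algebra) forces every successive extension in the filtration to split. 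The resulting direct sum is $\bigotimes_\w S_{n_\w}$-equivariant, with the action on the summand indexed by $(p_\w,q_\w)$ induced up from $\bigotimes_\w(S_{p_\w}\times S_{q_\w})$; applying $\Hom_{\bigotimes_\w S_{n_\w}}\bigl(\bigotimes_\w S^{\nu_\w},-\bigr)$ together with the restriction form of the Littlewood--Richardson rule will then produce the claimed formula.

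The main obstacle is the equivariance bookkeeping: checking that both the iterated Lemma \ref{deltascommute} isomorphism and the Mackey splitting are truly compatible with the $\bigotimes_\w S_{n_\w}$-actions in the asserted way. Both issues reduce to the naturality of the R-matrix construction from Section \ref{sim} and the uniqueness-of-lift clause inside the preceding theorem, which together force any map between the relevant Hom-spaces to intertwine the symmetric-group actions canonically. Once this is in hand, the rest is formal extraction of isotypic components.
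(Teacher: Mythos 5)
Your proposal is correct and follows essentially the same route as the paper, which simply declares that the proof of Theorem \ref{lrl} carries over once the degree-zero endomorphism algebra of $\dct{\w\in\W}\D(\w)^{\circ n_\w}$ is identified with $\bigotimes_{\w}\Q[S_{n_\w}]$ compatibly with the inclusions (\ref{endinclusion}) and (\ref{symmetricinclusion}); your Mackey-filtration and equivariance bookkeeping is exactly the implicit content of that argument (and the grading shifts you hedge on all vanish since $\d\cdot\d=0$). One small citation point: for the Ext-vanishing that splits the filtration you should argue as in Lemma \ref{impr} and Theorem \ref{14.1nd} (adjoint down to $\Res_{\d,\ldots,\d}$ and then apply Theorem \ref{14.1} for the indivisible root $\d$), since Theorem \ref{14.1} itself is stated only for indivisible roots and $k\d$ need not be indivisible.
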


\begin{proof}
 The proof is the same as that of Theorem \ref{lrl}
\end{proof}

Let $f_\la$ be the dimension of the Specht module $S^\la$ and for a multipartition $\ula=\{\la_\w\}_{\w\in\W}$, let $f_\ula=\prod_{\w} f_{\la_w}$.

As a $\Q[S_n]$-module, $\Q[S_n]$ decomposes as $\Q[S_n]=\oplus_{\la} (S^\la) ^{\oplus f_\la}$. Therefore we obtain the decomposition
\begin{equation}\label{decom}
 \dct{w\in \W}\D(\w)^{\circ n_w} \cong \bigoplus_{\uline{\la}\vdash n} \D(\uline{\la}) ^{\oplus f_{\uline{\la}}}.
\end{equation}

\begin{lemma}\label{19.4}
Let $\uline{\la}$ be a multipartition of $n$. The module $\D(\uline{\la})$ is indecomposable.
\end{lemma}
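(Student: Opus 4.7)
Write $X=\dct{\w\in\W}\D(\w)^{\circ n_\w}$ and $B=\bigotimes_{\w\in\W}\Q[S_{n_\w}]$. The preceding theorem gives a ring isomorphism $\End(X)_0\cong B$ together with the vanishing $\End(X)_d=0$ for $d<0$. The plan is to realise $\D(\ula)$ as a direct summand $e_\ula X$ cut out by a primitive idempotent $e_\ula\in B$, and then compute the endomorphism ring of this summand as a one-dimensional corner of $B$.

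For each $\w\in\W$ I would pick a primitive idempotent $e_{\la_\w}\in\Q[S_{n_\w}]$ lying in the Wedderburn block indexed by the Specht module $S^{\la_\w}$, and set $e_\ula=\bigotimes_\w e_{\la_\w}\in B$. Since $B\cong \bigoplus_\umu \Mat_{f_\umu}(\Q)$ and $e_\ula$ sits inside the block $\Mat_{f_\ula}(\Q)$ as a primitive idempotent, the corner ring $e_\ula B e_\ula$ is one-dimensional. Semisimplicity of $B$, combined with the definition $\D(\umu)=\Hom_{B}(S^{\umu},X)$, produces the $(R(n\d),B)$-bimodule decomposition
\[
X\cong \bigoplus_{\umu} S^{\umu}\otimes_\Q \D(\umu),
\]
with $B$ acting only on the left-hand tensor factor. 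Multiplication by $e_\ula$ then annihilates every summand with $\umu\neq\ula$ and cuts out a one-dimensional subspace of $S^{\ula}$, yielding an isomorphism $e_\ula X\cong \D(\ula)$ of $R(n\d)$-modules. In particular $\D(\ula)$ is nonzero, since the inclusion $B\hookrightarrow \End_{R(n\d)}(X)$ realises $B$ as a faithful action and every primitive idempotent of a faithful semisimple action is nonzero.

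Since $e_\ula$ is an $R(n\d)$-linear idempotent, the standard identification $\End(e_\ula X)_d \cong e_\ula \End(X)_d e_\ula$ holds in every degree $d$. Combined with the input from the previous theorem this gives $\End(\D(\ula))_d=0$ for $d<0$ and $\End(\D(\ula))_0\cong e_\ula B e_\ula\cong \Q$. Hence the only degree-zero idempotent endomorphisms of $\D(\ula)$ are $0$ and $1$, which forbids any nontrivial decomposition of $\D(\ula)$ as a graded $R(n\d)$-module and proves indecomposability. The only genuinely delicate point in the argument is the bimodule identification $e_\ula X\cong \D(\ula)$, but this is not a real obstacle: it is formal Schur--Weyl-style bookkeeping, since the $B$-action on $X$ supplied by the previous theorem consists of $R(n\d)$-endomorphisms and $B$ is semisimple.
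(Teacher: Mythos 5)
Your proof is correct and rests on the same key input as the paper, namely the computation that $\Hom(\dct{\w\in\W}\D(\w)^{\circ m_\w},\dct{\w\in\W}\D(\w)^{\circ n_\w})$ vanishes in negative degrees and equals $\bigotimes_{\w}\Q[S_{n_\w}]$ in degree zero, from which one deduces $\End(\D(\ula))_0\cong\Q$ and hence indecomposability. The only difference is bookkeeping: the paper compares dimensions across the decomposition $\dct{\w\in\W}\D(\w)^{\circ n_\w}\cong\bigoplus_{\umu}\D(\umu)^{\oplus f_{\umu}}$, while you cut out $\D(\ula)$ as $e_\ula X$ for a primitive idempotent $e_\ula$ and identify the corner $e_\ula\End(X)_0e_\ula\cong\Q$; the two arguments are interchangeable.
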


\begin{proof}
 From the decomposition (\ref{decom}) we obtain inclusions
\begin{equation}\label{inclusion}
 \bigoplus_\ula \Mat_{f_\ula}(\Q) \subset \bigoplus_\ula\Mat_{f_\ula}(\End(\D(\uline{\la}))\subset\End(\dct{\w\in\W}\D(\w)^{\circ n_\w}).
\end{equation}
Comparing dimensions shows that these inclusions are isomorphisms in degree zero. Therefore $\End(\D(\uline{\la}))_0$ is isomorphic to $\Q$, hence $\D(\ula)$ is indecomposable.
\end{proof}

\begin{lemma}\label{19.5}
 If $\ula\neq \uline{\mu}$, then $\D(\ula)$ is not isomorphic to any grading shift of $\D(\uline{\mu})$.
\end{lemma}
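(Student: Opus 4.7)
My plan is to argue by contradiction. Assume $\D(\ula) \cong q^i \D(\umu)$ as graded $R(n\d)$-modules for some $i \in \Z$ with $\ula \neq \umu$. Such an isomorphism produces a nonzero homomorphism $\D(\ula) \to \D(\umu)$ of degree $-i$ and an inverse $\D(\umu) \to \D(\ula)$ of degree $+i$, so at least one of these lives in non-positive degree. I split into two cases depending on whether the shapes $(|\la_\w|)_{\w \in \W}$ and $(|\mu_\w|)_{\w \in \W}$ coincide.

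First suppose the shapes differ. Then $\D(\ula)$ is a direct summand of $M_\ula := \dct{\w \in \W} \D(\w)^{\circ |\la_\w|}$ and $\D(\umu)$ is a direct summand of $M_\umu := \dct{\w \in \W} \D(\w)^{\circ |\mu_\w|}$. The theorem preceding Lemma \ref{19.4} computes $\Hom(M_\ula, M_\umu)_d = 0$ for all $d \leq 0$ when the shapes do not match. Any homomorphism between summands factors through a homomorphism between the ambient modules, so $\Hom(\D(\ula), \D(\umu))_d$ and $\Hom(\D(\umu), \D(\ula))_d$ both vanish for $d \leq 0$. Whichever sign $i$ takes, one of the two hypothesised homomorphisms lands in this vanishing range, giving a contradiction.

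Now suppose $\ula$ and $\umu$ have a common shape $(n_\w)_\w$, so both are summands of the single module $M := \dct{\w \in \W} \D(\w)^{\circ n_\w}$. The Hom theorem again gives $\Hom(M,M)_d = 0$ for $d < 0$, which by the argument of the previous paragraph forces $i = 0$. It remains to rule out $\D(\ula) \cong \D(\umu)$ as graded modules (without shift). For this I would reopen the dimension count of Lemma \ref{19.4}: the source of (\ref{inclusion}) has dimension $\sum_\ula f_\ula^2 = \prod_\w n_\w!$ (by the classical identity $\sum_{\la \vdash m} f_\la^2 = m!$ applied factorwise), and the Hom theorem gives $\dim \End(M)_0 = \prod_\w n_\w!$ as well. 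On the other hand, the decomposition $M \cong \bigoplus_\ula \D(\ula)^{\oplus f_\ula}$ produces $\End(M)_0 = \bigoplus_{\ula,\umu} \Mat_{f_\umu \times f_\ula}(\Hom(\D(\ula),\D(\umu))_0)$. Matching dimensions and using $\dim \End(\D(\ula))_0 \geq 1$ leaves no room for any off-diagonal term, so $\Hom(\D(\ula),\D(\umu))_0 = 0$ whenever $\ula \neq \umu$ share a shape, contradicting the hypothetical degree-zero isomorphism.

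The argument is essentially bookkeeping on top of the Hom theorem and the dimension count already used to prove Lemma \ref{19.4}; I expect no serious obstacle. The one place requiring care is the graded accounting: one must verify that any hypothesised isomorphism $\D(\ula) \cong q^i \D(\umu)$ forces either a nonzero homomorphism between different-shape ambient modules in non-positive degree, or an off-diagonal degree-zero homomorphism between same-shape summands of $M$, both of which are excluded above.
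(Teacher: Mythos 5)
Your argument is correct and is essentially the paper's own proof: both rest on the Hom-vanishing theorem preceding Lemma \ref{19.4} together with the dimension count showing the inclusions in (\ref{inclusion}) are isomorphisms in degrees $\leq 0$, which kills any off-diagonal homomorphism in non-positive degree. You merely make explicit the case split between equal and unequal shapes that the paper leaves implicit.
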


\begin{proof}
 Let $i\leq 0$ be an integer. The inclusions in (\ref{inclusion}) are all isomorphisms in degrees less than or equal to zero. Therefore $\Hom(\D(\la),\D(\mu))_i=0$ and thus $\D(\la)$ is not isomorphic to $q^i \D(\mu)$. Similarly $\D(\mu)$ is not isomorphic to $q^i \D(\la)$.
\end{proof}

\begin{theorem}
The set $\D(\underline{\la})$ is a complete set of indecomposable projective $S(n\d)$-modules.
\end{theorem}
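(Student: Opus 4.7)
The plan is to verify three claims: every $\D(\uline{\la})$ is projective, they are pairwise non-isomorphic indecomposable modules, and they exhaust the indecomposable projective $S(n\d)$-modules.

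First I would observe that the decomposition (\ref{decom}) exhibits each $\D(\uline{\la})$ as a direct summand of $\dct{\w\in\W}\D(\w)^{\circ n_\w}$, which is projective in the category of $S(n\d)$-modules by Lemma \ref{impr}. Hence each $\D(\uline{\la})$ is projective. Indecomposability of $\D(\ula)$ is Lemma \ref{19.4}, and Lemma \ref{19.5} shows that distinct multipartitions give non-isomorphic modules (even up to grading shift).

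The main content is completeness, which I would establish by a counting argument. By Theorem \ref{imaginaryclassification} the simple $S(n\d)$-modules are the $L(\w)(\text{with }n=1)$ indexed by chamber coweights adapted to $\prec$, and in general the number of isomorphism classes of simple $S(n\d)$-modules equals $f(n)$ from Theorem \ref{numberofimaginarysemicuspidals}, with generating function $\prod_{i\geq 1}(1-t^i)^{1-|I|}$. On the other hand, for any positive system on the finite root system $\Phi_f$ the set $\W$ of chamber coweights is in bijection with the simple roots $\D_f$, so $|\W|=|I|-1$. Hence the number of multipartitions $\uline{\la}\vdash n$ indexed by $\W$ is the coefficient of $t^n$ in
\[
\prod_{i\geq 1}(1-t^i)^{-|\W|} = \prod_{i\geq 1}(1-t^i)^{1-|I|},
\]
which equals $f(n)$. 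Thus the number of pairwise non-isomorphic indecomposables $\D(\uline{\la})$ matches the number of simple $S(n\d)$-modules. Since in a finite-dimensional-in-each-degree graded setting every simple has a unique (up to isomorphism) indecomposable projective cover, the $\D(\uline{\la})$ must give a complete set, and I would conclude by noting that each $\D(\ula)$ surjects onto $L(\ula)$ by construction (so the correspondence is the expected one).

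The only mild obstacle is the bookkeeping of the counting identity, which relies on knowing $|\W|=|I|-1$ for affine Cartan data; this is immediate since $\Phi_f$ has rank $|I|-1$ and chamber coweights are precisely the fundamental coweights of a chosen positive system on $\Phi_f$.
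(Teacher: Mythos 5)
Your proposal is correct and follows essentially the same route as the paper: projectivity via the decomposition (\ref{decom}) and Lemma \ref{impr}, indecomposability and irredundancy via Lemmas \ref{19.4} and \ref{19.5}, and completeness by matching the count of multipartitions against the number of simple semicuspidal $R(n\d)$-modules from Theorem \ref{numberofimaginarysemicuspidals}. Your explicit bookkeeping with $|\W|=|I|-1$ just spells out the comparison of generating functions that the paper leaves implicit.
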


\begin{proof}
The module $\D(\uline{\la})$ is a direct summand of $\circ_{\w\in\W} \D(\w)^{\circ n_\w}$ which is projective by Lemma \ref{impr}, hence $\D(\ula)$ is projective. Lemmas \ref{19.4} and \ref{19.5} ensure that the set $\{\D(\ula)\}$ is an irredundant set of indecomposable projective $S(n\d)$-modules, up to a grading shift. The number of indecomposable projective $S(n\d)$-modules is equal to the number of irreducible semicuspidal $R(n\d)$-modules. This number is known by Theorem \ref{numberofimaginarysemicuspidals}, hence we have found all of the indecomposable projectives.
\end{proof}

\begin{theorem}\label{19.7}
The set $\{L(\underline{\la})\}_{\ula\vdash n}$ is a complete set of self-dual irreducible $S(n\d)$-modules.
\end{theorem}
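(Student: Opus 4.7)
My plan is to analyze the module $M := \dct{\w\in\W} L(\w)^{\circ n_\w}$ via the double centralizer theorem, and then conclude completeness by counting.

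First I would invoke the endomorphism algebra computation in (\ref{endsn}): $\End_{R(n\d)}(M) = B := \bigotimes_\w \Q[S_{n_\w}]$, which is semisimple and, by Theorem \ref{14.3} together with its tensor-product generalization, concentrated in degree zero. The module $M$ is finite-dimensional, since each cuspidal $L(\w)$ is finite-dimensional and induction preserves this (because $R(n\d)$ is free of finite rank as a right $R(\d)^{\otimes n}$-module by Theorem \ref{basisofrnu}). Applying the ungraded double centralizer theorem to the finite-dimensional $\Q$-vector space $M$ with semisimple commutant $B$, one obtains that $M$ is a semisimple $R(n\d)$-module with the decomposition
\[
M \cong \bigoplus_{\ula \vdash n} L(\ula) \otimes \bigotimes_\w S^{\la_\w}
\]
as $R(n\d)\otimes B$-modules, each multiplicity space $L(\ula)=\Hom_B(\bigotimes S^{\la_\w},M)$ is simple as an $R(n\d)$-module, and distinct multipartitions yield non-isomorphic simples because the block decomposition $B \cong \bigoplus_\ula \Mat_{f_\ula}(\Q)$ matches the multipartition indexing exactly. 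Since simple ungraded implies simple graded, each $L(\ula)$ is simple in the graded category as well.

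For self-duality, I would use Lemma \ref{reslw} together with (\ref{dualofaproduct}) iteratively. Because $\d\cdot\d = 0$ in affine type ($\d$ is isotropic), the grading shifts in each dual-swap vanish, so $(L(\w_1)\circ\cdots\circ L(\w_n))\dual \cong L(\w_n)\circ\cdots\circ L(\w_1)$ with no shift. The proof of Lemma \ref{risq} showed that the R-matrices between cuspidal $R(\d)$-modules are degree-preserving (there $s=(\d,\d)_n/2$), so one may un-reverse the order back to the original at no grading cost. Checking that the resulting isomorphism $M\dual\cong M$ is $B$-equivariant (the transported dual action agrees with the original because $B$ is generated by involutive R-matrix endomorphisms) and using the self-duality of Specht modules over $\Q$, the Specht-isotypic decomposition of $M\dual$ matches that of $M$ termwise, forcing $L(\ula)\dual\cong L(\ula)$ as graded modules.

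Finally, completeness is a counting argument. Each $L(\ula)$ is semicuspidal by Lemma \ref{10.2}, being a summand of the semicuspidal module $M$. By Theorem \ref{numberofimaginarysemicuspidals}, the number of self-dual simple semicuspidal $R(n\d)$-modules is the coefficient of $t^n$ in $\prod_{i\geq 1}(1-t^i)^{1-|I|}=\prod_{i\geq 1}(1-t^i)^{-|\W|}$ (using $|\W|=|I|-1$), which is precisely the generating function for $\W$-indexed multipartitions of $n$. Hence the pairwise non-isomorphic simples $\{L(\ula)\}_{\ula\vdash n}$ exhaust the list. The main obstacle I foresee is the careful tracking of grading shifts and $B$-equivariance in the self-duality argument: although each ingredient (self-duality of $L(\w)$, isotropy of $\d$, degree-preservation of the R-matrices, self-duality of Specht modules) is clean in isolation, combining them requires attention to the sign and ordering of the R-matrix action on the Specht factors so that no residual grading shift survives.
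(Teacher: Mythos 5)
There is a genuine gap at the heart of your argument: the ``double centralizer theorem'' you invoke does not exist in the generality you need. Knowing that the commutant $B=\End_{R(n\d)}(M)$ of the finite-dimensional module $M=\dct{\w\in\W}L(\w)^{\circ n_\w}$ is semisimple does \emph{not} imply that $M$ is semisimple over $R(n\d)$, nor that the multiplicity spaces $L(\ula)=\Hom_B(\otimes_\w S^{\la_\w},M)$ are simple. The implication only goes the other way: one must already know that the image of $R(n\d)$ in $\End_\Q(M)$ is semisimple (equivalently, that $M$ is a semisimple $R(n\d)$-module) before Wedderburn theory identifies the multiplicity spaces as simples and gives the double centralizer property. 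A minimal counterexample: let $A$ be the algebra of upper triangular $2\times 2$ matrices acting on $M=\Q^2$; then $\End_A(M)=\Q$ is semisimple, yet $M$ is indecomposable and not simple, and the multiplicity space of the unique simple $\End_A(M)$-module is all of $M$. So the step ``$B$ semisimple, hence $M\cong\bigoplus_\ula L(\ula)\otimes\bigotimes_\w S^{\la_\w}$ with each $L(\ula)$ simple'' is unjustified, and this is precisely the hard point of the theorem.

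What does survive from your setup: since $B$ is semisimple and its action commutes with $R(n\d)$, you do get the isotypic decomposition $M\cong\bigoplus_\ula L(\ula)^{\oplus f_\ula}$ as $R(n\d)$-modules, and comparing $\dim\End(M)=\prod_\w n_\w!$ from (\ref{endsn}) with $\sum_\ula f_\ula^2$ forces $\End(L(\ula))_0\cong\Q$ and $\Hom(L(\ula),L(\umu))_0=0$ for $\ula\neq\umu$ --- i.e.\ indecomposability and pairwise non-isomorphism, exactly as in Lemmas \ref{19.4} and \ref{19.5} (and in the inclusions (\ref{inclusion2})). But simplicity needs an extra input. The paper supplies it by first classifying the indecomposable projective $S(n\d)$-modules (using Theorem \ref{numberofimaginarysemicuspidals}), so that every simple semicuspidal module is of the form $\hd(L(\umu))$; then a simple submodule $X\subseteq L(\ula)$ yields a nonzero map $L(\umu)\to L(\ula)$, and the equality of the inclusions (\ref{inclusion2}) forces $\umu=\ula$ and the map to be an isomorphism, whence $L(\ula)$ is simple. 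Your counting argument via Theorem \ref{numberofimaginarysemicuspidals} (using $|\W|=|I|-1$) and your self-duality argument (using $\d\cdot\d=0$ and (\ref{dualofaproduct})) are fine as far as they go, but both presuppose the simplicity you have not established; to repair the proposal you would have to either reinstate the projective-module classification as in the paper or give an independent proof that the image of $R(n\d)$ in $\End_\Q(M)$ is all of $\End_B(M)$.
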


\begin{proof}
The set $\D(\uline{\la})$ is a complete set of indecomposable projectives, so the set $\operatorname{hd}\D(\uline\la)$ is a complete set of irreducible $S(n\d)$-modules. Since $\D(\uline\la)$ surjects onto $L(\uline\la)$, the set $\hd(L(\uline\la))$ is a complete set of irreducible $S(n\d)$-modules.
So it suffices to prove that $L(\uline\la)$ is irreducible. 

Let $X$ be a simple submodule of $L(\uline\la)$. Then $X$ is semicuspidal so is of the form $\hd(L(\uline\mu))$ for some multipartition $\uline\mu$. Therefore we get a nonzero morphism from $L(\uline\mu)$ to $L(\uline{\la})$. From the decomposition $\dct{\w\in\W}L(\w)^{\circ n_\w}\cong\oplus_\ula  \D(\ula)^{\oplus f_{\ula}}$ we obtain inclusions
\begin{equation}\label{inclusion2}
 \bigoplus_{\ula\vdash n} \Mat_{f_\ula}(\Q) \subset \bigoplus_{\ula\vdash n}\Mat_{f_\ula}(\End(L(\ula))\subset\End(\dct{\w\in\W}L(\w)^{\circ n_\w}).
\end{equation}
Comparing dimensions shows that these inclusions are equalities and hence all morphisms from $L(\uline\mu)$ to $L(\uline\la)$ are either zero or isomorphisms. Hence $L(\uline{\la})$ must be irreducible, as required. The self-duality of $L(\ula)$ is immediate from the self-duality of $L(\om)$ and (\ref{dualofaproduct}).
\end{proof}

\begin{theorem}\label{14.1nd}
Let $\ula$ and $\umu$ be two multipartitions. Then
\[
\Ext^i(\D(\uline{\la}),L(\uline{\mu})) = \begin{cases}
\Q &\text{if $\uline{\la}=\uline{\mu}$ and $i=0$}, \\
0 &\text{otherwise}.
\end{cases}
\]
\end{theorem}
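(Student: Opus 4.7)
The plan is to handle the cases $i=0$ and $i\ge 1$ separately.

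For $i=0$, I would invoke that $\D(\ula)$ is the projective cover of $L(\ula)$ in the category of $S(n\d)$-modules. This has just been established above: the theorem preceding our statement exhibits $\{\D(\ula)\}_{\ula\vdash n}$ as a complete list of indecomposable projective $S(n\d)$-modules, while the proof of Theorem~\ref{19.7} shows that $L(\ula)$ is the head of $\D(\ula)$. Since $L(\umu)$ is itself an $S(n\d)$-module, every $R(n\d)$-homomorphism $\D(\ula)\to L(\umu)$ factors through $R(n\d)\twoheadrightarrow S(n\d)$, whence $\Hom_{R(n\d)}(\D(\ula),L(\umu))=\Hom_{S(n\d)}(\D(\ula),L(\umu))$, which equals $\Q$ in degree zero if $\ula=\umu$ and vanishes otherwise by the standard projective-cover formalism.

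For $i\ge 1$, since by (\ref{decom}) the module $\D(\ula)$ is a direct summand of $P_{\underline{n}}:=\dct{\w\in\W}\D(\w)^{\circ n_\w}$, where $n_\w=|\la_\w|$, it suffices to establish $\Ext^i(P_{\underline{n}},L(\umu))=0$. Iterating the adjunction (\ref{extind}) produces
\[
\Ext^i_{R(n\d)}(P_{\underline{n}},L(\umu))\;\cong\;\Ext^i_{R(\d)^{\otimes n}}\!\left(\bigotimes_{\w}\D(\w)^{\otimes n_\w},\;\Res_{\d,\dots,\d}L(\umu)\right),
\]
and since $L(\umu)$ is semicuspidal the right-hand coefficient module lies in $S(\d)^{\otimes n}$-mod.

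I would then finish by inducting on $n$ via a K\"unneth-style argument. The base case $n=1$ is precisely Theorem~\ref{14.1}. For the inductive step, pick a projective $R(\d)$-resolution $Q_\bullet\to\D(\w_1)$: because the $R(\d)$-action on $\Res_{\d,\dots,\d}L(\umu)$ via the first tensor factor factors through $S(\d)$, Theorem~\ref{14.1} forces the complex $\Hom_{R(\d)}(Q_\bullet,\Res L(\umu))$ to be quasi-isomorphic to its degree-zero cohomology $\Hom_{R(\d)}(\D(\w_1),\Res L(\umu))$. This residual coefficient module inherits a natural $R(\d)^{\otimes(n-1)}$-action from the remaining tensor factors of $\Res L(\umu)$, and, crucially, this action still factors through $S(\d)^{\otimes(n-1)}$; thus the inductive hypothesis applies to $\bigotimes_{k\ge 2}\D(\w_k)^{\otimes n_k}$ paired against it and gives vanishing in all positive degrees. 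The main obstacle is precisely this bookkeeping in the K\"unneth step, namely checking that the semicuspidal coefficient structure is preserved at each stage; this follows by tracing the commuting tensor-factor actions on $\Res L(\umu)$, but is the substantive point of the argument.
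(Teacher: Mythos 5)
Your proposal is correct and follows essentially the same route as the paper: the $i=0$ case via $\D(\ula)$ being the projective cover of $L(\ula)$ in $S(n\d)$-mod, and the $i>0$ case by passing to the direct summand inside $\dct{\w\in\W}\D(\w)^{\circ n_\w}$, applying adjunction, noting $\Res_{\d,\ldots,\d}L(\umu)$ is an $S(\d)^{\otimes n}$-module, and invoking Theorem \ref{14.1}. Your K\"unneth-style induction on tensor factors merely spells out the step the paper compresses into ``the result now follows from adjunction and Theorem \ref{14.1},'' and the bookkeeping you flag (the residual coefficient module still factoring through $S(\d)^{\otimes(n-1)}$) does go through as you indicate.
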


\begin{proof}
In the course of proving Theorem \ref{19.7}, the module $\D(\uline{\la})$ was shown to be the projective cover of the irreducible module $L(\uline{\la})$ in the category of $S(n\d)$-modules. This takes care of the $i=0$ case.

Now suppose that $i>0$. Since $\D(\uline{\la})$ is a direct summand of $\circ_{\w\in\W}\D(\w)^{\circ n_\w}$, it suffices to show that
\[
\Ext^i(\dct{\w\in\W}\D(\w)^{\circ n_\w},L(\uline{\mu}))=0.
\]
The module $\Res_{\d,\ldots,\d}L(\uline{\mu})$ has all composition factors a tensor product of cuspidal $R(\d)$-modules. The result now follows from adjunction and Theorem \ref{14.1}.
\end{proof}

\begin{corollary}\label{19.10}
Let $\ula$ and $\uline{\mu}$ be two multipartitions. Then $\langle [\D(\ula)],[L(\umu)]\rangle=\delta_{\ula,\umu}$.
\end{corollary}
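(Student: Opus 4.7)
The plan is to unwind the Ext pairing definition of $\langle\cdot,\cdot\rangle$ and plug in Theorem~\ref{14.1nd} directly. Recall that by definition $\langle x,y\rangle=(x,\bar{y})$, where $(M,N)=\sum_{i\geq 0}(-1)^i\dimq\Ext^i(M,N)$; this identification was established via the corollary following Lemma~\ref{extprops2}. Applying this to $x=[\D(\ula)]$ and $y=[L(\umu)]$ gives
\[
\langle [\D(\ula)],[L(\umu)]\rangle \;=\; ([\D(\ula)],\,\overline{[L(\umu)]}).
\]

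Next I would invoke self-duality. By Theorem~\ref{19.7}, the module $L(\umu)$ is self-dual, i.e.\ $L(\umu)\dual\cong L(\umu)$. Since the bar involution on $\f^*$ corresponds under the categorification (\ref{fmod}) to the duality functor $\dual$ on $R(n\d)\fmod$, self-duality of $L(\umu)$ gives $\overline{[L(\umu)]}=[L(\umu)]$. Hence
\[
\langle [\D(\ula)],[L(\umu)]\rangle \;=\; ([\D(\ula)],[L(\umu)]) \;=\; \sum_{i\geq 0}(-1)^i\dimq\Ext^i(\D(\ula),L(\umu)).
\]

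Finally, Theorem~\ref{14.1nd} computes each term of this alternating sum: all higher Ext groups vanish, and $\Hom(\D(\ula),L(\umu))$ equals $\Q$ if $\ula=\umu$ and vanishes otherwise. The sum collapses to $\delta_{\ula,\umu}$, which is the desired equality. There is no real obstacle here once Theorem~\ref{14.1nd} is in hand; the only thing to be mildly careful about is ensuring that the grading shift in the self-duality statement is trivial (which is exactly what Theorem~\ref{19.7} asserts for the normalised modules $L(\ula)$), so that no extra power of $q$ enters when removing the bar.
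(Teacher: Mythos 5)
Your proof is correct and is exactly the argument the paper intends: the corollary is stated without proof as an immediate consequence of Theorem \ref{14.1nd}, and the implicit reasoning is precisely the unwinding of the Ext pairing, removal of the bar via the self-duality of $L(\umu)$ from Theorem \ref{19.7}, and the collapse of the alternating sum to $\delta_{\ula,\umu}$. Your care about the absence of a grading shift is the right (and only) point needing attention, and Theorem \ref{19.7} handles it as you say.
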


\section{The Imaginary Part of the PBW Basis}

We now follow \cite{beckcharipressley} and define the imaginary root vectors. For comparison with their paper, we note that our $q$ is their $q^{-1}$. We will not be able to cite results from \cite{beckcharipressley} since they only work with convex orders of a particular type.
The aim of this section is to describe a purely algebraic construction of the PBW basis. We will prove that this algebraic construction agrees with the one coming from KLR algebras in Theorem \ref{pbwcat}.

Let $\om$ be a chamber coweight adapted to $\prec$. We first define elements $\psi_n^\om$ by
\[
 \psi_n^\om= E_{n\d-\om_+}E_{\om_+}-q^2 E_{\om_+} E_{n\d-\om_+}.
\]

Before we continue, we show that the $\psi_n^\om$ lie in a commutative subalgebra of $\f$.

\begin{theorem}
If $L_1$ and $L_2$ are irreducible semicuspidal representations of $R(n_1\d)$ and $R(n_2\d)$ respectively, then $L_1\circ L_2\cong L_2\circ L_1$.
\end{theorem}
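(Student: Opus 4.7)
My plan is to reduce to the already-classified case and invoke results we have in hand. By Theorem \ref{19.7}, every self-dual irreducible semicuspidal representation of $R(n\d)$ is of the form $L(\ula)$ for a unique multipartition $\ula\vdash n$, and grading shifts don't affect whether two induction products are isomorphic. So without loss of generality I would write $L_1 = L(\ula)$ and $L_2 = L(\umu)$ for some multipartitions $\ula\vdash n_1$ and $\umu\vdash n_2$.

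With this reduction in place, Theorem \ref{lrl} gives two explicit direct sum decompositions:
\[
L(\ula)\circ L(\umu)=\bigoplus_{\unu} L(\unu)^{\oplus c^{\unu}_{\ula\umu}}, \qquad
L(\umu)\circ L(\ula)=\bigoplus_{\unu} L(\unu)^{\oplus c^{\unu}_{\umu\ula}}.
\]
Thus the remaining content is the combinatorial identity $c^{\unu}_{\ula\umu}=c^{\unu}_{\umu\ula}$, which is immediate from the definition $c^{\unu}_{\ula\umu}=\prod_{\w\in\W}c^{\nu_\w}_{\la_\w\mu_\w}$ together with the classical symmetry $c^{\nu}_{\la\mu}=c^{\nu}_{\mu\la}$ of ordinary Littlewood--Richardson coefficients (which one can see for instance from the symmetry of Schur function multiplication $s_\la s_\mu = s_\mu s_\la$). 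Matching the two decompositions term by term yields the desired isomorphism $L_1\circ L_2\cong L_2\circ L_1$.

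There is no real obstacle here: all the hard work has been done in constructing the modules $L(\ula)$ and in establishing Theorem \ref{lrl}, which itself ultimately rests on the isomorphism $\End(L(\w)^{\circ n})\cong \Q[S_n]$ (Theorem \ref{14.3}) and its generalization in \eqref{endsn}. The only subtle point worth flagging is that the decompositions in Theorem \ref{lrl} are genuine direct sum decompositions of modules (not merely equalities in the Grothendieck group), which is exactly what allows one to pass from the numerical symmetry of LR coefficients to an actual module isomorphism.
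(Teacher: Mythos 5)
Your proof is correct, but it takes a different route from the paper. The paper's own argument stays with the $R$-matrix machinery: since $L_1$ and $L_2$ are direct summands of modules of the form $\dct{\w\in\W}L(\w)^{\circ n_\w}$, whose mutual Hom spaces are concentrated in degree zero, the same argument as in Lemma \ref{risq} shows that the Kang--Kashiwara--Kim morphisms $r_{L_1,L_2}$ and $r_{L_2,L_1}$ are inverse isomorphisms; this yields a canonical isomorphism $L_1\circ L_2\cong L_2\circ L_1$ without invoking the classification. Your argument instead combines Theorem \ref{19.7} (every irreducible semicuspidal is, up to grading shift, some $L(\ula)$) with the Littlewood--Richardson decomposition of Theorem \ref{lrl} and the symmetry $c^{\unu}_{\ula\umu}=c^{\unu}_{\umu\ula}$, matching the two direct sum decompositions; you are right to stress that Theorem \ref{lrl} gives honest module decompositions rather than Grothendieck-group identities, and your handling of grading shifts is fine. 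Both proofs ultimately rest on $\End(L(\w)^{\circ n})\cong\Q[S_n]$ (Theorem \ref{14.3} and (\ref{endsn})), and there is no circularity since Theorems \ref{lrl} and \ref{19.7} are established before this point. What the paper's route buys is an explicit, canonical isomorphism given by an $R$-matrix (a form of the statement it reuses elsewhere, e.g.\ in fixing normalisations in the MV polytope section), whereas yours produces an isomorphism only after choosing identifications of the isotypic summands; what your route buys is that it avoids re-running the degree-zero Hom computation and reads the commutativity off directly from the already-proved combinatorial decomposition.
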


\begin{proof}
The modules $L_1$ and $L_2$ are both direct summands of modules of the form $\circ_{\om} L(\om)^{\circ n_\om}$. The space of homomorphisms between two modules of this form has already been computed to be concentrated in degree zero. Therefore $\Hom(L_1,L_2)$ is concentrated in degree zero. By the same argument as in the proof of Lemma \ref{risq}, the $R$-matrices $r_{L_1,L_2}$ and $r_{L_2,L_1}$ are inverse isomorphisms.
\end{proof}

\begin{corollary}
The subalgebra of $\f$ spanned by all semicuspidal representations of $R(n\d)$ is commutative.
\end{corollary}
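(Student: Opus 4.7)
The plan is essentially to decategorify the preceding theorem. First I would note that the semicuspidal representations of $R(n\delta)$, as $n$ ranges over $\N$, do in fact span a subalgebra: by Lemma \ref{10.2} the induction product of two semicuspidals is semicuspidal, so the span is closed under the product inherited from $\f^*$ (and correspondingly $\f$ via projective covers). Moreover, this subalgebra is spanned over $\Z[q,q\inv]$ by the classes $[L]$ of irreducible semicuspidal modules, since every finite-dimensional semicuspidal module admits a composition series with irreducible semicuspidal factors.

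Next, to establish commutativity it suffices to check it on the generating classes $[L_1],[L_2]$ where $L_i$ is an irreducible semicuspidal representation of $R(n_i\delta)$. Under the categorification isomorphism (\ref{fmod}), the product in $\f^*$ is given by induction, so
\[
[L_1]\cdot [L_2] = [L_1\circ L_2], \qquad [L_2]\cdot [L_1]=[L_2\circ L_1].
\]
By the theorem immediately preceding this corollary, $L_1\circ L_2\cong L_2\circ L_1$ as graded $R((n_1+n_2)\delta)$-modules, and hence their classes in the Grothendieck group coincide. This gives $[L_1][L_2]=[L_2][L_1]$ for all pairs of irreducible semicuspidal generators, and by bilinearity commutativity extends to the entire subalgebra.

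The only point requiring care is that the isomorphism $L_1\circ L_2\cong L_2\circ L_1$ must be genuinely grading-preserving for the decategorified equation to hold without a power of $q$. This is automatic from the proof of the preceding theorem: the $R$-matrices $r_{L_1,L_2}$ and $r_{L_2,L_1}$ are mutually inverse, and inverse morphisms in the category of graded modules must carry matching grading shifts, so the total shift is zero. With that in hand, the argument is essentially immediate, and I do not anticipate any further obstacle — the corollary is really a formal consequence of the preceding theorem together with the fact that induction categorifies the multiplication in $\f^*$.
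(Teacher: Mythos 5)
Your argument is correct and is exactly the route the paper intends: the corollary is an immediate consequence of the preceding theorem, since the classes of irreducible semicuspidal modules span the subalgebra, induction categorifies the product, and the isomorphism $L_1\circ L_2\cong L_2\circ L_1$ (which is degree-preserving because the mutually inverse $R$-matrices force the shift to vanish) gives commutativity on these spanning classes. Your extra remarks on closure under the product via Lemma \ref{10.2} and on the grading shift are sensible but add nothing beyond the paper's implicit reasoning.
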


\begin{lemma}\label{20.3}
Let $\w\in \Om$ and $n\in\N$.
There exist semicuspidal representations $X$ and $Y$ of $R(n\d)$ with $[X]-[Y]=\psi_n^\om$. 
\end{lemma}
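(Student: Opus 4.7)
The plan is to apply the Levendorskii--Soibelman formula (Theorem~\ref{lsf}) to the ordered pair of real roots $(\om_+, n\d-\om_+)$; note that $n\d-\om_+ = \om_- + (n-1)\d \prec \d \prec \om_+$. Since $\d$ lies in the null space of the bilinear form, $\om_+\cdot(n\d-\om_+) = -\om_+\cdot\om_+ = -2$, so Theorem~\ref{lsf} gives
\[
E_{\om_+}E_{n\d-\om_+} - q^{-2}E_{n\d-\om_+}E_{\om_+} \in \sum_{\pi < (\om_+, n\d-\om_+)} \Z[q,q\inv]\, E_\pi.
\]
Multiplying by $-q^2$ exhibits $\psi_n^\om$ as an element of the same span.

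The crux is then to show that every root partition $\pi$ of $n\d$ contributing to this expansion is purely imaginary, i.e.\ $\pi = \ula$ for a multipartition $\ula \vdash n$. Unpacking the bilexicographic order, the support of $\pi$ must lie in the open interval $(n\d-\om_+, \om_+)$. Using Theorems~\ref{3.11} and~\ref{independence}, I will reduce to a convex order of the form in Example~\ref{tworow} adapted so that the distinguished simple root is $\a = p(\om_+)$. In that convex order the positive roots in the open interval $(n\d-\om_+,\om_+)$ are exactly $\om_- + k\d$ for $k \geq n$, the imaginary roots $m\d$ for $m \geq 1$, and $\om_+ + k\d$ for $k \geq 1$. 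A weight argument then finishes the job: if $\pi$ contains $a$ parts of the form $\om_+ + k\d$ and $b$ parts of the form $\om_- + l\d$, vanishing of the finite-root projection forces $a = b$, and using $\om_+ + \om_- = \d$ together with the constraints $k \geq 1$ and $l \geq n$ bounds the $\d$-weight contributed by these real parts below by $a(n+2)$, which exceeds $n$ unless $a = 0$.

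With purity in hand, Corollary~\ref{19.10} identifies $E_\ula = [\Delta(\ula)]$, so
\[
\psi_n^\om = \sum_{\ula\vdash n} c_{\ula}(q)\, [\Delta(\ula)] \qquad \text{with } c_{\ula}(q) \in \Z[q,q\inv].
\]
Writing each $c_{\ula}(q) = c_{\ula}^+(q) - c_{\ula}^-(q)$ with $c_{\ula}^{\pm} \in \N[q,q\inv]$, and taking $X$ (respectively $Y$) to be the direct sum of appropriately grading-shifted copies of $\Delta(\ula)$ prescribed by $c_{\ula}^+$ (respectively $c_{\ula}^-$), produces two semicuspidal $R(n\d)$-modules with $[X] - [Y] = \psi_n^\om$.

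The main obstacle is the purity step: in an arbitrary convex order the open interval $(n\d-\om_+, \om_+)$ may contain real roots whose finite projections lie outside $\{\pm\a\}$, which would ruin the weight count. Verifying that the reduction via Theorems~\ref{3.11} and~\ref{independence} preserves both $\psi_n^\om$ and the semicuspidal category at $n\d$ --- so that establishing the lemma for the Example~\ref{tworow} convex order suffices for the original $\prec$ --- is the technical heart of the argument.
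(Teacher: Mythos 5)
Your overall route is genuinely different from the paper's. The paper proves this lemma in one line: it applies the construction of Lemma \ref{mpses} to the pair $(\om_+,\,n\d-\om_+)$, producing a map $q^{2}\D(\om_+)\circ\D(n\d-\om_+)\to\D(n\d-\om_+)\circ\D(\om_+)$ whose kernel and cokernel are semicuspidal, and then uses $[\D(\a)]=E_\a$ (Theorem \ref{realpbw}) so that the difference of their classes is exactly $\psi_n^\om$; no convex-order change and no PBW expansion is needed. Your alternative --- Levendorskii--Soibelman (the sign and scaling computation via $\om_+\cdot(n\d-\om_+)=-2$ is correct), a support-plus-weight count to show only purely imaginary root partitions contribute, and then realizing the imaginary PBW elements as $[\D(\ula)]$ via Corollary \ref{19.10} and splitting coefficients into positive and negative parts --- is a legitimate strategy, and the weight count in the Example \ref{tworow} order is correct (the identification $E_\ula=[\D(\ula)]$ also needs the easy observation, as in Lemma \ref{orthog}, that $[\D(\ula)]$ is orthogonal to $[\overline\D(\sigma)]$ for $\sigma$ not purely imaginary, since $\D(\ula)$ is semicuspidal).

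However, there is a genuine gap exactly where you flag it, and it is not closed by the results you cite. Your purity argument only works after replacing $\prec$ by the Example \ref{tworow} order, and transferring the conclusion back requires that $\psi_n^\om$ be the same element of $\f$ for both orders, i.e.\ that $E_{\om_+}$ \emph{and} $E_{n\d-\om_+}$ depend only on the positive system $p(\Phi_{\prec\d})$. Theorem \ref{independence} gives this only for $\om_\pm$ (so only the case $n=1$ is covered), and Theorem \ref{3.11} does not help: it produces a word-type order agreeing with $\prec$ on a chosen finite set, not the two-row order, and the two orders will in general disagree on the roots below $n\d-\om_+$ that enter the definition of $E_{n\d-\om_+}$ and the cuspidality condition at $n\d-\om_+$. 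Nor can you run the weight count directly in the original order: for a general convex order the open interval $(n\d-\om_+,\om_+)$ may contain real roots whose finite projections lie outside $\{\pm p(\om_+)\}$, and then the projection/$\d$-coefficient argument no longer forces purity. So as written your argument establishes the lemma only for convex orders of two-row type; to make it work in general you would need an independence-of-convex-order statement for the cuspidal data at the roots $\om_-+k\d$ (equivalently for $E_{n\d-\om_+}$), which is not available at this point in the paper --- whereas the paper's kernel/cokernel construction avoids the issue entirely.
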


\begin{proof}
The same argument as in the proof of Lemma \ref{mpses} shows that we can take $X$ and $Y$ to be the cokernel and kernel of a map from $q^2\D(\om_+)\circ \D(n\d-\om_+)$ to $\D(n\d-\om_+)\circ \D(\om_+)$.
\end{proof}

\begin{corollary}\label{commute}
The elements $\psi^\w_n$ commute with each other.
\end{corollary}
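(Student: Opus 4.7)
The plan is short: Corollary \ref{commute} follows directly by combining Lemma \ref{20.3} with the commutativity statement for the semicuspidal subalgebra established immediately before it. So the approach is to observe that each $\psi_n^\w$ lies in the commutative subalgebra of $\f$ spanned by classes of semicuspidal $R(n\d)$-modules (for varying $n$), and then conclude by bilinearity.

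In more detail, I would proceed as follows. First, invoke Lemma \ref{20.3} to choose, for each pair $(\w,n)$, semicuspidal representations $X_n^\w$ and $Y_n^\w$ of $R(n\d)$ such that
\[
\psi_n^\w = [X_n^\w] - [Y_n^\w] \in \f.
\]
Fix now two such elements $\psi_n^\w$ and $\psi_m^{\w'}$. Expanding the product in $\f$ bilinearly,
\[
\psi_n^\w \cdot \psi_m^{\w'} = [X_n^\w\circ X_m^{\w'}] - [X_n^\w\circ Y_m^{\w'}] - [Y_n^\w\circ X_m^{\w'}] + [Y_n^\w\circ Y_m^{\w'}],
\]
using that multiplication in $\f$ categorifies to the induction product $\circ$ under (\ref{pmod}).

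Next, apply the corollary immediately preceding Lemma \ref{20.3}: each of the four products above is an induction of two semicuspidal modules of imaginary weight, hence commutes (up to isomorphism) with its reversal by Theorem preceding that corollary. Therefore each bracketed class is unchanged when the two factors are swapped, and the same expansion applied to $\psi_m^{\w'}\cdot\psi_n^\w$ yields the same four classes. Equating the two expansions gives
\[
\psi_n^\w \cdot \psi_m^{\w'} = \psi_m^{\w'} \cdot \psi_n^\w,
\]
which is the desired identity. There is no real obstacle here: the work is entirely absorbed into Theorem 20.2 (isomorphism of the two inductions of semicuspidal imaginary modules) and Lemma \ref{20.3} (realisation of $\psi_n^\w$ as a difference of semicuspidal classes), both of which are already in hand.
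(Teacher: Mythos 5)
Your proposal is correct and is essentially the paper's (implicit) argument: the corollary is stated without proof precisely because it follows at once from Lemma \ref{20.3} together with the preceding corollary that the span of classes of semicuspidal $R(n\d)$-modules is a commutative subalgebra of $\f$. One small caution: since the modules $X$ and $Y$ produced by Lemma \ref{20.3} need not be irreducible, you should not claim module isomorphisms such as $X\circ X'\cong X'\circ X$ from the theorem (which is stated only for irreducible semicuspidals), but rather conclude commutativity at the level of Grothendieck classes via that corollary, which is all that is needed.
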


Now we return to defining the imaginary part of the PBW basis and recursively define elements $P_n^\om$ by $P_0^\om=1$ and
\[
 P^\om_n=\frac{1}{[n]}\sum_{s=1}^n q^{n-s} \psi^\om_s P^\om_{n-s}.
\]

Let $\la=(\la_1\geq\la_2\geq\cdots)$ be a partition and let $t\geq \ell(\la)$ be an integer. We define
\[
 S_\la^\om = \det (P^\om_{\la_i-i+j})_{1\leq i,j\leq t}.
\]

By Corollary \ref{commute} the entries in this matrix all commute with each other so there is no ambiguity in the definition of the determinant.

The elements $P^\om_n$ here should be thought of as playing the role of the complete symmetric functions in the ring of all symmetric functions. This determinental definition shows that the elements $S^\om_\la$ are playing the role of the Schur functions. This point of view makes it clear that the definition of $S^\om_\la$ does not depend on $t$.

Let $\pi=(\b_1^{m_1},\ldots,\underline{\la},\ldots,\ga_1^{n_1})$ be a root partition. The PBW basis element $E_\pi$ is defined to be
\begin{equation}\label{pbwalg}
 E_\pi = E_{\b_1}^{(m_1)}\cdots E_{\b_k}^{(m_k)} \left( \prod_{\w\in \W}S_{\la_\w}^\w \right)E_{\ga_l}^{(m_l)} \cdots E_{\ga_1}^{(n_1)}.
\end{equation}
This agrees with the definition in \cite{becknakajima} for the special convex orders which they use.

\section{MV Polytopes}

\begin{definition}
Let $M$ be an $R(\nu)$-module. The MV polytope of $M$, denoted $P(M)$, is the convex hull of the set
\[
\{ \mu  \mid \Res_{\mu,\nu-\mu}M\neq 0\}
\]
\end{definition}

Let $\om$ be a chamber coweight. The $\om$-face of a polytope $P$ is defined to be the intersection of $P$ with the plane spanned by $\om_+$ and $\om_-$. 
For a general polytope, this construction is a cross-section. We choose to call it a face because of the following result.

\begin{proposition}
Suppose that $\om$ is adapted to the convex order $\prec$. If $L(\pi)$ is a simple module for some root partition $\pi$ such that the support of $\pi$ is contained in the span of $\om_-$ and $\om_+$, then the $\om$ face of $P(L(\pi))$ is a (possibly degenerate) 2-face of $P(L(\pi))$.
\end{proposition}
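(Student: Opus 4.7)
My plan is to prove the stronger statement that the entire polytope $P(L(\pi))$ is contained in the two-plane $V$ spanned by $\omega_+$ and $\omega_-$. Since $V$ is two-dimensional, this forces the $\omega$-face $V \cap P(L(\pi))$ to coincide with $P(L(\pi))$ itself, which is then automatically a (possibly degenerate) 2-face of $P(L(\pi))$.

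First, I would observe that the weight $\nu$ of $L(\pi)$ is a nonnegative integer combination of the roots appearing in the support of $\pi$, all of which lie in $V$ by hypothesis, so $\nu \in V$. To finish it suffices to prove that every lattice point $\mu$ of the polytope also lies in $V$.

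Let $\mu \in P(L(\pi))$, so that $\Res_{\mu,\nu-\mu}L(\pi)\neq 0$, and pick a simple subquotient $L(\sigma)\otimes L(\tau)$ of this restriction with $\sigma$ a root partition of $\mu$ and $\tau$ a root partition of $\nu-\mu$. By Frobenius reciprocity the embedding $L(\sigma)\otimes L(\tau)\hookrightarrow \Res_{\mu,\nu-\mu}L(\pi)$ corresponds to a nonzero homomorphism $L(\sigma)\circ L(\tau)\to L(\pi)$, which is surjective since $L(\pi)$ is simple. Write $L(\sigma)=A(L_1,\ldots,L_a)$ and $L(\tau)=A(M_1,\ldots,M_b)$ in terms of admissible sequences of irreducible semicuspidals, as provided by Theorem \ref{fdmain}. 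Then one has a chain of surjections $L_1\circ\cdots\circ L_a\circ M_1\circ\cdots\circ M_b \twoheadrightarrow L(\sigma)\circ L(\tau)\twoheadrightarrow L(\pi)$. Using the uniqueness clause of Theorem \ref{fdmain}, I would identify the multiset union of roots underlying $(L_1,\ldots,L_a)$ and $(M_1,\ldots,M_b)$, after reordering into decreasing order with respect to $\prec$, with the admissible sequence of $\pi$. In particular the support of $\sigma$ is contained in the support of $\pi$, which lies in $V$, so $\mu$ (being a nonnegative combination of roots in the support of $\sigma$) lies in $V$.

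The main obstacle is the identification of admissible data carried out in the previous paragraph. When the concatenated sequence $(L_1,\ldots,L_a,M_1,\ldots,M_b)$ is already in decreasing convex order, Lemma \ref{irreduciblehead} yields a unique simple quotient which must be $L(\pi)$, and the multiset match follows directly. When the concatenation is not admissible, the argument requires sorting it, and I expect this to be handled by a Mackey-filtration computation combined with the Levendorskii-Soibelman formula of Section \ref{lsformula}: rearranging adjacent factors produces only lower-order correction terms which, by the triangularity statement in Theorem \ref{main}(3), cannot contribute to the multiplicity of $L(\pi)$ in the Grothendieck group. Once the reordering issue is resolved, the conclusion is immediate, since a polytope contained in a two-plane has dimension at most two and is trivially a 2-face of itself.
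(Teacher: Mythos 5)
Your strategy rests on the stronger claim that the whole polytope $P(L(\pi))$ lies in the plane $V=\operatorname{span}(\om_-,\om_+)$, and that claim is false, so the approach cannot be repaired. Take affine type $A_2$ and a convex order $\prec$ for which $\alpha_1+\alpha_2$ is a simple root of the positive system $p(\Phi_{\succ\d})=\{-\alpha_1,\alpha_2,\alpha_1+\alpha_2\}$ (such orders exist, e.g.\ by the first example of Section 3). The chamber coweight $\om$ dual to this simple root is adapted to $\prec$, with $\om_+=\alpha_1+\alpha_2$ and $\om_-=\alpha_0$. For $\pi=(\om_+)$ the hypothesis of the proposition holds, and $L(\pi)=L(\om_+)$ is the cuspidal simple of weight $\alpha_1+\alpha_2$, i.e.\ the one-dimensional module concentrated on one of the words $(1,2)$ or $(2,1)$. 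Its MV polytope is the triangle with vertices $0$, $\alpha_1+\alpha_2$ and one of $\alpha_1,\alpha_2$, and neither $\alpha_1$ nor $\alpha_2$ lies in $V$: every element of $V=\operatorname{span}(\d,\alpha_1+\alpha_2)$ has equal coefficients of $\alpha_1$ and $\alpha_2$. Cuspidality only forces a nonzero $\Res_{\la,\mu}L(\om_+)$ with $\la,\mu\neq 0$ to have $\la$ a sum of roots $\prec\om_+$; it does not force $\la\in V$. The content of the proposition is precisely that the planar slice $P(L(\pi))\cap V$ is a \emph{face} of the polytope (here, the edge joining $0$ to $\om_+$), not that the polytope is planar.

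The step where your argument breaks is the identification of the multiset of cuspidal data of $(L_1,\ldots,L_a,M_1,\ldots,M_b)$ with the admissible sequence of $\pi$. The uniqueness in Theorem \ref{fdmain} applies only to admissible (decreasing) sequences, and when the concatenation is out of order the Levendorskii--Soibelman correction terms are exactly where $L(\pi)$ can occur, rather than being negligible: in the example above, adjunction together with $\Res_{\alpha_1,\alpha_2}L(\om_+)\cong L(\alpha_1)\otimes L(\alpha_2)$ gives a surjection $L(\alpha_1)\circ L(\alpha_2)\twoheadrightarrow L(\om_+)$, although the multisets $\{\alpha_1,\alpha_2\}$ and $\{\alpha_1+\alpha_2\}$ differ. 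What the triangularity in Theorem \ref{main}(3) yields from such a surjection is only $\pi\leq\rho$, where $\rho$ is the root partition obtained by sorting the concatenation; it does not exclude $\pi<\rho$, and that is exactly what happens at lattice points $\mu\notin V$. (A smaller issue: a simple subquotient of $\Res_{\mu,\nu-\mu}L(\pi)$ need not be a submodule; to apply adjunction you should take a simple submodule.) A proof along the intended lines must instead exhibit $V$ as cut out on $P(L(\pi))$ by supporting hyperplanes: for each adapted chamber coweight $\om'\neq\om$ one shows, using that $L(\pi)$ is a quotient of $\overline{\D}(\pi)$, the Mackey filtration, and the fact that every root in the support of $\pi$ pairs to zero with $\om'$ while roots $\preceq\d$ (resp.\ $\succeq\d$) pair nonpositively (resp.\ nonnegatively) with $\om'$ after applying $p$, that $\langle\om',p(\mu)\rangle\leq 0$ whenever $\Res_{\mu,\nu-\mu}L(\pi)\neq 0$; the locus of equality for all such $\om'$ is then a face of $P(L(\pi))$ equal to $P(L(\pi))\cap V$, of dimension at most two.
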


\begin{definition}
Let $\la$ be the functional on the span of $\om_-$ and $\om_+$ such that $\la(\om_+)=1$ and $\la(\om_-)=-1$.
The width of the $\om$-face of a polytope $P$ is equal to the maximum value of $\la(p)-\la(q)$ where $p$ and $q$ are two points in the $\om$-face of $P$.
\end{definition}

\begin{example}
The width of the $\om$-face of $P(L(n\d-\om_+))$ is $n$.
\end{example}

We know this because a MV polytope is completely determined by its 2-faces, which are MV polytopes for rank two root systems.

In the rest of this section, we fix a choice of chamber coweight $\w$ adapted to $\prec$. Without loss of generality, we may assume that our convex order is of the form of Example \ref{tworow}.

Note that in our labelling of the irreducible semicuspidal modules for $R(\d)$ by multipartitions, there are choices involved. Namely replacing 
$r_{L(\om),L(\om)}$ by 
its negative results in replacing the partition $\la_\w$ by its transpose. We make a choice of sign in $r_{L(\om),L(\om)}$ such that $L_\w((2))$ has $\om$-width 2.

The reason that such a choice is always possible is that the module $L(\om')\circ L(\om'')$ will only have $\w$-width at least two if $\w=\w'=\w''$ and by the Tingley-Webster classification, there exists a unique MV polytope for $2\d$ of $\w$-width 2. It must thus come from one of the summands of $L(\om)\circ L(\om)$ and we may replace our $R$-matrix with its negative if necessary to ensure that this summand is the one indexed by the partition $(2)$.

This means that the $\omega$-face of the MV polytope for $L(\om,(1^2))$ is
\[
\begin{tikzpicture}
 \node at (3,0){
\tikz[yscale=0.4, xscale=1.2] {
\node at (0,0) {$\bullet$}; 
\node at (0,4) {$\bullet$};
\node at (-1,1) {$\bullet$};
\node at (-1,3) {$\bullet$};
\draw [line width = 0.04cm] (0,0) -- (-1,1);
\draw [line width = 0.04cm] (-1,3) -- node[midway,left] {$(1)$}(-1,1);
\draw [line width = 0.04cm] (0,4) -- (-1,3);
\draw [line width = 0.04cm] (0,0) -- node[right,midway]{$(1,1)$}(0,4);
\draw [line width = 0.01cm, color=gray] (-1,1) -- (0, 2);
\draw [line width = 0.01cm, color=gray] (-1,3) -- (0, 2);
}
};
\end{tikzpicture}
\]

\begin{proposition}
Let $\la$ be a partition and $\w$ be a chamber coweight.
The module $L_\om(\la)$ has $\w$-width 1 if and only if $\la=(1^n)$.
\end{proposition}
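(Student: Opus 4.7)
\emph{Plan.} The plan is to compute the $\om$-face of $L_\om(\la)$ explicitly via the Schur--Weyl description $L_\om(\la)=\Hom_{\Q[S_n]}(S^\la,L(\om)^{\circ n})$ of Section~17 together with a Mackey analysis of $\Res_{\mu,n\delta-\mu}L(\om)^{\circ n}$, reducing everything to a computation of triple Littlewood--Richardson coefficients. First, semicuspidality combined with $\om$ being adapted to $\prec$ forces any vertex $\mu=a\om_++b\om_-$ of the $\om$-face to satisfy $a\leq b$, so $\la(\mu)\leq 0$ with maximum value~$0$ attained at $\mu=k\d$ by Theorem~\ref{lrl}; hence the $\om$-width equals $\max(b-a)$ over the $\om$-face vertices.

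Setting $n_0=n-b$, $n_-=b-a$, $n_+=a$, cuspidality of $L(\om)$ forces the Mackey layers of $\Res_{\mu,n\delta-\mu}L(\om)^{\circ n}$ to be indexed by ordered partitions of $[n]$ into three classes (the restriction types $(0,\d)$, $(\om_-,\om_+)$, $(\d,0)$), giving $\binom{n}{n_0,n_-,n_+}$ layers on which the $S_n$-action of Theorem~\ref{14.3} permutes as $S_n/(S_{n_0}\times S_{n_-}\times S_{n_+})$. Intra-layer, $S_{n_0}$ and $S_{n_+}$ act via the full $\Q[S_k]$-actions of Theorem~\ref{14.3} on the respective $L(\om)^{\circ k}$ factors, while $S_{n_-}$ acts diagonally on $L(\om_-)^{\circ n_-}\otimes L(\om_+)^{\circ n_-}$ through a character $\epsilon=\epsilon_-\epsilon_+$ coming from the degree-zero scalars $r_{L(\om_\pm),L(\om_\pm)}\in\{\pm 1\}$; the convention fixing $L_\om((2))$ to have $\om$-width~$2$ forces this product character $\epsilon$ to be trivial for every $n_-$. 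Frobenius reciprocity together with the Specht branching rule then yields that $[\Res_{\mu,n\delta-\mu}L_\om(\la)]\neq 0$ if and only if the triple Littlewood--Richardson coefficient $c^{\la}_{\nu,(n_-),\rho}$ is nonzero for some partitions $\nu\vdash n_0$ and $\rho\vdash n_+$.

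For $\la=(1^n)$, conjugating the classical fact that $c^{(n)}_{\alpha,\beta,\gamma}\neq 0$ forces $\alpha,\beta,\gamma$ each to be a single row, one gets $c^{(1^n)}_{\nu,(n_-),\rho}\neq 0$ only when $\nu$, $(n_-)$, $\rho$ are all single columns; this forces $n_-\leq 1$, bounding the width by $1$, and equality holds because $c^{(1^n)}_{(1^{n-1}),(1)}=1$ produces a nonzero restriction at $\mu=\om_-$. For $\la\neq(1^n)$, i.e.\ $\la_1\geq 2$, elementary partition combinatorics produces $\nu\subset\la$ of size $n-2$ with $\la/\nu$ a horizontal strip of two boxes (one may explicitly remove the last two cells of the bottom row, or argue via two corners of $\la'$ in distinct rows); Pieri then gives $c^{\la}_{\nu,(2)}=1$, and the formula applied at $\mu=2\om_-$ (with $\rho=\emptyset$) yields a nonzero restriction, placing a vertex with $\la(\mu)=-2$ and hence forcing $\om$-width $\geq 2$.

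The main technical obstacle is the intra-layer character analysis, namely verifying that $\epsilon$ is trivial on every $S_{n_-}$: this rests on Schur's lemma applied to the irreducible real-root module $L(\om_\pm)^{\circ 2}$ (forcing $r_{L(\om_\pm),L(\om_\pm)}$ to be a degree-zero scalar in $\{\pm 1\}$), together with the sign convention from Section~17 forcing the product of these two scalars to be $+1$; the rest is just Mackey and classical symmetric-function combinatorics.
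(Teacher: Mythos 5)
There is a genuine gap at the core computational step. Your whole criterion rests on the claim that cuspidality forces every Mackey layer of $\Res_{\mu,n\d-\mu}L(\om)^{\circ n}$ (for $\mu$ in the span of $\om_\pm$) to assign to each tensor factor one of the three restriction types $(0,\d)$, $(\om_-,\om_+)$, $(\d,0)$. Cuspidality gives no such thing: $L(\om)$ has many other nonzero restrictions $\Res_{\la,\d-\la}L(\om)$ -- for instance $\la$ can be any simple root occurring as the first letter of a word of $L(\om)$, and such $\la$ is in general not equal to $\om_-$ (which need not even be simple). Moreover, in the Mackey filtration only the \emph{total} weight $\mu$ is constrained to lie in the $\om$-plane; the individual summands $\mu_i$ need not, so layers with out-of-plane $\mu_i$'s summing to an in-plane $\mu$ can contribute (already for $n=2$, $\mu=\om_-$ one can have $\mu_1,\mu_2$ two distinct simple roots). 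Consequently the count $\binom{n}{n_0,n_-,n_+}$, the identification of the layer set with $S_n/(S_{n_0}\times S_{n_-}\times S_{n_+})$, and the resulting triple Littlewood--Richardson criterion are all unjustified, and with them the upper bound on the $\om$-width of $L_\om(1^n)$ (the lower-bound direction via an explicit nonzero in-plane restriction would be fine if the criterion held). Note that the paper itself avoids exactly this trap: the analogous restriction computation (Theorem \ref{22.6}) is done by pairing against dual PBW elements and the Levendorskii--Soibelman algorithm, not by a naive Mackey count, precisely because the extra layers cannot be dismissed directly. A secondary problem is your normalisation argument: the relevant sign ambiguity is in $r_{L(\om),L(\om)}$ (interchanging $\la$ and $\la'$), not in $r_{L(\om_\pm),L(\om_\pm)}$, and the paper resolves it by \emph{defining} the labelling so that $L_\om((2))$ has width $2$, using the Tingley--Webster uniqueness of the width-$2$ rank-two polytope; your "character $\epsilon$ is trivial" step does not pin this down.

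For comparison, the paper's proof is an induction on $n$: the $\om$-width of $L(\ula)$ equals that of $L_\om(\la_\om)$, the Tingley--Webster classification of rank-two MV polytopes shows there is exactly one polytope of $\om$-width one at weight $n\d$ beyond those of width zero, and then the restriction rule of Theorem \ref{lrl} (the Littlewood--Richardson rule among the $L(\ula)$) forces the unique width-one partition to be $(1^n)$ for $n>2$. If you want to salvage your approach, you would need either the 2-face classification input you are trying to avoid, or a rigorous replacement for the Mackey step in the style of Theorem \ref{22.6}.
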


\begin{proof}
We prove this proposition by an induction on $n$. The case $n=1$ is trivial and the case $n=2$ is true by the choice of normalisation of the $R$-matrix.

Note that for $\w'\neq\w$, the module $L(\w')$ has $\w$-width zero. Therefore the $\w$-width of $\dct{x\in \W} L(\la_x)$ is equal to the $\w$-width of $L(\la_w)$.

Therefore by induction we know exactly how many $\w$-faces of modules of the form $\dct{x\in\W} L(\la_x)$ with $|\la_w|<n$ have width less than or equal to one. By \cite{tingleywebster} this comprises all MV polytopes of $\w$-width less than or equal to one except for one polytope of $\w$-width one. Therefore there exists some partition $\mu\vdash n$ for which $L_\w(\mu)$ has $\w$-width one.

The restriction $\Res_{k\d,(n-k)\d}L(\mu)$ can only have composition factors $L_1\otimes L_2$ where $L_1$ and $L_2$ have $\w$-width at most one. These restrictions are given by the Littlewood-Richardson rule (\ref{lrl}). So by induction the only option is $\Res_{k\d,(n-k)\d}L(\mu)\cong L_\w(1^k)\otimes L_\w(1^{n-k})$ which for $n>2$ forces $\mu=(1^n)$, completing the proof.
\end{proof}

\begin{theorem}\label{22.6}
\[
\Res_{n\d-\om_+,\om_+} L_\om(1^n)\cong A(L_\om(1^n),L(\om_-))\otimes L(\om_+).
\]
\end{theorem}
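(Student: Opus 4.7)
My plan is to compute the restriction in two stages, using the multi-Littlewood--Richardson formula together with a Mackey vanishing argument. (I read the right-hand side as $A(L_\om(1^{n-1}),L(\om_-))\otimes L(\om_+)$, since only this yields a module for the correct algebra $R(n\d-\om_+)\otimes R(\om_+)$.)

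First, by equation (\ref{lrr}), $\Res_{(n-1)\d,\d} L_\om(1^n) \cong L_\om(1^{n-1}) \otimes L(\om)$. Applying $\mathrm{id}\otimes\Res_{\om_-,\om_+}$ together with Lemma \ref{reslw} (which gives $\Res_{\om_-,\om_+} L(\om)\cong L(\om_-)\otimes L(\om_+)$) yields the triple restriction
\[
\Res_{(n-1)\d,\om_-,\om_+} L_\om(1^n) \cong L_\om(1^{n-1})\otimes L(\om_-)\otimes L(\om_+),
\]
which is a simple $R((n-1)\d)\otimes R(\om_-)\otimes R(\om_+)$-module. In particular $M := \Res_{n\d-\om_+,\om_+} L_\om(1^n)$ is nonzero.

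Set $A := A(L_\om(1^{n-1}), L(\om_-))$. Since $\d\succ\om_-$ the pair $(L_\om(1^{n-1}),L(\om_-))$ is admissible, so by Lemma \ref{irreduciblehead} $A$ is the unique simple quotient of $L_\om(1^{n-1})\circ L(\om_-)$, with $\Res_{(n-1)\d,\om_-} A \cong L_\om(1^{n-1})\otimes L(\om_-)$. Combining adjunction (\ref{extind}) with the triple restriction shows $\Hom(L_\om(1^{n-1}) \circ L(\om_-) \circ L(\om_+), L_\om(1^n))$ is one-dimensional in degree zero, and any nonzero map in this space descends to a surjection $\phi: A \circ L(\om_+) \twoheadrightarrow L_\om(1^n)$ (using simplicity of $L_\om(1^n)$).

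Finally I would compute $\Res_{n\d-\om_+,\om_+}(A\circ L(\om_+))$ via the Mackey theorem. Its layers are indexed by splittings $\om_+ = \la_2 + \mu_2$ with both $\Res_{n\d-\om_+-\la_2,\la_2} A$ and $\Res_{\om_+-\mu_2,\mu_2} L(\om_+)$ nonzero. Cuspidality of $L(\om_+)$ (a real-root cuspidal) forces $\mu_2\in\{0,\om_+\}$. The case $\mu_2=\om_+$, $\la_2=0$ gives the layer $A\otimes L(\om_+)$; the case $\mu_2=0$, $\la_2=\om_+$ would require $\Res_{n\d-2\om_+,\om_+} A \neq 0$. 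Since $A$ is a quotient of $L_\om(1^{n-1})\circ L(\om_-)$, it suffices to check that $\Res_{n\d-2\om_+,\om_+}(L_\om(1^{n-1})\circ L(\om_-))$ vanishes by a second Mackey computation; each of its layers expresses $\om_+$ as $\nu_{1,2}+\nu_{2,2}$, and semicuspidality of $L_\om(1^{n-1})$ forbids $\om_+$ (which is $\succ\d$) from appearing in its restriction, while cuspidality of $L(\om_-)$ limits $\nu_{2,2}$ to $\{0,\om_-\}$, with $\om_+-\om_-\notin\N I$. Hence this second Mackey filtration vanishes, so $\Res_{n\d-\om_+,\om_+}(A\circ L(\om_+)) \cong A\otimes L(\om_+)$, which is simple. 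Applying $\Res_{n\d-\om_+,\om_+}$ to $\phi$ yields a surjection $A\otimes L(\om_+)\twoheadrightarrow M$; since $M$ is nonzero and the source is simple, this is the desired isomorphism.

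The main obstacle is the Mackey vanishing in the last step: ruling out the layer $(\la_2,\mu_2)=(\om_+,0)$ requires combining the explicit presentation of $A$ as a quotient of $L_\om(1^{n-1})\circ L(\om_-)$ with the semicuspidality of $L_\om(1^{n-1})$ and the structural fact $\om_+-\om_-\notin\N I$, rather than merely reading off from the root partition of $A$.
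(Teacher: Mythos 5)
Your reading of the statement (with $A(L_\om(1^{n-1}),L(\om_-))$, so that the weights match) is the intended one, and your general strategy --- reduce to showing $\Res_{n\d-\om_+,\om_+}(A\circ L(\om_+))\cong A\otimes L(\om_+)$ and then use the surjection onto $L_\om(1^n)$ --- is a genuinely different route from the paper, which instead expands $[\Res_{n\d-\om_+,\om_+}L_\om(1^n)]$ in the dual PBW basis and pins down the coefficients via the Levendorskii--Soibelman algorithm, MV-polytope widths and the Littlewood--Richardson restriction rule. However, your Mackey vanishing arguments contain genuine errors. First, cuspidality of $L(\om_+)$ (resp.\ $L(\om_-)$) does not force the splittings of $\om_+$ (resp.\ $\om_-$) to be trivial: cuspidality only constrains which sums of roots can occur in a nonzero restriction, and cuspidal modules of non-simple real roots have many nonzero proper restrictions, so the reduction to the two layers $(\la_2,\mu_2)\in\{(0,\om_+),(\om_+,0)\}$ is unjustified. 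Second, and fatally, the claim that semicuspidality of $L_\om(1^{n-1})$ ``forbids $\om_+$ from appearing in its restriction'' is backwards: semicuspidality allows the \emph{second} component of a nonzero restriction to be any sum of roots $\succeq\d$, and in fact $\Res_{(n-1)\d-\om_+,\om_+}L_\om(1^{n-1})\neq 0$ --- for $n-1=1$ this is Lemma \ref{reslw}, and in general it is the very theorem you are proving at rank $n-1$. Hence $\Res_{n\d-2\om_+,\om_+}(L_\om(1^{n-1})\circ L(\om_-))$ is \emph{not} zero (already visible on the $\om$-face, e.g.\ in the $\widehat{\mathfrak{sl}}_2$-type computation with $n=2$, where $L(\om)\circ L(\om_-)$ has a two-dimensional $e_{\ii}$-weight space for the word ending in $\om_+$), so bounding $\Res_{n\d-2\om_+,\om_+}A$ by the restriction of the induced module cannot yield the vanishing you need. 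The whole difficulty of the theorem is precisely to control the restriction of the simple quotient $A$ rather than of $L_\om(1^{n-1})\circ L(\om_-)$, and your proposal has no tool for that; this is what the paper's dual-PBW/width argument (or the finer analysis in Lemmas \ref{zerocase} and \ref{zerocase2}) supplies. Your auxiliary claim $\om_+-\om_-\notin\N I$ is also false in general (e.g.\ in affine $D_4$ with $\om_-$ the central simple root, $\om_+-\om_-=\d-2\om_-\in\N I$), and the assertion that the map $L_\om(1^{n-1})\circ L(\om_-)\circ L(\om_+)\to L_\om(1^n)$ descends to $A\circ L(\om_+)$ is stated without proof; both would need repair even if the main Mackey step could be salvaged.
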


\begin{proof}
We perform an expansion in the dual PBW basis
\[
[\Res_{n\d-\om_+,\om_+} L_\om(1^n)] =\sum_{\sigma,\pi} c_{\sigma,\pi} E_\sigma^* \otimes E_\pi^*.
\]
Then
\[
c_{\sigma,\pi} =\langle E_\sigma\otimes E_\pi,[\Res_{n\d-\om_+,\om_+} L_\om(1^n)]\rangle = \langle E_\sigma E_\pi, [L_\om(1^n)]\rangle.
\]
We consider the algorithm of \S \ref{lsformula} which teaches us how to write the product $E_\sigma E_\pi$ in terms of the PBW basis.

Let $\pi_k$ be the smallest root appearing in $\pi$. If $\pi\neq \om_+$ then as $\w_+-\pi_k\in \N I$, it must be that $\pi_k \preceq \om_-$. Therefore at all stages in applying the algorithm for writing $E_\sigma E_\pi$ in terms of the PBW basis, any term $E_{\ga_1}\cdots E_{\ga_l}$ which appears has $\ga_l\preceq \om_- \prec \d$. Therefore no purely imaginary terms in the PBW basis can appear, and as $[L_\om(1^n)]$ is orthogonal to all PBW elements which are not purely imaginary, $c_{\sigma,\pi}=0$ for such $\pi$.

So we may assume $\pi=\om_+$.

Let $\sigma_k$ be the smallest root appearing in $\sigma$. Suppose that $\sigma_k$ is not of the form $m\d+\om_-$. Then $\sigma_k\prec \om_-$. At the first stage of applying our algorithm, up to two terms $E_{\ga_1}\cdots E_{\ga_l}$ appear. One term has $\ga_l=\sigma_k\prec w_-$ while the other term, if it exists, has $\ga_l=\sigma_l+\om_+$ which is also less than $\om_-$, since by convexity it is less than $\om_+$ and we know all roots between $\om_+$ and $\om_-$. By the same argument as in the previous paragraph, $c_{\sigma,\pi}=0$ in this case too.

Therefore, when $c_{\sigma,\pi}\neq 0$, all roots that appear in $\sigma$ are all in the span of $\om_-$ and $\om_+$. This implies that every irreducible subquotient of $\Res_{n\d-\om_+,\om_+} L_\om(1^n)$ is of the form $L(\sigma)\otimes L(\om_+)$ for some such root partition $\sigma$.

The largest root appearing in $\sigma$ is at most $\d$ as $L_\om(1^n)$ is cuspidal. Therefore $\sigma=(\underline{\la},m\d-\om_+)$ for some multipartition $\underline{\la}$ and positive integer $m$.

The $\om$-face of $P(L(m\d-\om_+))$ has width $m$. Therefore the $\om$-face of $P(L(\sigma))$ has width at least $m$. As the $\om$-face of $P(L(\sigma))$ is a subset of the $\om$-face of $P(L_\om(1^n))$ which as width one, $m=1$.

Now by Theorem \ref{lrl},
\[
\Res_{(n-1)\d,\d} L_\om(1^n) \cong L_\om(1^{n-1})\otimes L(\om).
\]
Therefore the only option for $\underline{\la}$ is $1^n$ at $\omega$ and zero elsewhere, and furthermore $L(1^n_\om,\w_-)\otimes L(\om_+)$ must appear with multiplicity one, completing the proof.
\end{proof}

\begin{lemma}\label{zerocase}
Let $\w$ be a chamber coweight and $\a=\w_-$. There is a short exact sequence
\[
0\to q L(\a+\d)\to L(\w)\circ L(\a)\to L(\w,\a)\to 0.
\]
\end{lemma}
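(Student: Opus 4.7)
My plan is to construct a canonical intertwiner $\phi\colon L(\w)\circ L(\a)\to L(\a)\circ L(\w)$, identify $\operatorname{image}(\phi)=L(\w,\a)$ by a head/socle multiplicity argument, and then identify $\ker\phi\cong qL(\a+\d)$ via restriction.

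First, by the $(\Ind,\Res)$-adjunction, $\Hom(L(\w)\circ L(\a),L(\a)\circ L(\w))\cong\Hom(L(\w)\otimes L(\a),\Res_{\d,\a}(L(\a)\circ L(\w)))$. A Mackey filtration argument (Theorem~\ref{mackey}) will show that $\Res_{\d,\a}(L(\a)\circ L(\w))=L(\w)\otimes L(\a)$ in degree zero: cuspidality of $L(\w)$ on $R(\d)$ (using $\w_-\prec\d\prec\w_+$) kills every Mackey layer except the ``swap'' one with $\nu_{21}=\a$, $\nu_{12}=\d$, which contributes $L(\w)\otimes L(\a)$ with shift $q^{-\a\cdot\d}=1$. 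The Hom space is therefore one-dimensional and concentrated in degree zero, and I take $\phi$ to be any nonzero element.

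Next I argue that $\operatorname{image}(\phi)=L(\w,\a)$. By Lemma~\ref{irreduciblehead}, $L(\w,\a)$ is the simple head of $L(\w)\circ L(\a)$, and its multiplicity as a composition factor is exactly one, since $\Res_{\d,\a}(L(\w)\circ L(\a))=L(\w)\otimes L(\a)$ is already simple. Dualising via~(\ref{dualofaproduct}), using self-duality of $L(\w)$ and $L(\a)$ together with $\w\cdot\a=0$, gives $(L(\w)\circ L(\a))^\dual\cong L(\a)\circ L(\w)$, so $L(\w,\a)$ is also the simple socle of $L(\a)\circ L(\w)$ with multiplicity one. Because $\operatorname{image}(\phi)$ is simultaneously a quotient of the source (hence carries $L(\w,\a)$ in its head) and a submodule of the target (hence carries $L(\w,\a)$ in its socle), and $L(\w,\a)$ has total multiplicity one, we are forced to $\operatorname{image}(\phi)=L(\w,\a)$. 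Hence there is a short exact sequence $0\to\ker\phi\to L(\w)\circ L(\a)\to L(\w,\a)\to 0$.

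To identify $\ker\phi\cong qL(\a+\d)$: by Theorem~\ref{3.11} I may assume the convex order is as in Example~\ref{tworow}, so $(\d,\a)$ is a minimal pair for $\a+\d$, and by Lemma~\ref{oppositerestrict} the module $\Res_{\a,\d}L(\a+\d)$ is an $S(\a)\otimes S(\d)$-module. A Mackey computation will show $\Res_{\a,\d}(L(\w)\circ L(\a))$ has exactly two degree-zero layers: $L(\a)\otimes L(\w)$ (from $\nu_{21}=\a$) and $L(\a)\otimes(L(\w_+)\circ L(\a))$ (from $\nu_{21}=0$, via Lemma~\ref{reslw}). Only the first is an $S(\a)\otimes S(\d)$-module, since $L(\w_+)\circ L(\a)$ fails cuspidality on $R(\d)$. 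Restriction applied to the short exact sequence forces the second layer into $\Res_{\a,\d}L(\w,\a)$, leaving the first (suitably shifted) as $\Res_{\a,\d}\ker\phi$; Frobenius reciprocity~(\ref{oppositeadjunction}) then produces a nonzero injection $qL(\a+\d)\hookrightarrow L(\w)\circ L(\a)$ landing in $\ker\phi$. The main obstacle is verifying the precise shift $q$ and that $\ker\phi$ has no extra composition factors: both follow from a Grothendieck-class count using Theorem~\ref{main}(3) together with the Levendorskii-Soibelman expansion of $E_\w E_\a-E_\a E_\w$, in which $(\d,\a)$ is the unique minimal-pair decomposition of $\a+\d$ producing the term $E_{\a+\d}$, with coefficient exactly $q$.
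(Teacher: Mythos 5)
The first half of your argument is sound and close in spirit to the paper's setup: the Mackey/adjunction computation showing $\Hom(L(\w)\circ L(\a),L(\a)\circ L(\w))$ is one-dimensional in degree zero, the multiplicity-one statement for $L(\w,\a)$ via $\Res_{\d,\a}(L(\w)\circ L(\a))\cong L(\w)\otimes L(\a)$, and the identification of $\operatorname{im}\phi$ with the head all work, so you do get a short exact sequence $0\to\ker\phi\to L(\w)\circ L(\a)\to L(\w,\a)\to 0$. The gap is entirely in the identification $\ker\phi\cong qL(\a+\d)$. First, the reduction ``by Theorem \ref{3.11} I may assume the convex order is as in Example \ref{tworow}'' is not available: Theorem \ref{3.11} only produces a \emph{word-type} order agreeing with $\prec$ on a finite set, while Example \ref{tworow} is not of word type; and the legitimate tool for changing the order, Theorem \ref{independence}, covers $S(\w_\pm)$, $L(\w)$, $L(\w,\a)$, but not $L(\a+\d)$, which occurs in the statement you are proving. (The fact you actually need, that $(\d,\a)$ is a minimal pair for $\a+\d$, is true for \emph{every} adapted order because $p(\w_-)$ is a simple root of the positive system $p(\Phi_{\prec\d})$, but you neither prove this nor cite a correct reason; the paper's proof never changes the convex order here.)

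Second, and more seriously, the final bookkeeping fails. It is not true that applying $\Res_{\a,\d}$ to the exact sequence ``forces the second layer into $\Res_{\a,\d}L(\w,\a)$, leaving the first as $\Res_{\a,\d}\ker\phi$'': the Mackey filtration is not compatible with the submodule $\ker\phi$, and in fact $\Res_{\a,\d}(qL(\a+\d))$ contains $L(\a)\otimes L(\w)$ with graded multiplicity $q^2+1$ (this is the computation of $g_\w$ in the proof of Lemma \ref{zerocase2}), so $\Res_{\a,\d}\ker\phi$ is not a single shifted copy of the first layer, and many of its composition factors come from the second layer. Finally, the decisive coefficient cannot be extracted from Theorems \ref{ls} and \ref{lsf}: those results constrain only the \emph{support} of the Levendorskii--Soibelman expansion, never its coefficients, and the coefficient of $[L(\a+\d)]$ in $[L(\w)][L(\a)]-[L(\a)][L(\w)]$ equals $f(q)-f(q^{-1})$ where $[\ker\phi]=f(q)[L(\a+\d)]$; asserting this coefficient is ``exactly $q$'' is precisely the content of the lemma, so your argument is circular at the crucial point. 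What is missing is a degree count of the kind the paper supplies: using the Mackey filtration of $\Res_{\a,\d}(L(\w)\circ L(\a))$, Lemma \ref{reslw} and the adjunction (\ref{oppositeadjunction}), one shows $\Hom(L(\a)\circ L(\w),L(\w)\circ L(\a))$ lives only in degrees $0$ and $2$ with the degree-$2$ part one-dimensional; combined with $f(q)\in q\N[q]$ (Theorem \ref{main}(3), Lemma \ref{geometry}, and the simplicity of $p(\w_-)$ in $p(\Phi_{\prec\d})$) and the dual sequence $0\to L(\w,\a)\to L(\a)\circ L(\w)\to(\ker\phi)\dual\to 0$, this forces $f(q)=q$. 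Without some such input your proposal does not pin down $\ker\phi$.
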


\begin{proof}
Theorem \ref{main} tells us that $L(\w,\a)$ is the head of the module $L(\w)\circ L(\a)$ and that every other subquotient of $L(\w)\circ L(\a)$ is cuspidal. Therefore there is a short exact sequence
\[
0 \to X \to L(\w)\circ L(\a)\to  L(\w,\a)\to 0
\]
for some cuspidal $R(\a+\d)$-module $X$. Since the head of $L(\w)\circ L(\a)$ is known, Lemma \ref{geometry} implies that $[X]\in q\N[q]E_{\a+\d}^*.$ Taking duals there is a short exact sequence
\[
0 \to L(\w,\a)\to L(\a)\circ L(\w)\to X\dual \to 0.
\]

We now consider
\[
\Hom(L(\a)\circ L(\w),L(\w)\circ L(\a))\cong \Hom(L(\a)\otimes L(\w), \Res_{\a,\d}L(\w)\circ L(\a)).
\]
The restriction has two nonzero pieces in its Mackey filtration. The module $L(\a)\otimes L(\w)$ appears as a quotient and we use Lemma \ref{reslw} to identify the submodule as $L(\a)\otimes (L(\d-\a)\circ L(\a))$.

Now we consider
\[
\Hom(L(\w),L(\d-\a)\circ L(\a))\cong \Hom(q^2 L(\a)\otimes L(\d-\a),L(\a)\otimes L(\d-\a))
\]
where we have used the adjunction (\ref{oppositeadjunction}) and Lemma \ref{reslw} to reach this isomorphism. Therefore there is a unique (up to scalar) morphism from $L(\a)\circ L(\w)$ to $L(\w)\circ L(\a)$ in degree 2, and the only other possible morphisms are in degree zero from the other term in the Mackey filtration. When comparing this with $[X]\in q\N[q]E_{\a+\d}^*$, the only option is that $X\cong q L(\a+\d)$, as required.
\end{proof}

\begin{lemma}\label{zerocase2}
Let $\w$ be a chamber coweight and $\a=\w_-+n\d$ for some natural number $n$. Then there are short exact sequences
\[
0\to qL(\a+\d) \to L(\w)\circ L(\a) \to L(\w,\a)\to 0 \]\[
0 \to \D(\w)\circ\D(\a) \to \D(\a) \circ \D(\w) \to q\D(\a+\d)\oplus q\inv \D(\a+\d)\to 0.
\]
\end{lemma}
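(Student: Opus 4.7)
The plan is to derive both short exact sequences together from the minimal-pair machinery of Lemma \ref{mpses}. By Theorem \ref{independence}, I may replace $\prec$ with the two-row order of Example \ref{tworow}, so that the real roots strictly between $\w_-$ and $\d$ are exactly $\w_-+k\d$ for $k \geq 0$. In this order, $(\d,\a)$ is a minimal pair for $\a+\d=\w_-+(n+1)\d$: any alternative decomposition $\a+\d = m\d + (\w_-+(n+1-m)\d)$ has its imaginary summand at the same $\prec$-level as $\d$ (all positive multiples of $\d$ are $\preceq$-equivalent), so no strictly intermediate pair exists.

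I would then apply Lemma \ref{mpses} with $\D_\b = \D(\w)$, $\D_\ga = \D(\a)$. Since $\d$ is in the radical of the bilinear form, $\d\cdot\a = 0$, and the sequence reads
\[
0 \to \D(\w)\circ \D(\a) \to \D(\a)\circ \D(\w) \to C \to 0,
\]
with $C$ a projective $S(\a+\d)$-module. Because $\a+\d$ is a real root, Corollary \ref{morita} forces $C$ to be a direct sum of grading shifts of the unique indecomposable projective $\D(\a+\d)$. To identify the shifts, I compute $[C] = E_\a E_\w - E_\w E_\a$ in $\f$ using Theorem \ref{realpbw} and Corollary \ref{stdim}. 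Theorem \ref{lsf}, with $\w\succ\a$ and $\w\cdot\a=0$, places this difference in $\sum_{\pi<(\w,\a)}\Z[q,q\inv]E_\pi$; enumerating root partitions of $\w_-+(n+1)\d$ in the two-row order shows $(\a+\d)$ is the unique $\pi<(\w,\a)$, so $[C] = c\,E_{\a+\d}$ for some $c\in\Z[q,q\inv]$.

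To pin down $c = q+q\inv$, I would apply $\Hom(-,L(\a+\d))$ to the sequence and exploit Theorem \ref{14.1}. The key vanishing is
\[
\Hom(\D(\w)\circ\D(\a),L(\a+\d)) \cong \Hom(\D(\w)\otimes\D(\a),\Res_{\d,\a}L(\a+\d)) = 0,
\]
since cuspidality of $L(\a+\d)$ together with $\a \prec \a+\d$ (so $\a$ cannot be written as a sum of roots $\succ \a+\d$ by the convexity theorem) forces $\Res_{\d,\a}L(\a+\d)=0$. With $\Ext^1(C,L(\a+\d))=0$ (Theorem \ref{14.1}), the long exact sequence collapses to $\Hom(\D(\a)\circ\D(\w),L(\a+\d)) \cong \Hom(C,L(\a+\d))$, and computing the left side via adjunction and Lemma \ref{oppositerestrict} (which makes $\Res_{\a,\d}L(\a+\d)$ a $S(\a)\otimes S(\d)$-module, so its pairing with the projective $\D(\a)\otimes\D(\w)$ is detected in degree zero) I expect the graded dimension $q+q\inv$, yielding $C \cong q\D(\a+\d)\oplus q\inv\D(\a+\d)$.

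For the first sequence, Theorem \ref{main} provides $0 \to X \to L(\w)\circ L(\a) \to L(\w,\a) \to 0$ with $X$ semicuspidal; the ordering analysis shows the only composition factor $X$ can carry is $L(\a+\d)$, so $[X]=m(q)[L(\a+\d)]$ with $m(q)\in q\N[q]$ by Lemma \ref{geometry} and self-duality of $L(\w,\a)$. To identify $m(q)=q$, I would repeat the Hom-space argument of Lemma \ref{zerocase}: compute $\Hom(L(\a)\circ L(\w), L(\w)\circ L(\a))$ by adjunction and Mackey on $\Res_{\a,\d}(L(\w)\circ L(\a))$, where in the two-row order only the $\mu_{11}=0$ and $\mu_{11}=\d$ pieces survive. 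The second piece is accessible by an induction on $n$ via the inductive first sequence applied to $\Res_{\a-\d,\d}L(\a)$; matching the unique nontrivial degree-$2$ morphism against the class information forces $m(q)=q$. The main obstacle will be the precise scalar computation in the second sequence: Theorem \ref{lsf} only locates $c$ in $\Z[q,q\inv]$, and the Ext-pairing together with the Mackey/adjunction analysis of $\Res_{\a,\d}L(\a+\d)$ must be carried out carefully to rule out any other graded contributions to $C$.
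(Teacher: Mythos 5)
Your overall skeleton (Lemma \ref{mpses} applied to the pair $(\d,\a)$ for the second sequence, Theorem \ref{main} plus Lemma \ref{geometry} for the first, induction on $n$) matches the paper, but the proposal omits exactly the steps that carry the content of the lemma, namely the determination of the grading shifts. For the second sequence, once you know $C\cong f(q)\D(\a+\d)$, your route to $f(q)$ is circular: the long exact sequence identifies $\Hom(C,L(\a+\d))$ with $\Hom(\D(\a)\otimes\D(\w),\Res_{\a,\d}L(\a+\d))$, and \emph{both} graded dimensions are literally $f(q)$, the graded multiplicity of $L(\a)\otimes L(\w)$ in $\Res_{\a,\d}L(\a+\d)$; nothing in your argument computes that multiplicity, and you concede as much (``I expect the graded dimension $q+q\inv$''). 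The paper pins it down by three inputs you do not supply: the specialisation at $q=1$ in $\f$ (via Theorem \ref{realpbw} and Corollary \ref{stdim}) giving $f(1)=2$; the symmetry $f(q)=f(q\inv)$, because restriction commutes with duality and the simples involved are self-dual, so $f(q)=q^i+q^{-i}$; and, crucially, the \emph{first} sequence at the same level $n$, which by adjunction produces a nonzero morphism $\Res_{\a,\d}L(\a+\d)\to L(\a)\otimes L(\w)$ in the degree that forces $i=1$.

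Relatedly, your inductive structure would not close. In the paper the two sequences are proved by an intertwined induction: the second sequence at level $n-1$ is what computes the graded multiplicity of $L(\a-\d)\otimes L(x)$ in $\Res_{\a-\d,\d}L(\a)$, through the pairing $g_x(q)=\langle [C_x],[L(\a)]\rangle$ with the cokernels $C_x$ of Lemma \ref{mpses}, giving $g_\w(q)=q+q\inv$; this is precisely what yields the degree-two morphism $L(\a)\circ L(\w)\to L(\w)\circ L(\a)$ needed to conclude $X\cong qL(\a+\d)$ in the first sequence at level $n$, and then the level-$n$ first sequence feeds back into the level-$n$ second sequence as above. You instead invoke ``the inductive first sequence applied to $\Res_{\a-\d,\d}L(\a)$'', which does not give that restriction multiplicity (it concerns $L(\w)\circ L(\a-\d)$, and unwinding it would also require control of the restrictions of the heads $L(\w,\a-\d)$), and you never use the level-$n$ first sequence in your treatment of the second one; so at neither stage are the shifts actually determined. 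A smaller point: the opening reduction to the order of Example \ref{tworow} via Theorem \ref{independence} is not justified here, since that theorem only asserts independence of $S(\w_+)$ and $S(\w_-)$, not of the cuspidal or semicuspidal categories attached to $\w_-+n\d$ and $n\d$ that the statement is about; it is also unnecessary, because for $\w$ adapted to $\prec$ the pair $(\d,\a)$ is minimal for $\a+\d$ in any convex order: a decomposition into two roots strictly between $\a$ and $\d$ would express the simple finite root $p(\w_+)$ as a sum of two roots of the positive system $p(\Phi_{\succ\d})$, which is impossible.
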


\begin{proof}
We prove the existence of these short exact sequences by an induction on $n$. The case $n=0$ for the first sequence is Lemma \ref{zerocase}. First we prove the existence of the first sequence for some $n>0$, assuming that both sequences are known for lesser values of $n$.

As in the proof of Lemma \ref{zerocase}, we have a short exact sequence\[
0 \to X \to L(\w)\circ L(\a)\to  L(\w,\a)\to 0
\]
where $[X]\in q\N[q]E_{\a+\d}^*$, and we wish to study
\[
\Hom(L(\a)\circ L(\w),L(\w)\circ L(\a))\cong \Hom(L(\a)\otimes L(\w), \Res_{\a,\d}L(\w)\circ L(\a)).
\]

The Mackey filtration of $\Res_{\a,\d}(L(\w)\circ L(\a))$ has two nonzero pieces. The module $L(\a)\otimes L(\w)$ appears as a quotient, and to understand the submodule, we need to first understand $\Res_{\a-\d,\d}L(\a)$.

By Lemma \ref{oppositerestrict}, we can write
\[
[\Res_{\a-\d,\d}L(\a)] = \sum_{x\in \Om} g_x(q) [L(\a-\d)]\otimes [L(x)]
\]
for some polynomials $g_x(q)\in \N[q,q\inv]$ which satisfy $g_x(q)=g_x(q\inv)$ since restriction commutes with duality.

For $x\in \Omega$, let $C_x$ be the projective $S(\a-\d)$-module which appears in the short exact sequence of Lemma \ref{mpses}:
\[
0\to \D(x)\circ \D(\a-\d) \to \D(\a-\d)\circ \D(x)\to C_x\to 0.
\]
We compute
\begin{align*}
g_x(q)&=\langle E_{\a-\d}\otimes E_x, [\Res_{\a-\d}L(\a)] \rangle \\
&= \langle E_{\a-\d}E_x,[L(\a)] \rangle \\
&= \langle E_{\a-\d}E_x-E_x E_{\a-\d},[L(\a)] \rangle \\
&= \langle [C_x],[L(\a)]\rangle.
\end{align*}
Therefore $C_x\cong g_x(q)\D(\a)$. 

If $x\neq \w$ then we can compute the value of $[C_x]$ after specialising $q=1$ in $\f$ to obtain $g_x(1)$ is 0 or 1, which forces $g_x(q)$ to be 0 or 1.

For $x=\w$, we use the inductive hypothesis applied to the second short exact sequence to conclude that $g_\w(q)=q+q\inv$. Therefore $\Res_{\a,\d} L(\w)\circ L(\a-\d)$ has a submodule isomorphic to $q(L(\w)\circ L(\a-\d) )\otimes L(\w)$.

By the inductive hypothesis this module receives a map from $q^2 L(\a)\otimes L(\w)$ and hence there exists a morphism from $L(\a)\circ L(\w)$ to $L(\w)\circ L(\a)$ of degree two. In fact this argument shows us we know even more, namely that all other morphisms between these modules are of degree zero. So the same argument as in Lemma \ref{zerocase} allows us to conclude $X\cong qL(\a+\d)$, as required.

Now we deduce the second short exact sequence from the first. By Lemma \ref{mpses}, there exists a short exact sequence
\[
0\to \D(\w)\circ \D(\a) \to \D(\a)\circ \D(\w)\to C\to 0
\]
where $C$ is a projective $S(\a+\d)$-module, hence isomorphic to $f(q)$ copies of $\D(\a+\d)$ for some $f(q)\in\N[q,q\inv]$. The same argument computing pairings as above shows that $f(q)$ is equal to the multiplicity of $L(\a)\otimes L(\om)$ in $\Res_{\a,\d}L(\a+\d)$. The computation in $\f$ specialised at $q=1$ shows $f(1)=2$, and since $f(q)=f(q\inv)$, we have $f(q)=q^i+q^{-i}$ for some $i\in\Z$.

The first exact sequence gives us a morphism from $qL(\a+\d)$ to $L(\om)\circ L(\a)$ which by adjunction induces a nonzero morphism $\Res_{\a,\d}L(\a+\d)\to L(\a)\otimes L(\w)$. Therefore $i=1$, as required.
\end{proof}

\begin{proposition}\label{resolution}
Let $k$ and $l$ be positive integers. There is a short exact sequence
\[
0\to q A(L_\om(1^k),L((l+1)\d-\om_+))\to L_\om(1^{k+1})\circ L(l\d-\om_+)\to A(
L_\om(1^{k+1}),L(l\d-\om_+))\to 0.
\]
\end{proposition}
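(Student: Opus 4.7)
The plan is to identify the kernel $K$ of the surjection $L_\om(1^{k+1}) \circ L(l\d - \om_+) \twoheadrightarrow A(L_\om(1^{k+1}), L(l\d - \om_+))$ (provided by Lemma~\ref{irreduciblehead}) with a grading shift of $A(L_\om(1^k), L((l+1)\d - \om_+))$. The strategy is to construct the desired embedding via a Mackey restriction computation together with adjunction, then identify the resulting image with all of $K$ via a character identity.

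I first carry out the Mackey computation of Theorem~\ref{mackey} applied to $\Res_{k\d, (l+1)\d - \om_+}$ of the induced module. Enumerating layers by matrices $(\nu_{ij})$ with row sums $((k+1)\d, l\d - \om_+)$ and column sums $(k\d, (l+1)\d - \om_+)$, the semicuspidality of $L_\om(1^{k+1})$ forces $\nu_{11}$ to be a sum of roots $\preceq \d$, while cuspidality of $L(l\d - \om_+)$ forces $\nu_{21}$ to vanish or to be a sum of roots $\prec l\d - \om_+$. A short calculation in the basis $\{\d, \om_+, \ldots\}$ of $\Q\Phi$ shows that any nonzero such $\nu_{21}$ forces the $\om_+$-coefficient of $\nu_{11}$ to be strictly positive, contradicting the constraint on $\nu_{11}$. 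Hence only the trivial layer $\nu_{11} = k\d, \nu_{12} = \d, \nu_{21} = 0, \nu_{22} = l\d - \om_+$ contributes, and (\ref{lrr}) gives
\[
\Res_{k\d, (l+1)\d - \om_+}\bigl(L_\om(1^{k+1}) \circ L(l\d - \om_+)\bigr) \cong L_\om(1^k) \otimes \bigl(L(\om) \circ L(l\d - \om_+)\bigr).
\]
By Lemma~\ref{zerocase2} there is an inclusion $qL((l+1)\d - \om_+) \hookrightarrow L(\om) \circ L(l\d - \om_+)$; combining with the above and the adjunction (\ref{extind}) yields a nonzero morphism
\[
\phi\colon qL_\om(1^k) \circ L((l+1)\d - \om_+) \to L_\om(1^{k+1}) \circ L(l\d - \om_+).
\]

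Composing $\phi$ with the surjection onto $L(\pi) = A(L_\om(1^{k+1}), L(l\d - \om_+))$, where $\pi = (L_\om(1^{k+1}), L(l\d - \om_+))$, gives a map from the proper standard $\overline{\D}(\sigma)$ for $\sigma = (L_\om(1^k), L((l+1)\d - \om_+))$ into the irreducible $L(\pi)$. Since $(l+1)\d - \om_+ \succ l\d - \om_+$, a direct check of the root partition ordering gives $\sigma < \pi$, so $\pi \not\le \sigma$; by Theorem~\ref{main}(3) the composite vanishes, and the image of $\phi$ is contained in $K$. To conclude that this image equals $K$ and is isomorphic to $qA(L_\om(1^k), L((l+1)\d - \om_+))$, I plan to establish the character identity in $\f^*$ obtained by multiplying the Grothendieck-group identity of Lemma~\ref{zerocase2} on the left by $[L_\om(1^k)]$ and using the Littlewood--Richardson decomposition $L_\om(1^k) \circ L(\om) \cong L_\om(1^{k+1}) \oplus L_\om(2, 1^{k-1})$ from Theorem~\ref{lrl}; iterating and isolating the $(1^{k+1})$-summand yields
\[
[L_\om(1^{k+1}) \circ L(l\d - \om_+)] = [A(L_\om(1^{k+1}), L(l\d - \om_+))] + q[A(L_\om(1^k), L((l+1)\d - \om_+))].
\]
Once this identity is in hand, Lemma~\ref{geometry} upgrades the maximal-degree summand to a graded submodule, which must coincide with $K$, completing the short exact sequence. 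The main obstacle will be this character identity, in particular ruling out composition factors $L(\tau)$ in $K$ with $\tau < \pi$ whose restriction to $(k\d, (l+1)\d - \om_+)$ vanishes and so remain invisible to the Mackey analysis of the second paragraph.
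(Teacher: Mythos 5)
Your overall strategy (a single Mackey restriction, Lemma \ref{zerocase2}, and the ordering of root partitions) is close in spirit to the paper's, but the step you yourself flag as the main obstacle is exactly where the content of the proposition lies, and your proposed route does not close it. By Theorem \ref{main}(3) the kernel $K$ could a priori contain composition factors $q^iA(L_\om(1^{n}),L((l+m)\d-\om_+))$ for any $m\geq 2$ (with $n=k+1-m$), including the purely cuspidal factor $L((k+l+1)\d-\om_+)$ when $m=k+1$; all of these restrict to zero under $\Res_{k\d,(l+1)\d-\om_+}$, so your one Mackey computation cannot detect them. The character identity you propose to obtain by multiplying the class identity of Lemma \ref{zerocase2} by $[L_\om(1^k)]$ and applying the Pieri case of Theorem \ref{lrl} does not follow by ``iterating'': that manipulation produces the terms $[L_\om(2,1^{k-1})\circ L(l\d-\om_+)]$ and $[L_\om(1^k)\circ L(\om,l\d-\om_+)]$, whose decompositions into simples are not supplied by your induction (which only covers column shapes) and are at least as hard as the statement being proved. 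The paper's proof supplies precisely the missing ingredients: it argues by induction on $k$, restricts to $(n\d,(l+m)\d-\om_+)$ for \emph{every} $m$ (not just $m=1$) to force the imaginary part of any would-be factor to be $(1^{n})$ at $\om$ and to bound its multiplicity via the inductive hypothesis applied to $L_\om(1^{m})\circ L(l\d-\om_+)$, and eliminates the invisible cuspidal factor (the case $n=0$, $m=k+1$) by comparing $\om$-widths of MV polytopes: the induced module has $\om$-width $l+1$, while $L((l+m)\d-\om_+)$ has $\om$-width $l+m$, forcing $m=1$. Without an argument playing the role of these two steps, your proof is incomplete.

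A secondary, fixable point: in the Mackey exclusion you assert that any nonzero $\nu_{21}$ makes the $\om_+$-coefficient of $\nu_{11}$ strictly positive. This fails when $\nu_{21}$ involves roots $\gamma\prec\om_-$ outside the plane with $\langle\om,p(\gamma)\rangle=0$. The correct argument is that a nonzero $\nu_{21}$ consists of real roots $\preceq l\d-\om_+$, so $p(\nu_{21})$ is a nonzero sum of negative roots of the positive system $p(\Phi_{\succ\d})$, whence $p(\nu_{11})=-p(\nu_{21})$ lies strictly in the positive cone, while semicuspidality of $L_\om(1^{k+1})$ forces $p(\nu_{11})$ into the nonpositive cone; this contradiction kills all nontrivial layers, so your identification of the restriction with $L_\om(1^k)\otimes\bigl(L(\om)\circ L(l\d-\om_+)\bigr)$ is correct even though the justification as written is not.
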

\begin{proof}
This proof proceeds by an induction.
By Theorem \ref{main}, the module $L_\om(1^{k+1})\circ L(l\d-\om_+)$ surjects onto $A(
L_\om(1^{k+1}),L(l\d-\om_+))$ and all other subquotients are of the form $X_{\uline{\la},m}^i=q^iA(L(\underline{\la}),L((l+m)\d-\a))$ for some $m>0$ and $\underline{\la}$ a multipartition of $k+1-m$.

Setting $n=k+1-m$, the following computation is straightforward as there is only one nonzero piece in the Mackey filtration.
\[
\Res_{n\d,(l+m)\d-\om_+} (L_\om(1^{k+1})\circ L(l\d-\om_+)  )\cong L_\om(1^n)\otimes 
\left( L_\om(1^m))\circ L(l\d-\om_+) \right)
\]
Note that if $X_{\uline{\la},m}^i$ is a subquotient of $L_\om(1^{k+1})\circ L(l\d-\om_+)$ then $L(\uline{\la})\otimes L((l+m)\d-\om_+)$ must appear as a subquotient of this restriction.
Immediately we see that $\la_\w=(1^n)$ and $\la_x=0$ for all other chamber coweights $x$.

Consider a subquotient of the form $X_{\uline{\la},m}^i$ with $\uline{\la}\neq 0$. Then by inductive hypothesis we know all that there is only a cuspidal subquotient of $ L_\om(1^m)\circ L(l\d-\om_+)$ when $m=1$. Furthermore this cuspidal subquotient appears with multiplicity $q$, which completes the proof in this case.

So now turn our attention to the remaining case when $n=0$. The module $L_\om(1^{k+1})\circ L(l\d-\om_+)$ has $\om$-width $l+1$ and the module $L((l+m)\d-\w_+)$ has $\w$-width $l+m$. Therefore $m=1$. The result now follows from Lemma \ref{zerocase2}. \end{proof}

\section{Inner Product Computations}

For any natural number $n$ and chamber coweight $\w$, define $e_n^\om=[L_\om(1^n)]$.

\begin{lemma}\label{psinen}
Let $\om$ be a chamber coweight and $\{n_x\}_{x\in \Om}$ a collection of natural numbers with sum $n$. Then
\[
\langle \psi_n^\om ,\prod_{x\in\Om}e_{n_x}^x \rangle = \begin{cases}
(-q)^{n-1} &\text{if $n_x=0$ for all $x\neq \w$,} \\
0 & \text{otherwise}.
\end{cases}
\]
\end{lemma}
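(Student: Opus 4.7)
The plan is to use the Hopf-pairing identities of Lemma \ref{extprops2} to convert the pairing into a restriction computation, then iterate Proposition \ref{resolution} to extract the signed power of $q$. Write $M = \dct{x\in\Om} L_x(1^{n_x})$. Applying $\langle xy, z\rangle = \langle x\otimes y, r(z)\rangle$ to $\psi_n^\om = E_{n\d-\om_+}E_{\om_+} - q^2 E_{\om_+} E_{n\d-\om_+}$, we obtain a difference of two restriction pairings. The second summand, involving $r([M])_{(\om_+,\, n\d-\om_+)} = [\Res_{\om_+,\,n\d-\om_+}M]$, vanishes because $M$ is semicuspidal and $\om_+ \succ \d$. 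For the first summand, a Mackey analysis of $\Res_{n\d-\om_+,\om_+} M$ (mirroring the proof of Theorem \ref{22.6}) leaves exactly one possibly nonzero layer, which itself is zero unless $n_\om \geq 1$. When $n_\om \geq 1$, Theorem \ref{22.6} identifies it as $q^s\bigl(\dct{x\neq\om} L_x(1^{n_x})\circ L(\pi)\bigr)\otimes L(\om_+)$, where $\pi := (L_\om(1^{n_\om-1}), L(\om_-))$. Up to the (ultimately trivial) shift $q^s$, everything thus reduces to computing $\langle E_{n\d-\om_+}, [Y]\rangle$ where $Y$ denotes the first tensor factor.

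For the ``zero otherwise'' case, assume some $n_x > 0$ with $x\neq\om$. Iterating the second short exact sequence of Lemma \ref{zerocase2} starting from $\a = \om_-$ exhibits $\D(n\d-\om_+)$ as an iterated subquotient of $\D(\om_-)\circ \D(\om)^{\circ(n-1)}$; hence by Mackey every imaginary tensor factor of every restriction of $\D(n\d-\om_+)$ is a subquotient of an induced product of $\D(\om)$'s, whose composition factors lie entirely in the $\om$-chamber. Applying $\langle x, yz\rangle = \langle r(x), y\otimes z\rangle$ repeatedly to $\langle E_{n\d-\om_+}, [Y]\rangle$ and using Corollary \ref{19.10} to see that $[L_x(1^{n_x})]$ is orthogonal to every $\om$-chamber class, the pairing vanishes as soon as any factor $[L_x(1^{n_x})]$ with $x\neq\om$ is encountered.

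For the main case $n_\om = n$, so that $Y = L(\pi)$, I would iterate Proposition \ref{resolution} with $l = 1, 2, \ldots, n-1$ to telescope
\[
[L(\pi)] = \sum_{j=0}^{n-2} (-q)^j\,[L_\om(1^{n-1-j})\circ L((j+1)\d - \om_+)] \;+\; (-q)^{n-1}\,[L(n\d - \om_+)].
\]
The last term pairs with $E_{n\d-\om_+}$ to give $(-q)^{n-1}$ by the dual-basis property. For each earlier term, the adjunction rewrites the pairing as $\langle r(E_{n\d-\om_+})_{((n-1-j)\d,\,(j+1)\d-\om_+)}, [L_\om(1^{n-1-j})]\otimes [L((j+1)\d-\om_+)]\rangle$, and the indicated coproduct component is $[\Res_{(n-1-j)\d,\,(j+1)\d-\om_+}\D(n\d-\om_+)]$; this vanishes because cuspidality of $\D(n\d-\om_+)$ forbids an imaginary weight $(n-1-j)\d$ in the left slot (since $\d \succ n\d - \om_+$). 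The residual shift $q^s$ is fixed to be $1$ by comparison with the base case $n=1$, where $\psi_1^\om = [\D(\om)]$ and $\langle [\D(\om)], [L(\om)]\rangle = 1 = (-q)^0$.

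The hard part will be the structural input behind the vanishing case: namely, that restrictions of $\D(n\d-\om_+)$ only record $\om$-chamber data in their imaginary slots. While this follows from iterating Lemma \ref{zerocase2}, verifying that the chamber support is preserved throughout the resulting extensions and subquotients requires careful bookkeeping of the Mackey layers.
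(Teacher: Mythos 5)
Your overall route is the paper's: reduce by the coproduct pairing, kill the $q^2E_{\om_+}E_{n\d-\om_+}$ term by semicuspidality, compute the $(n\d-\om_+,\om_+)$ component of $r\bigl(\prod_x e^x_{n_x}\bigr)$ via Theorem \ref{22.6} together with the telescoping coming from Proposition \ref{resolution} and Lemma \ref{zerocase2}, and finish with the vanishing of restrictions of $\D(n\d-\om_+)$ having an imaginary left slot. The genuine gap is at the first step. You assert that the Mackey analysis of $\Res_{n\d-\om_+,\om_+}M$ leaves exactly one possibly nonzero layer, namely the one in which the $\om$-factor supplies all of $\om_+$. Semicuspidality of $L_x(1^{n_x})$ only forces the right-hand piece of each factor to be a sum of roots $\succeq\d$, and $\om_+\succ\d$, so nothing you have said excludes $\Res_{n_x\d-\om_+,\,\om_+}L_x(1^{n_x})\neq 0$ for $x\neq\om$; Theorem \ref{22.6} concerns only the factor $x=\om$ and cannot be cited for the others. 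This is precisely where the paper works: a nonzero such restriction yields, by a Mackey argument, $\Res_{\om_-,\om_+}L(x)\neq 0$, then Lemma \ref{oppositerestrict} and adjunction give a nonzero map $L(\om_-)\circ L(\om_+)\to L(x)$, forcing $L(x)\cong L(\om)$ and hence $x=\om$ by Theorem \ref{imaginaryclassification}. (One could instead argue that $L_x(1^{n_x})$ has $\om$-width zero for $x\neq\om$, but some such argument must be supplied.) Without it, both your identification of the unique layer and the vanishing in the case $n_\om=0$ are unproved.

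Your second paragraph is also problematic, and unnecessary. The claim that an induced product of $\D(\om)$'s has all composition factors ``in the $\om$-chamber'' is unjustified: $\D(\om)$ is the projective cover of $L(\om)$ in the category of $S(\d)$-modules, and Lemma \ref{negativehoms} only controls multiplicities in degrees $\leq 0$; composition factors $L(x)$ with $x\neq\om$ may occur in positive degrees. (What is true, and would rescue the orthogonality, is that $[\D(\om)^{\circ k}]$ is a sum of classes $[\D(\ula)]$ with $\ula$ supported at $\om$, which pair to zero with $[L_x(1^{n_x})]$ by Corollary \ref{19.10}.) But no such detour is needed: once the unique layer is identified, the very fact you invoke in your main case --- $\Res_{k\d,\,(n-k)\d-\om_+}\D(n\d-\om_+)=0$ for $k\geq 1$, since $k\d$ is not a sum of roots $\preceq n\d-\om_+\prec\d$ --- kills $\langle E_{n\d-\om_+},[Y]\rangle$ as soon as the leftmost factor of $Y$ has imaginary degree, which is exactly how the paper disposes of all terms with some $n_x>0$, $x\neq\om$, in one line. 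So the step you flag as the hard part is immediate, while the genuinely delicate point (why only the $\om$-factor can supply $\om_+$) is the one your proposal omits. The remainder --- the telescoped expansion of $[L(\pi)]$ (whose last step, with no $L_\om$ factor remaining, uses Lemma \ref{zerocase2} rather than Proposition \ref{resolution}), the evaluation $\langle[\D(n\d-\om_+)],[L(n\d-\om_+)]\rangle=1$, and the absence of a grading shift because the $\om$-derived factor sits last --- agrees with the paper's computation.
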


\begin{proof}
By Theorem \ref{independence}, we assume without loss of generality that our convex order is as in Example \ref{tworow}.

By definition $\psi_n^\om=E_{n\d-\om_+} E_{\om_+}-q^2 E_{\om_+} E_{n\d-\om_+}$. Since $\Res_{\om_+,n\d-\om_+}L=0$ for any semicuspidal representation $L$, we have
\[
\langle \psi_n^\om ,\prod_{x\in\Om}e_{n_x}^x \rangle =\langle E_{n\d-\om_+}\otimes E_{\om_+}, \prod_{x\in\Om}r(e_{n_x}^x) \rangle.
\]

The terms in the product all commute so without loss of generality we may assume that the $r(e_{n_\om}^\w)$ term is last.

Each term appearing in the product of the $r(e_{n_x}^x)$'s is a product of terms $y\otimes z$ with $y$ of degree at most $\d$ and $z$ of degree at least $\d$. Since we need a term of degree $(n\d-\om_+,\om_+)$, the only option is that exactly one of the terms does not have degree $(n_x\d,0)$.

That particular term will have degree $(n_x-\om_+,\om_+)$. Now for $r_{n_x-\om_+,\om_+}(e_{n_x}^x)$ to not be zero, it must be that $\Res_{n_x\d-\om_+,\om_+}L_x(1^{n_x})\neq 0$ and hence the restriction $\Res_{n_x\d-\om_+,\om_+}L(x)^{\circ n_x}$ is also not zero. By a Mackey argument this implies that $\Res_{\om_-,\om_+}L(x)\neq 0$. 
By Lemma \ref{oppositerestrict} there is an injection $L(\om_-)\otimes L(\w_+)\to \Res_{\om_-,\om_+}L(x)$ and so by adjunction there is a nonzero map from $L(\om_-)\circ L(\w_+)$ to $L(x)$. By Theorem \ref{imaginaryclassification} $x=\w$.

Now Theorem \ref{22.6} and Proposition \ref{resolution} tell us that
\[
r_{n_\om\d-w_+,\om_+}(e_{n_\om}^\om) = \sum_{j=1}^{n_\om} (-q)^{j-1} e_{n-j}^\om E_{j\d-\om_+}^*\otimes E_{\w_+}^*.
\]

Therefore
\[
\langle \psi_n^\om ,\prod_{x\in\Om}e_{n_x}^x \rangle =\langle E_{n\d-\om_+},
(\prod_{ \substack{x\in\Om, \\ x\neq \w}} e_{n_x}^x )(\sum_{j=1}^{n_\w}(-q)^{j-1} e_{n-j}^\om E_{j\d-\om_+}^*)
\rangle.
\]
Since $\Res_{\d,(n-1)\d-\w_+}\D(n\d-\w_+)=0$, there is only one possible term which can be nonzero, it only occurs when $n_x=0$ for all $x\neq \w$ and $j=n_\w$. The resulting inner product is easily evaluated to $(-q)^{n-1}$.
\end{proof}

\begin{lemma}\label{pnen}
For $n\geq 0$, we have
 \[
\langle P_n^\om ,\prod_{x\in\Om}e_{n_x}^x \rangle
=\begin{cases}
             1 &\text{if $n_x=0$ for all $x\neq \w$ and $n_\w\leq 1$,}\\
0 &\text{otherwise}.
            \end{cases}
 \]
\end{lemma}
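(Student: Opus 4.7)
The plan is to proceed by induction on $n$. The base cases $n=0$ (trivially $\langle 1,1\rangle = 1$) and $n=1$ (where the recursion gives $P_1^\om = \psi_1^\om$, so the claim reduces to Lemma \ref{psinen}) are immediate.

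For the inductive step with $n \geq 2$, multiplying the recursive definition of $P_n^\om$ by $[n]$ and pairing with $\prod_x e_{n_x}^x$ gives, via the comultiplication-pairing adjunction of Lemma \ref{extprops2},
\[
[n]\langle P_n^\om,\textstyle\prod_x e_{n_x}^x\rangle \;=\; \sum_{s=1}^n q^{n-s}\,\langle \psi_s^\om \otimes P_{n-s}^\om,\, r(\textstyle\prod_x e_{n_x}^x)\rangle.
\]
The central step is computing the $(s\d,(n-s)\d)$-component of $r(\prod_x e_{n_x}^x)$. Since each $L_x(1^{n_x})$ is semicuspidal, only Mackey pieces whose indices are multiples of $\d$ contribute, and by (\ref{lrr}) each factor restricts as $e_{k_x}^x \otimes e_{n_x - k_x}^x$. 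Crucially, the twisted multiplication on $\f^{*}\otimes \f^{*}$ produces $q$-factors of the form $q^{k\d\cdot k'\d}=1$ thanks to the affine-type identity $\d\cdot\d=0$; hence
\[
r\bigl(\textstyle\prod_x e_{n_x}^x\bigr)\big|_{s\d,\,(n-s)\d} \;=\; \sum_{\vec{k}:\,\sum_x k_x = s} \bigl(\textstyle\prod_x e_{k_x}^x\bigr) \otimes \bigl(\textstyle\prod_x e_{n_x - k_x}^x\bigr).
\]

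Substituting this expansion and applying Lemma \ref{psinen} to the first factor of each summand annihilates every term except the one with $k_x = 0$ for $x\neq\om$ and $k_\om = s$ (which requires $s\leq n_\om$), contributing $(-q)^{s-1}$. The recursion thus collapses to
\[
[n]\langle P_n^\om, \textstyle\prod_x e_{n_x}^x\rangle \;=\; \sum_{s=1}^{\min(n,n_\om)} q^{n-s}(-q)^{s-1}\, \langle P_{n-s}^\om,\, e_{n_\om - s}^\om \textstyle\prod_{x\neq \om} e_{n_x}^x\rangle.
\]
The inner pairing is now handled by the inductive hypothesis: if some $n_x > 0$ with $x\neq \om$, it vanishes for every $s$, giving the claim. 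Otherwise $n_\om = n$ and it is nonzero only for $s\in\{n-1,n\}$, yielding $q^{1}(-q)^{n-2} + q^{0}(-q)^{n-1} = (-q)^{n-2}\bigl(q+(-q)\bigr) = 0$.

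The main obstacle is justifying the clean coproduct expansion for an induced product of semicuspidals; this rests on two affine-type inputs, namely $\d\cdot\d = 0$ (which eliminates all twisting factors arising from the algebra structure on $\f^{*}\otimes\f^{*}$) and the restriction formula (\ref{lrr}), itself derived from the $R$-matrix theory of Kang--Kashiwara--Kim. Once these are in place, the proof is a $q$-analogue of the elementary Newton identity computation that shows $\langle h_n, e_n\rangle = 0$ for $n\geq 2$ in the ring of symmetric functions.
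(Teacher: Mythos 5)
Your proof is correct and follows essentially the same route as the paper: expand via the recursive definition of $P_n^\w$, use $\langle xy,z\rangle=\langle x\otimes y,r(z)\rangle$, reduce $r(\prod_x e_{n_x}^x)$ to its imaginary-bidegree components using semicuspidality and (\ref{lrr}), and finish by induction with Lemma \ref{psinen}. The only difference is that you spell out the final telescoping computation (including the $\d\cdot\d=0$ observation and the cancellation $q(-q)^{n-2}+(-q)^{n-1}=0$), which the paper leaves as ``easily computed''.
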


\begin{proof}
From the definition of $P_n^\w$,
\begin{align*}
\langle P_n^\om ,\prod _{x\in\Om}e_{n_x}^x \rangle &= \frac{1}{n}\sum_{s=1}^n q^{n-s} \langle 
\psi_s^\w P_{n-s}^\w, \prod _{x\in\Om}e_{n_x}^x \rangle. \\
&=\frac{1}{n}\sum_{s=1}^n q^{n-s} \langle 
\psi_s^\w \otimes P_{n-s}^\w, \prod _{x\in\Om}r(e_{n_x}^x) \rangle.
\end{align*}

Since each $L_x(1^{n_x})$ is semicuspidal, the only relevant terms in $r(e_{n_x}^x)$ are of bidegree $(k\d,l\d)$ for some $k,l\in \N$, and all these terms are known by (\ref{lrr}). Therefore
\[
\langle P_n^\om ,\prod _{x\in\Om}e_{n_x}^x \rangle = \frac{1}{n}\sum_{s=1}^n q^{n-s} \langle 
\psi_s^\w \otimes P_{n-s}^\w, \prod _{x\in\Om}\sum_{k=0}^{n_x} e_k^x \otimes e_{n_x-k}^x \rangle.
\]
This can easily be computed by an induction on $n$ together with Lemma \ref{psinen}.
\end{proof}

\section{Symmetric Functions}

Let $\La$ be the Hopf algebra of symmetric functions. We consider it over the ground ring $\Z[q,q\inv]$. It is isomorphic to $\Z[q,q\inv][h_1,h_2,\ldots]$ where $h_n$ is the complete symmetric function. Let $s_\la$ be the Schur function indexed by the partition $\la$. Let $(\cdot,\cdot)$ denote the usual inner product on $\La$ for which the Schur functions form an orthonormal basis. We denote the coproduct on $\La$ by $\D$.

Let $B$ be the subalgebra of $\f^*$ generated by the elements $e_{n}^\w$. For $x\in B$ we define $r_\d(x)\in B\otimes B$ to be the sum of all terms in $r(x)$ of bidegree $(a\d,b\d)$.

\begin{lemma}\label{hopfiso}
There is an isomorphism of Hopf algebras $\psi\map{\La^{\otimes \W}}{B}$ with
\[
\psi(\otimes_\w s_{\la_w}) = [L(\ula)]
\] where the coproduct on $B$ is $r_\d$.
\end{lemma}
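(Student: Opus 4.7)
The plan is to produce $\psi$ first as an algebra homomorphism using the polynomial description of $\La^{\otimes\W}$, and then to verify coalgebra compatibility and bijectivity by matching structure constants with Theorem~\ref{lrl}.

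First, I exploit that $\La^{\otimes\W}$ is the polynomial algebra over $\Z[q,q\inv]$ in the elementary symmetric functions $\{e_n^\w\}_{n\geq 1,\w\in\W}$, one family per tensor factor. Define $\psi$ on these generators by $\psi(e_n^\w)=[L_\w(1^n)]\in B$. The images all lie in degrees that are multiples of $\d$, so the twist $q^{\alpha\cdot\beta}$ governing the bialgebra structure on $\f^*\otimes\f^*$ is trivial on $B\otimes B$ because $\d\cdot\d=0$; moreover the images pairwise commute, since Theorem~\ref{lrl}(1) shows that $[L_\w(1^m)][L_{\w'}(1^n)]=[L_\w(1^m)\circ L_{\w'}(1^n)]$ is a single simple class $[L(\ula)]$ independent of the order of the factors (for $\w\neq\w'$ the product collapses to one term, and for $\w=\w'$ it is symmetric via the Littlewood--Richardson identity $c^\nu_{\la\mu}=c^\nu_{\mu\la}$). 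The universal property of polynomial rings therefore produces a unique algebra homomorphism $\psi\map{\La^{\otimes\W}}{\f^*}$, whose image is by construction the subalgebra of $\f^*$ generated by the $[L_\w(1^n)]$, namely $B$.

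Next, I would verify coalgebra compatibility. Both $(\psi\otimes\psi)\circ\Delta$ and $r_\d\circ\psi$ are algebra maps into $B\otimes B$, so it suffices to compare them on generators. In $\La$, $\Delta(e_n)=\sum_{k=0}^n e_k\otimes e_{n-k}$, while equation~(\ref{lrr}) gives $\Res_{k\d,(n-k)\d}L_\w(1^n)\cong L_\w(1^k)\otimes L_\w(1^{n-k})$, exhausting the bidegree $(k\d,(n-k)\d)$ components of $\Res L_\w(1^n)$. Thus $r_\d[L_\w(1^n)]=\sum_{k=0}^n[L_\w(1^k)]\otimes[L_\w(1^{n-k})]$, matching $(\psi\otimes\psi)\Delta(e_n^\w)$ term by term.

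Third, I would identify the Schur-basis image and conclude bijectivity. Restricting $\psi$ to a single $\w$-copy of $\La$, Theorem~\ref{lrl}(1) shows that the $\Z[q,q\inv]$-linear map $s_\la\mapsto[L_\w(\la)]$ is an algebra homomorphism with Schur product constants $c^\nu_{\la\mu}$; since it sends $e_n=s_{(1^n)}$ to $[L_\w(1^n)]$, it coincides with $\psi$ on the $\w$-copy of $\La$. Then iterating Theorem~\ref{lrl}(1) over chamber coweights with disjoint supports collapses $L_{\w_1}(\la_{\w_1})\circ L_{\w_2}(\la_{\w_2})\circ\cdots$ to the single simple $L(\ula)$, yielding $\psi(\otimes_\w s_{\la_\w})=[L(\ula)]$. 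Injectivity of $\psi$ then follows from Theorem~\ref{19.7}, which asserts that the modules $L(\ula)$ are pairwise non-isomorphic simples, so their classes are linearly independent in $\f^*$, and the Schur basis of $\La^{\otimes\W}$ is sent to a linearly independent set.

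The principal subtlety I anticipate is to verify that $r_\d$ indeed defines a coassociative coalgebra structure on $B$ landing in $B\otimes B$, inheriting coassociativity from $r$; this reduces to Theorem~\ref{lrl}(2), which confirms that the $\d$-multiple bidegree components of $\Res L(\ula)$ decompose into tensor products of simple semicuspidals. Once this is in place, compatibility with the antipode is automatic for any bialgebra map between graded connected bialgebras, completing the Hopf algebra isomorphism.
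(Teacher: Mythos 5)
Your argument is correct and takes essentially the same route as the paper, which simply declares the lemma immediate from Theorem \ref{lrl}: you spell out that matching of product and coproduct structure constants, identify $\psi(\otimes_\w s_{\la_\w})=[L(\ula)]$, and get injectivity from the linear independence of the classes of the pairwise non-isomorphic simples $L(\ula)$ (Theorem \ref{19.7}), with surjectivity onto $B$ by definition. The only soft spot, the claim that $r_\d\circ\psi$ is multiplicative (which does not literally reduce to Theorem \ref{lrl}(2)), is easily bypassed: since you have already identified the Schur-basis images, check comultiplicativity directly on that basis, where Theorem \ref{lrl}(2) gives $r_\d[L(\unu)]=\sum_{\ula,\umu}c^{\unu}_{\ula\umu}\,[L(\ula)]\otimes[L(\umu)]$, matching $\Delta(\otimes_\w s_{\nu_\w})$ term by term.
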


\begin{proof}
This is immediate from Theorem \ref{lrl}.
\end{proof}

Define an algebra homomorphism $\varphi\map{\La^{\otimes \W}}{\f}$ by
\[
\varphi (\otimes_\w h_{n_\w}) = \prod_{\w\in\W} P^\w_{n_\w}.
\]
That such a homomorphism exists is because the $h_{n_\w}$ freely generate $\La^{\otimes \W}$ as a commutative algebra and Corollary \ref{commute} which implies that the $P_{n_\w}^\w$ lie in a commutative subalgebra of $\f$.

\begin{lemma}\label{comp}
For all $x,y\in\La^{\otimes \W}$ we have
\[
\langle \varphi(x),\psi(y) \rangle = (x,y).
\]
\end{lemma}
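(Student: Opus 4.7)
The plan is to reduce the identity to Lemma \ref{pnen} using multiplicativity of both $\varphi$ and $\psi$ together with the adjunction rule $\langle x_1 x_2, z\rangle = \langle x_1 \otimes x_2, r(z)\rangle$ from Lemma \ref{extprops2}. Both sides of the desired identity are $\Z[q,q\inv]$-bilinear in $x$ and $y$, so it suffices to check on a spanning family. Since $\Lambda$ is generated as a commutative algebra by the $h_n$ (resp.\ the $e_n$) and both $\varphi$ and $\psi$ are multiplicative, I would take $x = \otimes_{\omega\in\Omega} h_{n_\omega}$ and $y = \otimes_{\omega\in\Omega} e_{m_\omega}$. With these choices $\varphi(x) = \prod_\omega P^\omega_{n_\omega}$ and $\psi(y) = \dct{\omega\in\Omega} e^\omega_{m_\omega}$.

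For the symmetric-function side, since $h_n = s_{(n)}$ and $e_m = s_{(1^m)}$ and the inner product on $\La^{\otimes \W}$ factors as a product over slots, I get $(x,y) = \prod_\omega \delta_{(n_\omega),(1^{m_\omega})}$, which equals $1$ precisely when $n_\omega = m_\omega \leq 1$ for every $\omega$, and $0$ otherwise.

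For the KLR side, iteratively applying $\langle x_1 x_2,z\rangle = \langle x_1\otimes x_2,r(z)\rangle$ moves the product into the iterated coproduct:
\begin{equation*}
\langle \varphi(x),\psi(y)\rangle = \langle \otimes_\omega P^\omega_{n_\omega},\, r^{(k-1)}(\psi(y))\rangle, \qquad k = |\Omega|.
\end{equation*}
Each $P^\omega_{n_\omega}$ has purely imaginary degree $n_\omega \delta$, so only the bidegree-imaginary components of $r^{(k-1)}(\psi(y))$ contribute, and these are precisely $r_\delta^{(k-1)}(\psi(y))$. By Lemma \ref{hopfiso}, $\psi$ intertwines $\Delta$ and $r_\delta$, so $r_\delta^{(k-1)} \circ \psi = \psi^{\otimes k}\circ \Delta^{k-1}$. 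Expanding $\Delta^{k-1}(\otimes_\omega e_{m_\omega})$ using $\Delta e_m = \sum_{a+b=m} e_a \otimes e_b$ slot-by-slot and factoring the pairing on tensors yields
\begin{equation*}
\langle \varphi(x),\psi(y)\rangle = \sum_{(a^i_\omega)} \prod_{i=1}^k \langle P^{\omega_i}_{n_{\omega_i}},\, \psi(\otimes_\omega e_{a^i_\omega})\rangle,
\end{equation*}
where the sum is over nonnegative integers $a^i_\omega$ with $\sum_i a^i_\omega = m_\omega$ for each $\omega$, and I have fixed an enumeration $\Omega = \{\omega_1,\dots,\omega_k\}$.

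Now Lemma \ref{pnen} says each factor vanishes unless $a^i_\omega = 0$ for $\omega\neq \omega_i$ and $a^i_{\omega_i} = n_{\omega_i} \leq 1$, in which case it contributes $1$. Combined with the constraint $\sum_i a^i_\omega = m_\omega$, this forces $m_\omega = a^\omega_\omega = n_\omega \leq 1$ for all $\omega$, giving exactly the value computed for $(x,y)$. The only real obstacle is the multi-index bookkeeping when interchanging $\Delta^{k-1}$ with the $\Omega$-tensor structure, but once written down the identity reduces cleanly to Lemma \ref{pnen} applied in each slot.
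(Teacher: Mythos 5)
Your computation is in substance the paper's own argument: reduce to Lemma \ref{pnen} using multiplicativity of $\varphi$ and $\psi$, the adjunction $\langle x_1x_2,z\rangle=\langle x_1\otimes x_2,r(z)\rangle$, the purely imaginary degrees of the $P^\w_n$ (so that $r$ may be replaced by $r_\d$), and Lemma \ref{hopfiso}. The step that fails is the reduction to your ``spanning family''. The elements $x=\otimes_{\w}h_{n_\w}$ with a \emph{single} complete symmetric function in each slot do not linearly span $\La^{\otimes\W}$: in one slot the span of $\{h_n\}$ meets the degree-$n$ component of $\La$ in a one-dimensional subspace, whereas $\dim\La_n=p(n)$; the same objection applies to $y=\otimes_\w e_{m_\w}$. ``Generated as a commutative algebra by the $h_n$'' gives as a linear spanning set the monomials $h_\la=h_{\la_1}h_{\la_2}\cdots$, i.e.\ $x=\otimes_\w h_{\la_\w}$ indexed by multipartitions, and likewise $y=\otimes_\w e_{\mu_\w}$. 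So checking the identity only on your family does not, by bilinearity, establish it for all $x,y$; this is a genuine gap, not bookkeeping.

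The repair is to run the same expansion with arbitrary monomials. On the $x$ side this costs nothing: write $\varphi(x)$ as a product of $N$ factors $P^{\w_i}_{m_i}$, iterate the adjunction, and the imaginary-degree argument together with Lemma \ref{hopfiso} reduces everything to pairings of a single $P^\w_m$ against $\psi$ of a coproduct component --- this is exactly the induction the paper performs, peeling off one $P$-factor at a time, and as long as $y$ has at most one $e$ per slot every terminal pairing is literally of the form treated in Lemma \ref{pnen}. On the $y$ side, however, once some $\mu_\w$ has more than one part the coproduct components carry several $e$'s in a single slot, and Lemma \ref{pnen} as stated only covers $\prod_x e^x_{n_x}$ with one factor per chamber coweight; so you must either extend Lemma \ref{pnen} (e.g.\ by redoing Lemma \ref{psinen} for such products using Theorem \ref{lrl}) or make explicit how the induction bottoms out for such $y$. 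The paper's proof is admittedly terse on this last point as well, but your write-up has, in addition, the non-spanning reduction, which is the step that outright fails as written.
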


\begin{proof}
Lemma \ref{pnen} establishes this formula in the special case when $x=P_n^\w$. To deduce the general case from this particular case, we use $(xy,z)=(x\otimes y,z)$ and
$$\langle \varphi(xy),\psi(z)\rangle = \langle \varphi(x) \varphi(y),\psi(z)\rangle = \langle \varphi(x) \otimes \varphi(y), r_\d (\psi(z)) \rangle = \langle \varphi(x) \otimes \varphi(y),\psi(\D(z))\rangle $$
where in the last step we used Lemma \ref{hopfiso}.
\end{proof}

\begin{corollary}\label{pairsandl}
Let $\om$ and $\om'$ be two chamber coweights and let $\la$ and $\mu$ be partitions. Then $\langle S_\la^\om,[L_{\om'}(\mu)] \rangle = \delta_{\om\om'}\delta_{\la\mu}$.
\end{corollary}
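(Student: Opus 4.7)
The proof is almost a direct application of Lemma~\ref{comp} combined with the Jacobi-Trudi identity. Here is the plan.

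First I would observe that the definition of $S_\la^\om$ by the determinant $\det(P^\om_{\la_i-i+j})$ is precisely the Jacobi-Trudi formula for the Schur function $s_\la$ expressed in terms of the complete symmetric functions. Writing $s_\la^{(\w)}\in\La^{\otimes\W}$ for the element $1\otimes\cdots\otimes s_\la\otimes\cdots\otimes 1$ with $s_\la$ in the $\w$-th tensor slot, the algebra homomorphism $\varphi$ sends $h_n^{(\w)}=1\otimes\cdots\otimes h_n\otimes\cdots\otimes 1$ to $P^\w_n$, and hence
\[
\varphi(s_\la^{(\w)}) = \det\bigl(\varphi(h^{(\w)}_{\la_i-i+j})\bigr)=\det(P^\w_{\la_i-i+j})=S_\la^\w.
\]
Here I am using that the $P^\w_n$ commute among themselves (Corollary~\ref{commute}) so the determinant makes sense and behaves formally like Jacobi-Trudi.

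Second, by the definition of the isomorphism $\psi$ in Lemma~\ref{hopfiso}, we have $[L_{\om'}(\mu)]=\psi(s_\mu^{(\om')})$. Applying Lemma~\ref{comp} then gives
\[
\langle S_\la^\om,[L_{\om'}(\mu)]\rangle = \langle \varphi(s_\la^{(\om)}),\psi(s_\mu^{(\om')})\rangle = (s_\la^{(\om)},s_\mu^{(\om')}).
\]
The inner product on $\La^{\otimes\W}$ is the tensor product of the Hall inner products on each factor, and Schur functions on distinct factors are orthogonal while Schur functions within the same factor satisfy $(s_\la,s_\mu)=\delta_{\la\mu}$. Therefore $(s_\la^{(\om)},s_\mu^{(\om')})=\delta_{\om\om'}\delta_{\la\mu}$, which yields the corollary.

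There is no significant obstacle: the only point requiring a moment's care is the identification $\varphi(s_\la^{(\w)})=S_\la^\w$, which follows from the Jacobi-Trudi formula together with the commutativity of the $P^\w_n$. Everything else is a formal consequence of Lemma~\ref{comp} and the orthonormality of Schur functions under the Hall inner product.
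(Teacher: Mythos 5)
Your proof is correct and is essentially the argument the paper intends: its one-line proof ``The Schur functions are orthonormal'' implicitly relies on exactly your identifications $\varphi(s_\la^{(\om)})=S_\la^\om$ (via Jacobi--Trudi and Corollary \ref{commute}) and $\psi(s_\mu^{(\om')})=[L_{\om'}(\mu)]$ (Lemma \ref{hopfiso}), followed by Lemma \ref{comp}. You have simply spelled out the details the paper leaves tacit.
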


\begin{proof}
The Schur functions are orthonormal.
\end{proof}



\begin{theorem}\label{impbw}
Let $\ula=\{\la_\w\}_{\w\in\W}$ be a purely imaginary root partition. Then
$$[\D(\ula)]=\prod_{w\in\W}S^\w_{\la_\w}$$
\end{theorem}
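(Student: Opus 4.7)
The plan is to compute $\langle\prod_\w S^\w_{\la_\w}, [L(\umu)]\rangle$ for every multipartition $\umu$ of $n := \sum_\w|\la_\w|$ and then invoke the duality $\langle[\D(\unu)],[L(\umu)]\rangle = \delta_{\unu,\umu}$ from Corollary \ref{19.10}. The key observation is that the Jacobi--Trudi formula $s_\la = \det(h_{\la_i - i + j})$ transforms under the algebra homomorphism $\varphi$ into the determinantal formula defining $S^\w_{\la}$: since $\varphi$ sends $h_n$ in the $\w$-th tensor factor to $P^\w_n$, we have $\varphi(\otimes_\w s_{\la_\w}) = \prod_\w \det(P^\w_{\la_i - i + j}) = \prod_\w S^\w_{\la_\w}$.

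Combining this identity with Lemmas \ref{hopfiso} and \ref{comp}, together with the orthonormality of Schur functions under the Hall inner product, yields
\[
\langle\prod_\w S^\w_{\la_\w}, [L(\umu)]\rangle = \langle\varphi(\otimes_\w s_{\la_\w}),\psi(\otimes_\w s_{\mu_\w})\rangle = (\otimes_\w s_{\la_\w},\otimes_\w s_{\mu_\w}) = \delta_{\ula,\umu}.
\]

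To conclude by duality I must verify that $\prod_\w S^\w_{\la_\w}$ lies in the $\Q(q)$-span of $\{[\D(\unu)]\}_{\unu\vdash n}$. Lemma \ref{20.3} writes each $\psi^\w_s$ as a difference of classes of semicuspidal projective $R(s\d)$-modules, which by the classification in Section 19 are $\Z[q,q\inv]$-combinations of the $[\D(\unu)]$'s; the recursive definition of $P^\w_n$ combined with Corollary \ref{improd} (products of semicuspidal classes remain semicuspidal) keeps $P^\w_n$, each $S^\w_\la$, and the product $\prod_\w S^\w_{\la_\w}$ within this span. Expanding $\prod_\w S^\w_{\la_\w} = \sum_\unu c_\unu[\D(\unu)]$ and pairing with $[L(\umu)]$, the computation above together with Corollary \ref{19.10} forces $c_\unu = \delta_{\ula,\unu}$, yielding the theorem. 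The main obstacle is this final closure step: confirming that $\psi^\w_s$, $P^\w_n$, and $S^\w_\la$ remain within the semicuspidal projective span despite the rational coefficients introduced by the $1/[n]$ normalization. Every ingredient is supplied by the preceding sections, but assembling them cleanly requires some care.
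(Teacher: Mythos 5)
Your proof is correct and follows essentially the same route as the paper: identify $\prod_\w S^\w_{\la_\w}$ as $\varphi(\otimes_\w s_{\la_\w})$ via Jacobi--Trudi, use Lemmas \ref{hopfiso} and \ref{comp} (i.e.\ orthonormality of Schur functions, which is Corollary \ref{pairsandl}) together with Corollary \ref{19.10}, and check via Lemmas \ref{20.3} and \ref{10.2} that everything lies in the span of semicuspidal classes. The only cosmetic difference is that you expand $\prod_\w S^\w_{\la_\w}$ in the $[\D(\unu)]$'s while the paper expands $[\D(\ula)]$ in the $S_{\umu}$'s; your worry about the $1/[n]$ denominators is harmless since the relevant span can be taken over $\Q((q))$, and the modules produced by Lemma \ref{20.3} need only be semicuspidal (the kernel need not be projective), which is all the argument requires.
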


\begin{proof}
The nondegeneracy of $(\cdot,\cdot)$ together with Lemma \ref{comp} implies that $\varphi$ is injective. By Lemmas \ref{20.3} and \ref{10.2}, the image of $\varphi$ lies in the subspace of $\f_{\Z((q))}^*$ spanned by the semicuspidal modules. A dimension count shows that the image is precisely the span of the semicuspidal modules.
Therefore $\D_\w(\ula)$ is a linear combination of the elements $S_{\uline{\mu}}$. 

The pairings in Corollary \ref{19.10} and \ref{pairsandl} force $\D(\uline{\la})=S_{\uline{\la}}$.
\end{proof}


\section{Standard Modules}

The nil Hecke algebra $NH_n$ is the algebra $R(ni)$ for any $i\in I$.  
It is well known that the nil Hecke algebra is a matrix algebra over its centre, see for example \cite[Proposition 2.21]{rouquier2}. In particular, there is an isomorphism
\[
NH_n \cong \Mat_{[n]!}(\Q[x_1,\ldots,x_n]^{S_n})
\]
where each $x_i$ is in degree two.

Let $e_n$ be a primitive idempotent in $NH_n$.

\begin{theorem}
 Let $\a$ be a real root. There is an isomorphism $\End(\D(\a)^{\circ n})\cong NH_n$.
\end{theorem}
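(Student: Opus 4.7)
The plan is to construct an explicit algebra homomorphism $\Phi : NH_n \to \End(\D(\a)^{\circ n})$, verify the nil Hecke relations, and conclude by a Mackey dimension count. Since $\End(\D(\a)) = \Q[z]$ with $z$ in degree $2$ by Theorem \ref{premorita}, the central element $z$ acts on each factor of $\D(\a)^{\otimes n}$, giving commuting endomorphisms $z_1, \ldots, z_n$ of $\D(\a)^{\circ n}$ in degree $2$. I set $\Phi(y_j) = z_j$, and $\Phi(\tau_k)$ equal to the R-matrix $r_{\D(\a),\D(\a)}$ of \cite{kkk} acting on the factors $k, k+1$ and as the identity on the remaining factors. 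Since $\a\cdot\a = 2$, this R-matrix is an endomorphism of degree $-2$, matching the grading of $\tau_k \in NH_n$.

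Next I would verify the nil Hecke relations. The braid relation and the commutation of $\Phi(\tau_k)$ and $\Phi(\tau_\ell)$ for $|k-\ell|>1$ follow from the corresponding axioms for R-matrices in \cite{kkk}. The nil property $\Phi(\tau_k)^2 = 0$ comes for free from degree considerations: by adjunction and Mackey, $\End(\D(\a)^{\circ 2})$ is supported in degrees $\geq -2$, while $\Phi(\tau_k)^2$ would lie in degree $-4$. The main obstacle is the interchange relations between the $\Phi(\tau_k)$ and the $\Phi(y_j)$; verifying that they produce the expected nil Hecke signs requires unwinding the construction of the R-matrix in terms of the element $\varphi_{w[\a,\a]}$ and the renormalization factor $(z-w)^{-s}$ from \cite{kkk}, combined with Lemma \ref{klem}(iv)--(v) to track how $\varphi_{w[\a,\a]}$ transports the polynomial generators across the two $\a$-blocks.

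To conclude, I would apply Mackey to $\Res_{\a,\ldots,\a}\D(\a)^{\circ n}$: cuspidality of $\D(\a)$ forces the filtration to have exactly $n!$ nonzero subquotients, each isomorphic to $q^{-2\ell(w)}\D(\a)^{\otimes n}$ for $w \in S_n$. Since $\D(\a)^{\otimes n}$ is $\Ext$-acyclic against semicuspidal modules (by Theorem \ref{14.1} together with a K\"unneth argument for $R(\a)^{\otimes n}$), the Hom computation splits and yields
\[
\dim_q \End(\D(\a)^{\circ n}) = \Bigl(\sum_{w \in S_n} q^{-2\ell(w)}\Bigr)(1-q^2)^{-n} = \dim_q NH_n.
\]
Finally, $\Phi$ is injective by the leading-term argument of Theorem \ref{14.3}: each $\Phi(\tau_w)$ has a nontrivial image in the Mackey layer indexed by $w$ modulo layers for longer permutations, so the elements $\Phi(y^a\tau_w)$ are linearly independent. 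The dimension equality then forces $\Phi$ to be a graded isomorphism.
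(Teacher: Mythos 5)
Your overall scaffolding (the polynomial part coming from $\End(\D(\a))\cong\Q[z]$, the Mackey--plus--$\Ext$-vanishing dimension count giving $\dim_q\End(\D(\a)^{\circ n})=\bigl(\sum_{w\in S_n}q^{-2\ell(w)}\bigr)(1-q^2)^{-n}$, and a leading-term injectivity argument) is sound and is close in spirit to the argument of \cite[\S 3]{bkm}, which is all the paper itself invokes. But there is a genuine gap at the heart of your construction: the crossing $\Phi(\tau_k):=r_{\D(\a),\D(\a)}$ is not a degree $-2$ endomorphism and does not square to zero, so neither the grading match nor the nil relation goes through. Test the simplest case $\a=\a_i$: then $\D(\a_i)=R(\a_i)=\Q[y_1]$ and $\D(\a_i)^{\circ 2}=R(2\a_i)=NH_2$. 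Here $\varphi_1e_{(i,i)}=(\tau_1y_1-y_1\tau_1)e_{(i,i)}=\bigl((y_2-y_1)\tau_1-1\bigr)e_{(i,i)}$ has degree $0$, and the unrenormalized map $R_{\D_z,\D_w}$ is already nonzero at $z=w=0$ (it sends $1\otimes 1\mapsto\varphi_1\neq 0$ in $NH_2$), so the integer $s$ in the definition of $r_{\D,\D}$ is $0$; the resulting $r_{\D,\D}$ has degree $0$ and, by Lemma \ref{klem}(i), satisfies $r_{\D,\D}^2=\mathrm{id}$, not $0$. This is no accident: the maximal renormalization in \cite{kkk} is designed to make the $R$-matrix as close to invertible as possible (compare Lemma \ref{risq}), which is exactly the opposite of the nilpotent degree $-2$ generator you need. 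So your degree claim ``since $\a\cdot\a=2$ this $R$-matrix has degree $-2$'' is false, and with it the ``free'' proof of $\Phi(\tau_k)^2=0$.

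Even granting some degree $-2$ crossing, the mixed relations $(\tau_ky_k-y_{k+1}\tau_k)e=e$ are the actual content of the theorem, and you explicitly defer them (``requires unwinding the construction''); nothing in the proposal supplies that verification. The paper's proof simply cites \cite[\S 3]{bkm}, where the crossing is built not as a KKK $R$-matrix but from the (one-dimensional, by your own Mackey count) degree $-2$ part of $\End(\D(\a)^{\circ 2})$ --- equivalently from short exact sequences of the type in Lemma \ref{mpses} --- and the nil Hecke relations, in particular the interchange relation, are then pinned down by a leading-term analysis on the Mackey layers before the dimension count closes the argument. To repair your write-up you should replace the $R$-matrix by such a degree $-2$ morphism and actually carry out the relation check; the rest of your outline can then stand.
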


\begin{proof}
 The proof of \cite[\S 3]{bkm} works in this generality without any change.
\end{proof}

For any positive real root $\a$ and any positive integer $n$, we define the divided power standard module $\D(\a)^{(n)}$ to be
\[
 \D(\a)^{(n)} =q^{n(n-1)/2} e_n (\D(\a)^{\circ n})
\]

\begin{lemma}\label{24.3}
Let $\a$ be a real root and $n$ a positive integer. Then
\[
\Ext^i(\D(\a)^{(n)},L(\a)^{\circ n}) \cong \begin{cases}
\Q \text{ if $i=0$} \\
0 \text{ otherwise}
\end{cases}
\]
\end{lemma}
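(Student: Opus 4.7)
The plan is to reduce the computation to $\Ext^*(\D(\a)^{\circ n}, L(\a)^{\circ n})$ via the idempotent presentation of $\D(\a)^{(n)}$, and then to evaluate this latter Ext using the induction-restriction adjunction, the Mackey filtration, and Theorem~\ref{14.1}.

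First, since $\D(\a)^{(n)} = q^{n(n-1)/2}\, e_n \D(\a)^{\circ n}$ is a direct summand of a grading shift of $\D(\a)^{\circ n}$, the primitive idempotent $e_n \in \End(\D(\a)^{\circ n}) = NH_n$ gives, via precomposition, a natural identification
\[
\Ext^i(\D(\a)^{(n)}, L(\a)^{\circ n}) \cong q^{-n(n-1)/2}\, e_n \cdot \Ext^i(\D(\a)^{\circ n}, L(\a)^{\circ n}),
\]
so it suffices to compute the right-hand Ext as a right $NH_n$-module and extract the $e_n$-component.

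Next, I would apply the adjunction \eqref{extind} and the Mackey filtration:
\[
\Ext^i(\D(\a)^{\circ n}, L(\a)^{\circ n}) \cong \Ext^i_{R(\a)^{\otimes n}}(\D(\a)^{\otimes n}, \Res_{\a,\ldots,\a} L(\a)^{\circ n}).
\]
Because $\a$ is a real (hence indivisible) root and $L(\a)$ is cuspidal, every nonvanishing Mackey layer is forced to have all matrix entries zero or $\a$; each of the $n!$ permutation matrices contributes a grading shift of $L(\a)^{\otimes n}$, with the shifts summing to $q^{-n(n-1)/2}[n]_q!$ (each transposition contributing a factor $q^{-\a\cdot\a} = q^{-2}$). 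A K\"unneth argument using Theorem~\ref{14.1} (tensoring a projective resolution of $\D(\a)$ in $R(\a)$-mod with itself $n$ times and applying $\Hom$) shows $\Ext^{>0}_{R(\a)^{\otimes n}}(\D(\a)^{\otimes n}, L(\a)^{\otimes n}) = 0$, so the Mackey filtration splits at the level of Ext.

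Consequently, for $i > 0$ we immediately obtain $\Ext^i(\D(\a)^{(n)}, L(\a)^{\circ n}) = 0$. For $i = 0$, K\"unneth together with $\Hom(\D(\a), L(\a)) = \Q$ in degree zero (since $\D(\a)$ is the projective cover of the self-dual simple $L(\a)$) yields that $\Hom(\D(\a)^{\circ n}, L(\a)^{\circ n})$ has graded dimension $q^{-n(n-1)/2}[n]_q!$, with the natural right $NH_n$-action through $\D(\a)^{\circ n}$. Since $NH_n \cong \Mat_{[n]_q!}(Z)$ and $e_n$ is a primitive idempotent of graded rank one, $e_n$ extracts a one-dimensional graded piece, and the shift $q^{n(n-1)/2}$ in the definition of $\D(\a)^{(n)}$ is designed precisely to place it in degree zero, producing $\Q$. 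The main obstacle is the grading bookkeeping: confirming that $e_n$ really picks out a copy of $\Q$ in exactly the degree that cancels with $q^{-n(n-1)/2}$. This reduces to an elementary calculation in $NH_n$, or equivalently follows from the compatibility of the divided power construction with the categorification identity $[\D(\a)^{\circ n}] = [n]_q!\,[\D(\a)^{(n)}]$ in $\f$.
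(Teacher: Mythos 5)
Your proposal is correct and follows essentially the same route as the paper: adjunction plus the Mackey filtration (with $n!$ layers, each a shift of $L(\a)^{\otimes n}$), Theorem \ref{14.1} for the vanishing of higher Ext, the projective-cover property of $\D(\a)$ for the degree-zero count, and finally the divided-power decomposition of $\D(\a)^{\circ n}$ to extract the statement for $\D(\a)^{(n)}$. The only differences are presentational -- you phrase the last step as extracting the $e_n$-isotypic summand of the Ext as an $NH_n$-module and make the K\"unneth step explicit, where the paper simply divides by the graded multiplicity in $\D(\a)^{\circ n}\cong q^{\binom{n}{2}}[n]!\,\D(\a)^{(n)}$ -- and your grading bookkeeping for the sum of Mackey shifts is consistent with the paper's conventions.
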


\begin{proof}
We compute by adjunction
\[
\Ext^i(\D(\a)^{\circ n},L(\a)^{\circ n})\cong \Ext^i(\D(\a)^{\otimes n},\Res_{\d,\ldots,\d}L(\a)^{\circ n}).
\]
The module $\Res_{\d,\ldots,\d}L(\a)^{\circ n}$ has a composition series with $n!$ subquotients, each isomorphic to some $q^jL(\a)^{\otimes n}$ and $[\Res_{\d,\ldots,\d}L(\a)^{\circ n}]=[n]!E_\a^*\otimes\cdots\otimes E_\a^*$. So by Theorem \ref{14.1}, for $i>0$ we have
\[
\Ext^i(\D(\a)^{\circ n},L(\a)^{\circ n})=0
\]
while for $i=0$ we also use the fact that $\D(\a)$ is the projective cover of $L(\a)$ in the category of $S(\a)$-modules to obtain
\[
\Hom(\D(\a)^{\circ n},L(\a)^{\circ n})\cong q^{{n \choose 2}} [n]!\Q.
\]
Since $\D(\a)^{\circ n}\cong  q^{{n \choose 2}} [n]!\D(\a)^{(n)}$, we obtain the desired result.
\end{proof}

Let $\pi=(\b_1^{m_1},\ldots,\b_k^{m_k},\underline{\la},\ga_l^{n_l},\ldots,\ga_1^{n_1})$ be a root partition. We define the corresponding standard module to be
\[
 \D(\pi)=\D(\b_1)^{(m_1)}\circ\cdots\circ\D(\b_k)^{(m_k)}\circ \D(\ula)\circ\D(\ga_l)^{(n_l)}\circ\cdots\circ 
\D(\ga_1)^{(n_1)}.
\]
Also define
\[
 \nabla(\pi)=\overline{\D}(\pi)\dual.
\]

\begin{proposition}\label{homological}
Let $\pi$ and $\sigma$ be two root partitions. Then
\[
 \Ext^i(\D(\pi),\nabla(\sigma)) =\begin{cases}
             \Q &\text{if } i=0 \text{ and } \pi=\sigma \\
0 &\text{otherwise}
            \end{cases}
\]
\end{proposition}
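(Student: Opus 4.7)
I would proceed by induction on the number $r$ of atomic blocks in $\pi$. Write $\pi = (\pi_1, \ldots, \pi_r)$ and $\sigma = (\sigma_1, \ldots, \sigma_s)$ as sequences of blocks with strictly descending weights $\b_1 \succ \cdots \succ \b_r$ and $\ga_1 \succ \cdots \succ \ga_s$; each $\pi_k$ is either a divided power $\D(\b_k)^{(m_k)}$ or the imaginary standard $\D(\ula)$, and each $\sigma_j$ contributes the atomic simple factor $L(\sigma_j)$ to $\overline{\D}(\sigma)$. Using (\ref{dualofaproduct}) iteratively, $\nabla(\sigma)$ is isomorphic, up to a grading shift, to the reversed induction product $L(\sigma_s) \circ \cdots \circ L(\sigma_1)$.

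First I would treat the case $\b_1 \neq \ga_1$. If $\b_1 \succ \ga_1$, apply (\ref{extind}) to peel off the leftmost factor of $\D(\pi)$, reducing to $\Ext^i(\D(\pi_1) \otimes \cdots, \Res_{m_1\b_1, \nu - m_1\b_1} \nabla(\sigma))$. A Mackey argument applied to $\nabla(\sigma)$ shows that each left-column entry $\nu_{i,1}$ must be a sum of roots $\preceq \ga_{s+1-i} \preceq \ga_1 \prec \b_1$ by semi-cuspidality of $L(\sigma_{s+1-i})$, so the convex cone separation from the theorem at the start of Section~3 rules out the column sum $m_1\b_1$ unless all $\nu_{i,1}$ vanish, contradicting the sum. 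Hence the restriction vanishes. The symmetric case $\b_1 \prec \ga_1$ is handled by applying (\ref{oppositeadjunction}) to peel off the rightmost factor $L(\sigma_1)$ of $\nabla(\sigma)$ and running the analogous cone-separation argument on $\Res_{n_1\ga_1, \nu - n_1\ga_1} \D(\pi)$, whose left-column entries $\nu_{k,1}$ are forced by semi-cuspidality of $\D(\pi_k)$ into the cone of roots $\preceq \b_k \preceq \b_1 \prec \ga_1$.

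Next, for $\b_1 = \ga_1$ (forcing $m_1 = n_1$, since both equal the multiplicity of $\b_1$ in the root partition), the same Mackey analysis identifies a unique surviving matrix: the $\b_1$-direction argument forces $\nu_{s,1} = n_1\b_1$ and $\nu_{i,1} = 0$ for $i < s$, which by the row constraints pins $\nu_{s,2} = 0$ and $\nu_{i,2} = n_{s+1-i}\ga_{s+1-i}$ for $i<s$. After the Mackey reshuffle and induction in the second column this gives $\Res_{m_1\b_1, \nu - m_1\b_1} \nabla(\sigma) \cong L(\sigma_1) \otimes \nabla(\sigma')$ (up to a grading shift) with $\sigma' = (\sigma_2, \ldots, \sigma_s)$. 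The tensor-product Ext formula then yields
\[
\Ext^i(\D(\pi), \nabla(\sigma)) \cong \bigoplus_{i_1 + i_2 = i} \Ext^{i_1}(\D(\pi_1), L(\sigma_1)) \otimes \Ext^{i_2}(\D(\pi'), \nabla(\sigma')),
\]
where $\pi' = (\pi_2, \ldots, \pi_r)$. The first factor equals $\delta_{i_1,0}\delta_{\pi_1,\sigma_1}\Q$ by Lemma~\ref{24.3} (real blocks, where $\pi_1 = \sigma_1$ is forced by weight) or Theorem~\ref{14.1nd} (imaginary blocks), and the second factor equals $\delta_{i_2,0}\delta_{\pi',\sigma'}\Q$ by the inductive hypothesis applied to the strictly shorter root partitions $\pi'$ and $\sigma'$. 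Combining gives the desired $\delta_{i,0}\delta_{\pi,\sigma}\Q$.

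The main obstacle is the Mackey analysis in both subcases: establishing that the convex cone separation applied to the row-wise semi-cuspidality constraints either rules out the restriction entirely (mismatched leading weights) or pins down a unique surviving matrix (matching leading weights). The grading-shift bookkeeping---reconciling the shift from the iterated (\ref{dualofaproduct}), the $s_\la$ shift in $\overline{\D}(\pi)$, and the divided-power shift $q^{m(m-1)/2}$ in $\D(\b)^{(m)}$---is routine but must be tracked so that the final copy of $\Q$ lands in degree zero.
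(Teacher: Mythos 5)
Your overall strategy is close in spirit to the paper's: both arguments run on the adjunctions (\ref{extind}) and (\ref{oppositeadjunction}), Mackey plus convexity to kill restrictions, and the two key inputs Lemma \ref{24.3} and Theorem \ref{14.1nd}. The mismatch cases $\b_1\succ\ga_1$ and $\b_1\prec\ga_1$ and the equal-multiplicity matching case are argued correctly. However, there is a genuine gap in your case division: you assert that $\b_1=\ga_1$ "forces $m_1=n_1$, since both equal the multiplicity of $\b_1$ in the root partition". This is false as stated: $m_1$ and $n_1$ are multiplicities in two \emph{different} root partitions $\pi$ and $\sigma$ of the same weight $\nu$, and nothing forces them to coincide (e.g.\ $\pi$ may contain $\b_1^2$ while $\sigma$ contains $\b_1$ once together with smaller roots making up the difference). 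The same issue arises for imaginary leading blocks of different total sizes. So the case $\b_1=\ga_1$, $m_1\neq n_1$ is simply not treated, and your induction does not close. The case can be repaired with your own tools: if $m_1>n_1$, the cone argument shows the only row of the Mackey matrix that can feed multiples of $\b_1$ into the first component is the $L(\sigma_1)$-row, whose weight is only $n_1\b_1$, so $\Res_{m_1\b_1,\nu-m_1\b_1}\nabla(\sigma)=0$; if $m_1<n_1$, the surviving layers leave on the right an induction product which is (up to shift) $\nabla$ of a root partition still containing $\b_1$ (or a nonempty imaginary part), whereas $\pi'$ does not, so the inductive hypothesis gives vanishing.

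For comparison, the paper sidesteps this bookkeeping entirely by not inducting block by block: it applies (\ref{extind}) with the \emph{full} multi-restriction $\Res_\pi$ and (\ref{oppositeadjunction}) with $\Res_\sigma$, and Theorem \ref{main}(3) (ultimately Lemma \ref{restrictinduct}) then forces $\pi\leq\sigma$ and $\sigma\leq\pi$, hence $\pi\sim\sigma$, i.e.\ $f_\pi=f_\sigma$, so all real multiplicities and the size of the imaginary part agree automatically; one then computes $\Res_\pi\nabla(\sigma)$ in one stroke and factors the Ext group as a tensor product over the blocks, finishing with Lemma \ref{24.3} and Theorem \ref{14.1nd}. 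If you keep your inductive formulation, you must add the unequal-multiplicity subcase above (and, as you note, track the grading shifts so that the surviving $\Q$ sits in degree zero).
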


\begin{proof}
 Let $\pi=(\b_1^{m_1},\cdots,\ga_1^{n_1})$. Then by adjunction,
\[
 \Ext^i(\D(\pi),\co(\sigma))=\Ext^i(\D(\b_1)^{(m_1)}\otimes\cdots \otimes \D(\ga_1)^{(n_1)},\Res_\pi\co(\sigma))
\]
and by Theorem \ref{main}, $\Res_\pi\co(\sigma)=0$ unless $\pi\leq\sigma$.

On the other hand, by the adjunction (\ref{oppositeadjunction}),
\[
 \Ext^i(\D(\pi),\co(\sigma)) = \Ext^i ( \Res_\sigma \D(\pi),L(\ga_1)^{\otimes n_1}\otimes \cdots \otimes L(\b_1)^{\otimes m_1})
\]
and so again using Theorem \ref{main}, $\Res_\sigma \D(\pi)=0$ unless $\sigma\leq \pi$.

Thus the only case to consider is when $\sigma\sim\pi$. Remember that this means that $\sigma$ and $\pi$ agree except for the multipartition they contain. Let $\ula$ be the multipartition in $\pi$ and $\umu$ be the multipartition in $\sigma$.

By Theorem \ref{main}, $$\Res_\pi\co(\pi)\cong \Res_\p(\overline{\D}(\p)\dual)\cong (\Res_\p \overline{\D}(\p))\dual \cong L(\b_1)^{\circ m_1}\otimes \cdots \otimes L(\ga_1)^{\circ n_1}.$$ Therefore 
\[
\Ext^*(\D(\pi),\nabla(\sigma))\cong \bigotimes_\a \Ext^*(\D(\a)^{(f_\p(\a))},L(\a)^{\circ f_\sigma(\a)}) \otimes \Ext^*(\D(\ula),L(\uline{\mu}))
\] where the tensor product is over all real roots $\a$.

The result now follows from Lemma \ref{24.3} and Theorem \ref{14.1nd}.
\end{proof}

\begin{theorem}\label{pbwcat}
Let $\p$ be a root partition.
The class of the standard module $\D(\p)$ is the PBW monomial $E_\p$, defined algebraically in (\ref{pbwalg}).
\end{theorem}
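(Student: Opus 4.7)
The plan is to recognise that both sides of the identity $[\D(\pi)]=E_\pi$ factor into analogous pieces indexed by the real roots $\b_i$, $\ga_j$ and the imaginary multipartition $\uline{\la}$, and then to match the pieces one at a time using results already established. Because the induction functor categorifies the multiplication in $\f$ under the isomorphism (\ref{pmod}), applying $[\;\cdot\;]$ to the definition of $\D(\pi)$ yields
\[
[\D(\pi)] = [\D(\b_1)^{(m_1)}]\cdots [\D(\b_k)^{(m_k)}]\cdot [\D(\uline{\la})]\cdot [\D(\ga_l)^{(n_l)}]\cdots[\D(\ga_1)^{(n_1)}],
\]
and the right hand side of (\ref{pbwalg}) has exactly the same factorisation with the $E$'s and $S^\w$'s in the corresponding slots. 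So the theorem reduces to two independent identities: $[\D(\a)^{(n)}]=E_\a^{(n)}$ for every real root $\a$ and positive integer $n$, and $[\D(\uline{\la})]=\prod_{\w\in\W}S_{\la_\w}^\w$ for every purely imaginary multipartition.

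The imaginary identity is exactly the content of Theorem \ref{impbw} and requires nothing new. For the real-root identity, I would start from Theorem \ref{realpbw}, which gives $[\D(\a)]=E_\a$, and again use the categorification of multiplication by induction to deduce $[\D(\a)^{\circ n}]=E_\a^n$ in $\f$. It remains to pass from $\D(\a)^{\circ n}$ to $\D(\a)^{(n)}=q^{n(n-1)/2}e_n(\D(\a)^{\circ n})$. By the theorem identifying $\End(\D(\a)^{\circ n})$ with the nil Hecke algebra $NH_n$, together with the Morita description $NH_n\cong\Mat_{[n]!}(Z(NH_n))$, the module $\D(\a)^{\circ n}$ decomposes as a sum of shifted copies of $e_n(\D(\a)^{\circ n})$ whose graded multiplicity is precisely $[n]_q!$. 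Tracking the grading shift $q^{n(n-1)/2}$ built into the definition of $\D(\a)^{(n)}$ then produces
\[
[\D(\a)^{\circ n}] = [n]_q!\,[\D(\a)^{(n)}],
\]
which is the categorical incarnation of the divided-power relation $E_\a^n=[n]_q!\,E_\a^{(n)}$ in $\f$. Combining with $[\D(\a)^{\circ n}]=E_\a^n$ and cancelling gives $[\D(\a)^{(n)}]=E_\a^{(n)}$ as desired.

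Assembling the three ingredients in the displayed factorisation above produces
\[
[\D(\pi)] = E_{\b_1}^{(m_1)}\cdots E_{\b_k}^{(m_k)}\Bigl(\prod_{\w\in\W}S_{\la_\w}^\w\Bigr)E_{\ga_l}^{(n_l)}\cdots E_{\ga_1}^{(n_1)} = E_\pi,
\]
which is exactly (\ref{pbwalg}). The main obstacle is really only bookkeeping: checking that the grading shifts in the definition of the divided power $\D(\a)^{(n)}$ are compatible with Lusztig's convention for $E_\a^{(n)}$, and that the nil Hecke Morita decomposition yields the graded multiplicity $[n]_q!$ rather than some twist thereof. Once those conventions are pinned down, the argument is a direct appeal to Theorems \ref{realpbw} and \ref{impbw} together with the fact that induction categorifies the twisted product.
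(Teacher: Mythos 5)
Your proposal is correct and follows essentially the same route as the paper, whose proof simply cites Theorems \ref{realpbw} and \ref{impbw} and leaves implicit the facts you spell out (induction categorifying the product, and the nil Hecke divided-power bookkeeping giving $[\D(\a)^{(n)}]=E_\a^{(n)}$). Your extra care with the grading shift $q^{n(n-1)/2}$ and the $[n]_q!$ multiplicity is exactly the routine verification the paper suppresses.
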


\begin{proof}
 This follows from Theorems \ref{realpbw} and \ref{impbw}.
\end{proof}

Proposition \ref{homological} proves that the classes of the standard modules $\D(\pi)$ and the proper standard modules $\overline{\D}(\sigma)$ are orthogonal under $(\cdot,\cdot)$. Therefore the class of each proper standard module is an element of the dual PBW basis. So we have categorified both the PBW and dual PBW basis.

A module $M$ is said to have a $\D$-flag if it has a sequence of submodules $0=M_0\subseteq M_1\subseteq \cdots
\subseteq M_{n-1}\subseteq M_n=M$ such that each subquotient $M_{i+1}/M_i$ is isomorphic to $q^m\D(\pi)$ for some integer $m$ and some root partition $\pi$.

\begin{theorem}\label{deltaflag}
Let $M$ be a finitely generated $R(\nu)$-module such that $\Ext^1(M,\nabla(\pi))=0$ for all root partitions $\pi$. Then $M$ has a $\D$-flag. Furthermore $[M:\D(\pi)]=\dim\Hom(M,\nabla(\pi))$.
\end{theorem}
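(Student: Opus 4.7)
The plan is to adapt the classical argument for existence of $\D$-filtrations in a highest-weight category, with Proposition \ref{homological} replacing projectivity. I induct on
\[
d(M) = \sum_\pi \dim \Hom(M, \nabla(\pi)),
\]
a non-negative integer whose finiteness follows from Lemma \ref{extwelldefined}, finite generation of $M$, and the fact that only finitely many root partitions of weight $\nu$ exist.

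For the base case $d(M) = 0$: if $M \neq 0$ then finite generation forces $\hd(M) \neq 0$, so some $L(\pi) \subseteq \hd(M)$, and the composition $M \twoheadrightarrow L(\pi) \hookrightarrow \nabla(\pi)$ (using that $L(\pi)$ is the socle of $\nabla(\pi)$) contradicts $\Hom(M, \nabla(\pi)) = 0$. So $M = 0$ and the empty $\D$-flag works.

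For the inductive step, pick $\pi$ maximal among root partitions of weight $\nu$ with $\Hom(M, \nabla(\pi)) \neq 0$, and construct a surjection $f \colon M \twoheadrightarrow q^j\D(\pi)$ for an appropriate grading shift $j$. Once $f$ is in hand, set $N = \ker f$; applying $\Hom(-, \nabla(\sigma))$ to $0 \to N \to M \to q^j\D(\pi) \to 0$ and invoking the vanishing $\Ext^i(\D(\pi), \nabla(\sigma)) = 0$ for $i \geq 1$ from Proposition \ref{homological} gives $\Ext^1(N, \nabla(\sigma)) = 0$ for every $\sigma$, while $d(N) < d(M)$. Induction then supplies a $\D$-flag of $N$, which extends through $f$ to a $\D$-flag of $M$. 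The multiplicity assertion $[M : \D(\pi)] = \dim \Hom(M, \nabla(\pi))$ follows immediately: Proposition \ref{homological} makes the classes $\{[\D(\pi)]\}$ and $\{[\nabla(\pi)]\}$ dual bases under the Ext pairing, and the $\Ext^1$-vanishing collapses the Euler characteristic to a single Hom, yielding $[M : \D(\pi)] = ([M], [\nabla(\pi)]) = \dim \Hom(M, \nabla(\pi))$.

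The main obstacle is the construction of $f$. The natural strategy is two-step: first show $L(\pi)$ appears in $\hd(M)$ (giving a surjection $g \colon M \twoheadrightarrow L(\pi)$), then lift $g$ through the natural projection $\D(\pi) \twoheadrightarrow L(\pi)$ to obtain $f$. For the first step one applies $\Hom(-, \nabla(\pi))$ to $0 \to \operatorname{rad}(M) \to M \to \hd(M) \to 0$; since $\Hom(L(\sigma), \nabla(\pi)) \neq 0$ only when $\sigma = \pi$, a subsidiary argument exploiting maximality of $\pi$ (through the long exact sequence and an inductive analysis of $\operatorname{rad}(M)$) must force the nonvanishing of $\Hom(M, \nabla(\pi))$ to come from $\hd(M)$ rather than $\operatorname{rad}(M)$. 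The lifting step requires $\Ext^1(M, K) = 0$ for $K = \ker(\D(\pi) \twoheadrightarrow L(\pi))$, which demands a $\nabla$-flag on $K$ with layers $\nabla(\sigma)$ for $\sigma \leq \pi$. This dual structural input --- essentially the $\nabla$-version of the theorem being proved --- is the most delicate piece, and is most naturally handled by a simultaneous induction covering both $\D$- and $\nabla$-flags, or by leveraging duality together with the BGG reciprocity alluded to in the introduction.
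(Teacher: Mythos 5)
Your overall skeleton (induct on $d(M)=\sum_\pi\dim\Hom(M,\nabla(\pi))$, peel off a shifted standard quotient, use Proposition \ref{homological} and the long exact sequence to see that the kernel again satisfies the hypothesis with smaller $d$, and read off the multiplicity statement) is exactly the shape of the standard argument that the paper simply cites from \cite[Theorem 3.13]{bkm}. But the step you yourself flag as ``the main obstacle'' is genuinely missing, and the plan you sketch for it does not work. First, the choice of $\pi$ is wrong: you take $\pi$ \emph{maximal} with $\Hom(M,\nabla(\pi))\neq 0$, but in a $\D$-flag the large root partitions sit at the \emph{bottom}, so no surjection $M\twoheadrightarrow q^j\D(\pi)$ need exist for the maximal $\pi$. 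The paper itself supplies a counterexample: take $M=\D(\w_-)\circ\D(\w_+)$ with the short exact sequence of Theorem \ref{omegases}, $0\to q^2\D(\w_+)\circ\D(\w_-)\to M\to\D(\w)\to 0$. Here $M$ has a $\D$-flag, the maximal root partition with $\Hom(M,\nabla(\pi))\neq 0$ is $\pi_1=(\w_+,\w_-)$, yet $M$ has no quotient isomorphic to any shift of $\D(\pi_1)=\D(\w_+)\circ\D(\w_-)$: a surjection $f\map{M}{q^j\D(\pi_1)}$ restricted to the submodule $q^2\D(\pi_1)$ is either an isomorphism (a surjective endomorphism of a Noetherian module is injective), which splits the sequence and contradicts $\End(M)_0\cong\Q$ from the proof of Theorem \ref{omegases}, or else $f$ induces a surjection from $\D(\w)$ onto a nonzero quotient of $q^j\D(\pi_1)$, forcing $L(\pi_1)$ to be a quotient of $\D(\w)$, contradicting that $\D(\w)$ has head $L(\w)$. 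In particular your subsidiary claim that $L(\pi)$ lies in $\hd(M)$ for this maximal $\pi$ also fails in general; the correct argument peels off $\D(\pi)$ for a \emph{minimal} admissible $\pi$.

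Second, even with the right choice of $\pi$, your route to the surjection is not viable as described. The lifting of $M\twoheadrightarrow q^jL(\pi)$ through $\D(\pi)\twoheadrightarrow L(\pi)$ cannot be justified by a $\nabla$-flag on $K=\ker(\D(\pi)\to L(\pi))$: such a flag does not exist, already for a single real root, where $\operatorname{rad}\D(\a)\cong q^2\D(\a)$ is filtered by (infinitely many shifted) standard, not costandard, objects. And the alternative you mention, ``leveraging \dots BGG reciprocity,'' is circular in this paper: Theorem \ref{15.4} is deduced \emph{from} Theorem \ref{deltaflag}, and the $\D$-flag structure of projectives is likewise a consequence, not an available input. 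So the inductive reduction and the multiplicity computation in your proposal are fine, but the existence of the surjection onto a standard module --- which is the actual content hidden behind the paper's citation --- is neither proved nor provable along the lines you indicate; it has to be extracted from the Ext-vanishing hypothesis on $M$ itself (as in the cited standard argument), with $\pi$ chosen minimally.
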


\begin{proof}
 This is a standard argument, for example see \cite[Theorem 3.13]{bkm}.
\end{proof}

As a consequence we obtain the following BGG reciprocity for KLR algebras.

\begin{theorem}\label{15.4}
Let $\pi$ be a root partition and let $\P(\pi)$ be the projective cover of $L(\pi)$. Then $\P(\pi)$ has a $\D$-flag. For any root partition $\sigma$ the multiplicity $[\P(\pi):\D(\sigma)]$ is equal to the multiplicity $[\overline{\D}(\sigma):L(\pi)]$.
\end{theorem}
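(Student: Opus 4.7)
The plan is to deduce Theorem~\ref{15.4} as a direct consequence of Proposition~\ref{homological} and Theorem~\ref{deltaflag}. The overall strategy is the usual BGG reciprocity argument: produce the $\D$-flag on $\P(\pi)$ using the Ext vanishing coming from projectivity, read off the multiplicities via Hom-spaces into $\nabla$-modules, and then translate $\nabla$-multiplicities back to $\overline{\D}$-multiplicities using the self-duality of the simples.

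First I would verify the hypothesis of Theorem~\ref{deltaflag} for $M=\P(\pi)$. Since $\P(\pi)$ is a projective $R(\nu)$-module (being the projective cover of $L(\pi)$), we have $\Ext^1(\P(\pi),N)=0$ for every module $N$, so in particular $\Ext^1(\P(\pi),\nabla(\sigma))=0$ for every root partition $\sigma$. The module $\P(\pi)$ is also finitely generated (each $R(\nu)$ is Noetherian and has finite-dimensional graded pieces, so finitely-generated projective covers exist in the sense of the conventions fixed earlier in the paper). Thus Theorem~\ref{deltaflag} applies and gives that $\P(\pi)$ has a $\D$-flag together with the identity
\[
[\P(\pi):\D(\sigma)]=\dim \Hom(\P(\pi),\nabla(\sigma)).
\]

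Next I would compute $\dim\Hom(\P(\pi),\nabla(\sigma))$ by standard projective-cover nonsense. Because $\P(\pi)$ is the projective cover of the self-dual simple $L(\pi)$, applying $\Hom(\P(\pi),-)$ to a composition series of $\nabla(\sigma)$ picks out exactly the $L(\pi)$-layers (with the appropriate grading shifts), so
\[
\dim \Hom(\P(\pi),\nabla(\sigma)) = [\nabla(\sigma):L(\pi)]
\]
as elements of $\Z((q))$. Finally I would use duality: by definition $\nabla(\sigma)=\overline{\D}(\sigma)^{\circledast}$, and by Theorem~\ref{main}(2) every $L(\pi)$ is self-dual, so duality is an exact involution on finite-dimensional modules that fixes simples and hence preserves composition multiplicities, giving $[\nabla(\sigma):L(\pi)]=[\overline{\D}(\sigma):L(\pi)]$. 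Chaining the three equalities yields the desired formula
\[
[\P(\pi):\D(\sigma)]=[\overline{\D}(\sigma):L(\pi)].
\]

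There is no real obstacle here: all the hard work has been done in setting up the standard and proper standard modules and in establishing the Ext-orthogonality of Proposition~\ref{homological} (which is what fuels Theorem~\ref{deltaflag}). The only mild subtlety is keeping track of graded multiplicities: the equalities above should be read as identities in $\Z((q))$, with the identification of $\dim_q\Hom(\P(\pi),-)$ with graded composition multiplicity of $L(\pi)$ relying on the self-dual normalization of $L(\pi)$ established via Theorem~\ref{main}(2). Once this is noted, the proof is essentially the reference to \cite[Theorem 3.13]{bkm} already used in Theorem~\ref{deltaflag}.
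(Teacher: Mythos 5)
Your proposal is correct and follows essentially the same route as the paper: apply Theorem \ref{deltaflag} to the finitely generated projective $\P(\pi)$, identify $\dim\Hom(\P(\pi),\nabla(\sigma))$ with the graded multiplicity of $L(\pi)$, and transfer to $\overline{\D}(\sigma)$ via $\nabla(\sigma)=\overline{\D}(\sigma)\dual$ and the self-duality of $L(\pi)$. The paper's proof is word-for-word this argument (with the same caveat about $q\leftrightarrow q\inv$ handled by inserting duals), so nothing further is needed.
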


\begin{proof}
Since $\P(\pi)$ is finitely generated and projective it satisfies the hypotheses of Theorem \ref{deltaflag} and hence has a $\D$-flag. Furthermore the multiplicity of the module $\D(\sigma)$ in the flag is
\[
 [\P(\pi):\D(\sigma)]=\dim\Hom(P(\pi),\co(\sigma)).
\]
As $\P(\pi)$ is the projective cover of $L(\pi)$, the dimension of this homomorphism space is equal to the multiplicity $[\nabla(\sigma)\dual:L(\pi)]$. By duality
$[\nabla(\sigma):L(\pi)]=[\overline{\D}(\sigma):L(\pi)\dual]$ and since $L(\pi)\cong L(\pi)\dual$ we are done.
\end{proof}

\begin{theorem}
The PBW basis (\ref{pbwalg}) is a basis of $\f$ as a $\Z[q,q\inv]$-module.
\end{theorem}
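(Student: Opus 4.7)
The plan is to leverage the Khovanov-Lauda isomorphism $\bigoplus_\nu G_0(R(\nu)\prmod) \cong \f$ from (\ref{pmod}), which tells us that the classes $\{[\P(\pi)]\}$ of the indecomposable projective covers already form a $\Z[q,q\inv]$-basis of $\f$. It therefore suffices to exhibit a unitriangular $\Z[q,q\inv]$-change of basis matrix between $\{[\P(\pi)]\}$ and $\{[\D(\pi)]\}$, after which Theorem \ref{pbwcat} (identifying $[\D(\pi)]$ with $E_\pi$) will finish the argument.

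First I would invoke Theorem \ref{15.4}: each $\P(\pi)$ admits a $\D$-flag, and the multiplicities are given by BGG reciprocity
\[
[\P(\pi):\D(\sigma)] = [\overline{\D}(\sigma):L(\pi)].
\]
Since $\overline{\D}(\sigma)$ is finite dimensional, these multiplicities lie in $\N[q,q\inv]$. Next I would read off the triangularity from Theorem \ref{main}(3): the right-hand side vanishes unless $\sigma \geq \pi$, and when $\sigma = \pi$ it equals $1$. Thus for each fixed $\nu \in \N I$ we have, inside the finite rank $\Z[q,q\inv]$-module $\f_\nu$, the identity
\[
[\P(\pi)] = [\D(\pi)] + \sum_{\sigma > \pi} [\overline{\D}(\sigma):L(\pi)]\,[\D(\sigma)].
\]
The indexing set of root partitions of $\nu$ is finite, so this is a finite unitriangular matrix over $\Z[q,q\inv]$, which therefore has an inverse with entries in $\Z[q,q\inv]$.

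Consequently each $[\D(\pi)]$ is a $\Z[q,q\inv]$-linear combination of the $[\P(\sigma)]$, and since the $[\P(\sigma)]$ themselves form a $\Z[q,q\inv]$-basis of $\f$, the classes $\{[\D(\pi)]\}$ also form a $\Z[q,q\inv]$-basis of $\f$. By Theorem \ref{pbwcat} we have $[\D(\pi)] = E_\pi$, so the algebraically defined PBW basis (\ref{pbwalg}) is indeed a $\Z[q,q\inv]$-basis of $\f$.

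I expect no serious obstacle here: the entire work of the paper has been to build up precisely the structural statements (Theorems \ref{main}, \ref{pbwcat}, \ref{15.4}) that make this final assembly immediate. The only subtlety worth remarking on is ensuring that the BGG multiplicities are genuinely elements of $\Z[q,q\inv]$ rather than $\Z((q))$, which is automatic because $\overline{\D}(\sigma)$ is finite dimensional and the order-theoretic indexing set of relevant $\pi$ is finite.
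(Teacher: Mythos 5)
Your proposal is correct and follows essentially the same route as the paper: the paper's proof likewise combines Theorem \ref{15.4} (BGG reciprocity, giving the $\D$-flag of $\P(\pi)$) with Theorem \ref{main}(3) to obtain a unitriangular change of basis between $\{[\D(\pi)]\}$ and the projective basis $\{[\P(\pi)]\}$, and then invokes Theorem \ref{pbwcat}. You have merely spelled out the matrix inversion and the integrality of the multiplicities, which the paper leaves implicit.
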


\begin{proof}
 By Theorem \ref{15.4} and Theorem \ref{main}(3), the matrix expressing the set $\{[\D(\pi)]\}$ in terms of the basis $\{[\P(\pi)]\}$ is upper-triangular, with ones along the diagonal.
Therefore the set $\{[\D(\pi)]\}$ is a basis of $\f$ as a $\Z[q,q\inv]$-module.
\end{proof}

\begin{remark}
 This is a generalisation, with a different proof, of a result of \cite{becknakajima}.
\end{remark}

\begin{proposition}
 With respect to the PBW basis, the bar involution is unitriangular.
\end{proposition}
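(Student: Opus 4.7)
The plan is to deduce unitriangularity of the bar involution on the PBW basis from two facts already in hand: the bar-invariance of the classes of indecomposable projectives, and the unitriangular change of basis between projectives and standards provided by BGG reciprocity.

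Concretely, I would first recall from the discussion after (\ref{pmod}) that Lusztig's geometric construction identifies $[\P(\pi)]$ with a canonical basis element of $\f$, and is therefore bar-invariant:
\[
\overline{[\P(\pi)]}=[\P(\pi)].
\]
Next, combining Theorem \ref{15.4} with Theorem \ref{main}(3), the $\D$-flag multiplicities $[\P(\pi):\D(\sigma)]=[\overline{\D}(\sigma):L(\pi)]$ vanish unless $\sigma\geq\pi$ (and equal $1$ when $\sigma=\pi$). Hence in the Grothendieck group
\[
[\P(\pi)] \;=\; [\D(\pi)] \;+\; \sum_{\sigma>\pi} m_{\pi\sigma}(q)\,[\D(\sigma)],
\qquad m_{\pi\sigma}(q)\in\Z[q,q\inv].
\]

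Since this transition matrix is unitriangular (with respect to the partial order on root partitions), it can be inverted over $\Z[q,q\inv]$, yielding
\[
[\D(\pi)] \;=\; [\P(\pi)] \;+\; \sum_{\sigma>\pi} n_{\pi\sigma}(q)\,[\P(\sigma)],
\qquad n_{\pi\sigma}(q)\in\Z[q,q\inv].
\]
Applying the bar involution and using the bar-invariance of each $[\P(\sigma)]$ gives
\[
\overline{[\D(\pi)]} \;=\; [\P(\pi)] \;+\; \sum_{\sigma>\pi} n_{\pi\sigma}(q\inv)\,[\P(\sigma)].
\]
Re-expanding each $[\P(\sigma)]$ back into the PBW basis and collecting terms (noting that the contribution to $[\D(\pi)]$ comes only from the $[\P(\pi)]$ summand, with coefficient $1$) produces
\[
\overline{[\D(\pi)]} \;=\; [\D(\pi)] \;+\; \sum_{\sigma>\pi} c_{\pi\sigma}(q)\,[\D(\sigma)]
\]
for some $c_{\pi\sigma}(q)\in\Z[q,q\inv]$. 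Identifying $[\D(\pi)]=E_\pi$ via Theorem \ref{pbwcat} finishes the proof.

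There is no real obstacle here: every ingredient has been established earlier in the paper. The only point requiring a moment's care is that $[\P(\pi)]$ is \emph{literally} bar-invariant (and not merely up to a power of $q$); this is guaranteed once one fixes the grading normalization coming from the identification with Lusztig's canonical basis element associated to $L(\pi)$, which is compatible with the self-duality of $L(\pi)$. Everything else is formal manipulation of a unitriangular transition matrix.
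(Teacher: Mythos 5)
Your argument is correct, but it takes a genuinely different route from the paper. The paper stays in $\f^*$: by Proposition \ref{homological} the PBW basis $\{E_\pi\}=\{[\D(\pi)]\}$ is dual under $\langle\cdot,\cdot\rangle$ to the costandard classes $[\nabla(\pi)]$, so it suffices to check bar-unitriangularity on the basis $\{[\nabla(\pi)]\}$, which follows because each $\nabla(\pi)$ is, up to an overall grading shift, an induction product of self-dual simples, so its class is bar-invariant up to a power of $q$ and expands unitriangularly in the $[L(\sigma)]$ by Theorem \ref{main}(3). You instead work in $\f$ itself, combining the geometric fact (stated in the Ext-pairing section) that the classes $[\P(\pi)]$ are canonical basis elements, hence bar-invariant, with the unitriangular $\D$-flag multiplicities from Theorem \ref{15.4} and Theorem \ref{main}(3); inverting and barring the transition matrix is then formal. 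Both routes are legitimate and use only material established before the proposition, so there is no circularity. What the paper's argument buys is independence from the geometric identification of projectives with the canonical basis: it only needs self-duality of simples and the module-theoretic triangularity, which is why the paper can afterwards use the proposition to give an algebraic characterisation of the canonical basis before invoking geometry in Theorem \ref{positif}. What your argument buys is brevity and the explicit intermediate identity $[\P(\pi)]=[\D(\pi)]+\sum_{\sigma>\pi}m_{\pi\sigma}(q)[\D(\sigma)]$. Two small points deserve a remark: the grading normalisation making $[\P(\pi)]$ literally bar-invariant (which you flag) is supplied by the Verdier self-duality of the perverse sheaf $\P_b$ in the geometric construction; and since $\leq$ on root partitions is only a preorder, one should note that within a $\sim$-class the only nonzero multiplicity $[\overline{\D}(\sigma):L(\pi)]$ is the diagonal one (this follows from Lemma \ref{restrictinduct} and Lemma \ref{irreduciblehead}(2)), which is exactly what makes your transition matrix honestly unitriangular; the paper glosses the same point in the preceding theorem.
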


\begin{proof}
By Proposition \ref{homological}, the PBW basis is dual to the basis $[\nabla(\pi)]$ under the pairing $\langle\cdot,\cdot\rangle$. It suffices to prove that the bar involution on $\f^*$ is unitriangular with respect to this basis. Since each $\nabla(\pi)$ is an induction product of self-dual simples up to an overall grading shift, it is easy to see that the bar involution is unitriangular by Theorem \ref{main}(3).
\end{proof}

Once we have that the bar-involution is unitriangular, it is straightforward to show that there exists a unique basis $b_\p$ of $\f$ which is bar-invariant and for which
\[
 b_\p=E_\p + \sum_{\sigma<\pi} c_{\p\sigma} E_\sigma
\]
where $c_{\p\sigma}\in q\Z[q]$. Theorem \ref{positif} below shows that the basis $\{b_\p\}$ is the canonical basis, providing an algebraic characterisation of the canonical basis.

Thus from Theorem \ref{15.4} and the fact that the indecomposable projective modules categorify the canonical basis, we obtain the following positivity result.
\begin{theorem}\label{positif}
The change of basis matrix from the canonical basis to a PBW basis is unitriangular with off diagonal entries lying in $q\N[q]$.
\end{theorem}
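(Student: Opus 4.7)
The plan is to deduce the positivity theorem directly from BGG reciprocity (Theorem \ref{15.4}), the identification of indecomposable projectives with the canonical basis, and the graded constraints provided by Lemma \ref{geometry}.

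First, recall that under the isomorphism (\ref{pmod}) the class $[P(\pi)]$ of the indecomposable projective cover of $L(\pi)$ is the canonical basis element $b_\pi$, while by Theorem \ref{pbwcat} the PBW element $E_\sigma$ equals $[\Delta(\sigma)]$. Since $P(\pi)$ admits a $\Delta$-flag by Theorem \ref{15.4}, we may write
\[
[P(\pi)]\;=\;\sum_\sigma [P(\pi):\Delta(\sigma)]\,[\Delta(\sigma)]
\]
with graded multiplicities $[P(\pi):\Delta(\sigma)]\in\N[q,q\inv]$ (automatically non-negative). BGG reciprocity identifies these with $[\overline{\D}(\sigma):L(\pi)]$, and Theorem \ref{main}(3) gives $[\overline{\D}(\pi):L(\pi)]=1$, establishing unitriangularity. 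Moreover the same theorem forces $[\overline{\D}(\sigma):L(\pi)]=0$ unless $\pi\leq\sigma$, so only ``upper'' off-diagonal terms occur.

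It remains to show that for $\sigma\neq\pi$ the graded multiplicity $[\overline{\D}(\sigma):L(\pi)]$ lies in $q\N[q]$ rather than merely in $\N[q,q\inv]$. For this I would apply Lemma \ref{geometry} to $M=\overline{\D}(\sigma)$. Writing $[\overline{\D}(\sigma)]=\sum_{i,L}a_{i,L}q^i[L]$ in terms of self-dual simples $L$, let $m$ denote the minimal $i$ with some $a_{i,L}\neq 0$. By Lemma \ref{geometry}, every $L$ with $a_{m,L}\neq 0$ yields a quotient $q^m L$ of $\overline{\D}(\sigma)$. But $\overline{\D}(\sigma)$ has a unique irreducible quotient $L(\sigma)$ in degree zero (Lemma \ref{irreduciblehead} together with the self-dual normalisation from Theorem \ref{main}(2)); combined with the fact $a_{0,L(\sigma)}=1$, this forces $m=0$ and $L(\sigma)$ to be the only simple contributing at degree zero. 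Hence $a_{i,L(\pi)}=0$ for all $i\leq 0$ whenever $\pi\neq\sigma$, giving $[\overline{\D}(\sigma):L(\pi)]\in q\N[q]$ as required.

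There is no real obstacle left at this stage: the genuinely hard work — proving BGG reciprocity for KLR algebras of symmetric affine type, constructing the standard modules $\D(\pi)$, and establishing the homological orthogonality of Proposition \ref{homological} — has already been carried out earlier in the paper. The positivity theorem is then essentially a formal consequence, with the only substantive input being the nonnegative grading of the Morita-equivalent algebra $A(\nu)$ that underlies Lemma \ref{geometry}.
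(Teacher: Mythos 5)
Your proposal is correct and follows essentially the same route as the paper: non-negativity and unitriangularity come from BGG reciprocity (Theorem \ref{15.4}), Theorem \ref{main}(3), and the identification of indecomposable projectives with the canonical basis, while the $q\N[q]$ constraint on off-diagonal entries comes from Lemma \ref{geometry}. You merely spell out the Lemma \ref{geometry} step (forcing the minimal degree to be $0$ and attained only by $L(\sigma)$) in more detail than the paper does.
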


\begin{proof}
 The fact that the coefficients are all nonnegative is from Theorem \ref{15.4} and the fact that the indecomposable projective modules categorify the canonical basis. That the coefficients lie in $q\Z[q]$ follows from Lemma \ref{geometry}.
\end{proof}

This positivity result is new in affine type. In finite type this result is \cite[Corollary 10.7]{lusztigoneofthem} for particular convex orders and for all convex orders is due to Kato and the author independently in \cite{kato,klr1}.

\def\cprime{$'$}

\end{document}